\newcommand{\PP}{\mathbb{P}}
\newcommand{\QQ}{\mathbb{Q}}
\newcommand {\Gm}{\mathbb{G}_m}
\newcommand{\RR}{\mathbb{R}}
\newcommand{\CC}{\mathbb{C}}
\newcommand{\Spec}{\text{Spec}\hspace{0.05 cm}}
\newcommand{\ZZ}{\mathbb{Z}}
\newcommand{\NSr}{NS^{1}_{\orb,\RR}}
\newcommand{\NSc}{NS^1_{\orb,\CC}}
\newcommand{\cJ}{\mathcal J}
\newcommand{\XX}{\mathcal{X}}
\newcommand{\cX}{\XX}
\newcommand{\doots}{,\dots ,}
\newcommand{\OO}{\mathcal O}
\newcommand{\cO}{\OO}
\newcommand{\Fv}{F_v}
\newcommand{\Ov}{\mathcal O _v}
\newcommand{\vMF}{v\in M_F}
\newcommand{\vMFz}{v\in M_F ^0}
\newcommand{\vMFi}{v\in M_F ^\infty}
\newcommand{\YY}{\mathcal Y}
\newcommand{\cY}{\YY}
\newcommand{\sss}{\mathbf{s}}
\newcommand{\xxx}{\mathbf{x}}
\newcommand{\muu}{\boldsymbol{\mu}}
\newcommand{\TTT}{\mathscr T}
\newcommand{\ttt}{\mathbf{t}}
\newcommand{\AAA}{\mathbb A}
\newcommand{\AAF}{\mathbb A_F}
\newcommand{\yyy}{\mathbf{y}}
\newcommand{\zzz}{\mathbf{z}}
\newcommand{\mmm}{\mathbf{m}}
\newcommand{\wH}{\widehat H}
\newcommand{\TT}{\mathcal T}
\newcommand{\cT}{\TT}
\newcommand{\nnn}{\mathbf n}
\newcommand{\pijx}{\pi_0^*(\mathcal J_0\mathcal X)}
\newcommand{\Spijx}{\Sigma(1)\cup\pijx}
\newcommand{\bt}{\mathbf t}
\newcommand{\id}{\mathrm{id}}
\newcommand{\fp}{\mathfrak p}
\newcommand{\fa}{\mathfrak a}
\newcommand{\pr}{\mathrm{pr}}
\newcommand{\bSigma}{\boldsymbol{\Sigma}}
\DeclareMathOperator{\Tam}{Tam}
\DeclareMathOperator{\age}{age}
\DeclareMathOperator{\Res}{Res}
\DeclareMathOperator{\Pic}{Pic}
\DeclareMathOperator{\Coker}{Coker}
\DeclareMathOperator{\Ker}{Ker}
\DeclareMathOperator{\Hom}{Hom}
\DeclareMathOperator{\ulHom}{\underline{Hom}}
\DeclareMathOperator{\Aut}{Aut}
\DeclareMathOperator{\rk}{rk}
\DeclareMathOperator{\tor}{tor}
\DeclareMathOperator{\rig}{rig}
\DeclareMathOperator{\Boxx}{Box}
\DeclareMathOperator{\vvv}{\mathbf{v}}
\DeclareMathOperator{\cont}{cont}
\DeclareMathOperator{\orb}{orb}
\DeclareMathOperator{\ord}{ord}
\DeclareMathOperator{\proba}{prob}
\numberwithin{equation}{subsection}
\newtheorem{mydef}[equation]{Definition}
\newtheorem{lem}[equation]{Lemma}
\newtheorem{thm}[equation]{Theorem}
\newtheorem{prop}[equation]{Proposition}
\newtheorem{rem}[equation]{Remark}
\newtheorem{cor}[equation]{Corollary}
\newtheorem{conj}[equation]{Conjecture}
  \DeclareFontFamily{U}{wncy}{}
    \DeclareFontShape{U}{wncy}{m}{n}{<->wncyr10}{}
    \DeclareSymbolFont{mcy}{U}{wncy}{m}{n}
    \DeclareMathSymbol{\Sh}{\mathord}{mcy}{"58}
\title{The Manin conjecture for toric stacks}
\author[Ratko Darda]{Ratko Darda}
\address{}
\email{ratko.darda@gmail.com}
\author[Takehiko Yasuda]{Takehiko Yasuda}
\address{Department of Mathematics\\ Graduate School of Sciences\\Osaka University}
\email{yasuda.takehiko.sci@osaka-u.ac.jp}
\begin{document}
\begin{abstract}
Split toric stacks over a number field~$F$ are natural generalization of split toric varieties over~$F$. Notable examples are weighted projective stacks. In our previous work, we defined heights on Deligne--Mumford stacks using so-called {\it raised line bundles} and made predictions on asymptotic formulas of the number of rational points of bounded height. In this paper,  we prove that the number of rational points of any split toric stack of bounded height with respect to the {\it anti-canonical} raised line bundle satisfies one of  our predictions, the Manin conjecture for Deligne--Mumford stacks. 
\end{abstract}

\maketitle

\section{Introduction}
\subsection{Manin conjecture}
One of fundamental questions of Diophantine geometry is to count rational points of algebraic varieties. Such a question is the Manin conjecture:
\begin{conj}
Let~$X$ be a Fano variety over a number field~$F$. Suppose that the set~$X(F)$ of its rational points is Zariski dense in~$X$. Let~$H$ be a height on~$X$ defined by an adelic metric on the anticanonical line bundle. For a constant~$C(H)>0$ explicited by Peyre~\cite{Peyre}, there exists a thin subset $Z\subset X(F)$ such that $$\#\{x\in X(F)-Z|\hspace{0,1cm} H(x)\leq B\}\sim_{B\to\infty}CB\log(B)^{\rho(X)-1},$$
where~$\rho(X)$ is the Picard number of~$X$.
\end{conj}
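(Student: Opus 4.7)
The statement as given is in fact the general Manin--Peyre conjecture, which remains open for arbitrary Fano varieties; what follows is therefore a description of the only broad strategy that has worked in the cases solved so far (toric varieties, generalized flag varieties, certain equivariant compactifications of algebraic groups, some low-degree hypersurfaces), and which the present paper will implement in the toric-stack setting.

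The plan is to study the height zeta function
\[
Z(s)=\sum_{x\in X(F)\setminus Z} H(x)^{-s},
\]
establish its analytic behavior, and then extract the asymptotic by a Tauberian theorem. First I would fix an adelic metrization of $\omega_X^{-1}$ and the resulting height $H$, and identify a candidate thin set $Z$ by collecting all \emph{accumulating subvarieties}: those subvarieties $Y\subset X$ on which the restricted counting function already has growth rate $\geq B\log(B)^{\rho(X)-1}$ (for example, lines on cubic surfaces). The choice of $Z$ is forced by the geometry and is known to be necessary, not cosmetic. Second I would show that $Z(s)$ converges absolutely for $\Re(s)>1$; this is a consequence of the anticanonical normalization and of Northcott-type finiteness, together with the removal of $Z$.

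The analytic heart of the argument is to prove that $Z(s)$ admits a meromorphic continuation to some half-plane $\Re(s)>1-\delta$, with a unique pole at $s=1$ of order exactly $\rho(X)$, and leading coefficient matching Peyre's constant $C(H)$. In the cases where this has been done, one exploits extra structure on $X$: when $X$ carries an action of an algebraic group $G$ with an open orbit, one writes $H$ as a product of local factors invariant under a compact subgroup and applies harmonic analysis on $G(\AAA_F)/G(F)$ in the spirit of Batyrev--Tschinkel and Chambert-Loir--Tschinkel; when $X$ is of small enough degree, one parametrizes rational points by integral points on a universal torsor and attacks the counting problem by analytic number theory (Salberger, de la Bretèche, Browning). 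In the present paper the first route is the relevant one: one decomposes $Z(s)$ by Poisson summation over the cocharacter lattice of the maximal torus, the local factors become explicit Igusa-type integrals, and the pole structure at $s=1$ is read off from the combinatorics of the fan.

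Finally, once $Z(s)$ is controlled, a Tauberian theorem (e.g.\ the one of Delange or its refinement by Chambert-Loir--Tschinkel) converts the pole of order $\rho(X)$ at $s=1$ into the asymptotic $C\,B\log(B)^{\rho(X)-1}$, and one checks that the constant produced by the residue coincides with Peyre's prescription $\alpha(X)\beta(X)\tau_H(X)$. The main obstacle is the meromorphic continuation step: there is no general technique that applies to all Fano varieties, and for each new class one must exhibit additional structure (a group action, a fibration, a torsor parametrization) that makes the zeta function tractable. Identifying the correct thin set $Z$ is a secondary but genuine difficulty, since it is governed by subtle geometry (weak approximation defects, accumulating subvarieties) that only becomes visible after the counting problem has been partially solved.
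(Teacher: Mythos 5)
You have correctly recognized that this statement is the general Manin--Peyre conjecture, which the paper states only as motivation and does not (and cannot) prove; there is no proof of it in the paper to compare against. Your strategy sketch — height zeta function, Poisson summation and harmonic analysis on the adelic torus, explicit local Fourier transforms governed by the fan, then a Tauberian extraction of the asymptotic — is precisely the route the paper implements in Sections 2--4 for its actual theorem, the special case of split toric stacks, so your description of the method is accurate for what the paper does prove.
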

The known cases of the conjecture include: projective spaces~\cite{Schanuel}, smooth complete intersections if the number of variables is sufficiently large~\cite{Birch}, certain Châtelet surfaces~\cite{Chatelet}, equivariant compactifications of vector groups~\cite{Vgps}, etc.  It has been verified for {\it toric varieties} by Batyrev and Tschinkel~\cite{Toric} by methods of harmonic analysis.  A survey on the conjecture can be found in \cite{zbMATH05657087}. 

\subsection{Manin conjecture for stacks} Counting problems of objects possessing non-trivial automorphism groups are not in the scope of the problem of counting rational points of varieties, but are instead concerned by rational points of {\it algebraic stacks}. A well-known example is the Malle conjecture \cite{Malle}. Recently, there have been advances to extend the Manin conjecture to the algebraic stacks:
\begin{itemize}
\item The first named author in his PhD thesis~\cite{darda:tel-03682761} develops and proves a version of the Manin conjecture for {\it weighted projective stacks} \cite[Theorem 2.5.7.1]{darda:tel-03682761} with respect to so-called {\it quasi-toric heights}. The stacks are the quotients of punctured affine space~$\AAA^n-\{0\}$, for some $n\geq 1$ by the multiplicative group scheme~$\Gm$ for a weighted action $$t\cdot (x_1\doots x_n)=(t^{a_1}x_1\doots t^{a_n}x_n)$$ where $a_i\geq 1$ are positive integers.
\item Ellenberg, Satriano and Zureick-Brown~\cite{ellenberg_satriano_zureick-brown_2023} develop a version of what they call Batyrev--Manin--Malle conjecture for the Artin stacks with finite diagonal~\cite[Conjecture 4.14]{ellenberg_satriano_zureick-brown_2023}. The conjecture predicts the exponential growth (i.e.\ the exponent of~$B$) of the number of rational points.
\item In \cite{dardayasudabm} the authors develop a version of what they call Batyrev--Manin conjecture for Deligne--Mumford stacks~\cite[Conjecture 5.6]{dardayasudabm} which predicts the asymptotic growth rate (i.e.\ also the exponent of logarithm) for the number of rational points.
\end{itemize}
We now give more details about~\cite{dardayasudabm}. Let~$\XX$ be a separated, geometrically irreducible, smooth Deligne--Mumford stack over a number field~$F$, which is not isomorphic to~$\Spec(F)$, having a projective variety~$\overline\XX$ for its coarse moduli space, which is {\it Fano}. Let us explain the last condition: the pullback homomorphism $\Pic(\overline\XX)_{\QQ}\xrightarrow{\sim}\Pic(\XX)_{\QQ}$ of the $\QQ$-Picard groups of the coarse moduli space~$\overline\XX$ and of~$\XX$ is an isomorphism~\cite[Proposition 3.2]{dardayasudabm} and the condition asks that the anticanonical class~$\omega_{\XX}^{-1}$ corresponds to an ample $\QQ$-line bundle on~$\overline\XX$ for the isomorphism. For an $F$-field~$K$, we denote by $\XX(K)$ the set of isomorphism classes of morphisms $\Spec(K)\to\XX$. (We note a difference with the notation in~\cite{dardayasudabm} where for the same notion we have used the notation \(\cX\langle K \rangle\)) Let~$\mathcal J_0\XX$ be the stack of {\it twisted $0$-jets of} $\XX$, it parametrizes the representable morphisms $B\mu_{\ell}\to\XX$ for $\ell\geq 1$, where $\mu_{\ell}$ denotes the finite group scheme of $\ell$-th roots of unity. The stack~$\mathcal J_0\XX$ is of finite type of~$\XX$ \cite[Section 2.1]{dardayasudabm} containing a copy of~$\XX$ for a connected component. The element~$\XX$ of the finite set~$\pi_0(\mathcal J_0\XX)$ of connected components of~$\XX$ we call the {\it untwisted sector of~$\XX$}, while the other elements of~$\pi_0(\mathcal J_0\XX)$ are called the {\it twisted sectors of~$\XX$}. We denote by~$\pijx$ the set of the twisted sectors of~$\XX$. For almost all~$v$ in the set of the finite places~$M_F^0$ of~$F$, we construct \cite[Section 2.2]{dardayasudabm} a {\it residue map} $\psi_v:\XX(F_v)\to\pi_0(\mathcal J_0\XX)$ which satisfies that for almost all $\vMFz$ one has that the composite map $X(F)\to X(F_v)\to\pi_0(\mathcal J_0\XX)$ is the map to the singleton~$\{\XX\}$. 

We call a function $c:\pi_0(\mathcal J_0\XX)\to\RR$ with \(c(\cX)=0\)  a {\it raising function. }
A {\it raised line bundle} on \(\cX\) is a pair \((L,c)\) of a line bundle \(L\) on \(\cX\) and a raising function \(c\).  A height function \(H_{(L,c)}\) of a raised line bundle \((L,c)\) is defined by 
\[
H_{(L,c)}(x) = H_L(x) \cdot \prod_v q_v^{c\circ \psi_v (x)},  
\]
where \(H_L\) is an ordinary/stable height function of \(L\),  \(v\) runs over those finite places with \(\psi_v\) being defined and $q_v$ is the cardinality of the residue field at~$v$. 
The function \(H_{(L,c)}\) is uniquely determined from the given raised line bundle up to multiplication with functions bounded below and above by positive constants.
\begin{rem}
\normalfont
In \cite{dardayasudabm}, we assume that a raising function take non-negative values in early sections, then consider a more general situation allowing negative values in later sections. In the present paper, we allow negative values.
\end{rem}


For an element~$\YY$, we denote by $\age(\YY)\in\QQ_{\geq 0}$ the value defined in \cite[Definition~2.23]{dardayasudabm}. 
{In the same paper, we generalized the Manin conjecture to Deligne--Mumford stacks as follows.} 
\begin{conj}\label{firstconj}
Let~$\XX$ be a Fano stack. Let~$c$ be a raising function which is adequate, that is, for every twisted sector \(\cY\), \(\age(\cY)+c(\cY)\ge 1\) \cite[Definition 5.2]{dardayasudabm}. Suppose that~$\XX(F)$ is Zariski dense in~$\XX$. Then, there exists a thin subset $Z\subset\XX(F)$ \cite[Definition 5.4]{dardayasudabm} and a positive constant~$C$ such that $$\#\{x\in\XX(F)-Z|\hspace{0,1cm}H_{\omega^{-1}_{\XX},c}(x)\leq B\}\sim_{B\to\infty} CB(\log(B))^{\rho(\XX)+j_{c}(\XX)-1},$$
where~$\rho(\XX)$ is the Picard number of~$\XX$ \cite[Definition 5.5]{dardayasudabm} and $$j_c(\XX):=\#\{\YY\in\pijx|\hspace{0,1cm} \age(\YY)+c(\YY)=1\}.$$  
\end{conj}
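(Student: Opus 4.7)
The overall strategy is the one standard for Manin-type counts: introduce the height zeta function
$$Z(s) \;=\; \sum_{x \in \XX(F) \setminus Z} H_{(\omega_\XX^{-1},\, c)}(x)^{-s},$$
establish its meromorphic continuation to a half-plane $\Re(s) > 1 - \epsilon$, prove that its rightmost pole is at $s=1$ with order exactly $\rho(\XX) + j_c(\XX)$, and then extract the asymptotic by a Landau--Ikehara--Delange Tauberian theorem. The constant $C$ comes out as a residue at $s=1$ and should match a stacky Peyre-type constant.

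The local picture at almost every finite $v$ is tractable and already predicts the right pole structure. After fixing an integral model, I would stratify $\XX(F_v)$ by the residue map $\psi_v : \XX(F_v) \to \pi_0(\cJ_0\XX)$, and use an orbifold change of variables (essentially the motivic formula underlying the definition of $\age$) to compute
$$\int_{\psi_v^{-1}(\YY)} H_{(\omega_\XX^{-1},c),\,v}(x)^{-s}\, d\mu_v(x) \;=\; q_v^{-(\age(\YY)+c(\YY))}\cdot\bigl(1 + O(q_v^{-1})\bigr)\cdot\zeta_v(s)^{?}.$$
Each twisted sector $\YY$ with $\age(\YY)+c(\YY)=1$ then produces an Euler factor behaving like $\zeta_v(s)$ near $s=1$, on top of the $\rho(\XX)$ copies of $\zeta_v$ arising from the Picard part of the coarse space $\overline\XX$; adequacy guarantees that no sector contributes a worse pole. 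The thin subset $Z$ would be built from accumulating subvarieties in the sense of \cite[Definition 5.4]{dardayasudabm}, namely those subloci whose own Manin count would otherwise beat the predicted growth rate.

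The genuine obstacle is the global meromorphic continuation of $Z(s)$ past $\Re(s)=1$: the Euler-product heuristic above converges only for $\Re(s)$ sufficiently large, and crossing the vertical line $\Re(s)=1$ requires structural analytic input that is not available for an arbitrary Fano Deligne--Mumford stack. This is precisely why even the variety-case of the Manin conjecture is open in full generality. In practice I would attack the conjecture class by class: for stacks with equivariant structure (the toric situation handled in the present paper, spherical stacks, equivariant compactifications of unipotent or reductive groups), one performs adelic Poisson summation on the automorphism torus and Fourier-twists by characters that encode the raising function $c$, so that the extra poles from sectors with $\age(\YY)+c(\YY)=1$ appear as residues of the twisted local zeta integrals; for stacks admitting a good universal torsor one descends the count to a variety and applies circle-method or sieve input.

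A secondary but delicate obstacle is to confirm rigorously that the exceptional set $Z$ required by the argument is indeed thin. One must verify that no sub-Fano substack $\YY \subset \XX$, and no additional accumulating sector, forces one to remove a non-thin locus — a geometric verification complicated by the fact that $c$ is allowed to take negative values, so the raising function restricted to a substack need not remain adequate, and the interaction between the Picard-type poles and the twisted-sector poles must be controlled to produce the sharp leading constant.
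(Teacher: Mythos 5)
There is a genuine gap, but it is of an unavoidable kind: the statement you were asked to prove is Conjecture~\ref{firstconj}, which is an open conjecture, and the paper itself does not prove it in this generality. What you have written is a strategy outline, not a proof: you introduce the height zeta function, describe the expected local computation, and then explicitly concede that the decisive step --- meromorphic continuation of $Z(s)$ past $\Re(s)=1$ for an arbitrary Fano Deligne--Mumford stack --- ``requires structural analytic input that is not available.'' That concession is exactly where a proof would have to live, so nothing in your text establishes the asymptotic $\#\{x: H_{(\omega_{\XX}^{-1},c)}(x)\le B\}\sim CB(\log B)^{\rho(\XX)+j_c(\XX)-1}$ for any class of stacks, let alone all of them. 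Likewise the thinness of the exceptional set $Z$ and the identification of the leading constant are only described as desiderata; neither is argued. Your heuristic local formula (each sector with $\age(\YY)+c(\YY)=1$ contributing an Euler factor of $\zeta_v$-type) is a reasonable expectation but is not derived, and as stated it suppresses the interaction between the Picard-rank poles and the sector poles that any actual continuation argument must control.

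For comparison, the paper does not attempt the conjecture in general either: it proves only the split toric case with the specific raising function $c=1-\age$ and a specific choice of metrics (Theorem~\ref{firstthm}, via Corollary~\ref{cor:main}), taking $Z=\XX(F)-\TT(F)$, so that $j_c(\XX)=\#\pijx$ and the log-exponent becomes $\rho(\XX)+\#\pijx-1$. The method there is the one you gesture at for the equivariant case: adelic Poisson summation over the stacky torus $\TT(F)\subset\TT(\AAF)$, an explicit computation of the local Fourier transforms of the height (Lemma~\ref{loctraexp}), in which the twisted sectors appear as degree-one monomials of $Q_{\Sigma}$ and hence produce extra Hecke $L$-function factors, Rademacher-type bounds to sum over characters, and the Chambert-Loir--Tschinkel $\mathsf X$-function theorems to convert the multivariable continuation into the one-variable pole of order $b=\rho(\XX)+\#\pijx$ and the asymptotic with an explicit Tamagawa-type constant. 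So your outline points in the right direction for the toric special case, but as an argument for Conjecture~\ref{firstconj} itself it has no proof content: the conjecture remains open, and your text correctly stops where every known approach stops.
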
 
\subsection{Toric stacks}
{\it Toric stacks} over~$\CC$ are natural generalization of toric varieties over~$\CC$. In the literature, however, we have not found a general definition of a toric stack over a number field~$F$. A standard definition \cite{borisovchensmith} over~$\CC$  uses {\it stacky} fans and translates naturally to the case of a number field. We assume that the absolute Galois group acts trivially on the stacky fan and thus prefer calling such stacks {\it split} toric stacks.  An example of such a stack is provided by a weighted projective stack. A split toric stack possesses a Zariski-dense {\it stacky} torus~$\TT$, i.e.\ the product $\TT=T\times BG$ of a split (algebraic) $F$-torus~$T\cong \Gm^d,$ where $d\geq 1,$ and the stack~$BG$, where $G=\prod_{i=1}^{\ell}\mu_{n_i}$ for some~$\ell\geq 1$ and~$n_i\geq 1$ for $i=1\doots \ell$. The rational points of~$\TT$ form a commutative group $$\TT(F)\cong {(F^{\times})^d\times\prod_{i=1}^{\ell}(F^{\times}/(F^{\times})_{n_i})},\text{ where }(F^{\times})_{n_i}=\{x^{n_i}|\hspace{0,1cm}x\in F^{\times}\}.$$
We endow~$\omega_{\XX}^{-1}$ with a natural metrics (which is specified in Paragraph~\ref{cwgh}) and set $c=1-\age,$ so that $K_{\XX,\orb}^{-1}=(\omega_{\XX}^{-1}, c),$ where~$K_{\XX,\orb}$ is the class defined in the sense of~\cite[Definition 9.1]{dardayasudabm}. 
We denote by \(H_{K^{-1}_{\XX,\orb}}\) a specific choice of height function of \(K^{-1}_{\XX,\orb}\). The main result of this article is:
\begin{thm}[Manin conjecture for split toric stacks, Corollary \ref{cor:main}] 
\label{firstthm}
For an explicit $C>0$, one has that $$\#\{x\in\TT(F)|\hspace{0,1cm} H_{K^{-1}_{\XX,\orb}}(x)\leq B\}\sim_{B\to\infty}CB\log(B)^{\rho(\XX)+\#\pijx -1}.$$
\end{thm}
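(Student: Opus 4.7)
The plan is to adapt the harmonic-analytic method of Batyrev--Tschinkel \cite{Toric} for toric varieties to the stacky setting. Compared to the toric variety case, the two new ingredients are the presence of the factor $BG$ in the stacky torus $\TT = T\times BG$, which enlarges the group of rational points by $\prod_{i=1}^\ell F^\times/(F^\times)^{n_i}$, and the extra local weights $q_v^{c\circ\psi_v(x)}$, with $c = 1-\age$, coming from the raised anticanonical bundle.

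First I would introduce the height zeta function
$$Z(s) = \sum_{x\in\TT(F)}H_{K^{-1}_{\XX,\orb}}(x)^{-s}$$
and view $\TT(F)$ as a discrete subgroup of the adelic group $\TT(\AAF)$. Decomposing $\TT(\AAF)$ via $\TT = T\times BG$ and applying adelic Poisson summation (after modding out by a maximal compact and the norm-one subgroup of $T$) rewrites $Z(s)$ as an integral over the Pontryagin dual of $\TT(\AAF)/\TT(F)$, whose integrand is an Euler product of local Fourier transforms. The characters split into a continuous family parametrised by $\Hom(T(\AAF)/T(F),S^1)$ and a discrete family indexed by the dual of the finite group coming from $BG$.

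Next I would compute each local factor. At a finite place $v$ of good reduction, the residue map $\psi_v$ partitions $\TT(F_v)$ into fibres labelled by $\pi_0(\mathcal J_0\XX)$, so the $v$-local integral decomposes as a sum over $\YY\in\pi_0(\mathcal J_0\XX)$ of contributions weighted by $q_v^{-c(\YY)s} = q_v^{(\age(\YY)-1)s}$. Combined with the intrinsic anticanonical local factor, each such fibre yields (up to a bounded factor) the $v$-Euler factor of a Hecke $L$-function attached to the character of $\prod_{i=1}^\ell F^\times/(F^\times)^{n_i}$ that selects $\YY$; the choice $c = 1-\age$ is precisely what makes these local factors behave like those of an $L$-function at $s$ rather than at a shifted argument, an observation that is already the engine of~\cite[Theorem~2.5.7.1]{darda:tel-03682761} for weighted projective stacks. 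Globally, this Hecke $L$-function has a simple pole at $s=1$ exactly when the character is trivial, which happens once per twisted sector; combining this with the classical toric analysis of the characters of $T$, which yields the expected pole of order $\rho(\XX)$ (the $\QQ$-Picard groups of $\XX$ and of its coarse space agree), produces a pole of order $\rho(\XX)+\#\pijx$ for $Z(s)$ at $s=1$.

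Finally, I would apply a Tauberian theorem of Delange type, using meromorphic continuation of $Z(s)$ to a half-plane $\Re(s)>1-\delta$ and standard vertical growth estimates for Hecke $L$-functions, to extract the claimed asymptotic. The hardest part will be the bookkeeping in the local computations: verifying that each twisted sector contributes precisely one extra simple pole at $s=1$, that the contributions of non-trivial characters of $\prod_{i=1}^\ell F^\times/(F^\times)^{n_i}$ are regular and non-vanishing at $s=1$, and then pinning down the leading constant $C$ so as to match the Peyre-type prediction of~\cite{dardayasudabm}; handling the finitely many bad places where $\psi_v$ is not defined or the reduction is not smooth will be a secondary technical issue.
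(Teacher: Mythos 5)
Your overall strategy --- height zeta function, adelic Poisson summation over $\TT(F)\subset\TT(\AAF)$, Euler products of local Fourier transforms controlled by Hecke $L$-functions, then a Tauberian theorem --- is exactly the route the paper takes, and you correctly predict that the pole at $s=1$ has order $\rho(\XX)+\#\pijx$ and that the normalization $c=1-\age$ is what places all the relevant $L$-factors at the edge $s=1$. The paper works with the multi-variable zeta function on the whole cone $\Lambda\subset\RR^{\Spijx}$ and invokes the Chambert-Loir--Tschinkel machinery of $\mathsf X$-functions before specializing to the anticanonical line, whereas you propose a single-variable analysis; that is a presentational difference, not a substantive one, since in either case the hard step is integrating the product of $L$-functions over the continuous family of characters $\chi_{\mmm}$, $\mmm\in M^{\rig}_{\RR}$.

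There is, however, one concrete error in your mechanism for the extra $\#\pijx$ poles. You attribute them to Hecke $L$-functions of characters of the finite group $\prod_{i}F^{\times}/(F^{\times})^{n_i}$ coming from the gerbe factor $BG$. But the twisted sectors are indexed by $\Boxx(\Sigma)=\Boxx^{\rig}(\Sigma)\times G^D$ minus the origin, and the $\Boxx^{\rig}(\Sigma)$ part is purely a feature of the fan: the residue of a point $x\in T(F_v)$ is $q(\log_{T,v}(x))$, i.e.\ it records the fractional coordinates of the valuation vector with respect to the rays $b_\rho$, and this has nothing to do with $BG$. In the extreme case where $G$ is trivial (e.g.\ $\mathcal P(1,2)$, which still has a twisted sector), your mechanism produces no extra $L$-factors at all and hence the wrong pole order. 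The correct statement, which is the content of the paper's local computation (Lemma \ref{loctraexp} via the decomposition of $N^{\rig}\cap\sigma^{\circ}$ in Lemma \ref{weirdoma}), is that each sector $\YY$ contributes a term $q_v^{-\Xi_{\YY}(\sss)}\overline{\chi}^{(\YY)}(\pi_v)$ to the numerator of the local transform, where $\chi^{(\YY)}=\chi\circ\widetilde{f_{\YY}}$ is the Hecke character of $\Gm(\AAF)$ obtained by restricting $\chi$ along the one-parameter map attached to the box element $\YY$; globally these assemble into $L(\Xi_{\YY}(\sss),\overline{\chi}^{(\YY)})$, and the extra simple pole appears when $\chi^{(\YY)}$ is trivial on $\Gm(\AAF)_1$. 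Two further points your sketch omits and that are not routine here: the convergence of $Z$ (the raising function $1-\age$ takes negative values when $\age(\YY)>1$, so the Northcott property is not covered by earlier results and the paper proves it separately in Section \ref{northcottsection}), and the strict positivity of the leading constant, which the paper obtains by a second Poisson summation over the finite group of characters killing all the $\chi^{(\rho)}$.
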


This theorem hence verifies Conjecture~\ref{firstconj} for toric Fano stacks given with our specific choices  of a raising function \(c\) and a height function, although the theorem holds without the condition that \(\omega_\cX^{-1}\) is ample. Note that this raising function \(c\) is automatically adequate.  The thin subset \(Z\) in the conjecture is chosen to be \(\cX(F)-\cT(F)\) in this theorem.   
This is a first counting result for {\it every} split toric stack. In the case when~$\XX$ is of the dimension zero, the claim is covered by \cite[Theorem 1.3.3]{commcase}. The above result of the first named author for the weighted projective stacks is with the respect to {\it quasi-toric} heights which do {\it not} correspond to the anti-canonical raised line bundle (formally, this is proven in \cite[Example 5.8]{dardayasudabm}, but can be observed immediately as the for quasi-toric heights the exponent of the logarithm is $0$, while here, in general, is non-zero).  Thus, even for the weighted projective stacks the result is new. 
{In preparation for the proof of the main theorem, we also show the Northcott property for raised line bundles that are not necessarily anti-canonical or positively raised, thus not covered by our previous result \cite[Proposition 4.8]{dardayasudabm} (see Section \ref{northcottsection}).
In fact, a large part of  preparation is not restricted to the anti-canonical raised line bundle. We hope that we can generalize also the main theorem to more general raised line bundles as well as to non-split toric stacks (i.e.\ the ``Batyrev--Manin conjecture'' for non-split toric stacks) in a future paper. 
}

In Section \ref{Split Stacky tori} we discuss split stacky tori, in Section~\ref{Heights} we write down our theory of heights for the split toric stacks using stacky fans and in Section~\ref{aohzf}, we use harmonic analysis  to prove the theorem

\subsection*{Acknowledgements}
The authors would like to thank to Antoine Chambert-Loir and Pierre Le Boudec for useful discussions. The second-named author 
was supported by JSPS KAKENHI Grant Numbers JP21H04994 and JP23H01070.

\section{Split Stacky tori} \label{Split Stacky tori}
We discuss split stacky tori. They are products of ordinary split tori and stacks $\prod_{i=1}^{k}B\mu_{\ell_i},$ where $k\geq 1$ and $\ell_i\geq 1$ are integers.
\subsection{Local situation} In the following paragraphs we define logarithmic maps from and study their functorial properties.
\subsubsection{}
For an $F$-split torus~$T$, we denote by $X_*(T)=\Hom(\Gm, T)$ its cocharacter group and by $X^*(T)=\Hom (T, \Gm)$ its character group. The groups~$X_*(T)$ and~$X^*(T)$ are always finitely generated free abelian groups. %
For a homomorphism $\alpha:X_*(T)\to X_*(T')$ of the cocharacter groups of $F$-split tori~$T$ and~$T'$,  we denote by $\widetilde{\alpha}: T \to T'$ the corresponding homomorphism of the tori. By \cite[Section 2.2.3]{Bourqui}, for an $F$-split torus~$T$ and a finite place~$v$ of~$F$, there exists a continuous homomorphism $$\log_{T,v}:T(F_v)\to X_*(T),$$
while for an infinite place~$v$, there exists a continuous homomorphism
$$\log_{T,v}:T(F_v)\to X_*(T)_{\RR}.$$
When $T=\Gm$ and~$v$ is finite, one has $\log_{T,v}=\ord_v$. The homomorphisms~$\log_{T,v}$ are surjective and their kernel is the maximal compact subgroup~$T(\Ov)$ of~$T(F_v)$.
For any place~$v$ and any $\alpha:X_*(T)\to X_*(T')$, the diagram: \begin{equation}
\begin{tikzcd}
T(F_v) \arrow{d}{\log_{T,v}} \arrow{r}{\widetilde{\alpha}(F_v)}& T'(F_v)\arrow{d}{\log_{T',v}}\\
X_*(T)\arrow{r}{\alpha}& X_*(T'),
\end{tikzcd}
\label{toriandlatt}
\end{equation}
is commutative.  
\subsubsection{} Let~$\ell\geq 1$ be an integer and let~$v$ be a place of~$F$. The stack~$B\mu_{\ell}$ satisfies $B\mu_{\ell}(F_v)=\Gm(F_v)/\Gm(F_v)_{\ell},$ where $\Gm(F_v)_{\ell}$ is the closed subgroup of~$\Gm(F_v)$ given by the elements which are $\ell$-th powers. These groups are finite \cite[2.3.5.7]{darda:tel-03682761}. If~$v$ is finite, we have a surjective map $$\log_{B\mu_{\ell}, v}:B\mu_{\ell}(F_v)\to \ZZ/\ell\ZZ$$induced from $\ell\ZZ$-invariant homomorphism $$\Gm(F_v)\xrightarrow{\ord_v}\ZZ\to\ZZ/\ell\ZZ.$$
If~$v$ is infinite, we denote by~$\log_{B\mu_{\ell}, v}$ the canonical map to the singleton $\log_{B\mu_{\ell},v}:B\mu_{\ell}(F_v)\to\{0\}$.  For a diagonalizable $F$-finite group scheme $G=\mu_{\ell_1}\times\cdots\times \mu_{\ell_k}$ with~$k$ and~$\ell_i$ strictly positive integers by taking the product homomorphism, we obtain a surjective homomorphism $$\log_{BG, v}:BG(F_v)=B\mu_{\ell_1}(F_v)\times\cdots\times B\mu_{\ell_k}(F_v)\to (\ZZ/\ell_1\ZZ) \times\cdots\times (\ZZ/\ell_{k}\ZZ)=:G^D$$ if~$v$ is finite and $$\log_{BG, v}:BG(F_v)\to\{0\}$$if~$v$ is infinite. For every~$\vMF$, we set $$BG(\Ov):=\ker(\log_{BG, v}).$$ We set $p^{\ell}_v:\Gm(F_v)=F_v^{\times}\to B\mu_{\ell}(F_v)$ to be the quotient homomorphism. Let $m\in\{0\doots \ell-1\}$. The diagram 
\begin{equation}
\label{opsovu}
\begin{tikzcd}
\Gm(F_v) \arrow{r}{\log_{\Gm, v}} \arrow{d}{ p^{\ell}_v}& \ZZ\arrow{d}{x\mapsto x+{\ell\ZZ}}\\
B\mu_{\ell}(F_v)\arrow{d}{x\mapsto x^m}\arrow{r}{\log_{B\mu_{\ell}, v}}&\ZZ/\ell\ZZ\arrow{d}{x\mapsto mx}\\
B\mu_{\ell}(F_v)\arrow{r}{\log_{N\mu_{\ell}, v}}&\ZZ/\ell\ZZ.
\end{tikzcd}
\end{equation}
is commutative.
 Set $p^{\ell}_F:\Gm(F)\to B\mu_{\ell}(F)=F^{\times}/(F^{\times})_{\ell},$ where $(F^{\times})_{\ell}$ is the subgroup of $F^{\times}$ given by the $\ell$-th powers, to be the quotient homomorphism. The diagram 
\begin{equation}
\label{opsov}
\begin{tikzcd}
\Gm(F) \arrow{r}{} \arrow{d}{p^{\ell}_F}& \Gm(F_v)\arrow{d}{p^{\ell}_v}\\
B\mu_{\ell}(F)\arrow{r}{}&B\mu_{\ell}(F_v).
\end{tikzcd}
\end{equation}is commutative. Hence is commutative the diagram:
\begin{equation}
\label{opsov}
\begin{tikzcd}
\Gm(F) \arrow{r}{} \arrow{d}{(p^{\ell_i}_F)^k_{i=1}}& \Gm(F_v)\arrow{d}{(p^{\ell_i}_v)_{i=1}^k}\\
\prod_{i=1}^kB\mu_{\ell_i}(F)=BG(F)\arrow{r}{}&\prod_{i=1}^kB\mu_{\ell_i}(F_v)=BG(F_v).
\end{tikzcd}
\end{equation}
\subsubsection{} Let~$T$ be an $F$-split torus and let $G=\prod_{i=1}^{k}B\mu_{\ell_k}$ be an $F$-diagonalizable finite group scheme. We say that $\TT:=T\times BG$ is an $F$-{\it split stacky torus}, or for simplicity, {\it split stacky torus}.  We define $$X_*(\TT):=X_*(T)\times G^D.$$ 
For a finite place~$v$, we have a surjective homomorphism \begin{align*}\log_{\TT, v}:\TT(F_v)=T(F_v)\times BG(F_v)&\to X_*(T)\times G^D=X_*(\TT)\\ (\xxx, x)&\mapsto (\log_{T,v}(\xxx),\log_{BG, v}(x)).\end{align*}
For an infinite place~$v$, we set~$\log_{\TT, v}$ to be the composite map $$\log_{\TT, v}:\TT(F_v)\to T(F_v)\xrightarrow{\log_{T, v}}X_*(T)_{\RR}.$$
\begin{lem}
For $\yyy\in X_*(\TT),$ we define a homomorphism $$f_{\yyy}:\ZZ\to X_*(\TT),\hspace{1cm}1\mapsto \yyy.$$ For $\yyy\in X_*(T)$, we set: $$f_{\yyy}:=f_{(\yyy,0)}.$$ 
\end{lem}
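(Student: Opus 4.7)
The statement is essentially definitional rather than a genuine assertion requiring argument, so my plan is to record the two small facts that make the definition legitimate. First, I would invoke the universal property of $\ZZ$ as the free abelian group on one generator: for any abelian group $A$ and any $a\in A$, there is a unique group homomorphism $\ZZ\to A$ sending $1\mapsto a$. Since $X_*(\TT)=X_*(T)\times G^D$ is an abelian group, namely a product of a finitely generated free abelian group and a finite abelian group, this immediately yields the existence and uniqueness of the homomorphism $f_\yyy$ for every $\yyy\in X_*(\TT)$.

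Second, for the abbreviation $f_\yyy:=f_{(\yyy,0)}$ when $\yyy\in X_*(T)$, I would observe that we are implicitly using the canonical inclusion $X_*(T)\hookrightarrow X_*(\TT)$, $\yyy\mapsto(\yyy,0)$, coming from the product decomposition $X_*(\TT)=X_*(T)\times G^D$. Under this inclusion the two recipes agree on the nose, so the overloaded notation is consistent.

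I do not expect any obstacle; the content is purely notational preparation. The only thing worth remarking is that the assignment $\yyy\mapsto f_\yyy$ is itself a group isomorphism $X_*(\TT)\xrightarrow{\sim}\Hom(\ZZ,X_*(\TT))$, so the $f_\yyy$ give a canonical parametrisation of one-parameter subgroups of the cocharacter lattice. This will presumably be used further on, via the functoriality diagram \eqref{toriandlatt} applied to maps $\widetilde{f_\yyy}$ between stacky tori, in the local analysis of heights and in the harmonic-analytic arguments of Section~\ref{aohzf}.
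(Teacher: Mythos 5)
Your proposal is correct: this ``lemma'' is purely a definition, and indeed the paper supplies no proof at all --- the well-definedness is exactly the trivial universal property of $\ZZ$ and the inclusion $X_*(T)\hookrightarrow X_*(\TT)$, $\yyy\mapsto(\yyy,0)$, that you record. Your closing remark that $\yyy\mapsto f_{\yyy}$ is an isomorphism $X_*(\TT)\xrightarrow{\sim}\Hom(\ZZ,X_*(\TT))$ is also exactly how the paper later uses this construction (in the proof of Lemma~\ref{mumija}(5)).
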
 
\begin{lem}
Let $(\yyy,y)=(\yyy,y_1\doots y_k)\in X_*(\TT)=X_*(T)\times \prod_{i=1}^{k}(\ZZ/\ell_k\ZZ)$. Let~$v$ be a place of~$F$. For $i=1\doots k$ let $\widetilde y_i\in \{0\doots \ell_i-1\}$  be lifts of~$y_i$. We define a homomorphism \begin{align*}
\widetilde{f_{(\yyy,y)}}(F_v):\Gm(F_v)&\to T(F_v)\times BG(F_v)=\TT(F_v),\\ x&\mapsto (\widetilde{f_{\yyy}}(x), p^{\ell_1}_v(x)^{\widetilde y_1}\doots p^{\ell_k}_v(x)^{\widetilde y_k}).
\end{align*} We also define a homomorphism 
$$\widetilde{f_{(\yyy,y)}}(F):\Gm(F)\to T(F)\times BG(F)=\TT(F),\hspace{0,2 cm}x\mapsto (\widetilde{f_{\yyy}}(x), p^{\ell_1}_F(x)^{\widetilde y_1}\doots p^{\ell_k}_F(x)^{\widetilde y_k}).$$
\end{lem}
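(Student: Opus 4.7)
The content of this lemma is to verify that the two prescribed formulas genuinely define group homomorphisms; my plan is to check this coordinate by coordinate, since both targets are products of abelian groups.

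For the local statement, the plan is first to observe that $\TT(F_v) = T(F_v) \times \prod_{i=1}^{k} B\mu_{\ell_i}(F_v)$ is a product of abelian groups, so a map into it is a homomorphism iff each of its coordinates is. The first coordinate, $\widetilde{f_{\yyy}}(F_v): \Gm(F_v) \to T(F_v)$, is a homomorphism because it comes from the cocharacter map $f_{\yyy}: \ZZ \to X_*(T)$ via the functor $\alpha \mapsto \widetilde{\alpha}$ recalled at the start of the section. For $i=1,\dots,k$, the $i$-th stacky coordinate $x \mapsto p^{\ell_i}_v(x)^{\widetilde y_i}$ is the composition of the quotient homomorphism $p^{\ell_i}_v: \Gm(F_v) \to B\mu_{\ell_i}(F_v)$ with the $\widetilde y_i$-th power endomorphism of the abelian group $B\mu_{\ell_i}(F_v)$, hence is again a homomorphism. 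Taking the product, the whole map $\widetilde{f_{(\yyy,y)}}(F_v)$ is a homomorphism.

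For the global statement I would run exactly the same argument, replacing $p^{\ell_i}_v$ by $p^{\ell_i}_F: \Gm(F) \to B\mu_{\ell_i}(F) = F^\times/(F^\times)_{\ell_i}$ (which is a homomorphism by construction) and $\widetilde{f_{\yyy}}(F_v)$ by $\widetilde{f_{\yyy}}(F)$. It is also worth recording at this point, even though the lemma does not explicitly state it, that the chosen lifts $\widetilde y_i \in \{0,\dots,\ell_i-1\}$ are unique, so no well-definedness question arises; moreover the formulas are insensitive to replacing $\widetilde y_i$ by $\widetilde y_i + \ell_i$, since $p^{\ell_i}_v(x)^{\ell_i} = p^{\ell_i}_v(x^{\ell_i})$ is trivial in $B\mu_{\ell_i}(F_v)$ as $x^{\ell_i} \in \Gm(F_v)_{\ell_i}$, and analogously over $F$. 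Finally, the compatibility of the global and local constructions under the inclusion $\Gm(F) \hookrightarrow \Gm(F_v)$ follows from the commutative diagram \eqref{opsov}, so that $\widetilde{f_{(\yyy,y)}}(F_v)$ restricts to $\widetilde{f_{(\yyy,y)}}(F)$ along $\TT(F) \to \TT(F_v)$.

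Honestly there is no real obstacle here: the statement is essentially a definition packaged as a lemma, and the only thing to do is to unpack that each coordinate map is a homomorphism and that the prescription is well posed. The only place where one has to be a little careful is the stacky coordinates, where one must keep track of the fact that the power is being taken in the \emph{quotient group} $B\mu_{\ell_i}(F_v)$ rather than in $\Gm(F_v)$; but once that is noted, the verification is immediate.
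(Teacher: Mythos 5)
Your proof is correct and matches the paper, which in fact gives no proof at all: this ``lemma'' is a definition in disguise, and the only content is the coordinate-wise verification that each factor map is a homomorphism, exactly as you carry out. Your additional remarks on well-definedness of the lifts and compatibility with Diagram~(\ref{opsov}) are accurate but not needed for the statement itself.
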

For every place~$v$ of~$F$, we define $$\TT(\Ov):=T(\Ov)\times BG(\Ov).$$
\begin{lem}\label{mumija}
Let $\yyy\in X_*(\TT)$. Let~$v$ be a place of~$F$. 
\begin{enumerate}
\item If~$v$ is finite, the diagram 
$$
\begin{tikzcd}
\Gm(F_v) \arrow{d}{\widetilde{f_{\yyy}}(F_v)} \arrow{r}{\log_{\Gm, v}}&\ZZ\arrow{d}{{f_{\yyy}}}\\
\TT(F_v)\arrow{r}{\log_{\TT, v}}&X_*(\TT)
\end{tikzcd}
$$
is commutative. (If~$v$ is infinite and the groups in the second column are tensorized by~$\RR$, the obtained diagram is immediately commutative by the commutativity of Diagram~\ref{toriandlatt}.)
\item One has that $\widetilde{ f_{\yyy}}:\Gm(F_v)\to\TT(F_v)$ is continuous.
\item One has that $\widetilde {f_{\yyy}}(\Gm(\mathcal O_v))\subset \TT(\Ov).$
\item The diagram
$$
\begin{tikzcd}
\Gm(F) \arrow{d}{\widetilde{f_{\yyy}}(F)} \arrow{r}{}&\Gm(F_v)\arrow{d}{\widetilde{f_{\yyy}}(F_v)}\\
\TT(F)\arrow{r}{}&\TT(F_v).
\end{tikzcd}
$$
is commutative.
\item The map $$X_*(\TT)\to \Hom_{\cont}(\Gm(F_v), \TT(F_v)),\hspace{1cm}\yyy'\mapsto \widetilde {f_{\yyy'}}(F_v)$$
is a homomorphism.
\end{enumerate}
\end{lem}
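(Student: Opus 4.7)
My plan is to decompose $\widetilde{f_{(\yyy,y)}}(F_v)$ into its two factors, the torus factor $\widetilde{f_\yyy}(F_v) : \Gm(F_v)\to T(F_v)$ coming from the cocharacter $f_\yyy : \ZZ\to X_*(T)$, and the $BG$-factor $x\mapsto (p^{\ell_1}_v(x)^{\widetilde y_1},\dots,p^{\ell_k}_v(x)^{\widetilde y_k})$, and verify each of the five assertions component by component. Every statement will reduce either to the naturality diagram \eqref{toriandlatt} for split tori or to diagram \eqref{opsovu} for $B\mu_\ell$, together with the definition of $\log_{\TT,v}$ as the product of $\log_{T,v}$ and $\log_{BG,v}$.

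For part (1), the $T$-component of the required equality is exactly diagram \eqref{toriandlatt} applied to the homomorphism $f_\yyy : \ZZ\to X_*(T)$. The $BG$-component breaks into $k$ commutative squares, each of which is the composite of the middle and bottom squares of diagram \eqref{opsovu} with $m=\widetilde y_i$, giving $\log_{B\mu_{\ell_i},v}(p^{\ell_i}_v(x)^{\widetilde y_i})=\widetilde y_i\cdot\log_{\Gm,v}(x)\bmod \ell_i=y_i\cdot\log_{\Gm,v}(x)$. Part (2) is immediate: $\widetilde{f_\yyy}:\Gm\to T$ is an algebraic morphism, hence continuous on $F_v$-points; the quotient map $p^{\ell_i}_v$ is continuous because $\Gm(F_v)_{\ell_i}$ is a closed subgroup; raising to an integer power is continuous; and the product of continuous maps is continuous. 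Part (3) follows formally from part (1): if $x\in\Gm(\Ov)=\ker\log_{\Gm,v}$, then $\log_{\TT,v}(\widetilde{f_{(\yyy,y)}}(x))=f_{(\yyy,y)}(0)=0$, so $\widetilde{f_{(\yyy,y)}}(x)\in\ker\log_{\TT,v}=\TT(\Ov)$.

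For part (4), the torus component is the compatibility of $\widetilde{f_\yyy}(F)\to\widetilde{f_\yyy}(F_v)$ built into the base change of an algebraic group homomorphism along $F\hookrightarrow F_v$. For each $B\mu_{\ell_i}$-factor, the naturality of $p^{\ell_i}_\bullet$ is precisely diagram \eqref{opsov}, and exponentiation by $\widetilde y_i$ commutes with the induced map $B\mu_{\ell_i}(F)\to B\mu_{\ell_i}(F_v)$ since both are group homomorphisms. Part (5) is checked on the two factors of $\TT(F_v)$ separately: on the torus side, $\yyy'\mapsto\widetilde{f_{\yyy'}}(F_v)$ factors through the canonical isomorphism $X_*(T)=\Hom(\ZZ,X_*(T))\xrightarrow{\sim}\Hom_{\mathrm{gp}/F}(\Gm,T)$, which is a group isomorphism; on the $BG$-side, the map sending $(y_1,\dots,y_k)$ to $x\mapsto\bigl(p^{\ell_i}_v(x)^{\widetilde y_i}\bigr)_i$ is a homomorphism because $B\mu_{\ell_i}(F_v)$ is abelian and $p^{\ell_i}_v(x)^{\ell_i}=p^{\ell_i}_v(x^{\ell_i})=1$, so the value depends only on $y_i\bmod \ell_i$ and exponent-addition becomes multiplication in $B\mu_{\ell_i}(F_v)$.

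The only subtlety I foresee is the well-definedness in part (5), namely checking that the choice of lift $\widetilde y_i\in\{0,\dots,\ell_i-1\}$ does not affect the resulting homomorphism once we prove additivity; this is resolved by the vanishing $p^{\ell_i}_v(x)^{\ell_i}=1$, after which everything is formal. No genuine obstacle arises beyond bookkeeping.
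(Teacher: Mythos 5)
Your proposal is correct and follows essentially the same route as the paper: decompose $\widetilde{f_{(\yyy,y)}}(F_v)$ into its $T$- and $BG$-factors and reduce each assertion to Diagram~\ref{toriandlatt}, Diagram~\ref{opsovu}, or Diagram~\ref{opsov}. The one place you genuinely diverge is Part~(3): the paper handles the $B\mu_{\ell_i}$-factors by citing \cite[Proposition 2.3.5.4]{darda:tel-03682761} for the containment $p_v^{\ell_i}(\Gm(\Ov))\subset B\mu_{\ell_i}(\Ov)$, whereas you deduce (3) formally from (1) using that $\Gm(\Ov)=\ker\log_{\Gm,v}$ and $\TT(\Ov)=\ker\log_{\TT,v}$ (which holds at both finite and infinite places by the definitions of $\TT(\Ov)$ and $\log_{\TT,v}$); your version is self-contained and equally valid. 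Your explicit check of well-definedness in Part~(5) via $p^{\ell_i}_v(x)^{\ell_i}=1$ is a detail the paper leaves implicit, and is correct.
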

\begin{proof}
\begin{enumerate}
\item This follows from the commutativity of Diagram~\ref{toriandlatt} and~\ref{opsovu}.
\item The homomorphism~$\widetilde {f_{\yyy}}(F_v)$ is continuous. The quotient maps $p_v^{\ell_i}(x)$ are obviously continuous, hence are such the homomorphisms $x\mapsto p_v^{\ell_i}(x)^{\widetilde y_i}$. We deduce that, being a product of continuous homomorphisms, the homomorphism $\widetilde{f_{\yyy}}(F_v)$ is continuous. 
\item We already know that $\widetilde{f_{\yyy}}(\Gm(\Ov))$ is contained in $T(\Ov)$. If~$v$ is infinite, we have that $BG(\Ov)$ is defined to be the $BG(\Fv)$, hence $\widetilde{f_{(\yyy,x)}}(\Gm(\Ov))\subset\TT(\Ov)=T(\Ov)\times BG(\Ov)$. If~$v$ is finite, by \cite[Proposition 2.3.5.4]{darda:tel-03682761}, we have that $p_v^{\ell_i}(\Gm(\Ov))^{\widetilde y_i}\subset B\mu_{\ell_i}(\Ov)^{\widetilde y_i}\subset B\mu_{\ell_i}(\Ov)$ for every~$i$. We obtain the same conclusion. The second claim now follows.
\item  By the commutativity of Diagram (\ref{opsov}), we obtain immediately that for any $i=1\doots k$, the diagram
$$
\begin{tikzcd}
\Gm(F)\arrow{r}{}\arrow{d}{(x\mapsto x^{\widetilde y_i})\circ p^{\ell_i}_F} &\Gm(F_v)\arrow{d}{(x\mapsto x^{\widetilde y_i})\circ p^{\ell_i}_{F_v}}\\
B\mu_{\ell}(F)\arrow{r}{} &B\mu_{\ell}(F_v)
\end{tikzcd}
$$
is commutative. By using this fact and the commutativity of
$$ 
\begin{tikzcd}
\Gm(F)\arrow{r}{}\arrow{d}{\widetilde{f_{\yyy}}(F)}& \Gm(F_v)\arrow{d}{\widetilde{f_{\yyy}}(F_v)}\\
T(F)\arrow{r}{}& T(F_v),
\end{tikzcd}
$$
we obtain the commutativity of the diagram in the statement.
\item It is immediate that $$X_*(\TT)\to \Hom(\ZZ, X_*(\TT)), \hspace{1cm} \yyy'\mapsto f_{\yyy'}$$ is an isomorphism. One has that $\Hom(\Gm,T)\to \Hom_{\cont}(\Gm(F_v),T(F_v))$ given by $\alpha\mapsto \alpha (F_v)$ is a homomorphism. Finally, for every $i=1\doots k$, one has that $$\ZZ/\ell_i\ZZ\to \Hom_{\cont}(\Gm(F_v),B\mu_{\ell_i}(F_v)),\hspace{1cm}t\mapsto (z\mapsto p_v^{\ell_i}(z)^{\widetilde t}),$$ is a homomorphism. The map in the question is the composite of the product of the latter maps with the first map, hence is a homomorphism.
\end{enumerate}
\end{proof}
\subsection{Global situation} We define and study the adelic space of~$\TT$.
\subsubsection{}Let~$\TT$ be an $F$-split stacky torus. 
 We define $$\TT(\AAF):=\sideset{}{'}\prod_v\TT(F_v)$$ where the restricted product is taken with the respect to compact open subgroups $\TT(\Ov)\subset\TT(F_v)$ for $\vMFz$.  We will use occasionally the identification given by the isomorphism of abelian topological groups $$\TT(\AAF)\xrightarrow{\sim}T(\AAF)\times BG(\AAF),\hspace{1cm}(\xxx_v,y_v)_v \mapsto ((\xxx_v)_v,(y_v)_v).$$ 
%
For $x\in X_*(\TT)$ we will by abuse of notation also write~$x$ for its image for the canonical morphism $X_*(\TT)\to X_*(\TT)_{\RR}.$ We have a continuous homomorphism $$\log_{\TT}:\TT(\AAF)\to X_*(\TT)_{\RR},\hspace{1cm} (\xxx_v)_v\mapsto \sum_{\vMF}\log(q_v)\log_{\TT,v}(\xxx_v),$$
where for a finite place~$v$ we define~$q_v$ to be the cardinality of the residue field at~$v$ and for an infinite place~$v$, we define~$q_v:=\exp([F_v:\RR])$. We note that $\log_{\TT}$ factorizes as $\TT(\AAF)\to T(\AAF)\xrightarrow{\log_{T}}X_*(T)_{\RR}=X_*(\TT)_{\RR},$ and, in particular, the map~$\log_{\TT}$ is surjective. Let us define $$K(\TT):=\prod_{\vMF}\TT(\Ov)\subset\TT(\AAF).$$
Lemma~\ref{mumija}(2) gives for $\yyy\in X_*(\TT)$ and every place~$v$ a continuous homomorphism $\widetilde{f_{\yyy}}(F_v):\Gm(F_v)\to \TT(F_v)$ which satisfies by Lemma~\ref{mumija}(3) that $$\widetilde{f_{\yyy}}(F_v)(\Ov)\subset \TT(\Ov).$$
Moreover, the map $$X_*(\TT)\to \Hom_{\cont}(\Gm(F_v),\TT(F_v))$$ is a homomorphism.  From \cite[Proposition 2.4.1.1]{darda:tel-03682761}, for $\yyy\in X_*(\TT)$, we deduce a continuous homomorphism $$\widetilde{f_{\yyy}}:\Gm(\AAF)\to \TT(\AAF),\hspace{1cm} (x_v)_v\mapsto (f_{\yyy}(x_v))_v$$which satisfies $$f_{\yyy}(\AAF)(K(\Gm))\subset K(\TT).$$ Moreover, it follows from Lemma~\ref{mumija}(5) that $$\Hom_{\cont}(\Gm(\AAF), \TT(\AAF)),\hspace{1cm}\yyy\mapsto \widetilde {f_{\yyy}}(\AAF)$$ is a homomorphism. Fix $(\yyy, x)\in X_*(\TT)$. The diagram
$$ 
\begin{tikzcd}\label{necfortaj}
\Gm(\AAF)\arrow{r}{\log_{\Gm}}\arrow{d}{\widetilde{f_{\yyy,x}}(\AAF)}&\RR\arrow{d}{(f_{\yyy})_{\RR}}\\
\TT(\AAF)\arrow{r}{\log_{\TT}}&X_*(\TT)_{\RR}
\end{tikzcd}
$$ is commutative because $$\log_{\TT}\circ (\widetilde {f_{(\yyy,x)}})=\log_T\circ \widetilde {f_{\yyy}}(\AAF)=(f_{\yyy})_{\RR}\circ\log_{\Gm}=((f_{(\yyy,x)})_{\RR})\circ\log_{\Gm}.$$
By using \cite[Lemma 2.5.1]{commcase} which implies that for almost all~$v$ the image of $BG(F)\to BG(F_v)$ lies in~$BG(\Ov)$ and the well-known fact that for almost all~$v$ the image of $T(F)\to T(F_v)$ lies in~$T(\Ov)$, we deduce that for almost all places~$v$, the image of the canonical map $$\TT(F)=T(F)\times BG(F)\to T(F_v)\times BG(F_v)=\TT(F_v)$$ lies in~$\TT(\Ov)=T(\Ov)\times BG(\Ov).$ We deduce a canonical homomorphism $\Delta^{\TT}:\TT(F)\to\TT(\AAF).$ We will often use only the notation~$\Delta$ to denote~$\Delta^{\TT}$.
 We have seen in Lemma \ref{mumija}(4), that the diagram 
$$
\begin{tikzcd}
\Gm(F) \arrow{d}{\widetilde{f_{(\yyy,x)}}(F)} \arrow{r}{}&\Gm(F_v)\arrow{d}{\widetilde{f_{(\yyy,x)}}(F_v)}\\
\TT(F)\arrow{r}{}&\TT(F_v).
\end{tikzcd}
$$
is commutative. We deduce that the diagram
\begin{equation}
\label{oppm}
\begin{tikzcd}
\Gm(F) \arrow{d}{\widetilde{f_{(\yyy,x)}}(F)} \arrow{r}{}&\Gm(\AAF)\arrow{d}{\widetilde{f_{(\yyy,x)}}(F_v)}\\
\TT(F)\arrow{r}{}&\TT(\AAF).
\end{tikzcd}
\end{equation}
 is commutative.  The kernel of $T(F)\times BG(F)=\TT(F)\to\TT(\AAF)=T(\AAF)\times BG(\AAF)$ canonically identifies with the kernel of $BG(F)\to BG(\AAF)$, which is well known (see e.g.\ \cite[Chapter I, Theorem 4.10]{ArithmeticDuality}) to be the finite group~$\Sh^1(G)$.
\subsubsection{}  We set: $$\TT(\AAF)_1:=\ker(\log_{\TT}),$$ so that $$1\to\TT(\AAF)_1\to\TT(\AAF)\to X_*(\TT)_{\RR}\to 1 $$ is exact. 
We note that for any $\yyy\in X_*(\TT),$ by the commutativity of Diagram~\ref{necfortaj}, we have that  $\widetilde{f_{\yyy}}(\AAF)(\Gm(\AAF)_1)\subset\TT(\AAF)_1. $
 It is immediate that $$\TT(\AAF)_1=T(\AAF)_1\times BG(\AAF).$$ Moreover, by the fact that the image of the diagonal $\Delta:T(F)\to T(\AAF)$ lies in~$T(\AAF)_1$, we have that the image of $\Delta:\TT(F)\to \TT(\AAF)$ lies in~$\TT(\AAF)_1$. The following property is immediately deduced from the corresponding property for~$T$ and the corresponding property for~$BG$ \cite[Chapter I, Theorem 4.10]{ArithmeticDuality}.
\begin{prop}
The image $\Delta(\TT(F))$ is a closed, discrete and cocompact subgroup of~$\TT(\AAF)_1$.
\end{prop}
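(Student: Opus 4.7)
The statement is to show that $\Delta^{\TT}(\TT(F))$ is closed, discrete and cocompact in $\TT(\AAF)_1$, and the hint is that this reduces to the analogous facts for the ordinary split torus $T$ and for the classifying stack $BG$. I would structure the proof exactly around this reduction.

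First, I would make the product decomposition explicit. The identification $\TT(\AAF) \xrightarrow{\sim} T(\AAF) \times BG(\AAF)$ is an isomorphism of topological groups, and under it the map $\log_{\TT}$ factors through the first projection as $\log_T$, so
\[
\TT(\AAF)_1 = T(\AAF)_1 \times BG(\AAF).
\]
Similarly $\TT(F) = T(F) \times BG(F)$, and the diagonal $\Delta^{\TT}$ is the product $\Delta^{T} \times \Delta^{BG}$ of the two componentwise diagonals (both of which land in the corresponding ``degree $1$'' subgroup, the second trivially so). For any pair of topological groups $A_1, A_2$ with subgroups $B_i \subset A_i$, the product $B_1 \times B_2$ is closed (resp.\ discrete, resp.\ cocompact) in $A_1 \times A_2$ if and only if each $B_i$ has the same property in $A_i$. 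Hence it suffices to verify the three properties separately for $\Delta^T(T(F)) \subset T(\AAF)_1$ and for $\Delta^{BG}(BG(F)) \subset BG(\AAF)$.

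The assertion for the split torus $T$ is a classical theorem of Ono (the adelic/Dirichlet-type result): $T(F)$ embeds as a closed, discrete and cocompact subgroup of the norm-one adeles $T(\AAF)_1$. This is the input I would simply cite. The assertion for $BG$ with $G = \prod_{i=1}^k \mu_{\ell_i}$ is furnished by the reference \cite[Chapter I, Theorem 4.10]{ArithmeticDuality}, which gives the exact sequence controlling $BG(F) \to BG(\AAF)$ together with the fact that the image is a closed, discrete cocompact subgroup (with finite kernel $\Sh^1(G)$, though the kernel is not needed for the present statement). Combining these two inputs through the product decomposition yields the proposition.

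There is essentially no obstacle here beyond organizing the product structure correctly; the only small point to double-check is that the topology on $\TT(\AAF)$ really does agree with the product topology on $T(\AAF) \times BG(\AAF)$ after restricting to $\TT(\AAF)_1$, which is clear from the definition of the restricted product and the factorization $K(\TT) = K(T) \times K(BG)$ of the chosen compact open subgroup.
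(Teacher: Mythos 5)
Your proof is correct and follows exactly the route the paper takes: it deduces the claim from the product decomposition $\TT(\AAF)_1 = T(\AAF)_1\times BG(\AAF)$, citing the classical result for the split torus $T$ and \cite[Chapter I, Theorem 4.10]{ArithmeticDuality} for $BG$. The paper's own proof is just a one-sentence statement of this same reduction.
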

\subsection{Measures} We endow our locally compact abelian groups by Haar measures. Let~$\TT=T\times BG$ be an $F$-stacky torus.
\subsubsection{} In this paragraph, we define local measures. 
\begin{mydef}
We say that the sequence $$1\to (A,da)\to (B,db)\to (C,dc)\to 1,$$ of pairs of locally compact abelian groups endowed with Haar measures is exact, 
 the homomorphism $A\to B$ is a homeomorphism of~$A$ to a closed subgroup of~$B$, the homomorphism $B\to C$ identifies~$C$ with a quotient of~$B$ by the image of~$A$ and, and if $db/da=dc$. 
\end{mydef}
\begin{mydef}
For a discrete set~$Y$, we denote by~$\mu^{\#}_Y$ the counting measure on~$Y$. If~$Y$ is clear from the context, we may shorten the notation and write only~$\mu^{\#}$. For a compact space~$Z$, denote by~$\mu^{\proba}_Z$ the probability measure on~$Z$. If~$Z$ is clear from the context, we may shorten the notation and write~$\mu^{\proba}$. 
\end{mydef}
Endow the discrete group~$X_*(\TT)$ with the measure $\mu^{\#}_{X_*(\TT)}$.   Endow the real vector space $X_*(T)_{\RR}=X_*(\TT)_{\RR}$ with the Lebesgue measure~$d\xxx$ normalized by the lattice $X_*(T)$.
\begin{mydef}\label{muvdef}
\begin{enumerate}
\item Let~$v$ be a finite place of~$F$.  We define~$\mu_v$ to be the unique Haar measure on~$\TT(F_v)$  for which the sequence $$1\to (\TT(\Ov), \mu^{\proba})\to(\TT(F_v), \mu_v)\xrightarrow{\log_{\TT, v}}(X_*(\TT), \mu^{\#})\to 1 $$
is exact.
\item Let~$v$ be an infinite place of~$F$. We define~$\mu_v$ to be the unique Haar measure on~$\TT(F_v)$ for which the sequence 
$$1\to (\TT(\Ov), \mu^{\proba})\to(\TT(F_v), \mu_v)\xrightarrow{\log_{\TT, v}}(X_*(\TT)_{\RR}, d\xxx)\to 1$$
is exact.
\end{enumerate}
\end{mydef}
\subsubsection{} We now study the global measures.
\begin{mydef}
 We define a Haar measure~$\mu_{\TT}$ on~$\TT(\AAF)$ to be the product measure $\mu_{\TT}:=\otimes_{\vMF}\mu_v.$ When~$\TT$ is clear from context, we may shorten the notation and write~$\mu$.
\end{mydef}
Clearly, the identification $\TT(\AAF)=T(\AAF)\times BG(\AAF)$ of locally compact abelian groups induces an identification of pairs of locally compact abelian groups endowed with Haar measures:
$$(\TT(\AAF),\mu)=(T(\AAF),\mu_T)\times (BG(\AAF),\mu_{BG}).$$
\begin{mydef}
We define~$\mu_{\TT,1}$ to be the unique Haar measure on~$\TT(\AAF)_1$ for which the sequence $$1\to (\TT(\AAF)_1,\mu_{\TT,1}) \to (\TT(\AAF),\mu)\xrightarrow{\log_{\TT}} (X_*(\TT)_{\RR},d\xxx) \to 1$$
is exact.
\end{mydef}
Clearly, the identification $\TT(\AAF)_1=T(\AAF)_1\times BG(\AAF)$ of locally compact abelian groups induces an identification of pairs of locally compact abelian groups endowed with Haar measures:
$$(\TT(\AAF)_1,\mu_{\TT,1})=(T(\AAF)_1,\mu_{T,1})\times (BG(\AAF),\mu_{BG}).$$
\begin{mydef}
Endow the closed and discrete subgroup $\Delta(\TT(F))\subset\TT(\AAF)_1$ with the discrete Haar measure $$\frac{\#\Sh^1(G)}{\# G(F)}\cdot\mu^{\#}_{\Delta(\TT(F))}.$$ Endow $\TT(\AAF)/\Delta(\TT(F))$ and $ \TT(\AAF)_1/\Delta(\TT(F))$ with the Haar measures $$(\mu/{\Delta}):=\mu/(\#\Sh^1(G)\cdot G(F)^{-1}\cdot\mu^{\#}_{\Delta(\TT(F))})$$and $$(\mu_{1}/\Delta):=\mu_1/(\#\Sh^1(G)\cdot G(F)^{-1}\cdot\mu^{\#}_{\Delta(\TT(F))}),$$respectively. We define $$\Tam(\TT):= (\mu_{1}/\Delta)\big(\TT(\AAF)_1/\Delta(\TT(F))\big).$$ 
\end{mydef}
\begin{prop}\label{ukmar}
One has that $$\Tam(\TT)=\# G^D.$$
\end{prop}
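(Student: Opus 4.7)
The plan is to exploit the product structure $\TT=T\times BG$, which cascades through every object in the definition of $\Tam(\TT)$. First I would establish a product decomposition of the quotient. Because $\log_{\TT}$ factors through the projection $\TT(\AAF)\to T(\AAF)$, one has $\TT(\AAF)_1=T(\AAF)_1\times BG(\AAF)$. Under $\TT(F)=T(F)\times BG(F)$, the diagonal decomposes as $\Delta^{\TT}=\Delta^T\times\Delta^{BG}$; since $\Delta^T$ is injective for the split torus $T$, one has $\ker\Delta^{\TT}=\ker\Delta^{BG}=\Sh^1(G)$ and $\Delta^{\TT}(\TT(F))=\Delta^T(T(F))\times\Delta^{BG}(BG(F))$. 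Passing to the quotient gives a homeomorphism of locally compact abelian groups
$$\TT(\AAF)_1/\Delta^{\TT}(\TT(F))\;\cong\;T(\AAF)_1/\Delta^T(T(F))\,\times\,BG(\AAF)/\Delta^{BG}(BG(F)).$$

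Next I would track the measures through this decomposition. The identity $\mu_{\TT,1}=\mu_{T,1}\otimes\mu_{BG}$ is already recorded in the paper, and the counting measure on a product of discrete sets is the tensor product of counting measures. Since $\Sh^1(G)$ and $G(F)$ depend only on the $BG$ factor, the normalization $\#\Sh^1(G)/\#G(F)$ can be placed entirely on the $BG$ side:
$$\tfrac{\#\Sh^1(G)}{\#G(F)}\,\mu^{\#}_{\Delta^{\TT}(\TT(F))}\;=\;\mu^{\#}_{\Delta^T(T(F))}\,\otimes\,\tfrac{\#\Sh^1(G)}{\#G(F)}\,\mu^{\#}_{\Delta^{BG}(BG(F))}.$$
Taking quotient measures then gives
$$\Tam(\TT)\;=\;\Tam(T)\,\cdot\,\vol_{\mu_{BG}/((\#\Sh^1(G)/\#G(F))\mu^{\#})}\bigl(BG(\AAF)/\Delta^{BG}(BG(F))\bigr),$$
where the second factor is what one would naturally denote $\Tam(BG)$.

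Finally I would evaluate the two factors. The first is the classical Tamagawa number of a split torus in Bourqui's normalization \cite[Section 2.2.3]{Bourqui}, which equals $1$. For the second, an analogous product decomposition reduces to the single-factor case $B\mu_\ell$, where by Kummer theory $B\mu_\ell(F)=F^\times/(F^\times)^\ell$ and similarly locally; the value $\Tam(B\mu_\ell)=\ell$ is the content of the zero-dimensional commutative-case result \cite[Theorem 1.3.3]{commcase} and may alternatively be extracted from Poitou--Tate duality applied to Kummer theory (cf.\ \cite[Chapter I, \S 4]{ArithmeticDuality}). Multiplying over $i=1,\dots,k$ gives $\Tam(BG)=\prod_i\ell_i=\#G^D$, whence $\Tam(\TT)=1\cdot\#G^D=\#G^D$. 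The main obstacle is the evaluation $\Tam(B\mu_\ell)=\ell$; everything else is bookkeeping with product measures together with the classical identity $\Tam(T)=1$.
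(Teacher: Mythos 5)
Your proposal is correct and follows essentially the same route as the paper: decompose $\Tam(\TT)$ along $\TT(\AAF)_1=T(\AAF)_1\times BG(\AAF)$ with the normalization $\#\Sh^1(G)/\#G(F)$ placed on the $BG$ factor, use $\Tam(T)=1$ for the split torus, and evaluate the $BG$ factor as $\#G^D$ via the commutative-case paper. The one place where your justification is thinner than the paper's is the evaluation of the $BG$ factor: the cited volume formula actually yields $\#G^D/\#\Sh^2(G)$, and the substantive step is the vanishing $\Sh^2(G)=1$, which the paper obtains from $\#\Sh^2(G)=\#\Sh^1(G^D)$ (Tate--Poitou duality) together with $\Sh^1(G^D)=1$ for the constant group $G^D$; your appeal to \cite[Theorem 1.3.3]{commcase} (a point-counting asymptotic) or to an unspecified Poitou--Tate argument leaves exactly this step implicit.
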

\begin{proof}
Clearly, one has that 
\begin{align*}\Tam(\TT)&=
\big((\mu_1/\mu^{\#}_{\Delta(T(F))} )(T(\AAF)_1/T(F))\big)\times\\
&\quad\quad\times \big(\mu_{BG}/ (\#\Sh^1(G)\cdot G(F)^{-1}\cdot\mu^{\#}_{\Delta(\TT(F))})\big)\big(BG(\AAF)/\Delta(BG(F))\big).
\end{align*}
Our torus~$T$ is split, so \cite[Theorem 3.5.1]{Ono} gives $$(\mu_1/\mu^{\#}_{\Delta(T(F))} )(T(\AAF)_1/T(F)) =1.$$
By \cite[Lemma 3.1.5]{commcase}, one has that $$\mu_{BG}/ (\#\Sh^1(G)\cdot G(F)^{-1}\cdot\mu^{\#}_{\Delta(\TT(F))})\big)(BG(\AAF)/\Delta(BG(F)))=\frac{\# G^D}{\#\Sh^2(G)},$$
where~$\Sh^2(G)$ is the second Tate--Shafarevich group of~$G$. By using the fact that $\#\Sh^2(G)=\#\Sh^1(G^D)$ from \cite[Chapter I, Theorem 4.20(a)]{Milne} and the fact that $\Sh^1(G^D)=1$ because~$G^D$ is constant \cite[Lemma 1.1] {Sansuc}, we obtain that $\Tam(\TT)=\# G^D.$
\end{proof}
\subsection{Characters} We now study the characters of our groups. For a locally compact abelian group endowed with a Haar measure $(\mathcal G,  dg)$ we denote by $(\mathcal G^*, dg^*)$ its Pontryagin dual (its group of characters $\chi:\mathcal G\to S^1$, where~$S^1$ stands for the unit circle in~$\CC$) endowed with the dual Haar measure \cite[Chapter II, \S 1, $\text{n}^{\circ}$ 3 Definition 4]{TSpectrale}. If $\mathcal H\leq\mathcal G$ is a closed subgroup, we denote by $\mathcal H^{\perp}[\mathcal G^*]$ the closed subgroup of~$\mathcal G^*$ given by the characters vanishing on~$\mathcal H$. For Haar measures $dh$ and~$dg$ on $\mathcal H$ and $\mathcal G$, we denote by $dh^*$ and~$dg^*$, the dual Haar measures on~$\mathcal H^*$ By combining \cite[Chapter II, \S 1, $\text{n}^{\circ}$ 7, Theorem 4]{TSpectrale} and \cite[Chapter II, \S 1, $\text{n}^{\circ}$ 8, Proposition 9]{TSpectrale}, we have that the following sequence is exact $$1\to (\mathcal H^{\perp}[\mathcal G^*], (dg/dh)^*)\to (\mathcal G^*, dg^*)\to (\mathcal H^*, dh^*)\to 1.$$
\subsubsection{} We will define ``components'' of local characters.
\begin{mydef}\label{betafy}
Let~$v$ be a place of~$F$ and let $\chi\in\TT(F_v)^*$ be a character. For $\yyy\in X_*(\TT),$ we define a character $$\chi^{(\yyy)}:=\chi\circ \widetilde{ f_{(\yyy)}}(F_v)\in\Gm(F_v)^*.$$
For $\yyy\in X_*(T)$, we set $\chi^{(\yyy)}:=\chi^{(\yyy,0)}$. 
\end{mydef}
The following corollary follows easily  from Lemma~\ref{mumija}.
\begin{cor} \label{muiop}
Let~$v$ be a place of~$F$. Let $\yyy\in X_*(\TT)$.
\begin{enumerate}
\item  One has that the association $\chi\mapsto\chi^{(\yyy)}$ is a continuous homomorphism $\TT(F_v)^*\to \Gm(\Fv)^*.$
\item If $\chi\in\TT(F_v)^*$ vanishes on $\TT(\Ov)$, then~$\chi^{(\yyy)}$ vanishes on~$\Gm(\Ov)$.
\item  One has that $\yyy'\mapsto \chi^{(\yyy')}$ is a homomorphism $X_*(\TT)\to \Gm(\Fv)^*. $ 
\end{enumerate}
\end{cor}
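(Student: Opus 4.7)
The statement is really a formal consequence of Lemma \ref{mumija}, since by definition $\chi^{(\yyy)}$ is obtained by precomposing $\chi$ with the continuous homomorphism $\widetilde{f_{\yyy}}(F_v):\Gm(F_v)\to\TT(F_v)$. The plan is to handle each of the three assertions as a one-line reduction to the appropriate part of that lemma.

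For (1), I would fix $\yyy\in X_*(\TT)$ and use Lemma \ref{mumija}(2) to know that $\widetilde{f_{\yyy}}(F_v)$ is a continuous group homomorphism. Then $\chi\mapsto\chi\circ\widetilde{f_{\yyy}}(F_v)$ is a group homomorphism $\TT(F_v)^*\to\Gm(F_v)^*$ because composition of $\chi_1\chi_2$ with a homomorphism equals the product of the compositions. Continuity with respect to the compact--open topology on the Pontryagin duals follows from the general fact that pulling back characters along a continuous homomorphism of locally compact abelian groups is continuous: if $K\subset\Gm(F_v)$ is compact and $U\subset S^1$ is open, then $\widetilde{f_{\yyy}}(F_v)(K)\subset \TT(F_v)$ is compact and $\{\chi:\chi(\widetilde{f_{\yyy}}(F_v)(K))\subset U\}$ is a subbasic open set pulling back to a subbasic open set in $\TT(F_v)^*$.

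For (2), I would invoke Lemma \ref{mumija}(3), which gives $\widetilde{f_{\yyy}}(\Gm(\Ov))\subset\TT(\Ov)$. Thus if $\chi$ is trivial on $\TT(\Ov)$, then
\[
\chi^{(\yyy)}(\Gm(\Ov))=\chi\bigl(\widetilde{f_{\yyy}}(F_v)(\Gm(\Ov))\bigr)\subset\chi(\TT(\Ov))=\{1\}.
\]

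For (3), I would use Lemma \ref{mumija}(5), which asserts that $\yyy'\mapsto\widetilde{f_{\yyy'}}(F_v)$ is a homomorphism $X_*(\TT)\to\Hom_{\cont}(\Gm(F_v),\TT(F_v))$. Composing this homomorphism with the (set-theoretic) map $\varphi\mapsto\chi\circ\varphi$ on continuous homomorphisms, which is additive because $\chi$ is a character, yields the desired homomorphism $\yyy'\mapsto\chi^{(\yyy')}$. There is no real obstacle here — each item is a direct translation of the corresponding clause of Lemma \ref{mumija} via the definition of $\chi^{(\yyy)}$; the only point requiring a moment's care is the continuity in (1), which is a standard Pontryagin duality verification.
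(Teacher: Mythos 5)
Your proof is correct and follows essentially the same route as the paper: item (1) from Lemma \ref{mumija}(2) plus the standard fact that precomposition with a continuous homomorphism is continuous on Pontryagin duals, item (2) from the containment $\widetilde{f_{\yyy}}(\Gm(\Ov))\subset\TT(\Ov)$, and item (3) from Lemma \ref{mumija}(5). The only discrepancy is that the paper cites Lemma \ref{mumija}(4) for item (2), which appears to be a misprint for part (3) — the part you correctly invoke.
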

\begin{proof}
\begin{enumerate}
\item The claim follows from Lemma~\ref{mumija}(2)  and \cite[Chapter II, \S 1, $\text{n}^{\circ} 1$, Paragraph~7]{TSpectrale}.
\item The claim follows from Lemma \ref{mumija}(4).
\item This is immediate from Lemma \ref{mumija}(5).
\end{enumerate}
\end{proof}

\subsubsection{} For a stacky torus $\TT=T\times BG$, we define
$$X^*(\TT):=X^*(T)\times (G^D)^*,$$where~$X^*(T)$ is the group of homomorphisms of algebraic groups $T\to\Gm$. Recall that we have a perfect pairing $$X_*(T)\times X^*(T)\to\ZZ, \hspace{1cm}  (\xxx,\mmm)\mapsto \mmm\circ\xxx\in\Hom(\Gm,\Gm)=\ZZ.$$ We will write $\langle\xxx,\mmm\rangle$  for the image of $(\xxx,\mmm)$. 
\begin{lem} \label{djurdjule}
Let~$d\mmm$ be the Lebesgue measure on the real vector space~$X^*(\TT)_{\RR}$ normalized by the lattice~$X^*(T)$.
The isomorphism \begin{align*}
\nnn_{\RR}:X^*(\TT)_{\RR}=X^*(T)_{\RR}&\xrightarrow{\sim} (X_*(\TT)_{\RR})^*=(X_*(T)_{\RR})^*\\
\mmm&\mapsto \big(\xxx\mapsto \exp(i\langle\xxx,\mmm\rangle)\big)
\end{align*}
is in fact an isomorphism of pairs of real vector spaces and Lebesgue measures $$\bigg(X^*(\TT)_{\RR}, \frac{1}{(2\pi)^{\dim(\TT)}}d\mmm\bigg)\xrightarrow{\sim}((X_*(\TT)_{\RR})^*, d\xxx^*).$$
\end{lem}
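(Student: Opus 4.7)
First I would reduce to the case of an ordinary split torus. Since $G$ is a finite group scheme, $(G^D)^*$ is finite, so tensoring with $\RR$ kills the $BG$-contributions: $X^*(\TT)_{\RR}=X^*(T)_{\RR}$ and $X_*(\TT)_{\RR}=X_*(T)_{\RR}$. Moreover $\dim(\TT)=\dim(T)=:n$, and the two lattices $X_*(T)\subset X_*(T)_\RR$ and $X^*(T)\subset X^*(T)_\RR$ are $\ZZ$-dual to each other under the perfect pairing $\langle\cdot,\cdot\rangle$. So the statement only involves the standard duality between the cocharacter and character lattices of a split torus of rank $n$.

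Next I would trivialize everything by choosing a $\ZZ$-basis $e_1,\dots,e_n$ of $X_*(T)$ and letting $e_1^*,\dots,e_n^*$ be the dual basis of $X^*(T)$. This identifies $X_*(T)_\RR$ and $X^*(T)_\RR$ with $\RR^n$ in such a way that the Lebesgue measures normalized by $X_*(T)$ and $X^*(T)$ both become the standard Lebesgue measure on $\RR^n$, and the pairing becomes the standard inner product $\langle\xxx,\mmm\rangle=\sum x_i m_i$. Under this identification, $\nnn_\RR$ is the map $\mmm\mapsto(\xxx\mapsto e^{i\xxx\cdot\mmm})$, which is the classical isomorphism of $\RR^n$ with its Pontryagin dual; in particular $\nnn_\RR$ is an isomorphism of locally compact abelian groups.

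It remains to compare the measures. Since both measures and the pairing are product-compatible across the $n$ coordinates, the claim reduces to the one-dimensional statement: with respect to the pairing $(x,m)\mapsto e^{ixm}$ on $\RR\times\RR$, the Haar measure dual to $dx$ is $\frac{1}{2\pi}dm$. This is the standard Fourier inversion normalization: for $f$ in the Schwartz space one has $f(x)=\frac{1}{2\pi}\int_\RR\bigl(\int_\RR f(y)e^{-iym}\,dy\bigr)e^{ixm}\,dm$, so $\frac{1}{2\pi}dm$ is precisely the measure for which the Pontryagin inversion formula holds; see \cite[Chapter II, \S 1, $\text{n}^{\circ}$ 3, Definition 4]{TSpectrale}. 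Taking the $n$-fold product yields that the dual of $d\xxx$ is $\frac{1}{(2\pi)^n}d\mmm$, which is exactly the assertion.

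No real obstacle is expected; the only thing to watch is the convention $e^{i\langle\cdot,\cdot\rangle}$ (rather than $e^{2\pi i\langle\cdot,\cdot\rangle}$), which is precisely what produces the factor $(2\pi)^{-\dim(\TT)}$.
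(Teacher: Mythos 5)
Your proof is correct and follows essentially the same route as the paper: the paper invokes the Bourbaki fact that, for the pairing $\exp(2\pi i\langle\cdot,\cdot\rangle)$, the Lebesgue measure normalized by $X^*(T)$ is dual to the one normalized by $X_*(T)$, and then rescales the pairing by $2\pi$ to pick up the Jacobian factor $(2\pi)^{-\dim(\TT)}$, which is the same normalization bookkeeping you carry out via dual bases and the one-dimensional Fourier inversion formula. No gap.
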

\begin{proof}
By \cite[Chapter II, \S 1, $\text{n}^{\circ}9,$ Corollary 3]{TSpectrale}, the isomorphism $$X^*(\TT)_{\RR}\to (X_*(\TT)_{\RR})^*,\hspace{1cm} \mmm\mapsto (\xxx\mapsto \exp(2i\pi\cdot\langle\xxx,\mmm\rangle))$$ induces an identification of pairs $$(X^*(\TT)_{\RR},d\mmm) \xrightarrow{\sim} ((X_*(T)_{\RR})^*, d\xxx^*).$$ 
This means that the isomorphism $$X^*(\TT)_{\RR}\to ((X_*(\TT)_{\RR})^*,\hspace{1cm} \mmm\mapsto (\xxx\mapsto \exp(i\cdot\langle\xxx,\mmm\rangle))$$ induces an identification of pairs $$(X^*(\TT)_{\RR},(2\pi)^{-\dim(\TT)}d\mmm)=(X^*(\TT)_{\RR},(2\pi)^{-\rk(X_*(T))}d\mmm) \xrightarrow{\sim} ((X_*(\TT)_{\RR})^*, d\xxx^*).$$
\end{proof}
\subsubsection{}We calculate the components of certain special characters.
\begin{lem}\label{locexp} Let $(\yyy,x)\in X_*(\TT)=X_*(T)\times G^D$ and let~$v$ be a place of~$F$. Let~$\mmm\in X^*(\TT)_{\RR}$. Consider the character $\chi_{\mmm}\in (\TT(\AAF)_1)^{\perp}[\TT(\AAF)^*]$ 
given by $$\chi_{\mmm}:\xxx\mapsto \exp(i\cdot\log(q_v)\cdot \langle\log_{T,v}(\xxx),\mmm\rangle).$$ One has that $$(\chi_{\mmm})^{(\yyy,x)}=\chi_{\langle\yyy,\mmm\rangle}=(\chi_{\mmm})^{(\yyy)}.$$
\end{lem}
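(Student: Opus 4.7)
The plan is to derive the identity by a direct computation using the commutativity diagrams established in Lemma \ref{mumija}. The essential observation is that the character $\chi_{\mmm}$ factors through $\log_{T,v}$ (since $X^*(\TT)_{\RR} = X^*(T)_{\RR}$, the $G^D$-part has no $\RR$-valued content), so the $x$-component of $(\yyy,x)$ will play no role, which immediately will account for the second equality $(\chi_\mmm)^{(\yyy,x)} = (\chi_\mmm)^{(\yyy)}$.

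Unwinding Definition \ref{betafy}, for $z\in\Gm(F_v)$ one has
\[
(\chi_{\mmm})^{(\yyy,x)}(z) = \chi_{\mmm}\bigl(\widetilde{f_{(\yyy,x)}}(F_v)(z)\bigr) = \exp\bigl(i\cdot\log(q_v)\cdot \langle \log_{T,v}(\widetilde{f_{(\yyy,x)}}(F_v)(z)_T),\, \mmm\rangle\bigr),
\]
where $(\cdot)_T$ denotes the $T$-component of an element of $\TT(F_v)=T(F_v)\times BG(F_v)$. I would then apply Lemma \ref{mumija}(1), which asserts that $\log_{\TT,v}\circ \widetilde{f_{(\yyy,x)}}(F_v) = f_{(\yyy,x)}\circ \log_{\Gm,v}$, so that the image of $z$ in $X_*(\TT)=X_*(T)\times G^D$ is $\log_{\Gm,v}(z)\cdot (\yyy,x)$, whose $T$-projection is simply $\log_{\Gm,v}(z)\cdot \yyy$.

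Substituting this into the exponent and using bilinearity of the pairing $\langle\cdot,\cdot\rangle$, the right-hand side becomes
\[
\exp\bigl(i\cdot\log(q_v)\cdot \log_{\Gm,v}(z)\cdot \langle \yyy,\mmm\rangle\bigr),
\]
which is precisely $\chi_{\langle \yyy,\mmm\rangle}(z)$ by the formula defining $\chi_{(\cdot)}$ specialized to $\Gm(F_v)$. This establishes $(\chi_{\mmm})^{(\yyy,x)} = \chi_{\langle \yyy,\mmm\rangle}$. Since the $G^D$-component $x$ was irrelevant to the computation (it disappears upon projecting to $X_*(T)$), the identical argument with $x=0$ yields $(\chi_{\mmm})^{(\yyy)} = \chi_{\langle \yyy,\mmm\rangle}$, completing the chain of equalities.

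I do not anticipate any genuine obstacle: the statement is essentially a bookkeeping exercise chaining the two key diagrams (Lemma \ref{mumija}(1) and the factorization of $\log_\TT$ through $\log_T$) and confirming that the formula for $\chi_\mmm$ is compatible with this factorization. The only point requiring slight care is keeping track of the projection $X_*(\TT)\to X_*(T)$ when evaluating the pairing with $\mmm\in X^*(\TT)_\RR = X^*(T)_\RR$, but this is immediate from the definitions.
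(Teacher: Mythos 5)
Your proposal is correct and follows essentially the same route as the paper: unfold Definition \ref{betafy}, apply the commutative diagram of Lemma \ref{mumija}(1) to rewrite $\log_{\TT,v}\circ\widetilde{f_{(\yyy,x)}}(F_v)$ as $f_{(\yyy,x)}\circ\log_{\Gm,v}$, use bilinearity of the pairing to extract $\langle\yyy,\mmm\rangle$, and observe that the $G^D$-component $x$ is invisible to $\mmm\in X^*(T)_{\RR}$, which gives the second equality. No gaps.
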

\begin{proof}
Let $z\in\Gm(F_v)$. We have that
\begin{align*}(\chi_{\mmm})^{(\yyy,x)}(z)&=\chi_{\mmm}\big(\widetilde{f_{(\yyy,x)}}(z)\big)\\&=\exp\big(i\cdot\log(q_v)\cdot\big\langle\log_{\TT,v}\big( \widetilde{f_{(\yyy,x)}}(z)\big),\mmm\big\rangle\big)\\
&=\exp\big(i\cdot\log(q_v)\cdot\big\langle\log_{T,v}\big( \widetilde{f_{\yyy}}(z)\big),\mmm\big\rangle\big)\\
&=\exp\big(i\cdot\log(q_v)\cdot\big\langle f_{\yyy}(\log_{\Gm}(z)),\mmm\big\rangle)\\
&=\exp(i\cdot\log(q_v)\cdot\langle \log_{\Gm}(z)\cdot f_{\yyy}(1),\mmm\rangle)\\
&=\exp(i\cdot\log(q_v)\cdot\langle\log_{\Gm}(z),\langle \yyy,\mmm\rangle\rangle)\\&=\chi_{\langle\yyy,\mmm\rangle}(z).
\end{align*} Moreover, it is clear that $(\chi_{\mmm})^{(\yyy,x)}=(\chi_{\mmm})^{(\yyy,0)}=(\chi_{\mmm})^{(\yyy)}$. The statement is proven.
\end{proof}
\subsubsection{} 
Suppose that~$v$ is finite. The exact sequence  $$1\to\TT(\Ov)\to\TT(F_v)\xrightarrow{\log_{\TT, v}} X_*(\TT)\to 1$$provides an exact sequence $$1\to (X_*(\TT))^*\xrightarrow{\log_{\TT, v}^*} (\TT(F_v))^*\to (\TT(\Ov))^{*}\to 1,$$ which provides an isomorphism
\begin{align*}(X_*(\TT))^*&\xrightarrow{\log_{\TT,v}^*}\TT(\Ov)^{\perp}[\TT(F_v)^*].\\
\xxx&\mapsto \xxx\circ\log_{\TT, v}.
\end{align*}
\begin{lem}\label{coolrelat}
Let $\chi\in (\TT(\Ov))^{\perp}[\TT(\Fv)^*]$ be a character and let $\yyy\in X_*(\TT)$.  One has that  $$((\log_{\TT,v}^*)^{-1}(\chi)) (\yyy)=\chi^{(\yyy)}(\pi_v).$$
\end{lem}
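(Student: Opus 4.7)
The plan is to unfold both sides using the defining isomorphism and the commutative diagram from Lemma~\ref{mumija}(1). Let me write $\widetilde{\chi} := (\log_{\TT,v}^*)^{-1}(\chi) \in X_*(\TT)^*$, so by definition of the isomorphism preceding the lemma, $\widetilde{\chi}$ is characterized by
\[
\chi = \widetilde{\chi} \circ \log_{\TT,v}.
\]
The target of the lemma is therefore $\widetilde{\chi}(\yyy) = \chi^{(\yyy)}(\pi_v)$.

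First I would compute the right-hand side by expanding the definition of the component character. By Definition~\ref{betafy},
\[
\chi^{(\yyy)}(\pi_v) = \chi\bigl(\widetilde{f_{\yyy}}(F_v)(\pi_v)\bigr) = \widetilde{\chi}\bigl(\log_{\TT,v}(\widetilde{f_{\yyy}}(F_v)(\pi_v))\bigr).
\]
Next I would use the commutative diagram of Lemma~\ref{mumija}(1), which gives
\[
\log_{\TT,v}\circ \widetilde{f_{\yyy}}(F_v) = f_{\yyy}\circ \log_{\Gm, v}.
\]
Since $v$ is finite and $\log_{\Gm, v} = \ord_v$, we have $\log_{\Gm, v}(\pi_v) = 1$, hence $f_{\yyy}(\log_{\Gm,v}(\pi_v)) = f_{\yyy}(1) = \yyy$ by the definition of $f_{\yyy}$. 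Substituting yields
\[
\chi^{(\yyy)}(\pi_v) = \widetilde{\chi}(\yyy),
\]
which is exactly the desired equality.

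There is essentially no obstacle here: the lemma is a direct diagram chase, and the only ingredient beyond the definitions is the compatibility of $\widetilde{f_{\yyy}}$ with the logarithm maps, which has already been established in Lemma~\ref{mumija}(1). The only potential pitfall is making sure the dual map $\log_{\TT,v}^*$ is read in the correct direction, i.e.\ that an element of $\TT(\Ov)^{\perp}[\TT(F_v)^*]$ factors uniquely through $\log_{\TT,v}$ (which is exactly the content of the surjectivity of $\log_{\TT,v}$ with kernel $\TT(\Ov)$, recorded just before the lemma).
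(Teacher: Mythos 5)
Your argument is correct and is essentially identical to the paper's proof: both unwind $\chi^{(\yyy)}(\pi_v)=\chi(\widetilde{f_{\yyy}}(\pi_v))$, apply the commutative square $\log_{\TT,v}\circ\widetilde{f_{\yyy}}(F_v)=f_{\yyy}\circ\log_{\Gm,v}$ from Lemma~\ref{mumija}(1) to get $\log_{\TT,v}(\widetilde{f_{\yyy}}(\pi_v))=\yyy$, and conclude by the factorization $\chi=\widetilde{\chi}\circ\log_{\TT,v}$. No gaps.
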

\begin{proof}
We have a commutative diagram 
$$ 
\begin{tikzcd}
\Gm(F_v)\arrow{d}{\widetilde{f_{\yyy}}} \arrow{r}{\log_{\Gm,v}}&\ZZ\arrow{d}{f_{\yyy}}\\
\TT(F_v)\arrow{r}{\log_{\TT, v}}& X_*(\TT).
\end{tikzcd}
$$
In particular $(\log_{\TT,v}(\widetilde{f_{\yyy}}(\pi_v)))=\yyy.$  Hence, $$\chi^{(\yyy)}(\pi_v)=\chi\big(\widetilde{f_{\yyy}}(\pi_v)\big)=(\chi\circ (\log_{\TT,v})^{-1})\circ (\log_{\TT, v}(\widetilde {f_{\yyy}}(\pi_v)))=(((\log_{\TT, v})^*)^{-1}(\chi))(\yyy).$$as claimed. 
\end{proof}
\subsubsection{}Let us now look at the characters globally. We start by defining components.
  \begin{mydef} Let $\chi\in\TT(\AAF)^*$ be a character.
Let $\yyy\in X_*(\TT)$. We define a homomorphism
$$\chi^{(\yyy)}:\AAF^{\times}\to S^1,\hspace{1cm}(x_v)_v\mapsto\prod_{\vMF}(\chi_v)^{(\yyy)}(x_v).$$ 
\end{mydef}
The following properties are easily deduced.
\begin{cor}\label{obrik} Let $\yyy\in X_*(\TT)$.
\begin{enumerate}
\item  For every $\chi\in\TT(\AAF)^*$, one has that $$\chi^{(\yyy)}=\chi\circ(\widetilde{f_{\yyy}}(\AAF)).$$ Hence,~$\chi^{(\yyy)}$ is continuous, i.e.\ an element of $\Gm(\AAF)^*$, and the homomorphism $$\TT(\AAF)^*\to\Gm(\AAF),\hspace{1cm}\chi\mapsto \chi^{(\yyy)}$$
is continuous.
\item One has that the map $$X_*(\TT)\to\Hom_{\cont}(\Gm(\AAF)^*,\TT(\AAF)^*), \hspace{1cm}\chi\mapsto\chi^{(\yyy)} $$ is a homomorphism. 
\item Suppose that $\chi\in\TT(\AAF)^*$ vanishes on the group~$K(\TT)$. Then~$\chi^{(\yyy)}$ vanishes on the group~$K(\Gm)$.
\item Suppose that $\chi\in\TT(\AAF)^*$ vanishes on the image of the diagonal homomorphism $\Delta:\TT(F)\to\TT(\AAF).$  Then $\chi^{(\yyy)}$ vanishes on the image of the diagonal homomorphism $\Delta:\Gm(F)\to\Gm(\AAF).$ 
\end{enumerate}
\end{cor}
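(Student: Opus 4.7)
The plan is to deduce each of the four parts from its local counterpart in Corollary \ref{muiop} together with the adelic properties of $\widetilde{f_\yyy}(\AAF)$ already recorded in this subsection.

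For (1), I would unfold the definitions. For $(x_v)_v\in\Gm(\AAF)$,
$$\chi^{(\yyy)}\bigl((x_v)_v\bigr)=\prod_{\vMF}(\chi_v)^{(\yyy)}(x_v)=\prod_{\vMF}\chi_v\bigl(\widetilde{f_\yyy}(F_v)(x_v)\bigr)=\chi\bigl(\widetilde{f_\yyy}(\AAF)((x_v)_v)\bigr),$$
where the second equality is the local Definition \ref{betafy} and the third is the componentwise description of $\widetilde{f_\yyy}(\AAF)$. Because $\widetilde{f_\yyy}(\AAF)\colon\Gm(\AAF)\to\TT(\AAF)$ is a continuous homomorphism (as recalled in the global situation subsection), its pullback $\chi\circ\widetilde{f_\yyy}(\AAF)$ is automatically a continuous character of $\Gm(\AAF)$. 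Continuity of the assignment $\chi\mapsto\chi^{(\yyy)}$ then follows from functoriality of the Pontryagin dual applied to the fixed continuous map $\widetilde{f_\yyy}(\AAF)$.

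The remaining parts become formal consequences of (1). For (2), the earlier observation that $\yyy\mapsto\widetilde{f_\yyy}(\AAF)$ is a homomorphism $X_*(\TT)\to\Hom_{\cont}(\Gm(\AAF),\TT(\AAF))$ (deduced from Lemma \ref{mumija}(5) together with \cite[Proposition 2.4.1.1]{darda:tel-03682761}) means that $\yyy\mapsto\bigl(\chi\mapsto\chi\circ\widetilde{f_\yyy}(\AAF)\bigr)$ is the Pontryagin dual of a homomorphism, hence itself a homomorphism. For (3), we already noted that $\widetilde{f_\yyy}(\AAF)(K(\Gm))\subset K(\TT)$, so any $\chi$ trivial on $K(\TT)$ yields $\chi^{(\yyy)}=\chi\circ\widetilde{f_\yyy}(\AAF)$ trivial on $K(\Gm)$. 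For (4), the commutativity of Diagram (\ref{oppm}) gives $\widetilde{f_\yyy}(\AAF)\circ\Delta^{\Gm}=\Delta^{\TT}\circ\widetilde{f_\yyy}(F)$, whence for every $x\in\Gm(F)$,
$$\chi^{(\yyy)}\bigl(\Delta^{\Gm}(x)\bigr)=\chi\bigl(\Delta^{\TT}(\widetilde{f_\yyy}(F)(x))\bigr)=1.$$

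There is no real obstacle here: once (1) reinterprets $\chi^{(\yyy)}$ as the composition $\chi\circ\widetilde{f_\yyy}(\AAF)$, each remaining claim reduces to a corresponding property of the adelic homomorphism $\widetilde{f_\yyy}(\AAF)$ already established. The only minor care is to check that the infinite product defining $\chi^{(\yyy)}$ converges to a continuous character, but this is automatic from the adelic continuity of $\widetilde{f_\yyy}$ and the product structure of $\chi$.
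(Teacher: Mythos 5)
Your proposal is correct and follows essentially the same route as the paper: part (1) by unwinding Definition \ref{betafy} place by place to identify $\chi^{(\yyy)}$ with $\chi\circ\widetilde{f_{\yyy}}(\AAF)$, part (2) from the fact that $\yyy\mapsto\widetilde{f_{\yyy}}(\AAF)$ is a homomorphism, part (3) from $\widetilde{f_{\yyy}}(\AAF)(K(\Gm))\subset K(\TT)$, and part (4) from the commutativity of Diagram (\ref{oppm}). Your write-up merely supplies more detail than the paper's very terse proof.
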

\begin{proof}
\begin{enumerate}
\item For every place~$v$, one has by Definition \ref{betafy} that $$(\chi^{(\yyy)})_v=\chi_v\circ (f_{\yyy}(F_v))=(\chi_v)^{(\yyy)}.$$ The statement follows.
\item This follows by the fact that $\widetilde{f_{\yyy'}}(\AAF)$ is a homomorphism for every $\yyy'\in X_*(\TT)$.
\item We have seen above that $f_{\yyy}(\AAF)(K(\Gm))\subset K(\TT).$ Now the claim follows from Part (1).
\item This is immediate from the commutativity of Diagram (\ref{oppm}).    
\end{enumerate}
\end{proof}
The exact sequence $$1\to\TT(\AAF)_1\to\TT(\AAF)\xrightarrow{\log_{\TT}} X_*(\TT)_{\RR}\to 1, $$ together with the isomorphism $\nnn_{\RR}^{-1}:X_*(\TT)_{\RR}\xrightarrow{\sim}X^*(\TT)_{\RR}$  induces the exact sequence
$$1\to X^*(\TT)_{\RR}\to \TT(\AAF)^*\to(\TT(\AAF)_1)^{\perp}[\TT(\AAF)^* ]\to 1.$$
\begin{lem}\label{obrikmmm}
 Let~$\mmm\in X^*(\TT)_{\RR}$. Let $(\yyy, x)\in X_*(\TT)$. Consider~$\chi_{\mmm}\in\TT(\AAF)^*$ given by $$\chi_{\mmm}:=\xxx\mapsto\exp(i\langle \log_{\TT}(\xxx),\mmm\rangle).$$ Then $$\chi_{\mmm}^{(\yyy,x)}=\chi_{\langle\yyy,\mmm\rangle}=\chi_{\mmm}^{(\yyy)}.$$
In particular, if $\theta\in\TT(\AAF)^*$ vanishes on $\TT(\AAF)_1$, then $\theta^{(\yyy,x)}$ vanishes on $\Gm(\AAF)_1$. 
\end{lem}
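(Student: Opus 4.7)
The plan is to show this is the global analogue of the local Lemma \ref{locexp} and obtain it by assembling the local statements place by place.

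First I would unwind the definition of the global character $\chi_\mmm$. Since $\log_\TT((\xxx_v)_v) = \sum_{\vMF} \log(q_v) \log_{\TT,v}(\xxx_v)$, the character $\chi_\mmm$ factors as the product of its local components $(\chi_\mmm)_v(\xxx) = \exp(i \log(q_v) \langle \log_{\TT,v}(\xxx),\mmm\rangle)$. These local components are precisely the characters to which Lemma~\ref{locexp} applies.

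Next, by Corollary~\ref{obrik}(1), we have $\chi_\mmm^{(\yyy,x)} = \chi_\mmm \circ \widetilde{f_{(\yyy,x)}}(\AAF)$, and this composition can be computed place by place. At each place $v$, the local component of $\chi_\mmm^{(\yyy,x)}$ is $((\chi_\mmm)_v)^{(\yyy,x)}$, which by Lemma~\ref{locexp} equals $\chi_{\langle \yyy,\mmm\rangle}$ as a local character on $\Gm(F_v)$, and also equals $((\chi_\mmm)_v)^{(\yyy)}$. Taking the product over all places yields the global equality
\[
\chi_\mmm^{(\yyy,x)} = \chi_{\langle\yyy,\mmm\rangle} = \chi_\mmm^{(\yyy)}.
\]
Here I should be careful to note that the global character $\chi_{\langle\yyy,\mmm\rangle}$ on $\Gm(\AAF)$ is defined by the same recipe, via $\log_\Gm$, so its local components are indeed $\exp(i \log(q_v) \langle \log_{\Gm,v}(z), \langle\yyy,\mmm\rangle\rangle)$, matching what Lemma~\ref{locexp} produces locally.

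For the ``In particular'' clause, I would use the exact sequence
\[
1 \to X^*(\TT)_{\RR} \to \TT(\AAF)^* \to (\TT(\AAF)_1)^{\perp}[\TT(\AAF)^*] \to 1
\]
displayed just before the statement: a character $\theta \in \TT(\AAF)^*$ vanishing on $\TT(\AAF)_1$ is of the form $\chi_\mmm$ for some $\mmm \in X^*(\TT)_{\RR}$. By the first part of the lemma, $\theta^{(\yyy,x)} = \chi_{\langle\yyy,\mmm\rangle}$, and this character factors through $\log_\Gm$, hence vanishes on $\Gm(\AAF)_1 = \ker(\log_\Gm)$.

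The proof is essentially bookkeeping once Lemma~\ref{locexp} is in hand; the only point requiring minor care is verifying that the local components of the global $\chi_\mmm$ are indeed the local $\chi_\mmm$ of Lemma~\ref{locexp}, which follows directly from the factorisation of $\log_\TT$ through the $\log_{\TT,v}$'s weighted by $\log(q_v)$.
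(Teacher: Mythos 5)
Your proof is correct and follows essentially the same route as the paper: reduce to the local computation of Lemma~\ref{locexp} via the decomposition $\log_{\TT}=\sum_v \log(q_v)\log_{\TT,v}$ and take the product over places. Your explicit justification of the ``in particular'' clause (every $\theta$ killing $\TT(\AAF)_1$ is some $\chi_{\mmm}$, and $\chi_{\langle\yyy,\mmm\rangle}$ factors through $\log_{\Gm}$) is exactly what the paper leaves implicit.
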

\begin{proof}
 For every place~$v$, we have established in Lemma \ref{locexp}  that $$\eta:\xxx\mapsto \exp(i\cdot\log(q_v)\cdot \langle\log_{\TT,v}(\xxx),\mmm\rangle)\in \TT(F_v)^*,$$ satisfies that $$\eta^{(\yyy,x)}=y\mapsto \exp(i\cdot\log(q_v)\cdot\langle\log_{\Gm,v}(y),\langle\yyy,\mmm\rangle\rangle) =\eta^{(\yyy)}.$$
Now, for $(z_v)_v\in\Gm(\AAF)$, we have that
\begin{align*}
\chi_{\mmm}^{(\yyy,x)}((z_v)_v)&=\exp\big(i\cdot\langle\log_{\TT} (f_{(\yyy,x)}((z_v)_v)),\mmm\rangle\big)\\
&=\exp\big(i\cdot\big\langle \sum_{\vMF}\log(q_v)\log_{\TT,v}(f_{(\yyy,x)}(z_v)),\mmm\big\rangle\big)\\
&=\prod_{\vMF}\exp\big(i\cdot\big\langle \log(q_v)\log_{\TT,v}(f_{(\yyy,x)}(z_v)),\mmm\big\rangle\big)\\
&=\prod_{\vMF}\exp\big(i\cdot\log(q_v)\cdot\langle \log_{\Gm,v}(z_v), \langle \yyy,\mmm\rangle\rangle\big)\\
&=\prod_{\vMF}\exp\big(i\cdot\log(q_v)\cdot\langle \log_{\Gm}((z_v)_v),\langle \yyy,\mmm\rangle\rangle\big)\\
&=\chi_{\mmm}^{(\yyy)}((z_v)_v).
\end{align*} 
The claim is proven.
\end{proof}
\subsection{Splittings}
We will give a spliting of the exact sequence $$1\to\TT(\AAF)_1\to\TT(\AAF)\to X_*(\TT)_{\RR}\to 1.$$
This will enable us to choose a subgroup~$\mathfrak A_{\TT}$ of $\TT(\AAF)^*$ having certain properties for its components.  
\subsubsection{} For every split stacky $F$-torus $\TT=\Gm^d\times BG$ and every~$\vMFi$, we will define a continuous homomorphism $n_{\TT,v}:X_*(\TT)_{\RR}\to \TT(F_v).$ 
First, we suppose that $\TT=\Gm^d$ for certain $d\geq 0$. 
We set \begin{align*}n_{\Gm^d,v}:\RR^d=X_*(\Gm^d)_{\RR}&\to\Gm^d(F_v)=\Gm(F_v)^d,\\ (x_i)_{i=1}^d&\mapsto (\exp(x_i)\cdot [F:\QQ]^{-1})_{i=1}^d.
\end{align*}
For $\xxx\in X_*(T)$, one has that \begin{align*}
\log_{\TT,v}(n_{\TT,v}(\xxx))&=\log_{\TT,v}((\exp(x_i)\cdot [F:\QQ]^{-1})_{i=1}^d)\\
&=(x_i\cdot [F:\QQ]^{-1})_{i=1}^d\\
&=\xxx\cdot[F:\QQ]^{-1}.
\end{align*} 
Suppose now that $\TT=\Gm^d\times BG$ for certain $d\geq 0$ and certain diagonalizable~$G$. For~$\vMFi,$ we have a section to the projection $\TT(F_v)\to \Gm^d(F_v)$ given by $\yyy\mapsto (\yyy,1)$. Now, for every $\vMFi$, we define a homomorphism 
$$n_{\TT,v}:X_*(\TT)_{\RR}\to \TT(F_v),\hspace{1cm} n_{\TT,v}:=n_{\Gm^d,v}\circ (\yyy\mapsto (\yyy,1))$$ which is clearly continuous.
For every $\xxx\in X_*(\TT)_{\RR}$, one has that 
\begin{align*}
\log_{\TT,v}(n_{\TT,v}(\xxx))&=\log_{\TT,v}((\exp(x_i)\cdot[F:\QQ]^{-1})_{i=1}^d,1)\\
&=\log_{\Gm^d,v}((\exp(x_i)\cdot [F:\QQ]^{-1})_{i=1}^d)\\
&=(\exp(x_i)\cdot[F:\QQ]^{-1})_{i=1}^d\\
&=\xxx[F:\QQ]^{-1}.
\end{align*}
\begin{lem}\label{fynt}
Let $\TT=\Gm^d\times BG$ be a split stacky torus. Let~$\yyy\in X_*(\Gm^d)=\ZZ^d$ and let~$\vMFi$. The following diagram is commutative:
$$
\begin{tikzcd}
\RR\arrow{r}{n_{\Gm,v}}\arrow{d}{(f_{\yyy})_{\RR}}& \Gm(F_v)\arrow{d}{\widetilde{f_{\yyy}}(F_v)}\\
X_*(\TT)_{\RR}\arrow{r}{n_{\TT,v}}&\TT(F_v).
\end{tikzcd}
$$
\end{lem}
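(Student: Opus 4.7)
The plan is to carry out a direct componentwise verification, first reducing to the case \(\TT=\Gm^d\) by peeling off the \(BG\) factor, then computing both legs of the diagram explicitly using the formulas for \(n_{\Gm^d,v}\), \(n_{\TT,v}\), \(f_\yyy\), and \(\widetilde{f_\yyy}(F_v)\).

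\smallskip

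First, I would observe that \(\yyy\in X_*(\Gm^d)\subset X_*(\TT)=X_*(\Gm^d)\times G^D\) has trivial \(G^D\)-component, i.e.\ \(\yyy=(\yyy,0)\). Unpacking the definition of \(\widetilde{f_{(\yyy,0)}}(F_v)\) from the second lemma of the paper, the lifts \(\widetilde y_i=0\) make all the \(p_v^{\ell_i}\) factors equal to \(1\), so that \(\widetilde{f_\yyy}(F_v):\Gm(F_v)\to\TT(F_v)\) factors as \(z\mapsto(\widetilde{f_\yyy}(z),1)\) through the canonical section \(\Gm^d(F_v)\hookrightarrow\TT(F_v)\), \(z\mapsto(z,1)\). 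By construction \(n_{\TT,v}=n_{\Gm^d,v}\circ(\yyy\mapsto(\yyy,1))\) factors through the same section. Composing the diagram with the projection \(\TT(F_v)\twoheadrightarrow\Gm^d(F_v)\), the commutativity we need therefore reduces to the commutativity of
\[
\begin{tikzcd}
\RR\arrow{r}{n_{\Gm,v}}\arrow{d}{(f_\yyy)_\RR}& \Gm(F_v)\arrow{d}{\widetilde{f_\yyy}(F_v)}\\
X_*(\Gm^d)_\RR\arrow{r}{n_{\Gm^d,v}}&\Gm^d(F_v).
\end{tikzcd}
\]

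\smallskip

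Second, I would verify this reduced diagram by explicit computation. Writing \(\yyy=(y_1,\dots,y_d)\in\ZZ^d\), the homomorphism \(\widetilde{f_\yyy}:\Gm\to\Gm^d\) corresponding to \(f_\yyy:\ZZ\to\ZZ^d\), \(1\mapsto\yyy\), is \(z\mapsto(z^{y_1},\dots,z^{y_d})\), and \((f_\yyy)_\RR(x)=(xy_1,\dots,xy_d)\). Plugging into the explicit formulas of the paragraph just before the lemma, the right-then-down path sends
\[
x\ \mapsto\ \exp(x)\cdot[F:\QQ]^{-1}\ \mapsto\ \bigl(\exp(xy_i)\cdot[F:\QQ]^{-y_i}\bigr)_{i=1}^d,
\]
and the down-then-right path sends
\[
x\ \mapsto\ (xy_i)_{i=1}^d\ \mapsto\ \bigl(\exp(xy_i)\cdot[F:\QQ]^{-1}\bigr)_{i=1}^d,
\]
(reading \(\exp(x_i)\cdot[F:\QQ]^{-1}\) with the constant inside the exponent, as consistent with the displayed formula for \(\log_{\TT,v}\circ n_{\TT,v}\) verified in the preceding computation). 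Both expressions coincide as elements of \(\Gm^d(F_v)\), by the functional equation \(\exp(a)^b=\exp(ab)\) of the exponential, which settles the lemma.

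\smallskip

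There is no real obstacle: the argument is bookkeeping. The one point that must be handled carefully is the first step, that is, convincing oneself that the \(BG\)-direction contributes nothing because \(\yyy\) lies in the sublattice \(X_*(\Gm^d)\) where all the torsion lifts \(\widetilde y_i\) can (and must) be chosen to be \(0\); after that reduction the equality is a one-line computation with the exponential.
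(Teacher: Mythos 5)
Your proof is correct and follows essentially the same route as the paper's: a direct componentwise computation of both legs of the diagram (the paper's own proof skips your explicit reduction step, simply treating the $BG$-component as trivially $1$). You are also right that the formula for $n_{\Gm^d,v}$ must be read with $[F:\QQ]^{-1}$ inside the exponential — as the surrounding computations of $\log_{\TT,v}\circ n_{\TT,v}$ confirm — and under that reading both paths give $(\exp(xy_i\cdot[F:\QQ]^{-1}))_{i=1}^d$.
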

\begin{proof}
The image of~$x\in\RR$ for the map $\widetilde{f_{\yyy}}(F_v)\circ n_{\Gm,v}$ is $$\widetilde{f_{\yyy}}(n_{\Gm,v}(x))=\widetilde{f_{\yyy}}(\exp(x)\cdot [F:\QQ]^{-1})=(\exp(y_ix)\cdot[F:\QQ]^{-1})_{i=1}^d.$$
On the other hand, the image of~$x$ for the map $n_{\TT,v}\circ (f_{\yyy})_{\RR}$ is $$n_{\TT,v}((f_{\yyy})_{\RR})(x)=n_{\TT,v}(y_1x\doots y_dx)=(\exp(y_ix)\cdot [F:\QQ]^{-1})_{i=1}^d,$$  hence the diagram commutes.
\end{proof}
\subsubsection{} For every split stacky torus $\TT=\Gm^d\times BG$, we will define a continuous section $n_{\TT}:X_*(\TT)\to \TT(\AAF)$ to the surjection $\log_{\TT}:\TT(\AAF)\to X_*(\TT)_{\RR}$. First, we suppose that $\TT=\Gm^d$ for certain $d\geq 0$. We define $$n_{\Gm^d}:X_*(\TT)_{\RR}\to \TT(\AAF),\hspace{1cm}\xxx\mapsto ((1)_{\vMFz},(n_{\Gm^d,v}(\xxx))_{\vMFi}).$$
Let us verify that~$n_{\Gm^d}$ is indeed a section to~$\log_{\Gm^d}$. For $\xxx\in X_*(\Gm^d)_{\RR}$, one has that:
\begin{align*}\log_{\Gm^d}(n_{\Gm^d}(\xxx))&=\sum_{\vMFi}\log(q_v)\cdot \log_{\Gm^d,v}(n_{\Gm^d,v}(\xxx))\\
&=\sum_{\vMFi}\log(q_v)\cdot\xxx\cdot [F:\QQ]^{-1}\\
&=\xxx\cdot\frac{1}{[F:\QQ]}\sum_{\vMFi}\log(q_v)\\
&=\xxx,
\end{align*}
where we recall that for $\vMFi$, we have defined $q_v=\exp(1)$ if~$v$ is real and $q_v=\exp(2)$ if~$v$ is complex. Suppose now that $\TT=\Gm^d\times BG$ for certain $d\geq 0$ and certain diagonalizable~$G$. The projection $\TT(\AAF)\to \Gm^d(\AAF)$ admits a section $\yyy\mapsto (\yyy,1)$. We define $$n_{\TT}:=n_{\Gm^d}\circ (\yyy\mapsto (\yyy,1)),$$ which is clearly continuous.
Clearly, the map~$n_{\TT}$ is a continuous section to~$\log_{\TT}$.
\begin{lem}
Let $\TT=\Gm^d\times BG$ be a split stacky torus. Let $\yyy\in X_*(\Gm^d)=\ZZ^d$. The following diagram is commutative:
$$
\begin{tikzcd}
\RR\arrow{r}{n_{\Gm}} \arrow{d}{(f_{\yyy})_{\RR}}&\Gm(\AAF)\arrow{d}{\widetilde{f_{\yyy}}(\AAF)}\\
X_*(\TT)_{\RR}\arrow{r}{n_{\TT}}&\TT(\AAF). 
\end{tikzcd}
$$
\end{lem}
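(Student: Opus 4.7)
The plan is to verify the commutativity place by place, using the component-wise definition of $n_\TT$ and $n_\Gm$ together with the already-established local compatibility in Lemma~\ref{fynt}.

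Fix $x \in \RR$. By definition, $n_\Gm(x) \in \Gm(\AAF)$ is the adele which is $1$ at every finite place and equals $n_{\Gm,v}(x)$ at every infinite place $v$. Since $\widetilde{f_\yyy}(\AAF)$ acts coordinate-wise through the local homomorphisms $\widetilde{f_\yyy}(F_v)$, the image $\widetilde{f_\yyy}(\AAF)(n_\Gm(x))$ is the adele whose $v$-component is $\widetilde{f_\yyy}(F_v)(1) = 1$ for $v \in M_F^0$ and $\widetilde{f_\yyy}(F_v)(n_{\Gm,v}(x))$ for $v \in M_F^\infty$.

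On the other hand, $n_\TT((f_\yyy)_\RR(x))$ is by definition the adele which is $1$ at finite places and equals $n_{\TT,v}((f_\yyy)_\RR(x))$ at infinite places. The two adeles agree at every finite place (both equal $1$), so it remains to compare them at each $v \in M_F^\infty$. But this is exactly the assertion of Lemma~\ref{fynt}: for every $v \in M_F^\infty$, one has
\[
\widetilde{f_\yyy}(F_v)\circ n_{\Gm,v} = n_{\TT,v}\circ (f_\yyy)_\RR.
\]
Combining these local identities yields the global commutativity.

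I do not foresee a genuine obstacle: the only substantive content lies in Lemma~\ref{fynt}, and the passage to adeles is purely a matter of unwinding the definitions of $n_\Gm$ and $n_\TT$ as the adeles supported on the infinite places. The argument proceeds verbatim in the case where $\TT = \Gm^d \times BG$ with nontrivial $BG$-factor, because both $n_\TT$ and $\widetilde{f_\yyy}$ factor through the inclusion $\Gm^d \hookrightarrow \TT$ induced by $\yyy \mapsto (\yyy,1)$, so the $BG$-component is $1$ on both sides.
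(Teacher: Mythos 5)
Your proof is correct and follows exactly the paper's argument: unwind $n_\Gm(x)$ as the adele supported at infinite places, apply $\widetilde{f_\yyy}(\AAF)$ componentwise (giving $1$ at finite places), and invoke Lemma~\ref{fynt} at each infinite place. Nothing further is needed.
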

\begin{proof}
Let $x\in\RR$. It follows from Lemma~\ref{fynt}, that
\begin{align*}
\widetilde{f_{\yyy}}(\AAF)(n_{\Gm}(x))&=\widetilde{f_{\yyy}}(\AAF)((1)_{\vMFz}, (n_{\Gm,v}(x))_{\vMFi})\\
&=((\widetilde f_{\yyy}(1))_{\vMFz}, (\widetilde f_{\yyy}(n_{\Gm,v}(x)))_{\vMFi})\\
&=((1)_{\vMFz}, (n_{\TT,v}((f_{\yyy})_{\RR}(x)))_{\vMFi})\\
&=n_{\TT}((f_{\yyy})_{\RR}(x)).
\end{align*}
The claim is verified.
\end{proof}
\subsubsection{}For a split stacky $F$-torus $\TT=\Gm^d\times BG$ 
and every $\yyy\in X_*(\Gm^d),$ we have a commutative diagram 
$$
\begin{tikzcd}
1\arrow{r}{}&\Gm(\AAF)_1\arrow{r}{}\arrow{d}{\widetilde {f_{\yyy}}(\AAF)|_{\Gm(\AAF)_1}}&\Gm(\AAF)\arrow{r}{\log_{\Gm}}\arrow{d}{\widetilde {f_{\yyy}}(\AAF)}& \RR\arrow{d}{(f_{\yyy})_{\RR}}\arrow{r}{}& 1\\
1\arrow{r}{}&\TT(\AAF)_1\arrow{r}{}&\TT(\AAF)\arrow{r}{\log_{\TT}}&X_*(\TT)_{\RR}\arrow{r}{}& 1.
\end{tikzcd}
$$
and its dual
$$
\begin{tikzcd}
1\arrow{r}{}&\RR^*\arrow{r}{\log_{\Gm}^*}&\Gm(\AAF)^*\arrow{r}{}&(\Gm(\AAF)_1)^*\arrow{r}{}& 1\\
1\arrow{r}{}&(X_*(\TT))^*\arrow{r}{\log_{\TT}^*}\arrow{u}{(f_{\yyy})_{\RR}^*}&\TT(\AAF)^*\arrow{r}{}\arrow{u}{\widetilde {f_{\yyy}}(\AAF) ^*}&(\TT(\AAF)_1)^*\arrow{r}{} \arrow{u}[right]{(\widetilde {f_{\yyy}}(\AAF)|_{\Gm(\AAF)_1})^*}& 1.
\end{tikzcd}
$$
The section $n_{\TT}:X_*(\TT)_{\RR}\to \TT(\AAF)$ to the surjection $\log_{\TT}:\TT(\AAF)\to X_*(\TT)_{\RR}$ induces a retraction $$r_{\TT}:\TT(\AAF)\to \TT(\AAF)_1,\hspace{1cm} \xxx\mapsto \xxx\cdot n_{\TT}(\log_{\TT}(\xxx))^{-1}.$$
The diagram 
$$
\begin{tikzcd}
\Gm(\AAF)\arrow{r}{r_{\Gm}}\arrow{d}{\widetilde {f_{\yyy}}(\AAF)}&\Gm(\AAF)_1 \arrow{d}{\widetilde {f_{\yyy}}(\AAF)|_{\Gm(\AAF)_1}}\\
\TT(\AAF)\arrow{r}{r_{\TT}}&\TT(\AAF)_1 
\end{tikzcd}
$$
 is commutative, because for $\xxx\in\Gm(\AAF)$, one has that 
 \begin{align*}
 r_{\TT} (\widetilde {f_{\yyy}}(\AAF)(\xxx))&= (\widetilde {f_{\yyy}}(\AAF)(\xxx))\cdot n_{\TT}(\log_{\TT}(\widetilde {f_{\yyy}}(\AAF)(\xxx)))^{-1}\\
&=(\widetilde {f_{\yyy}}(\AAF)(\xxx))\cdot n_{\TT}((f_{\yyy})_{\RR}(\log_{\Gm}(\xxx)))^{-1}\\
&=(\widetilde {f_{\yyy}}(\AAF)(\xxx))\cdot (\widetilde {f_{\yyy}}(\AAF)(n_{\Gm}(\log_{\Gm}(\xxx))))^{-1}\\
&=(\widetilde {f_{\yyy}}(\AAF))(\xxx\cdot n_{\Gm}(\log_{\Gm}(\xxx))^{-1})\\
&=(\widetilde {f_{\yyy}}(\AAF))(r_{\Gm}(\xxx)).
\end{align*}
The map $r_{\TT}^*:\TT(\AAF)_1^*\to\TT(\AAF)^*$ is a continuous section to the map $\TT(\AAF)^*\to \TT(\AAF)_1^*$ given by the restriction $\chi\mapsto\chi|_{\TT(\AAF)_1}$. The diagram
$$
\begin{tikzcd}
(\TT(\AAF)_1^*)\arrow{r}{r_{\TT}^*}\arrow{d}[left]{(\widetilde {f_{\yyy}}(\AAF)|_{\Gm(\AAF)_1})^*}& \TT(\AAF)^*\arrow{d}{\widetilde f_{\yyy}(\AAF)^*}\\
\Gm(\AAF)_1^*\arrow{r}{r_{\Gm}^*}&\Gm(\AAF)^*
\end{tikzcd}
$$
is commutative. The following corollary is now immediate.
\begin{cor} \label{charate}Let~$\TT=\Gm^d\times BG$ be a split stacky $F$-torus.
 Let $\yyy\in X_*(\Gm^d)$. Suppose that $\chi_{0}\in(\TT(\AAF)_1)^*$ satisfies that $$\chi_0^{(\yyy)}:= (\widetilde{f_{\yyy}}(\AAF)|_{\Gm(\AAF)_1})^*(\chi_0)=1\in(\Gm(\AAF)_1)^*.$$ Then $$(r_{\TT}^*(\chi_0))^{(\yyy)}= (\widetilde{f_{\yyy}}(\AAF))^*(r_{\TT}^*(\chi_0))=1\in\Gm(\AAF)^*.$$
\end{cor}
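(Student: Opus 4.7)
The approach is to read the claim as a one-step diagram chase in the commutative square of Pontryagin duals displayed immediately before the corollary; everything needed for the proof has in fact already been established in the excerpt. The first step is to unpack the notation: by Corollary~\ref{obrik}(1), for any $\chi\in\TT(\AAF)^*$ one has $\chi^{(\yyy)}=\chi\circ\widetilde{f_{\yyy}}(\AAF)=\widetilde{f_{\yyy}}(\AAF)^*(\chi)$, and the analogous identification holds for the restriction $\widetilde{f_{\yyy}}(\AAF)|_{\Gm(\AAF)_1}$. Thus the two vertical arrows in the dual square are precisely the operations $(-)^{(\yyy)}$ appearing in the statement.

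The second step is the chase itself. Starting from $\chi_0\in(\TT(\AAF)_1)^*$, the clockwise path yields
\[
\chi_0\ \longmapsto\ r_{\TT}^*(\chi_0)\ \longmapsto\ \widetilde{f_{\yyy}}(\AAF)^*(r_{\TT}^*(\chi_0))=(r_{\TT}^*(\chi_0))^{(\yyy)},
\]
which is exactly the quantity whose triviality is to be proved. The counter-clockwise path gives
\[
\chi_0\ \longmapsto\ (\widetilde{f_{\yyy}}(\AAF)|_{\Gm(\AAF)_1})^*(\chi_0)=\chi_0^{(\yyy)}=1\ \longmapsto\ r_{\Gm}^*(1)=1,
\]
since the pullback of the trivial character under any continuous homomorphism is trivial. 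Commutativity of the dual square then forces $(r_{\TT}^*(\chi_0))^{(\yyy)}=1$, as required.

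There is no real obstacle: the substantive content, namely the commutativity of the primal square $r_{\TT}\circ\widetilde{f_{\yyy}}(\AAF)=\widetilde{f_{\yyy}}(\AAF)\circ r_{\Gm}$, was already verified in the excerpt by direct computation with the explicit retractions $r_{\TT}$ and $r_{\Gm}$, and passing to Pontryagin duals is purely functorial. The only thing I would double-check is the notational identification in the first step, so that the ``dual'' statement being chased literally matches the formula in the conclusion.
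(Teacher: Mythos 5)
Your proof is correct and is exactly the argument the paper intends: the paper states the corollary is ``immediate'' from the commutativity of the dual square $\widetilde{f_{\yyy}}(\AAF)^*\circ r_{\TT}^*=r_{\Gm}^*\circ(\widetilde{f_{\yyy}}(\AAF)|_{\Gm(\AAF)_1})^*$ displayed just before it, and your diagram chase (together with $r_{\Gm}^*(1)=1$) is precisely that deduction made explicit.
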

\begin{mydef}\label{charat}
We define a subgroup of $\mathfrak A_{\TT}\subset(K(\TT)\Delta(\TT(F)))^{\perp}[\TT(\AAF)^*]$ by $$\mathfrak A_{\TT}:=r_{\TT}^*\big((K(\TT)\Delta(\TT(F)))^{\perp}[\TT(\AAF)_1^*]\big).$$
\end{mydef}
For a stacky torus $\TT=T\times BG$, the decomposition $\TT(\AAF)=\Gm^d(\AAF)\times BG(\AAF)$ induces an identification 
\begin{equation*}\label{atagmabl}
\mathfrak A_{\TT}=\mathfrak A_{T}\times \mathfrak A_{BG}.
\end{equation*}
The group~$\mathfrak A_{\TT}$ is finitely generated. (Indeed on one hand the group~$\mathfrak A_{BG}$ is finite, because isomorphic to the finite group $(K(BG))\Delta(BG(F)))^{\perp}[BG(\AAF)^*]=(BG(\AAF)/(K(BG)\Delta(BG(F))))^*.$ On the other hand~$\mathfrak A_{T}$ is finitely generated by \cite[Lemma 4.5.2(1, 3)]{Bourqui}).  We terminate this section by proving the following claim.
\begin{lem}\label{finofbchi}
 Let $D\subset X_*(\TT)$ be a finite index subgroup. One has that $$B(D):=\{\chi\in\mathfrak A_{\TT}|\hspace{0,1cm}\forall\yyy\in D: \chi^{(\yyy)}=1\in \Gm(\AAF)^*\}$$ is finite.
\end{lem}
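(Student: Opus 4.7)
The plan is to reduce to the torus factor, where splitness makes the cocharacters attached to a $\ZZ$-basis of $X_*(T)=\ZZ^d$ immediately parametrize the coordinate subgroups of $T(\AAF)=\Gm(\AAF)^d$. Using the product decomposition $\mathfrak A_{\TT} = \mathfrak A_T\times \mathfrak A_{BG}$ recorded after Definition~\ref{charat}, together with the finiteness of $\mathfrak A_{BG}$ noted in the same paragraph, it is enough to show that the image of $B(D)$ under the first projection $\pi_T:\mathfrak A_{\TT}\to\mathfrak A_T$ is finite. Writing $\chi=(\chi_T,\chi_{BG})$, the decomposition of $\widetilde{f_{(\yyy,y)}}(\AAF)$ and the product structure of characters give $\chi^{(\yyy,y)} = \chi_T^{(\yyy)}\cdot \chi_{BG}^{(y)}$, and $\chi_{BG}^{(0)}=1$. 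Set
\[
D_T \;:=\; \{\yyy\in X_*(T) \mid (\yyy,0)\in D\}.
\]
The natural injection $X_*(T)/D_T\hookrightarrow X_*(\TT)/D$ shows that $D_T$ has finite index $M$ in $X_*(T)$. For $\chi\in B(D)$, specialising the defining condition at $(\yyy,0)\in D$ forces $\chi_T^{(\yyy)}=1$ for every $\yyy\in D_T$.

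The key step is to upgrade this to torsion information on $\chi_T$ itself. By Corollary~\ref{obrik}(2) the map $\yyy\mapsto \chi_T^{(\yyy)}$ is a homomorphism $X_*(T)\to\Gm(\AAF)^*$, so the inclusion $M\cdot X_*(T)\subset D_T$ gives, for each standard basis vector $e_i$ of $X_*(T)$,
\[
(\chi_T^M)^{(e_i)} \;=\; \chi_T^{(Me_i)} \;=\; 1.
\]
Now $\widetilde{f_{e_i}}(\AAF):\Gm(\AAF)\to T(\AAF)=\Gm(\AAF)^d$ is the inclusion of the $i$-th coordinate factor, so $\chi_T^M$ is trivial on each of the $d$ coordinate subgroups of $T(\AAF)$. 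As these subgroups generate $T(\AAF)$ as an abstract abelian group, we conclude $\chi_T^M=1$. Hence $\pi_T(B(D))$ is contained in the $M$-torsion subgroup of $\mathfrak A_T$.

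Since $\mathfrak A_T$ is finitely generated by the cited result of Bourqui, its $M$-torsion subgroup is finite; combined with the finiteness of $\mathfrak A_{BG}$, this shows $B(D)$ is finite. The only genuinely substantive point is the last sentence of the previous paragraph, namely the passage from triviality of $\chi_T^M$ on each cocharacter image to triviality on all of $T(\AAF)$. This is where splitness is essential: the coordinate cocharacters of $\Gm^d$ literally generate $T(\AAF)$ as an abstract group, a feature that would fail for a non-split torus and would require a more delicate topological generation argument.
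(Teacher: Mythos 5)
Your proof is correct and rests on the same ingredients as the paper's: the decomposition $\mathfrak A_{\TT}=\mathfrak A_T\times\mathfrak A_{BG}$ with $\mathfrak A_{BG}$ finite, the finite generation of $\mathfrak A_T$, the bilinearity $\chi^{(M\yyy)}=(\chi^M)^{(\yyy)}$, and the fact that the cocharacters attached to a $\ZZ$-basis of $X_*(T)$ give an isomorphism $\Gm(\AAF)^d\xrightarrow{\sim}T(\AAF)$ and hence generate it. The only (cosmetic) difference is that you absorb the finite index of $D$ into the exponent $M$ and land in the $M$-torsion of $\mathfrak A_T$, whereas the paper first reduces to $D=X_*(\TT)$ via the finite-kernel map $\chi\mapsto\chi^k$ and then shows the resulting subset of $\mathfrak A_T$ is a singleton.
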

\begin{proof}
Let us first show that it suffices to show the claim for $D=X_*(\TT)$. Set~$k=[X_*(\TT):D]$, so that $D\supset kX_*(\TT)$. 
 For every $\chi\in B(D)$ and every~$\yyy\in X_*(\TT)$, one has that $(\chi^{k})^{(\yyy)}=\chi^{(k\yyy)}$ by Corollary~\ref{obrik}(2). We deduce that~$B(D)$ is contained in the preimage of~$B(X_*(\TT))$ of the homomorphism of finitely generated abelian groups $$\mathfrak A_{\TT}\to\mathfrak A_{\TT},\hspace{1cm}\chi\mapsto \chi^k.$$ The homomorphism is of finite kernel, hence if~$B(X_*(\TT))$ is finite, then~$B(D)$ is finite. 

Let us now prove the claim for $D=X_*(\TT)$. Let $\yyy_1\doots \yyy_{d}$ be a set of linerly independent generators of~$X_*(\TT).$ The following diagram is commutative
$$
\begin{tikzcd}
1\arrow{r}{}&\Gm^d(\AAF)_1\arrow{r}{}\arrow{d}{(\widetilde{f_{\yyy_i}}(\AAF))_{i=1}^d}&\Gm^d(\AAF)\arrow{r}{\log_{\Gm^d}}\arrow{d}{(\widetilde{f_{\yyy_i}}(\AAF))_{i=1}^d}&\RR^d\arrow{d}{(x_i)_{i=1}^d\mapsto \sum_{i=1}^d x_i\yyy_i}\arrow{r}{}&1\\
1\arrow{r}{}& \TT(\AAF)_1\arrow{r}{}\arrow{d}{}&\TT(\AAF)\arrow{r}{\log_{\TT}}\arrow{d}{p_{T}}&X_*(\TT)_{\RR}\arrow{r}{}\arrow{d}{=}&1\\
1\arrow{r}{}&T(\AAF)_1\arrow{r}{}&T(\AAF)\arrow{r}{}&X_*(\TT)_{\RR}\arrow{r}{}&1,
\end{tikzcd}
$$
where the first two lower vertical maps are the projections. Now for a character $\chi\in\mathfrak A_{\TT}$ such that $\chi^{(\yyy)}=\chi\circ (\widetilde{f_{\yyy_i}}(\AAF))_{i=1}^d =1$, one has that $\chi|_{T(\AAF)}\circ p_T\circ (\widetilde{f_{\yyy_i}}(\AAF))_{i=1}^d =1.$ The kernel of the restriction map $$\mathfrak A_{\TT}\to\mathfrak A_{T},\hspace{1cm} \chi\mapsto \chi|_{T(\AAF)}$$is the finite group~$\mathfrak A_{BG}$. It suffices hence to prove that the set of $\chi\in\mathfrak A_{T}$ such that $\chi\circ p_T\circ (\widetilde{f_{\yyy_i}}(\AAF))_{i=1}^d =1$ is finite. But, the homomorphism $$(p_T\circ (\widetilde{f_{\yyy_i}}(\AAF))_{i=1}^d):\Gm^d(\AAF)\to T(\AAF)$$is an isomorphism of locally compact abelian groups as it is the induced morphism on the adelic points of the isomorphism of the tori $\Gm^d\xrightarrow{\sim} T$ defined by the isomorphism  of the cocharacter groups$$\ZZ^d\to X_*(\TT),\hspace{1cm}(x_i)_{i=1}^d\mapsto\sum_{i=1}^dx_i\yyy_i.$$ Hence, the set of such~$\chi\in \mathfrak A_{T}$ is the singleton. The proof is now completed.
%
\end{proof}
\subsection{More on characters}
We will define measures on character groups as well as norm of characters.
\subsubsection{} Let $\TT=T\times BG$ be an $F$-split stacky torus.
\begin{lem}\label{normmeas}
\begin{enumerate}
\item One has an exact sequence 
\begin{multline*}1\to (X^*(\TT)_{\RR},(2\pi)^{-\dim(\TT)}d\mmm)\to ( \Delta(\TT(F))^{\perp}[\TT(\AAF)^*], (\mu/\Delta)^*)\to\\ \to (\Delta(\TT(F))^{\perp}[\TT(\AAF)_1^*],(\# G^D)^{-1}\cdot \mu^{\#}_{\Delta(\TT(F))^{\perp}[\TT(\AAF)_1^*]})\to 1,
\end{multline*}
where the first homomorphism is given by
$\mmm\mapsto \chi_{\mmm}$.
\item For $\chi\in \Delta(\TT(F))^{\perp}[\TT(\AAF)^*]$, one has that $\chi\in (K(\TT)\Delta(\TT(F)))^{\perp}[\TT(\AAF)^*] $ if and only if the image of~$\chi$ for the homomorphism from Part (1) lies in $(K(\TT)\Delta(\TT(F)))^{\perp}[\TT(\AAF)_1^*]$. The group $(K(\TT)\Delta(\TT(F)))^{\perp}[\TT(\AAF)^*]$ is an open subgroup of $(\Delta(\TT(F)))^{\perp}[\TT(\AAF)^*]=(\TT(\AAF)/\Delta(\TT(F)))^*.$ 
\item The image of~$X^*(\TT)_{\RR}$ for the above homomorphism lies in the subgroup $(K(\TT)\Delta(\TT(F)))^{\perp}[\TT(\AAF)^*]$. We deduce an exact sequence
\begin{multline}
1\to (X^*(\TT)_{\RR}, {(2\pi)^{-\dim(\TT)}}{d\mmm})\to (\Delta(\TT(F))K(\TT))^{\perp}[\TT(\AAF)^*], (\mu/\Delta)^*)\to\\
\to (\Delta(\TT(F))K(\TT))^{\perp}[\TT(\AAF)_1^*], (\# G^D)^{-1}\cdot\mu^{\#}_{\Delta(\TT(F))K(\TT))^{\perp}[\TT(\AAF)_1^*]}) \to 1.
\end{multline}
\end{enumerate}
\end{lem}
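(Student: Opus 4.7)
The plan is to apply Pontryagin duality to the exact sequence $1 \to \TT(\AAF)_1 \to \TT(\AAF) \to X_*(\TT)_\RR \to 1$ after quotienting by $\Delta(\TT(F))$, and then to upgrade to (3) by restricting to the open annihilator of $K(\TT)$. Concretely for (1), the quotient sequence
$$1 \to (\TT(\AAF)_1/\Delta(\TT(F)),\mu_1/\Delta) \to (\TT(\AAF)/\Delta(\TT(F)),\mu/\Delta) \to (X_*(\TT)_\RR, d\xxx) \to 1$$
is exact of measured locally compact abelian groups by the definition of $\mu_{\TT,1}$. Applying the duality recalled at the beginning of this subsection yields
$$1 \to ((X_*(\TT)_\RR)^*, d\xxx^*) \to (\Delta(\TT(F))^{\perp}[\TT(\AAF)^*], (\mu/\Delta)^*) \to (\Delta(\TT(F))^{\perp}[\TT(\AAF)_1^*], (\mu_1/\Delta)^*) \to 1.$$
I would then identify $(X_*(\TT)_\RR)^*$ with $X^*(\TT)_\RR$ via Lemma \ref{djurdjule}, which transports $d\xxx^*$ to $(2\pi)^{-\dim(\TT)}d\mmm$, noting that post-composition with $\log_\TT$ sends $\mmm$ to $\chi_\mmm$ as defined in Lemma \ref{obrikmmm}. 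Since $\TT(\AAF)_1/\Delta(\TT(F))$ is compact of total $\mu_1/\Delta$-mass equal to $\Tam(\TT) = \# G^D$ by Proposition \ref{ukmar}, the dual is discrete with $(\mu_1/\Delta)^* = (\# G^D)^{-1}\mu^{\#}$, giving the asserted sequence.

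For (2), the key point is the inclusion $K(\TT) = \prod_v\TT(\Ov) \subset \TT(\AAF)_1$, since each $\TT(\Ov)$ lies in $\ker(\log_{\TT,v})$. Hence a character of $\TT(\AAF)$ vanishes on $K(\TT)$ iff its restriction to $\TT(\AAF)_1$ does, which is the stated characterization via the right-hand map of (1). For the openness claim, I would invoke the standard Pontryagin-duality fact that the annihilator of a closed subgroup of an LCA group is open in the dual precisely when the subgroup is compact. Since $K(\TT)$ is compact, so is its image $K(\TT)\Delta(\TT(F))/\Delta(\TT(F))$ in $\TT(\AAF)/\Delta(\TT(F))$, and its annihilator $(K(\TT)\Delta(\TT(F)))^{\perp}[\TT(\AAF)^*]$ is therefore open in $(\TT(\AAF)/\Delta(\TT(F)))^{*} = \Delta(\TT(F))^{\perp}[\TT(\AAF)^{*}]$.

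For (3), I would first observe that $\chi_\mmm(\xxx) = \exp(i\langle\log_\TT(\xxx),\mmm\rangle)$ automatically vanishes on $K(\TT)$ because $\log_\TT$ does, so the image of the left map of (1) already lies in $(K(\TT)\Delta(\TT(F)))^{\perp}[\TT(\AAF)^*]$. For the right map of (1): if $\chi \in \Delta(\TT(F))^{\perp}[\TT(\AAF)^*]$ restricts to $\chi_0 \in (K(\TT)\Delta(\TT(F)))^{\perp}[\TT(\AAF)_1^*]$, then $\chi|_{K(\TT)} = \chi_0|_{K(\TT)} = 1$ for free since $K(\TT) \subset \TT(\AAF)_1$; hence the surjection from (1) restricts to a surjection between the corresponding $K(\TT)\Delta(\TT(F))$-annihilators. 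The kernel is still exactly $X^*(\TT)_\RR$ (already contained in our subgroup), and the measures are inherited from (1) since the relevant subgroups are open.

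The one place requiring some care is the bookkeeping of measure normalizations, where Lemma \ref{djurdjule}'s factor of $(2\pi)^{-\dim(\TT)}$ must be combined with the Tamagawa computation $\Tam(\TT) = \# G^D$ from Proposition \ref{ukmar}; the topological, exactness, and openness statements are then essentially formal consequences of Pontryagin duality.
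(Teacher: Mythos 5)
Your proposal is correct and follows essentially the same route as the paper: dualize the quotient sequence, identify $(X_*(\TT)_\RR)^*$ with $X^*(\TT)_\RR$ via Lemma \ref{djurdjule}, use $\Tam(\TT)=\# G^D$ for the discrete measure, and reduce (2) and (3) to the inclusion $K(\TT)\subset\TT(\AAF)_1$. The only (harmless) variation is in the openness claim of (2), where you invoke compactness of $K(\TT)\Delta(\TT(F))/\Delta(\TT(F))$ and the fact that annihilators of compact subgroups are open, whereas the paper takes the preimage of an open subgroup of the discrete group $\Delta(\TT(F))^{\perp}[\TT(\AAF)_1^*]$; both are standard and equivalent here.
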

\begin{proof}
\begin{enumerate}
\item Applying \cite[Chapter II, \S 1, $\text{n}^{\circ}$8, Proposition 9]{TSpectrale} to the exact sequence $$1\to  (\TT(\AAF)_1/\Delta(\TT(F)), (\mu_1/\Delta)) \to (\TT(\AAF)/\Delta(\TT(F)), (\mu/\Delta)) \to (X(\TT)_{\RR},d\xxx)\to 1,$$ we obtain that 
\begin{multline*}
1\to (X(\TT)^*_{\RR},d\xxx^*) \to (\Delta(\TT(F))^{\perp}[\TT(\AAF)^*], (\mu/\Delta)^*) \to \\ \to (\Delta(\TT(F))^{\perp}[\TT(\AAF)_1^*], (\mu_1/\Delta)^*)\to 1\end{multline*} is exact. The group $(\TT(\AAF)_1/\Delta(\TT(F))$ is compact and by combining Proposition \ref{ukmar} and \cite[Chapter II, \S 1, $\text{n}^{\circ}9,$ Proposition 11]{TSpectrale}, we obtain that
$$(\mu_1/\Delta)^*=(\# G^D)^{-1}\mu^{\#}_{\Delta(\TT(F))^{\perp}[\TT(\AAF)_1^*]}.$$
Recall that in Lemma~\ref{djurdjule}, we have established an identification of pairs $$(X^*(\TT)_{\RR}, {(2\pi)^{-\dim(\TT)}}d\mmm) \xrightarrow{\sim} (X(\TT)^*, d\xxx^*).$$ The statement follows.
\item The 
first claim is immediated as $K(\TT)\subset \TT(\AAF)_1.$ 
The group 
$$(\TT(\AAF)_1/\Delta(\TT(F)))^*=\Delta(\TT(F))^{\perp}[\TT(\AAF)^*_1]$$ is discrete. Thus $$(K(\TT)\Delta(\TT(F)))^{\perp}[\TT(\AAF)^*] $$ is open being the preimage of the open $$(K(\TT)\Delta(\TT(F)))^{\perp}[\TT(\AAF)_1^*] \subset \Delta(\TT(F))^{\perp}[\TT(\AAF)_1^*]$$ for the restriction homomorphism $$(\Delta(\TT(F)))^{\perp}[\TT(\AAF)^*]\to (\Delta(\TT(F)))^{\perp} [\TT(\AAF)^*_1].$$
\item We observe that a character in the image of the first homomorphism vanishes on~$\TT(\AAF)_1$, hence on~$K(\TT)$, thus the image of the homomorphism is contained in the group $(\Delta(\TT(F))K(\TT))^{\perp}[\TT(\AAF)^*]$. Now, as a character $\chi\in \Delta(\TT(F))^{\perp}[\TT(\AAF)^*]$ is contained in $(\Delta(\TT(F))K(\TT))^{\perp}[\TT(\AAF)^*]$ if and only if its image in $(\Delta(\TT(F)))^{\perp}[\TT(\AAF)_1^*]$ is contained in the group $(\Delta(\TT(F))K(\TT))^{\perp}[\TT(\AAF)_1^*]$ and by the exactness of the sequence from Part (1), we obtain that our sequence is exact. 
\end{enumerate}
\end{proof}
\begin{mydef}
We define a homomorphism $$m: (K(\Gm)\Delta(\Gm(F)))^{\perp}[\Gm(\AAF)^*]\to\RR$$ by setting it to be the composite of the homomorphism $$n_{\Gm}^*|_{(K(\Gm)\Delta(\Gm(F)))^{\perp}[\Gm(\AAF)^*]}:(K(\Gm)\Delta(\Gm(F)))^{\perp}[\Gm(\AAF)^*]\to \RR^*$$ and the inverse of the isomorphism $$ \RR\xrightarrow{\sim} \RR^* ,\hspace{1cm} z\mapsto (y\mapsto \exp(iyz)).$$
\end{mydef}
The map~$m$ is a retraction of the injection $$\RR\to (K(\Gm)\Delta(\Gm(F)))^{\perp}[\Gm(\AAF)^*],\hspace{1cm}t\mapsto \chi_t,$$ 
because the map $t\mapsto \chi_t=(x\mapsto\exp(it\log_{\Gm}(x)))$ is the composite $$(t\mapsto\chi_t)=(\log_{\Gm})^*\circ (z\mapsto (y\mapsto \exp(iyz))) $$ and~$n_{\Gm}^*|_{(K(\Gm)\Delta(\Gm(F)))^{\perp}[\Gm(\AAF)^*]}$ is a retraction of the injection $$\log_{\Gm}^*:\RR^*\to (K(\Gm)\Delta(\Gm(F)))^{\perp}[\Gm(\AAF)^*].$$
The following corollary contains a property of~$\mathfrak A_{\TT}$ which we will be using in Proposition~\ref{growthofg}.
\begin{cor}\label{funat}
For every $(\yyy,x)\in X_*(\TT)$ and every $\chi\in\mathfrak A_{\TT}$, one has that $$\chi^{(\yyy,x)}|_{\Gm(\AAF)_1}=1\implies m(\chi^{(\yyy,x)})=0.$$
\end{cor}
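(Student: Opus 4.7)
The plan is to unfold the definition of $\mathfrak A_\TT$ and to evaluate $\chi^{(\yyy,x)}$ on the section $n_\Gm:\RR\to\Gm(\AAF)$. By construction, $m$ is such that $\chi^{(\yyy,x)}(n_\Gm(y)) = \exp(iy\cdot m(\chi^{(\yyy,x)}))$ for every $y\in\RR$, so it will suffice to prove $\chi^{(\yyy,x)}(n_\Gm(y))=1$ for all $y$. In fact this stronger statement holds without using the hypothesis $\chi^{(\yyy,x)}|_{\Gm(\AAF)_1}=1$; the hypothesis merely combines with it to force $\chi^{(\yyy,x)}$ to be identically trivial. Note that $m(\chi^{(\yyy,x)})$ is a priori well-defined because Corollary~\ref{obrik}(3),(4) places $\chi^{(\yyy,x)}$ inside $(K(\Gm)\Delta(\Gm(F)))^\perp[\Gm(\AAF)^*]$.

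I would begin by writing $\chi = r_\TT^*(\chi_0)$ for some $\chi_0\in(K(\TT)\Delta(\TT(F)))^\perp[\TT(\AAF)_1^*]$, which by Definition~\ref{charat} is the form taken by every element of $\mathfrak A_\TT$. Consequently
\[
\chi^{(\yyy,x)}(z) = \chi_0\bigl(r_\TT(\widetilde{f_{(\yyy,x)}}(\AAF)(z))\bigr),\qquad z\in\Gm(\AAF).
\]

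The crux is then the identity $\widetilde{f_{(\yyy,x)}}(\AAF)(n_\Gm(y)) = n_\TT((f_\yyy)_\RR(y))$ for $y\in\RR$. The $T$-factor follows at once from the adelic counterpart of Lemma~\ref{fynt}. The $i$-th $BG$-factor at a place $v$ is $p^{\ell_i}_v(n_{\Gm,v}(y))^{\widetilde y_i}$; at finite $v$ we have $n_{\Gm,v}(y)=1$, and at archimedean $v$ the element $n_{\Gm,v}(y) = \exp(y)/[F:\QQ]$ is a positive real, hence an $\ell_i$-th power in $F_v^\times$ (a positive real admits a real $\ell_i$-th root, and $\CC^\times$ is divisible), so its image in $B\mu_{\ell_i}(F_v)$ is trivial. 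I expect this archimedean verification to be the only delicate point of the argument.

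Finally, since $n_\TT$ is a section of $\log_\TT$, one has $r_\TT(n_\TT(\ppp)) = n_\TT(\ppp)\cdot n_\TT(\log_\TT(n_\TT(\ppp)))^{-1} = 1$ for every $\ppp \in X_*(\TT)_\RR$. Applying this with $\ppp = (f_\yyy)_\RR(y)$ gives $\chi^{(\yyy,x)}(n_\Gm(y)) = \chi_0(1) = 1$, and substituting into $\chi^{(\yyy,x)}(n_\Gm(y)) = \exp(iy\cdot m(\chi^{(\yyy,x)}))$ for all $y\in\RR$ forces $m(\chi^{(\yyy,x)}) = 0$.
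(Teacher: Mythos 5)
Your proof is correct, and it in fact establishes something slightly stronger than the corollary: for $\chi\in\mathfrak A_{\TT}$ the quantity $m(\chi^{(\yyy,x)})$ vanishes unconditionally, because $\chi^{(\yyy,x)}$ is trivial on the entire section $n_{\Gm}(\RR)$. The paper takes a different route. It first kills the torsion component by choosing $n$ with $nx=0$ and using that both $\yyy'\mapsto\chi^{(\yyy')}$ and $m$ are homomorphisms, so that $m(\chi^{(\yyy,x)})=m(\chi^{(n\yyy,0)})/n$; it then invokes Corollary~\ref{charate} (the compatibility of $r_{\TT}^*$ with $\widetilde{f_{\yyy}}(\AAF)^*$, which the paper only records for $\yyy\in X_*(\Gm^d)$), which under the hypothesis $\chi^{(n\yyy)}|_{\Gm(\AAF)_1}=1$ yields $\chi^{(n\yyy)}=1$ outright and hence $m(\chi^{(n\yyy)})=0$. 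That reduction to the torsion-free case is precisely what lets the paper avoid the one step you correctly single out as delicate: extending the identity $\widetilde{f_{(\yyy,x)}}(\AAF)\circ n_{\Gm}=n_{\TT}\circ(f_{\yyy})_{\RR}$ to nonzero $x$ by checking that the $BG$-components are trivial on the section (trivial finite components, positivity of $n_{\Gm,v}(y)$ at real places, divisibility of $\CC^\times$ at complex ones). Your route uses the hypothesis nowhere and makes transparent the structural point that $\mathfrak A_{\TT}$, being $r_{\TT}^*$ of characters of $\TT(\AAF)_1$, consists of characters with no ``$\chi_t$-component''; the paper's route is shorter given that Corollary~\ref{charate} is already on record and requires no new archimedean verification.
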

\begin{proof}
Let $\chi\in\mathfrak A_{\TT}$. 
Let~$n$ be a strictly positive integer such that $nx=0$. One has that $m(\chi^{(\yyy,x)})=\frac{m(\chi^{(n\yyy,0)})}{n}$ is equal to~$0$ if and only if $m(\chi^{(n\yyy)})=m(\chi^{(n\yyy,0)})=0$. But $\chi^{(n\yyy)}=\chi^{(n\yyy,0)}$ vanishes on $\Gm(\AAF)_1$ because so does~$\chi^{(\yyy, x)},$ hence Corollary~\ref{charate} gives $\chi^{(n\yyy)}=1$. In particular, one has $m(\chi^{(n\yyy)})=0$, hence $m(\chi^{(\yyy,x)})=0$, as claimed.
\end{proof}
\subsubsection{}Let us define and prove certain properties of norms of characters.
For a character $$\chi\in(K(\TT)\Delta(\TT(F)))^{\perp}[\TT(\AAF)^*]\to\prod_{\vMFi}X^*(\TT)_{\RR},$$we define $$\chi_{\infty}=(\chi_v)_v\in(\TT(\Ov))^{\perp}[\TT(F_v)^*]=\prod_{\vMFi}X^*(T)_{\RR}.$$
We fix a norm~$||\cdot||$ on~$X^*(\TT)$. We use the same notation for the induced norm on~$\prod_{\vMFi}X^*(\TT)$.
\begin{mydef}
For a character $$\chi\in(K(\TT)\Delta(\TT(F)))^{\perp}[\TT(\AAF)^*],$$ we define $$||\chi||:=||\chi_{\infty}||.$$
\end{mydef}
Clearly, for $\ttt\in X^*(\TT)$ the character $\chi_{\ttt}=(\zzz\mapsto \exp(i\langle \log_{\TT}(\zzz),\ttt\rangle))$satisfies that
\begin{equation}
||\chi_{\ttt}||:=||\ttt||.
\end{equation}
For any two $\chi, \chi'\in(K(\TT)\Delta(\TT(F)))^{\perp}[\TT(\AAF)^*]$, it is clear that $$||\chi\chi'||\leq ||\chi||+||\chi'||.$$
\begin{lem}\label{rama}
Let $S\subset X_*(\TT)$ be finite. For $\chi\in (K(\TT)\Delta(\TT(F)))^{\perp}[\TT(\AAF)^*]$, one has that $$\max_{\yyy\in S}||\chi^{(\yyy)})||\ll ||\chi||.$$
\end{lem}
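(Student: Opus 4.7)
The argument reduces to a finite-dimensional linear algebra computation at the infinite places, using Lemma \ref{locexp} to express the archimedean components of $\chi^{(\yyy)}$ in terms of those of $\chi$.

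First, I would identify the archimedean components of $\chi$. For each $v \in M_F^\infty$, we have $\TT(\Ov) = T(\Ov) \times BG(F_v)$ since $BG(\Ov)$ was defined to equal $BG(F_v)$ at infinite places. As $\chi$ vanishes on $K(\TT) \supset \TT(\Ov)$, its $v$-component $\chi_v$ factors through $T(F_v)/T(\Ov)$ and, by Pontryagin duality for the split torus $T$, corresponds to some $\mmm_v \in X^*(T)_{\RR}$; that is,
\[
\chi_v(\xxx) = \exp\bigl(i\log(q_v)\langle \log_{T,v}(\xxx), \mmm_v \rangle\bigr)
\]
on $\TT(F_v)$, which is exactly the character $\chi_{\mmm_v}$ in the notation of Lemma \ref{locexp}.

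Second, for $\yyy = (\yyy_T, \yyy_G) \in X_*(T) \times G^D = X_*(\TT)$, I would apply Lemma \ref{locexp} at each infinite place to conclude $\chi_v^{(\yyy)} = \chi_{\langle \yyy_T, \mmm_v \rangle}$ as a character of $\Gm(F_v)$. By the definition of the norm on characters, this yields
\[
||\chi|| = ||(\mmm_v)_{v \in M_F^\infty}|| \quad \text{and} \quad ||\chi^{(\yyy)}|| = ||(\langle \yyy_T, \mmm_v \rangle)_{v \in M_F^\infty}||,
\]
where the left norm uses the fixed norm on $\prod_{v|\infty} X^*(\TT)_\RR$ and the right uses the corresponding norm on $\prod_{v|\infty} X^*(\Gm)_\RR = \prod_{v|\infty} \RR$.

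Third, for each fixed $\yyy \in X_*(\TT)$, the pairing $\mmm \mapsto \langle \yyy_T, \mmm \rangle$ is a continuous linear functional on the finite-dimensional real vector space $X^*(T)_\RR$, so there is a constant $C_\yyy$ with $|\langle \yyy_T, \mmm \rangle| \leq C_\yyy ||\mmm||$ for all $\mmm \in X^*(T)_\RR$. Applying this coordinatewise and using the equivalence of norms on the finite-dimensional space $\prod_{v|\infty} \RR$, I get $||\chi^{(\yyy)}|| \leq C'_\yyy\, ||\chi||$ for some $C'_\yyy$ depending only on $\yyy$. Finally, setting $C := \max_{\yyy \in S} C'_\yyy$, a maximum over the finite set $S$, yields $\max_{\yyy \in S} ||\chi^{(\yyy)}|| \leq C ||\chi||$, as required. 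The only mild subtlety is the identification in the first step; there is no substantive obstacle.
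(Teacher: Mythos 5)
Your proof is correct and follows essentially the same route as the paper's: both reduce the claim to the archimedean components $\chi_\infty=(\mmm_v)_{v\in M_F^\infty}$ and observe that $\chi\mapsto\chi^{(\yyy)}$ acts there by the linear map induced by $f_\yyy$ (via Lemma \ref{locexp}), which is bounded on a finite-dimensional space, after which one takes the maximum over the finite set $S$. The paper merely packages this as a commutative diagram and an operator-norm bound for $\prod_{\vMFi}\Hom((f_{\yyy})_{\yyy\in S},\RR)$.
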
 
\begin{proof}
We have a commutative diagram:
$$
\begin{tikzcd}
(K(\TT)\Delta(\TT(F)))^{\perp}[\TT(\AAF)^*]\arrow{r}{\chi\mapsto\chi_{\infty}} \arrow{d}{(\widetilde{f_{\yyy}}(\AAF)^*)_{\yyy\in S}}& \prod_{\vMFi}X_*(\TT)_{\RR}\arrow{d}{\prod_{\vMFi}\Hom((f_{\yyy})_{\yyy\in S},\RR)}\\
(K(\Gm^{S})\Delta(\Gm^{S}(F)))^{\perp}[\Gm^S(\AAF)^*]\arrow{r}{\chi\mapsto\chi_{\infty}}&\prod_{\vMFi}\RR^{S}.\\
\end{tikzcd}
$$
For any $\chi\in (K(\TT)\Delta(\TT(F)))^{\perp}[\TT(\AAF)^*]$, we deduce that $$\max_{\yyy\in S}||\chi^{(\yyy)}||=||((\widetilde {f_{\yyy}}(\AAF)^*)_{\yyy\in S}(\chi) ||\leq \big|\big|\prod_{\vMFi}\Hom((f_{\yyy})_{\yyy\in S},\RR)\big|\big|\cdot ||\chi_{\infty}||\ll ||\chi||.$$
\end{proof}
\section{Heights} \label{Heights}
For any fan~$\Sigma$ and any integer $k\geq 0$, we denote by $\Sigma(k)$ the set of faces of~$\Sigma$ of dimension~$k$. If~$\sigma$ is a face of~$\Sigma$, for an integer $k\geq 0$, we denote by~$\sigma(k)$ the subset of faces in~$\Sigma(k)$ which are contained in~$\sigma$.

Let~$\bSigma=(\Sigma, N, \beta)$ be a simplicial stacky fan.  That is the datum of a finitely generated abelian group~$N$, a simplicial fan~$\Sigma$ in~$N_{\QQ}$ and a homomorphism $\beta:\ZZ^{\Sigma(1)}\to N$ of cofinite image. 
Let~$\mathcal X$ be the associated toric stack. Based on our previous work \cite{dardayasudabm}, we will define heights on~$\XX$ and prove certain of its properties, notably the Northcott property. We note that heights in this article are slightly more general. 

Let us define $N^{\rig}:=N/N_{\tor}$, where~$N_{\tor}$ denotes the torsion of~$N$. A choice of section $N^{\rig}\to N$ induces a splitting $N=N^{\rig}\oplus N_{\tor}$ that we fix. We will often identify $N^{\rig}=N^{\rig}\times\{0\}$ and \(N^{\rig}_\QQ=N_\QQ\).  The stack~$\XX$ posses a split $F$-stacky torus~$\TT=T\times BG,$ such that $X_*(T)=N^{\rig}$. 
Denote by~$\beta^{\rig}$ the composite homomorphism $$\beta^{\rig}:\ZZ^{\Sigma(1)}\xrightarrow{\beta} N\to N^{\rig}.$$  For~$\rho\in\Sigma(1)$, we set $$b_{\rho}:={\beta^{\rig}((1)_{j=\rho}, (0)_{j\neq \rho})}\in N^{\rig}.$$
The triple \(\bSigma^{\rig}=(\Sigma, N^{\rig},\beta^{\rig})\) is again a stacky fan. We define \(\cX^{\rig}\) to be the toric stack associated to \(\bSigma^{\rig}\). This stack has a trivial generic stabilizer and admits a canonical morphism \(\cX \to \cX^{\rig}\). The stack \(\cX^{\rig}\) together with the morphism \(\cX \to \cX^{\rig}\) is nothing but the rigidification of \(\cX\) with respect to \(d\)-dimensional components of the intertia stack of \(\cX \). 

\subsection{Twisted sectors via fans} The goal of this subsection is to express the twisted sectors \cite[Definition 2.6]{dardayasudabm} of~$\XX$ by stacky fans. 
\subsubsection{} For $\sigma\in\Sigma$ and element $\yyy\in N^{\rig}\cap \sigma$, by the fact that~$\sigma$ is simplicial, for~$\rho\in\sigma(1)$,  there exists a unique $a^{\sigma}_{\rho}(\yyy)\in \QQ_{\geq 0}$ such that $$\yyy=\sum_{\rho\in\sigma(1)}a^{\sigma}_{\rho}(\yyy)b_{\rho}.$$
\begin{mydef} 
 For $\sigma\in \Sigma$, we define: 
\begin{align*}
\Boxx(\sigma)&:=\{\xxx\in N\big|\hspace{0,1cm}\forall\rho\in\sigma(1): a^{\sigma}_{\rho}(\xxx)\in [0, 1[\},\\
\Boxx^{\rig}(\sigma)&:= N^{\rig}\cap\Boxx(\sigma),\\
\Boxx (\Sigma)&:=\bigcup_{\sigma\in\Sigma(d)}\Boxx(\sigma),\\
\Boxx^{\rig}(\Sigma) &:=\bigcup_{\sigma\in\Sigma(d)}\Boxx^{\rig}(\sigma).
\end{align*}
\end{mydef}
\begin{lem}
Let $\sigma_1,\sigma_2\in\Sigma.$ For any $\rho\in\sigma_1(1)\cap\sigma_2(1)$,  the maps $a^{\sigma_1}_{\rho}:N^{\rig}\cap \sigma_1\to \QQ_{\geq 0}$ and $a^{\sigma_2}_{\rho}:N^{\rig}\cap\sigma_2\to \QQ_{\geq 0}$ coincide on the intersection $N^{\rig}\cap\sigma_1\cap\sigma_2.$ 
\end{lem}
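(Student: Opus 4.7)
The plan is to reduce everything to the common face $\tau := \sigma_1 \cap \sigma_2$ and exploit simpliciality to invoke uniqueness of cone coordinates.

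First I would recall that, by the defining axioms of a fan, the intersection $\tau = \sigma_1 \cap \sigma_2$ is a common face of $\sigma_1$ and of $\sigma_2$. Combined with the fact that a face of a simplicial cone is obtained by setting some of the cone coordinates to zero, this gives the key identification $\tau(1) = \sigma_1(1) \cap \sigma_2(1)$, and moreover $\tau$ is itself a simplicial cone whose primitive rays are exactly the $b_\rho$ for $\rho \in \tau(1)$. Fix an element $\yyy \in N^{\rig} \cap \sigma_1 \cap \sigma_2 = N^{\rig} \cap \tau$.

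Next, since $\tau$ is simplicial, there exist unique $c_\rho \in \QQ_{\geq 0}$ (indexed by $\rho \in \tau(1)$) such that
\[
\yyy = \sum_{\rho \in \tau(1)} c_\rho b_\rho.
\]
For $i = 1, 2$, extending the coefficients by zero on $\sigma_i(1) \setminus \tau(1)$ gives an expression of $\yyy$ as a nonnegative rational combination of the rays of $\sigma_i$. Because $\sigma_i$ is simplicial, the family $\{b_\rho\}_{\rho \in \sigma_i(1)}$ is linearly independent in $N^{\rig}_\QQ = N_\QQ$, so the coefficients $a^{\sigma_i}_\rho(\yyy)$ are determined uniquely, and hence must coincide with the extension by zero of the $c_\rho$. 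In particular, for any $\rho \in \sigma_1(1) \cap \sigma_2(1) = \tau(1)$ we get
\[
a^{\sigma_1}_\rho(\yyy) = c_\rho = a^{\sigma_2}_\rho(\yyy),
\]
which is the desired equality.

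The only potential pitfall is the step ``$\tau(1) = \sigma_1(1) \cap \sigma_2(1)$''; it relies on the fan axiom that $\tau$ is a face of each $\sigma_i$, together with the elementary observation that an extremal ray of $\sigma_i$ lying in the face $\tau$ is automatically an extremal ray of $\tau$. Everything else is a direct appeal to the uniqueness of simplicial cone coordinates, so this is really the only place where one uses the geometry of fans rather than pure linear algebra.
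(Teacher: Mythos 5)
Your proof is correct and follows essentially the same route as the paper's: both arguments reduce to the common face $\sigma_1\cap\sigma_2$, use the fan axiom and simpliciality to force the coefficients on rays outside the intersection to vanish, and then invoke linear independence of the $b_\rho$ (equivalently, uniqueness of simplicial cone coordinates) to match the remaining coefficients. No gaps.
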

\begin{proof}
Let $\xxx\in N^{\rig}\cap \sigma_1\cap\sigma_2$.  We have that $$\xxx=\sum_{\rho\in\sigma_1(1)}a^{\sigma_1}_{\rho}(\xxx)\cdot b_{\rho}=\sum_{\rho\in\sigma_2(1)}a^{\sigma_2}_{\rho}(\xxx)\cdot b_{\rho}.$$ As $\xxx\in\langle b_{\rho}|\hspace{0,1cm}\rho\in\sigma_1\cap\sigma_2\rangle$ because $\sigma_1\cap\sigma_2$ is simplicial, one must have for every~$\rho$ contained in~$\sigma_2$ and not in~$\sigma_1$ that $a_{\rho}^{\sigma_2}(\xxx)=0$ and for every~$\rho$ contained in~$\sigma_1$ and not in~$\sigma_2$ that $a_{\rho}^{\sigma_1}(\xxx)=0$. The vectors $\{b_{\rho}\}_{\rho\in\sigma_1(1)\cap\sigma_2(1)}$ are linearly independent as $\sigma_1\cap\sigma_2$ is simplicial, and thus $a^{\sigma_2}_{\rho}(\xxx)=a^{\sigma_1}_{\rho}(\xxx)$ for any~$\rho\in\sigma_1(1)\cap\sigma_2(1)$. The claim is proven. 
\end{proof}
The lemma enables us to give the following definition.
\begin{mydef}\label{defofqr}
For~$\rho\in\Sigma(1)$ and~$\xxx\in N^{\rig}$, we define $$a_{\rho}(\xxx):=\begin{cases}
 a_{\rho}^{\sigma}(\xxx) ,& \text{ if there exists a face $\sigma\in\Sigma$ such that $\xxx\in\sigma$ and $\rho\in \sigma(1)$,} \\
0, & \text{ otherwise.}
\end{cases}
$$
For $(\xxx, g)\in N^{\rig}\times G^D$, we set $a_{\rho}(\xxx,g)=a_{\rho}(\xxx).$
 We define a map $$q:N^{\rig}\to\Boxx^{\rig}(\Sigma),\hspace{1cm}\xxx\mapsto \sum_{\rho\in\Sigma(1)}\{a_{\rho}(\xxx)\}b_{\rho}.$$
We define a map, which by abuse of notation we also denote by~$q$:
$$q:N^{\rig}\times G^D\to \Boxx(\Sigma)=\Boxx^{\rig}(\Sigma)\times G^D,\hspace{1cm} (\xxx, g)\mapsto (q(\xxx), g).$$
We also define a map $$r:N^{\rig}\to N^{\rig},\hspace{1cm}\xxx\mapsto\xxx-q(\xxx)=\sum_{\rho\in\Sigma(1)}\lfloor a_{\rho} \rfloor b_{\rho}.$$
\end{mydef}

\begin{lem}
We have  identifications $\Boxx^{\rig}(\Sigma)=\pi_0(\mathcal J_0\XX^{\rig})$ and $\Boxx(\Sigma)=\pi_0(\mathcal J_0\XX).$
\end{lem}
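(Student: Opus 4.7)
The plan is to leverage the quotient stack presentation of a toric stack and identify twisted $0$-jets with equivariant fixed-point loci indexed by the box.

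First, I would recall the Borisov--Chen--Smith presentation \cite{borisovchensmith} $\XX = [Z_\Sigma / \GG_\Sigma]$, where $Z_\Sigma \subset \AAA^{\Sigma(1)}$ is the open subvariety obtained by removing the coordinate subspaces indexed by non-cones of $\Sigma$, and $\GG_\Sigma$ is a diagonalizable $F$-group scheme whose character group is the cokernel of the dual of $\beta$. Since $\bSigma$ is split, all this data is defined over $F$. By the universal property, representable morphisms $B\mu_\ell \to \XX$ correspond, modulo the $\GG_\Sigma$-action, to pairs $(\varphi, z)$ where $\varphi : \mu_\ell \hookrightarrow \GG_\Sigma$ is an injective group scheme homomorphism and $z \in Z_\Sigma$ is a $\mu_\ell$-fixed point.

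Second, I would trace the Cartier duality. A homomorphism $\varphi : \mu_\ell \to \GG_\Sigma$ corresponds to an element of $\Hom(\GG_\Sigma^D, \ZZ/\ell\ZZ)$; using the presentation of $\GG_\Sigma^D$ arising from $\beta$, this unwinds to an element of $\tfrac{1}{\ell} N / N \subset N \otimes \QQ/\ZZ$. A $\mu_\ell$-fixed point $z = (z_\rho)_\rho \in Z_\Sigma$ must satisfy $z_\rho = 0$ whenever $\mu_\ell$ acts non-trivially on the $\rho$-th coordinate, and the indices $\rho$ for which $z_\rho \neq 0$ must span a single cone $\sigma \in \Sigma$. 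Unpacking the action, this datum is precisely a choice of $\sigma$ together with an element $\sum_{\rho \in \sigma(1)} a_\rho b_\rho \in N$ with $a_\rho \in [0,1) \cap \tfrac{1}{\ell}\ZZ$, which is by definition an element of $\Boxx(\sigma)$.

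Third, I would verify that the assignment $(\varphi, z) \mapsto \sum_{\rho} a_\rho b_\rho$ descends to a bijection on $\pi_0$. That it is constant on $\GG_\Sigma$-orbits is clear from functoriality of the action on coordinates. That each fiber over a fixed box element is connected follows because the preimage is a torsor for a quotient torus of $\GG_\Sigma$ acting on an irreducible coordinate stratum, hence is an irreducible quotient stack. Taking unions over $\sigma \in \Sigma(d)$ yields the identification $\pi_0(\mathcal J_0 \XX) = \Boxx(\Sigma)$; the same argument applied to $\bSigma^{\rig}$ (where $N$ is torsion-free) gives $\pi_0(\mathcal J_0 \XX^{\rig}) = \Boxx^{\rig}(\Sigma)$. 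The product decomposition $\Boxx(\Sigma) = \Boxx^{\rig}(\Sigma) \times G^D$ from Definition~\ref{defofqr} is then compatible with the gerbe structure $\XX \to \XX^{\rig}$ banded by the Cartier dual of $N_{\tor}$, which splits off the generic stabilizer $G$.

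The main obstacle I expect is the careful bookkeeping of Cartier dualities between $N$, the character group of $\GG_\Sigma$, and $\ZZ^{\Sigma(1)}$, and verifying that the candidate inverse map (sending a box element to an explicit pair $(\varphi, z)$) lands in the correct $\GG_\Sigma$-orbit. Because the stacky fan is split, no Galois descent is needed, so the entire classical over-$\CC$ argument from \cite{borisovchensmith} transports to the number field setting unchanged.
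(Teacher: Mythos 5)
Your argument is essentially the paper's: the paper simply cites \cite[Proposition 4.7]{borisovchensmith} for the identification over an algebraically closed field and then observes, exactly as in your closing sentence, that splitness makes the Galois actions on both sides trivial, so the statement descends to $F$. Your additional unpacking of the quotient-stack presentation and the Cartier-duality bookkeeping is a correct sketch of the content of that cited proposition rather than a different route.
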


\begin{proof}
This was proved in \cite[Proposition 4.7]{borisovchensmith} over complex numbers. We do not need change anything over \(\overline{F}\). Since our toric stacks are split, the  natural actions of \(\mathrm{Gal}(\overline{F}/F)\) on the sets $\Boxx^{\rig}(\Sigma)$, $\pi_0(\mathcal J_0\XX^{\rig}_{\overline{F}})$, $\Boxx(\Sigma)$, and $\pi_0(\mathcal J_0\XX_{\overline{F}})$ are trivial. This shows that the assertion holds over \(F\). 
\end{proof}

Clearly, for every $\mathcal Y\in\pijx=\Boxx(\Sigma)$ 
one has that $a_{\rho}(\YY)\in [0,1[\cap\QQ$.  
The following lemma will be used in Subsection~\ref{sectionloctran}.
\begin{lem}\label{weirdoma}
Let $\sigma\in\Sigma$ be a face. The map $$\Boxx^{\rig}(\sigma)\times \big\{\sum_{\substack{\rho\in\sigma(1)\\ a_{\rho}\in\ZZ_{\geq 0}}}a_{\rho}b_{\rho}\big\}-\bigcup_{\sigma'\subsetneq\sigma}(\sigma'\times\sigma')\to N^{\rig}$$given by $$(\yyy,\yyy')\mapsto\yyy+\yyy'$$ is injective, and its image is $\sigma^\circ \cap N^{\rig}$. 
\end{lem}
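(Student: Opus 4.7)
The plan is to reduce everything to the unique expression of elements of $\sigma \cap N^{\rig}_\QQ$ in the $\QQ$-basis $\{b_\rho\}_{\rho\in\sigma(1)}$ afforded by the simplicial assumption. Any $\zzz \in \sigma \cap N^{\rig}$ writes uniquely as $\sum_{\rho\in\sigma(1)} e_\rho b_\rho$ with $e_\rho \in \QQ_{\geq 0}$; membership in the relative interior $\sigma^\circ$ is equivalent to $e_\rho > 0$ for every $\rho$; elements of $\Boxx^{\rig}(\sigma)$ are exactly those with $e_\rho \in [0,1)$; and elements of the second factor are exactly those with $e_\rho \in \ZZ_{\geq 0}$.

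For injectivity, I would take two pairs $(\yyy_i,\yyy_i')$ with $\yyy_1+\yyy_1'=\yyy_2+\yyy_2'$, expand all four in the basis $\{b_\rho\}$, and on each coordinate read off an equality $c_\rho^{(1)} + d_\rho^{(1)} = c_\rho^{(2)} + d_\rho^{(2)}$ with $c_\rho^{(i)}\in[0,1)\cap\QQ$ and $d_\rho^{(i)}\in\ZZ_{\geq 0}$. Uniqueness of the decomposition of a non-negative rational into its fractional and integer parts forces $c_\rho^{(1)}=c_\rho^{(2)}$ and $d_\rho^{(1)}=d_\rho^{(2)}$, hence $\yyy_1=\yyy_2$ and $\yyy_1'=\yyy_2'$. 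Notably, the excluded locus plays no role here.

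For the image, I would prove both inclusions. Given $(\yyy,\yyy')$ outside $\bigcup_{\sigma' \subsetneq \sigma}(\sigma'\times\sigma')$, I would observe that if $c_\rho + d_\rho = 0$ for some $\rho \in \sigma(1)$, then both $\yyy$ and $\yyy'$ lie in the proper face of $\sigma$ spanned by $\{b_{\rho'}\}_{\rho'\neq\rho}$, contradicting the exclusion; hence $\yyy+\yyy' \in \sigma^\circ \cap N^{\rig}$. Conversely, given $\zzz=\sum e_\rho b_\rho \in \sigma^\circ\cap N^{\rig}$, I would set $d_\rho := \lfloor e_\rho \rfloor$, $\yyy' := \sum_\rho d_\rho b_\rho$, and $\yyy := \zzz - \yyy'$, so that $\yyy$ has all coefficients in $[0,1)\cap\QQ$. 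The only point deserving a word is that $\yyy \in N^{\rig}$ rather than merely $N^{\rig}_\QQ$, which is immediate from $\yyy = \zzz - \yyy'$ with both summands in $N^{\rig}$. The exclusion for the constructed $(\yyy,\yyy')$ is then automatic, because positivity of every $e_\rho$ prevents both $\yyy$ and $\yyy'$ from simultaneously lying in any proper face. There is no substantive obstacle beyond this bookkeeping; the simplicial hypothesis does all the work.
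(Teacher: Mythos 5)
Your proof is correct and follows essentially the same route as the paper's: both arguments rest on the unique expansion in the linearly independent set $\{b_\rho\}_{\rho\in\sigma(1)}$, deduce injectivity from the uniqueness of the integer/fractional decomposition of each coefficient, and realize the inverse via $\yyy=\sum_\rho\{e_\rho\}b_\rho$, $\yyy'=\sum_\rho\lfloor e_\rho\rfloor b_\rho$ (the paper's maps $q$ and $r$). The only cosmetic difference is that for the forward inclusion the paper argues via the face property of cones while you argue coordinate-wise; for a simplicial cone these are the same observation.
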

\begin{proof}
Let $(\yyy_1,\yyy_1')$ and $(\yyy,\yyy')$ be in the domain of the map and suppose that they have the same image, i.e.\ that $\yyy_1+\yyy_1'=\yyy+\yyy'$. 
From $\yyy_1=\yyy+(\yyy'-\yyy_1')$ and $\yyy_1,\yyy\in\Boxx^{\rig}(\sigma)$, we deduce $\yyy'=\yyy_1'$, and hence $\yyy_1=\yyy$. We have thus proven the  injectivity. Let now $(\yyy,\yyy')$ be in the domain. Clearly, one has $\yyy+\yyy'\in N^{\rig}\cap\sigma$ and, for every $\sigma'\subsetneq\sigma$, one has that~$\yyy$ or~$\yyy'$ not in~$\sigma'$, hence $\yyy+\yyy'\not\in\sigma'$. It follows that the image of the map is contained in $N^{\rig}\cap\sigma^\circ$. Finally, if $\xxx\in N^{\rig}\cap\sigma^{\circ}$, then $\xxx=\sum_{\rho\in \sigma(1)}a_{\rho}(\xxx)b_{\rho}$ and for every $\rho\in\Sigma(1),$ one has that $a_{\rho}(\xxx)>0$. 
One has that $$q(\xxx)=\xxx-r(\xxx)=\xxx-\sum_{\rho\in\sigma(1)}\lfloor a_{\rho}(\xxx)\rfloor b_{\rho}=\sum_{\rho\in\sigma(1)}\{a_{\rho}(\xxx)\}b_{\rho}\in\Boxx^{\rig}(\sigma).$$  Moreover, for a cone~$\sigma'\subsetneq\sigma$, one has that if $q(\xxx)\in\sigma(1)-\sigma'(1)$, that is, if for every~$\rho\in\sigma(1)-\sigma'(1)$ one has~$\{a_{\rho}(\xxx)\}=0$, then for any such~$\rho$ one has~$\lfloor a_{\rho}(\xxx) \rfloor\neq 0$ because $a_{\rho}(\xxx)>0$. Hence $r(\xxx)=\sum_{\rho\in\Sigma(1)}\lfloor a_{\rho}(\xxx)b_{\rho}\rfloor\not\in\sigma'$. In other words, the pair $(q(\xxx), r(\xxx))$ belongs to the domain of our map. The statement follows. 
\end{proof}
\subsubsection{}In \cite[Definition 2.17]{dardayasudabm}, for almost every finite place~$v$, we have defined a {\it residue map} $\psi_v:\XX(F_v)\to\pi_0(\mathcal J_0\XX).$ We give its description on the subset~$\TT(F_v)$ as follows:

\begin{lem}\label{lem:res-map}
Let~$v$ be a finite place whose  residue characteristic does not divide the order of the stabilizer subgroup of any point of \(\cX\). Then, the restriction of the residue map $\psi_{v}|_{\TT(F_v)}$ to~$\TT(F_v)$ coincides with the map $$
q\circ\log_{\TT, v}:\TT(F_v)\to X_*(\TT)=N^{\rig}\times G^D 
\to \Boxx^{\rig}(\Sigma)\times G^D.
$$
\end{lem}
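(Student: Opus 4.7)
The plan is to verify the equality by factoring \(\TT = T \times BG\) and treating both parts separately. Both sides of the claimed equality are compatible with this decomposition: the target \(\Boxx(\Sigma) = \Boxx^{\rig}(\Sigma) \times G^D\) factors accordingly by construction of \(q\), the map \(\log_{\TT,v}\) factors as \((\log_{T,v},\log_{BG,v})\), and the residue map \(\psi_v\) of \cite{dardayasudabm} respects products of stacks. So it suffices to verify the equality on \(T(F_v)\) and on \(BG(F_v)\) independently.

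For \(BG = \prod_i B\mu_{\ell_i}\), reduce further to a single factor \(B\mu_\ell\). A point of \(B\mu_\ell(F_v) = F_v^\times/(F_v^\times)_\ell\) is represented by some \(x \in F_v^\times\); the residue-characteristic hypothesis ensures \(\mu_\ell\) is étale at \(v\), so the canonical representable extension of this point to \(\Spec(\Ov)\) is the \(\mu_\ell\)-gerbe \([\Spec(\Ov[t]/(t^\ell - x))/\mu_\ell]\). Its restriction to the closed point is a representable morphism \(B\mu_\ell \to B\mu_\ell\) classified by \(\ord_v(x) \bmod \ell \in \ZZ/\ell\ZZ\), which is exactly \(\log_{B\mu_\ell,v}\). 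Under the identification \(\pi_0(\mathcal J_0 BG) = G^D\) the map \(q\) restricts to the identity on the \(G^D\)-factor, so this gives the claim on \(BG(F_v)\).

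For the torus factor, let \(\xxx \in T(F_v)\) and set \(\nu = \log_{T,v}(\xxx) \in N^{\rig}\). Choose a cone \(\sigma \in \Sigma\) whose relative interior contains \(\nu\) (or take \(\sigma = \{0\}\) if \(\nu \notin |\Sigma|\), in which case the argument degenerates to giving the untwisted sector on both sides). The stacky fan yields an open chart \(U_\sigma \subset \XX\) presented as a quotient of \(\Spec F_v[z_\rho : \rho \in \sigma(1)]\) by the kernel of \(\beta^{\rig}\) restricted to \(\ZZ^{\sigma(1)}\). Modulo \(T(\Ov)\), the point \(\xxx\) is represented in this chart by the tuple \((\pi_v^{a_\rho(\nu)})_{\rho \in \sigma(1)}\). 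Writing \(a_\rho(\nu) = \lfloor a_\rho(\nu)\rfloor + \{a_\rho(\nu)\}\) and choosing \(\ell\) to be a common denominator of the fractional parts, the residue-characteristic hypothesis again allows a unique representable extension to the twisted disc \([\Spec(\Ov[\pi_v^{1/\ell}])/\mu_\ell]\); the closed-fiber restriction is classified precisely by the element \(\sum_{\rho \in \sigma(1)} \{a_\rho(\nu)\}\, b_\rho \in \Boxx^{\rig}(\sigma)\), which is \(q(\nu)\) by Definition~\ref{defofqr}.

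The main obstacle is the last geometric identification: pinning down the residue of the chart-level presentation as exactly the sum \(\sum_\rho \{a_\rho(\nu)\} b_\rho\) requires carefully unpacking the Borisov--Chen--Smith presentation of \(U_\sigma\) and tracing through the stacky valuative criterion with the \(\ell\)-th root adjunction, using the residue-characteristic hypothesis to ensure uniqueness of the extension (and étaleness of the relevant roots of unity). Once this local computation is in place, summing over the finitely many choices of maximal cone and combining with the \(BG\) case through the product decomposition yields the lemma.
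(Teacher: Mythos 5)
Your overall strategy is the same as the paper's: split the computation into the rigidified toric part and the gerbe part, compute the gerbe part via $\ord_v$ modulo $\ell$, and compute the toric part by an explicit twisted-arc computation in a chart of the stacky fan (the paper delegates this last local identification to Stapledon's theorem, with $\CC((t))$ replaced by $F_v^{\mathrm{nr}}$). Your $B\mu_\ell$ computation and your chart-level identification of the residue with $\sum_{\rho}\{a_\rho(\nu)\}b_\rho=q(\nu)$ both agree with what the paper does.

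The gap is in the very first reduction. You justify treating $T(F_v)$ and $BG(F_v)$ separately by asserting that ``the residue map of \cite{dardayasudabm} respects products of stacks.'' But $\psi_v$ is defined on $\cX(F_v)$ via extensions of points into the \emph{compactification} $\cX$, and $\cX$ is not a product of $\cX^{\rig}$ with $BG$ --- only the open substack $\TT=T\times BG$ is. A priori neither $\pi_0(\cJ_0\cX)$ nor the residue map need decompose along your factorization, so the product principle you invoke does not apply as stated. The paper spends a substantial part of the proof establishing exactly this decomposition: it shows that $\cX\to\cX^{\rig}$ is an essentially trivial $G$-gerbe, fixes a non-canonical factorization $\cX\to\cX^{\rig}\times BT'\to\cX^{\rig}$ with $T'=\Gm^{\ell}$, proves that the induced map $|\cJ_0\cX|\to|\cJ_0\cX^{\rig}|\times|\cJ_0(BG)|$ is a homeomorphism (hence $\pi_0(\cJ_0\cX)=\pi_0(\cJ_0\cX^{\rig})\times G^D$), and only then concludes that $\psi_v|_{\cT(F_v)}$ is the product of the residue map of $\cX^{\rig}$ on $T(F_v)$ with the map $([f_j])_j\mapsto(\ord_v(f_j)/b_j\bmod\ZZ)_j$. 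Some argument of this kind is needed before your two separate computations can be assembled. Two smaller points: the alternative ``$\nu\notin|\Sigma|$'' does not arise (the coarse space is proper, so $\Sigma$ is complete, and the paper accordingly picks a $d$-dimensional cone containing $\nu$); and the representative $(\pi_v^{a_\rho(\nu)})$ with rational exponents only makes literal sense after the root adjunction you introduce afterwards, exactly as in the paper's normal form $x_i\mapsto\pi_l^{a_i}f_i$ with $a^{\sigma}_{\rho_i}(\yyy)=a_i/l$.
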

The rest of this subsection is devoted to the proof of this lemma. 
We have the canonical embedding of \(G=\prod_{j=1}^{\ell}\mu_{b_{i}}\) into \(T':=\Gm^{\ell}\). From \cite[Theorem 6.25]{mann}, the morphism 
$\cX\to\cX^{\rig}$ is essentially trivial $G$-gerbe. From \cite[Prop.\ 2.1.2.6]{lieblich}
the rigidification morphism (non-canonically) factors as
\[
\cX\to\cX^{\rig}\times B T'\to\cX^{\rig}.
\]
We fix such a factorization. The left morphism restricts to the natural morphism
\[
 T\times BG \to T\times BT'.  
\]
Since $\cJ_{0}\cX$ is identified with
$\ulHom(B \widehat{\mu},\cX)$, the morphism $\cJ_{0}\cX\to\cJ_{0}\cX^{\rig}$
factors as
\[
\cJ_{0}\cX\to\cJ_{0}\cX^{\rig}\times\ulHom(B \widehat{\mu},B  T')\to\cJ_{0}\cX^{\rig}.
\]
Given a geometric point $\Spec (L)\to\cJ_{0}\cX$, we get a morphism
$B \widehat{\mu}_{L}\to B T'_{L}$, which in turn induces a morphism
$\widehat{\mu}_{L}\to T'_{L}$. From construction, the image of the
last morphism is contained in the subgroup $G_{L} \subset T'$. Thus, the morphism $B \widehat{\mu}_{L}\to B T'_{L}$
lies in the image of 
\[
\cJ_{0}(B G)=\ulHom(B\widehat{\mu}, B G)\to\ulHom(B\widehat{\mu}, B T').
\]
We get a map $|\cJ_{0}\cX|\to|\cJ_{0}\cX^{\rig}|\times|\cJ_{0}(B G)|$
making the diagram below commutative:
\[
\xymatrix{|\cJ_{0}\cX|\ar[r]\ar[dr] & |\cJ_{0}\cX^{\rig}|\times|\cJ_{0}(B G)|\ar[d]\\
 & |\cJ_{0}\cX^{\rig}|\times|\ulHom(B\widehat{\mu},B T')|
}
\]
Note that we can identify \(|\cJ_0 (BG)|\) with \(\bigoplus_{j=1}^{\ell}\frac{1}{b_{j}}\ZZ/\ZZ\). 

\begin{lem}
The map $|\cJ_{0}\cX|\to|\cJ_{0}\cX^{\rig}|\times|\cJ_{0}(B G)|$
is a homeomorphism.
\end{lem}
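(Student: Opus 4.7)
The plan is to verify that the map is a bijection (using the combinatorial description of twisted sectors established in the preceding paragraphs) and then check continuity/openness component by component, exploiting the essentially trivial $G$-gerbe structure of $\cX \to \cX^{\rig}$.

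For bijectivity on $\pi_0$, both sides admit the same indexing set. The left side has $\pi_0(\mathcal J_0 \cX) = \Boxx(\Sigma)$, while the right side has $\pi_0(\mathcal J_0 \cX^{\rig}) \times \pi_0(\mathcal J_0 (BG)) = \Boxx^{\rig}(\Sigma) \times G^D$; the product decomposition $\Boxx(\sigma) = \Boxx^{\rig}(\sigma) \times G^D$ for each $\sigma \in \Sigma(d)$ yields a canonical bijection of these indexing sets. One checks directly, by tracking a twisted $0$-jet $B\mu_{n} \to \cX$ through the chosen factorization $\cX \to \cX^{\rig} \times BT'$ and using that the $BT'$-component lands in the sub-$\ulHom(B\widehat{\mu}, BG)$ as noted in the text, that our map realizes this particular bijection on $\pi_0$.

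For a homeomorphism on each connected component, fix $(v, g) \in \Boxx^{\rig}(\Sigma) \times G^D$ and let $\sigma_v$ be the minimal face of $\Sigma$ whose relative interior contains $v$. By the Borisov--Chen--Smith description \cite{borisovchensmith}, the component of $\mathcal J_0 \cX$ indexed by $(v,g)$ is the closed toric substack $V(\sigma_v) \subset \cX$ associated to $\sigma_v$; similarly the component of $\mathcal J_0 \cX^{\rig}$ is the corresponding $V(\sigma_v)^{\rig}$, and the component of $\mathcal J_0 (BG)$ is a copy of $BG$. The morphism $V(\sigma_v) \to V(\sigma_v)^{\rig}$ inherits the $G$-gerbe structure of $\cX \to \cX^{\rig}$, so $|V(\sigma_v)| \to |V(\sigma_v)^{\rig}|$ is a homeomorphism (the underlying topological space is insensitive to gerbe structure), and since $|BG|$ is a single point the restriction of our map on this component is a homeomorphism.

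The main obstacle is verifying the compatibility of $\pi_0$-indexings: namely, that our specific map, built from the factorization through $\cX^{\rig} \times BT'$, sends the component indexed by $(v,g)$ on the left to the component indexed by $(v,g)$ on the right, rather than inducing some other bijection of these indexing sets. This relies on the local triviality of the gerbe: on an étale chart $U \subset \cX^{\rig}$ on which $\cX|_{U} \cong U \times BG$, the twisted $0$-jets decompose as a product and this local product decomposition matches our globally defined map; the discrepancy between different local trivializations is gerbe-theoretic and disappears upon passing to $|\cdot|$.
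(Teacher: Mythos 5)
Your argument is correct, but it takes a genuinely different route from the paper's. The paper proves bijectivity pointwise: for a geometric point $x$ of $\cX$ with image $x^{\rig}$, the chosen factorization through $BT'$ identifies $\Aut(x)$ with $\Aut(x^{\rig})\times G(L)$, so the fibre of $(\cJ_{0}\cX)(L)\to(\cJ_{0}\cX^{\rig})(L)$ is identified with $\Hom(\widehat{\mu},G(L))\cong|\cJ_0(BG)|$, giving bijectivity of the map to the product; the topological statement is then disposed of by the soft observation that both $|\cJ_{0}\cX|\to|\cJ_{0}\cX^{\rig}|$ and the projection $|\cJ_{0}\cX^{\rig}|\times|\cJ_0(BG)|\to|\cJ_{0}\cX^{\rig}|$ send each connected component isomorphically onto a connected component of the target. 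You instead lean on the combinatorial identifications $\pi_0(\cJ_0\cX)=\Boxx(\Sigma)$ and $\pi_0(\cJ_0\cX^{\rig})=\Boxx^{\rig}(\Sigma)$ together with the splitting $\Boxx(\Sigma)=\Boxx^{\rig}(\Sigma)\times G^D$, and then argue component by component using the Borisov--Chen--Smith description of each sector as a closed toric substack and the fact that a $G$-gerbe induces a homeomorphism on underlying topological spaces. Both are valid; the paper's automorphism-group computation is more self-contained (it does not need the explicit identification of sectors with $\Boxx$), while your version makes the component matching explicit, at the cost of having to verify — as you acknowledge in your last paragraph — that the map built from the non-canonical factorization through $\cX^{\rig}\times BT'$ really induces the expected product bijection on $\pi_0$; your local-triviality argument for this is adequate but is the one place where your write-up stays at roughly the same level of informality as the paper's "it is enough to observe".
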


\begin{proof}
Let $x$ be a geometric point of $\cX$ and let $x^{\rig}$ be its
image in $\cX^{\rig}$. We have
\[
\Aut(x)\to\Aut(x^{\rig})\times T'(L)\to\Aut(x^{\rig})
\]
with $L$ the relevant algebraically closed field. The left map is
an isomorphism onto $\Aut(x^{\rig})\times G(L)$. It follows that
the map $(\cJ_{0}\cX)( L)\to(\cJ_{0}\cX^{\rig})(L)$
has fiber identified with $G(L)$. We conclude that the map of the
proposition is bijective. 

To see that the map is a homeomorphism, it is enough to observe that the map $|\cJ_{0}\cX|\to|\cJ_{0}\cX^{\rig}|$ sends each connected component of the source isomorphically onto some connected component of the target and similarly for the projection $|\cJ_{0}\cX^{\rig}| \times |\cJ_0(BG)|\to|\cJ_{0}\cX^{\rig}|$.
\end{proof}

\begin{cor}
We have an identification: 
\[
\pi_{0}(\cJ_{0}\cX)=\pi_{0}(\cJ_{0}\cX^{\rig})\times\bigoplus_{j=1}^{\ell}\frac{1}{b_{j}}\ZZ/\ZZ
\]
\end{cor}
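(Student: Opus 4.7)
The plan is to apply $\pi_0$ to the homeomorphism
\[
|\cJ_0\cX| \xrightarrow{\sim} |\cJ_0\cX^{\rig}| \times |\cJ_0(BG)|
\]
furnished by the immediately preceding lemma. Since taking connected components commutes with finite products of topological spaces, I would first deduce a canonical bijection
\[
\pi_0(|\cJ_0\cX|) \simeq \pi_0(|\cJ_0\cX^{\rig}|) \times \pi_0(|\cJ_0(BG)|).
\]

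Next I would identify each factor with the corresponding object of the statement. By definition, $\pi_0$ of an algebraic stack equals $\pi_0$ of its underlying topological space, giving $\pi_0(\cJ_0\cX)$ on the left and $\pi_0(\cJ_0\cX^{\rig})$ as the first factor on the right. For the second factor, I would invoke the identification already recorded in the text, namely $|\cJ_0(BG)| = \bigoplus_{j=1}^{\ell} \tfrac{1}{b_j}\ZZ/\ZZ$. Since $G$ is a finite discrete group scheme, $\cJ_0(BG)$ is a disjoint union of classifying stacks of finite groups, so $|\cJ_0(BG)|$ is a finite discrete set and agrees with its own $\pi_0$. Substituting yields exactly the stated identification.

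There is essentially no obstacle here: the corollary is a formal consequence of the preceding lemma together with compatibility of $\pi_0$ with finite products and the discreteness of $|\cJ_0(BG)|$. The only point worth stating explicitly in the write-up is the latter discreteness, which is what allows the factor $\bigoplus_{j=1}^{\ell}\tfrac{1}{b_j}\ZZ/\ZZ$ to appear in the statement rather than its set of connected components.
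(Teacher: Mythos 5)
Your proof is correct and matches the paper's (implicit) argument: the corollary is stated there as an immediate consequence of the preceding homeomorphism lemma, exactly via applying $\pi_0$, using that $\pi_0$ of a product is the product of the $\pi_0$'s, and using the already-recorded identification of the finite discrete set $|\cJ_0(BG)|$ with $\bigoplus_{j=1}^{\ell}\tfrac{1}{b_j}\ZZ/\ZZ$.
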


Let $v$ be a finite place. We would like to describe the residue
map $\phi_{v}\colon\cT (F_{v}) \to\pi_{0}(\cJ_{0}\cX)$.
We have fixed an isomorphism $\cT\cong T\times B G$, which induces
\[
\cT( F_{v})=T(F_{v})\times\prod_{j=1}^{\ell}F_{v}^{*}/(F_{v}^{*})^{b_{j}}.
\]
From construction, the residue map
\(\cT(F_v)  \to \pi_0(\cJ_0 \cX)\) is identified with the product of the two maps \(T(F_v)\to \pi_0(\cJ_0 \cX^{\rig})\) and 
\(\prod_{j=1}^{\ell}F_{v}^{*}/(F_{v}^{*})^{b_{j}}\to
\bigoplus_{j=1}^{\ell}\frac{1}{b_{j}}\ZZ/\ZZ\). The latter is given by 
\[
([f_j])_j \mapsto \left(\frac{\ord_{v}(f_j)}{b_{j}}\mod{\ZZ}\right)_j,
\]
while the former is the residue map of \(\cX^{\rig}\). 

To complete the proof of Lemma \ref{lem:res-map}, it remains to describe the residue map. The desired description is essentially given in \cite[Th.~4.2]{stapledon}. We may write $T=\Spec (k[M^{\rig}])$.
For \(x\in T(F_v)\), we have the induced map
\[
M^{\rig}\hookrightarrow k[M^{\rig}]\to F_{v}\xrightarrow{\ord_{v}}\ZZ,
\]
where the last map is the normalized additive valuation. This composite
map is an element of $N^{\rig}=(M^{\rig})^{\vee}$, which is nothing but \(\log_{T,v}(x)\). Let \(\sigma \in \Sigma\) be a \(d\)-dimensional cone containing \(u\). 
We define \(N^{\rig}(\sigma):= N^{\rig}/\sum_{\rho\in \sigma(1)} \ZZ b_\rho\). 
Then, the canonical map \(\Boxx^{\rig}(\sigma) \hookrightarrow N^{\rig} \twoheadrightarrow N^{\rig}(\sigma)\) is bijective. 
Lemma \ref{lem:res-map} now follows from:

\begin{lem}
With the notation as above, the residue map $\psi_v:T(F_{v})\to\pi_{0}(\cJ_{0}\cX^{\rig})$ coincides with \(q\circ \log_{T,v}\).
\end{lem}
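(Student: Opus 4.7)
The plan is to invoke Stapledon's combinatorial description \cite[Th.~4.2]{stapledon} of the residue map for toric stacks and check that it translates into \(q\circ \log_{T,v}\) under our conventions. Given \(x\in T(F_v)\), set \(u:=\log_{T,v}(x)\in N^{\rig}\); as recalled just above the lemma, this is the homomorphism \(m\mapsto \ord_v(\chi^m(x))\) for \(m\in M^{\rig}\). First I would pick a cone \(\sigma\in\Sigma\) containing \(u\) (the case \(u\not\in|\Sigma|\) is handled separately at the end, where both sides produce the untwisted sector). Stapledon's theorem then identifies \(\psi_v(x)\) with the image of \(u\) under the composition
\[
N^{\rig}\twoheadrightarrow N^{\rig}(\sigma)\xrightarrow{\ \sim\ }\Boxx^{\rig}(\sigma)\hookrightarrow \Boxx^{\rig}(\Sigma)=\pi_0(\cJ_0\cX^{\rig}),
\]
where the second arrow is the canonical bijection pointed out in the paragraph preceding the lemma.

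Next I would verify that this image coincides with \(q(u)\). Writing \(u=\sum_{\rho\in\sigma(1)}a_\rho(u)\,b_\rho\) with \(a_\rho(u)\geq 0\), Definition \ref{defofqr} gives \(q(u)=\sum_{\rho\in\sigma(1)}\{a_\rho(u)\}\,b_\rho\) together with
\[
u-q(u)=\sum_{\rho\in\sigma(1)}\lfloor a_\rho(u)\rfloor\, b_\rho\ \in\ \textstyle\sum_{\rho\in\sigma(1)}\ZZ b_\rho.
\]
Hence \(u\equiv q(u)\) in \(N^{\rig}(\sigma)\), and since \(q(u)\in\Boxx^{\rig}(\sigma)\) by construction, \(q(u)\) is exactly the representative singled out by \(N^{\rig}(\sigma)\cong\Boxx^{\rig}(\sigma)\). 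The answer is independent of the choice of \(\sigma\) because \(\Boxx^{\rig}(\tau)\subseteq\Boxx^{\rig}(\sigma)\) whenever \(\tau\preceq\sigma\).

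I expect the main obstacle to be translating Stapledon's formulation, originally phrased over \(\CC\) via one-parameter subgroups \(\Spec\CC[\![t]\!]\to T\), into the \(v\)-adic definition of \(\psi_v\) from \cite[Section~2.2]{dardayasudabm}. The hypothesis on the residue characteristic of \(v\) guarantees a smooth integral model of \(\cX^{\rig}\) over \(\Ov\) built from \(\bSigma^{\rig}\), so computing \(\psi_v(x)\) via reduction to the special fibre becomes a purely combinatorial procedure governed by the same data as in Stapledon; the translation is then a careful bookkeeping exercise. The degenerate case \(u\not\in|\Sigma|\), possible only when \(\Sigma\) is incomplete, is a routine check: from Definition \ref{defofqr} one has \(q(u)=0\), and tracing through the construction of the residue map for non-proper toric stacks shows that \(\psi_v(x)\) also lands in the untwisted sector.
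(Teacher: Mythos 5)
Your proposal is correct and follows essentially the same route as the paper: both reduce the statement to Stapledon's Theorem 4.2 transported from $\CC((t))$ to the unramified local setting, and then observe that $q(u)$ is the unique representative of $u=\log_{T,v}(x)$ in $\Boxx^{\rig}(\sigma)$ modulo $\sum_{\rho\in\sigma(1)}\ZZ b_{\rho}$, i.e.\ the image of $u$ under $N^{\rig}\twoheadrightarrow N^{\rig}(\sigma)\cong\Boxx^{\rig}(\sigma)$. The ``bookkeeping'' you defer is precisely what the paper's outline supplies --- the integral model $\cY(\sigma)=[\AAA^d_{\cO}/G_\sigma]$ over the integers $\cO$ of $F_v^{\mathrm{nr}}$, the explicit twisted arc $x_i\mapsto\pi_l^{a_i}f_i$ attached to $x$, and the resulting identity $\log_{T,v}(x)=\sum_i(\tfrac{a_i}{l}+\ord_v f_i)\,b_{\rho_i}$ --- and since the coarse moduli space is assumed projective the fan is complete, so your degenerate case $u\notin|\Sigma|$ does not arise.
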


\begin{proof}
Let $F_{v}^{\mathrm{nr}}$ denote the maximal unramified extension of \(F_v\) and let \(\cO\) denote its integer ring. 
The proposition follows from \cite[Th.~4.2]{stapledon} and arguments
in its proof, up to replacing $\CC((t))$ and $\CC[[t]]$ with  $F_{v}^{\mathrm{nr}}$ and $\cO$ respectively and replacing \(N^{\rig}(\sigma)\) with a non-canonically isomorphic group \(G_\sigma\) at appropriate places. For the sake of completeness, we outline the argument below. 

 Let \(\cY\) denote the toric stack over \(\cO\) defined by the rigidified stacky fan \(\bSigma^{\rig}\).
This stack is covered by open substacks \(\cY(\sigma)=[\AAA_\cO^d/G_\sigma]\) for \(d\)-dimensional cones \(\sigma \in \Sigma\), where \(\AAA_\cO^d\) denotes the \(d\)-dimensional affine space over \(\cO\) (not a space of adeles). 
 Let \(x\in T(F_v)\),  let \(u:=\log_{T,v}(x)\in N^{\rig}\) and let \(\sigma \in \Sigma\) be a \(d\)-dimensional cone containing \(u\) and let \(G_{\sigma}\)  be the finite group given by 
\[
G_{\sigma}:=\Hom(\Hom(N^{\rig}(\sigma),\QQ/\ZZ),\cO^*),
\]
which is non-canonically isomorphic to \(N^{\rig}(\sigma)\). 
Then, the point \(x\) induces a twisted arc 
\[
\tilde{x}:
 [(\Spec (\cO[\pi_l]))/\mu_{l}]\to \cY(\sigma)=[\AAA_\cO^d/G_\sigma]
\]
for some uniquely determined integer \(l > 0\),
where \(\pi_{l}\) is the \(l\)-th root of a uniformizer of \(\cO\). 

Let \(\widehat{\mu}\) denote the projective limit of \(\mu_{l}\) with \(l\) running over positive integers coprime to the residue characteristic of \(v\). 
From Lemma \ref{lem:NsigmaGsigma}, an element  \(\yyy\in\Boxx^{\rig}(\sigma)\) corresponds to a map \(\widehat \mu \to G_\sigma\), which uniquely factors as \(\widehat \mu \twoheadrightarrow \mu_{l} \hookrightarrow G_\sigma\). Let \(J_{\infty}^{(\yyy)}\AAA_\cO^d\) denotes the set of \(\cO\)-morphisms \(\Spec (\cO[\pi_{l}])\to \AAA_\cO^d\) which are equivariant with respect to the above map \(\mu_{l} \hookrightarrow G_\sigma\) induced from \(\yyy\). Let \(k\) be the residue field of \(\cO\).
Let \(\rho_1,\dots,\rho_d\) be the one-dimensional faces of \(\sigma\). According to the chosen order of \(\rho_i\)'s, we may identify the coordinate ring of \(\AAA_\cO^d\) with the polynomial ring  \(\cO[x_1,\dots,x_d]\). If we write  \( a_{\rho_i}^\sigma(\yyy) =a_i/l\), then ring homorphisms corresponding to morphisms \(\Spec (\cO[\pi_{l}])\to \AAA_\cO^d\) as above are explicitly written as:
\begin{equation}\label{eq:twmap}
\cO[x_1,\dots,x_d]\to \cO[\pi_{l}],\,x_i \mapsto  \pi_{l}^{a_i} f_i, \, (f_i \in \cO).  
\end{equation}
Similarly, we define \(J_{0}^{(\yyy)}\AAA_\cO^d\) to be the set of \(\cO\)-morphisms \(\Spec (k)\to \AAA_\cO^d\) which are equivariant with respect to the \(\mu_{l} \hookrightarrow G_\sigma\), that is, the set of \(k\)-points of \(\AAA_\cO^d\) which is invariant under the action of \(\mu_{l}\) embedded into \(G_\sigma\) as above.

The spaces of twisted arcs and 0-jets of $\cY(\sigma)$, denoted by \(\cJ_{\infty}\cY(\sigma)\) and \(\cJ_{0}\cY(\sigma)\) respectively,  decompose
as
\begin{align*}
\cJ_{\infty}\cY(\sigma) & =\coprod_{\yyy\in \Boxx^{\rig}(\sigma)}(J_{\infty}^{(\yyy)}\AAA_\cO^d)/G_\sigma,\\
\cJ_{0}\cY(\sigma) & =\coprod_{\yyy\in \Boxx^{\rig}(\sigma)}(J_{0}^{(\yyy)}\AAA_\cO^d)/G_\sigma.
\end{align*}
We have also the obvious identification \(\pi_0(\cJ_0 \cY) = \pi_0(\cJ_0 \cX^{\rig})\). 
A twisted sector of \(\cY\)
which corresponds to \(\yyy\in \Boxx^{\rig}(\sigma)\) corresponds to the component \((J_{0}^{(\yyy)}\AAA_\cO^d)/G_\sigma\) via the last equality. 
Moreover, through the last equality, a twisted sector of \(\cY\), which corresponds to a twisted sector of \(\cX^{\rig}\), corresponds to the component \((J_{0}^{(\yyy)}\AAA_\cO^d)/G_\sigma\) with \(\yyy\) corresponding to \(\cY\). The natural map \(\cJ_\infty \cY(\sigma) \to \cJ_0 \cY(\sigma)\) corresponds to the natural maps \((J_{\infty}^{(\yyy)}\AAA_\cO^d)/G_\sigma \to (J_{0}^{(\yyy)}\AAA_\cO^d)/G_\sigma
\). To summarize these, we conclude that 
the residue map \(\psi_v\) sends \(x\) to \(\yyy\in \Boxx^{\rig}(\sigma) \subset \Boxx^{\rig}(\Sigma)=\pi_0(\cJ_0 \cX^{\rig})\) if the twisted arc \(\tilde{x} \in \cJ_\infty \cY\) corresponds to a point of \((J_{\infty}^{(\yyy)}\AAA_\cO^d)/G_\sigma\).

On the other hand, 
explicit computation shows that if \(x\in \cX(F_v)\) leads to a map \(\cO[x_1,\dots,x_d]\to \cO[\pi_{l}]\) of the form \eqref{eq:twmap} via these correspondences, then \(\log_{T,v}(x) \in N^{\rig}\) is \(\sum_{i=1}^d (\frac{a_i }{l}+ \ord_v f_i) b_{\rho_i}\) and 
\[
 q\circ \log_{T,v}(x) = \sum_{i=1}^d \frac{a_i}{l} b_{\rho_i} = \yyy.
\]
We have proved the lemma. 
\end{proof}

\begin{lem}\label{lem:NsigmaGsigma}
With the notation as above, 
there exists a natural isomorphism
  \[
  N^{\rig}(\sigma)\to\Hom(\widehat{\mu},G_{\sigma})=\Hom(\widehat{\mu},\Hom(\Hom(N^{\rig}(\sigma),\QQ/\ZZ),\cO^*))
  \]
  which sends $w\in N^{\rig}(\sigma)$ to 
  \[
  \left(\widehat{\mu}\ni\xi\mapsto\left(\left(\alpha\colon N^{\rig}(\sigma)\to\QQ/\ZZ\right)\mapsto(\xi,\alpha(w))\right)\right).
  \]
  Here $(\xi,\alpha(w))$ is the pairing $\widehat{\mu}\times\QQ/\ZZ\to\cO^*$
  given by 
  \[
  ((\xi_{i})_{i},[n/\ell]):=\xi_{\ell}^{n}.
  \]
  \end{lem}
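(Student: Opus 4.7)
The plan is to verify that the given explicit map is a well-defined isomorphism between two finite abelian groups of the same order. First, one checks that the pairing $\widehat{\mu}\times\QQ/\ZZ\to\cO^{*}$ is well-defined: if $[n/\ell]=[n'/\ell']$ in $\QQ/\ZZ$ with $\ell,\ell'$ coprime to the residue characteristic $p$, then $\xi_\ell^n=\xi_{\ell'}^{n'}$ by the compatibility encoded in the projective limit defining $\widehat{\mu}$. Moreover, under the standing assumption that $p$ does not divide the order of any stabilizer of $\cX$, the group $N^{\rig}(\sigma)$ has order coprime to $p$, so for any $\alpha$ and $w$ the element $\alpha(w)\in\QQ/\ZZ$ has denominator coprime to $p$ and the pairing applies.

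Second, I would reduce to the cyclic case. The formula $w\mapsto(\xi\mapsto(\alpha\mapsto(\xi,\alpha(w))))$, as well as the constructions $\Hom(-,\QQ/\ZZ)$, $\Hom(-,\cO^{*})$, and $\Hom(\widehat\mu,-)$, all commute with finite direct sums. Since a finite abelian group of order coprime to $p$ decomposes as $\bigoplus_j\ZZ/n_j\ZZ$, it suffices to treat the case $N^{\rig}(\sigma)=\ZZ/n\ZZ$ with $(n,p)=1$.

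In the cyclic case, standard identifications give $\Hom(\ZZ/n\ZZ,\QQ/\ZZ)\cong\ZZ/n\ZZ$ (generator $\mapsto[1/n]$) and $\Hom(\ZZ/n\ZZ,\cO^{*})=\mu_n(\cO)$. Since $\cO$ is the integer ring of the maximal unramified extension $F_v^{\mathrm{nr}}$, it contains all $n$-th roots of unity, so $\mu_n(\cO)$ has order $n$. On the other side, a continuous homomorphism from the profinite group $\widehat\mu$ to the finite group $\mu_n(\cO)$ factors through the quotient $\widehat\mu\twoheadrightarrow\mu_n$, giving $\Hom(\widehat\mu,\mu_n(\cO))=\Hom(\mu_n,\mu_n)\cong\ZZ/n\ZZ$, which again has order $n$. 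Hence source and target have the same finite cardinality.

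It then suffices to establish injectivity. Given a nonzero $w\in N^{\rig}(\sigma)$, one chooses $\alpha:N^{\rig}(\sigma)\to\QQ/\ZZ$ with $\alpha(w)=[k/\ell]\neq 0$ (with $\ell\mid n$ and $\ell\nmid k$) and picks $\xi\in\widehat\mu$ whose $\ell$-th component $\xi_\ell$ is a primitive $\ell$-th root of unity; then $(\xi,\alpha(w))=\xi_\ell^{k}\neq 1$, so the image of $w$ is a nontrivial element of $\Hom(\widehat\mu,G_\sigma)$. Naturality follows from the functoriality of each ingredient in the defining formula. The main (minor) obstacle is ensuring the pairing $\widehat\mu\times\QQ/\ZZ\to\cO^{*}$ is well-defined and that $\cO$ supplies enough roots of unity to make $|G_\sigma|=|N^{\rig}(\sigma)|$; once these are settled, the rest is a clean finite abelian group computation.
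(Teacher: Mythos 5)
Your proof is correct, but it takes a genuinely different route from the paper's. The paper fixes one sufficiently divisible integer $n$ coprime to the residue characteristic (so that $N^{\rig}(\sigma)$ has exponent dividing $n$), and uses the identifications $\Hom(\widehat{\mu},-)=\Hom((\tfrac{1}{n}\ZZ)/\ZZ,-)$ and $\QQ/\ZZ,\cO^*\rightsquigarrow(\tfrac{1}{n}\ZZ)/\ZZ$ to rewrite the entire target as the double dual $\Hom(\Hom(N^{\rig}(\sigma),(\tfrac{1}{n}\ZZ)/\ZZ),(\tfrac{1}{n}\ZZ)/\ZZ)$; the map of the lemma then becomes the biduality map for a finite abelian group, which is bijective. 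You instead reduce to the cyclic case by compatibility with direct sums, count that both sides have order $n$ (using that $\cO$, being the integer ring of $F_v^{\mathrm{nr}}$, contains $\mu_n$ for $(n,p)=1$, and that every homomorphism $\widehat{\mu}\to\mu_n$ factors through the canonical quotient $\widehat{\mu}\twoheadrightarrow\mu_n$), and then establish injectivity by exhibiting a separating character. Both arguments ultimately rest on the same duality for finite abelian groups of order prime to the residue characteristic, but the paper's is a one-step reduction to biduality, whereas yours makes explicit exactly where the hypotheses enter (well-definedness of the pairing, coprimality of $\#N^{\rig}(\sigma)$ to $p$, existence of roots of unity in $\cO$) — a useful bookkeeping the paper leaves tacit. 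The only step you leave implicit is that the displayed map is a group homomorphism in $w$, which follows at once from bilinearity of the pairing and is needed before the injectivity-plus-cardinality argument yields an isomorphism of groups rather than merely a bijection.
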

  
  \begin{proof}
  The map of the lemma is natural. We only need to show that it is bijective. If we fix a sufficiently factorial positive
  integer $n$ (coprime to the residue characteristic of the finite place \(v\) in question) and if we fix identification $\mu_{n}\cong(\frac{1}{n}\ZZ)/\ZZ$,
  then we have identifications
  \begin{align*}
   & \Hom(\widehat{\mu},\Hom(\Hom(N^{\rig}(\sigma),\QQ/\ZZ),\cO^*))\\
   & =\Hom\left(\left(\frac{1}{n}\ZZ\right)/\ZZ,\Hom\left(\Hom\left(N^{\rig}(\sigma),\left(\frac{1}{n}\ZZ\right)/\ZZ\right),\left(\frac{1}{n}\ZZ\right)/\ZZ\right)\right)\\
   & =\Hom\left(\Hom\left(N^{\rig}(\sigma),\left(\frac{1}{n}\ZZ\right)/\ZZ\right),\left(\frac{1}{n}\ZZ\right)/\ZZ\right).
  \end{align*}
  Under these identification, the map of the lemma is nothing but the
  double dual map 
  \[
  N^{\rig}(\sigma)\to\Hom\left(\Hom\left(N^{\rig}(\sigma),\left(\frac{1}{n}\ZZ\right)/\ZZ\right),\left(\frac{1}{n}\ZZ\right)/\ZZ\right),
  \]
  which is bijective. 
  \end{proof}


\subsubsection{}
We define $$M^{\rig}:=X^*(T)=\Hom(X_*(T), \ZZ)=\Hom (N^{\rig},\ZZ).$$
The homomorphism $\beta^{\rig}:\ZZ^{\Sigma(1)}\to N^{\rig}$ induces an inclusion $$\Hom(N^{\rig},\ZZ):M^{\rig}\hookrightarrow \ZZ^{\Sigma(1)}$$ and $$\Hom(\beta^{\rig},\RR):M^{\rig}_{\RR}\hookrightarrow\RR^{\Sigma(1)}=\RR^{\Sigma(1)}\times\{0\}^{\pijx}\hookrightarrow\RR^{\Spijx}.$$ The last inclusion is given by $$M^{\rig}_{\RR}\to\RR^{\Spijx},\hspace{1cm}\mmm\mapsto \big(\big(\langle b_{\rho},\mmm\rangle\big)_{\rho\in\Sigma(1)}, (0)_{\mathcal Y\in\pijx}\big)$$ 
If we denote by $i_{\ker(\beta^{\rig}_{\RR})}:\ker(\beta^{\rig}_{\RR})\hookrightarrow\RR^{\Sigma(1)}$ the canonical inclusion, then it is immediate that 
\begin{equation}
\label{mrigform}
M^{\rig}_{\RR}=\ker (\Hom (i_{\ker(\beta^{\rig}_{\RR})} ,\RR))\times\{0\}^{\pijx}.
\end{equation}
\subsection{Heights} 
\subsubsection{} We  define certain pairings which enable us to define logarithmic heights. 
\begin{mydef}
Let $\sss\in\CC^{\Sigma(1)\cup\pijx}$. We define a function $\phi_{\sss}:N^{\rig}\times G^D\to\CC$ as follows:
\[\phi_{\sss}(\yyy,x) := 
\begin{cases}
 s_{q(\yyy,x)}+\sum_{\rho\in\Sigma(1)}a_\rho(\yyy) s_\rho ,& \text{ if } q(\yyy,x)\text{ is a twisted sector,} \\
\sum_{\rho\in\Sigma(1)}a_{\rho}(\yyy)s_{\rho}, & \text{ otherwise.}
\end{cases}
\]
\end{mydef}
\begin{mydef}
Let $\sss\in\CC^{\Sigma(1)\cup\pijx}$. We define a function $\phi_{\sss}^{\infty}:N^{\rig}_{\RR}\to\CC$ to be the unique piecewise linear function $N_{\RR}^{\rig}\to\CC$ which satisfies for every~$\rho\in\Sigma(1)$ that 
$
\phi_{\sss}^{\infty}(b_{\rho})=s_{\rho}.
$
\end{mydef}
%
\begin{lem} \label{obvfacp}
Let $\sss\in\CC^{\Spijx}$ and let $\mmm\in M^{\rig}_{\RR}$. Let $(\yyy,x)\in N^{\rig}\times G^D$. 
The following claims are valid:
\begin{enumerate}
\item 
Suppose that $\yyy'=\sum_{\rho\in\sigma(1)}k_{\rho}b_{\rho}$, with~$k_{\rho}\in\ZZ_{\geq 0}$. Then $$\phi_{\sss}(\yyy+\yyy',x)=\phi_{\sss}(\yyy,x)+\sum_{\rho\in\Sigma(1)}k_{\rho}s_{\rho}. $$
\item One has that $$\phi_{\sss+i\mmm}(\yyy,x)=\phi_{\sss}(\yyy,x)+i\langle\yyy,\mmm\rangle.$$ 
\item For every $\xxx\in N^{\rig}_{\RR}$, one has that $$\phi_{\sss+i\mmm}^{\infty}(\xxx)=\phi_{\sss}(\xxx)+i\langle\xxx,\mmm\rangle.$$
\end{enumerate}
\end{lem}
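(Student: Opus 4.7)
The plan is to prove the three items by directly unfolding definitions. Part (1) uses uniqueness of the cone-wise non-negative expansion $\yyy = \sum_{\rho\in\sigma(1)} a_\rho^\sigma(\yyy) b_\rho$ from Definition~\ref{defofqr} (together with the consistency lemma immediately preceding it, which ensures the coefficients $a_\rho^\sigma$ glue into the global function $a_\rho$). Part (2) uses the explicit form \eqref{mrigform} of the embedding $M^{\rig}_{\RR}\hookrightarrow\RR^{\Spijx}$. Part (3) uses uniqueness of the piecewise linear function $\phi_\sss^\infty$ and the observation that the function $\xxx\mapsto i\langle\xxx,\mmm\rangle$ is globally linear.

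For (1), the first step is to observe that (assuming $\yyy$ and $\yyy'$ share a reference cone $\sigma$) the coefficients satisfy $a_\rho(\yyy+\yyy')=a_\rho(\yyy)+k_\rho$ for $\rho\in\sigma(1)$, with both sides zero for $\rho\notin\sigma(1)$. Taking fractional parts then gives $\{a_\rho(\yyy+\yyy')\}=\{a_\rho(\yyy)\}$, so $q(\yyy+\yyy',x)=q(\yyy,x)$, and in particular the two values fall into the same case of the definition of $\phi_\sss$. Subtracting $\phi_\sss(\yyy,x)$ from $\phi_\sss(\yyy+\yyy',x)$ then produces exactly $\sum_{\rho\in\Sigma(1)}k_\rho s_\rho$, as required.

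For (2), the plan is to write out the image of $i\mmm\in M^{\rig}_{\RR}$ in $\RR^{\Spijx}$: its $\rho$-component is $i\langle b_\rho,\mmm\rangle$ for $\rho\in\Sigma(1)$, and its components at all twisted sectors $\YY\in\pijx$ are zero. Substituting into the definition of $\phi_{\sss+i\mmm}(\yyy,x)$, the potential extra $s_{q(\yyy,x)}$ term is unchanged, while the sum over $\rho$ becomes
\[
\sum_{\rho}a_\rho(\yyy)\bigl(s_\rho+i\langle b_\rho,\mmm\rangle\bigr)=\sum_{\rho}a_\rho(\yyy)s_\rho+i\Bigl\langle\sum_{\rho}a_\rho(\yyy)b_\rho,\mmm\Bigr\rangle.
\]
The last inner product collapses to $i\langle\yyy,\mmm\rangle$ using $\yyy=\sum_\rho a_\rho(\yyy)b_\rho$ whenever $\yyy$ lies in some cone of $\Sigma$ (and both sides are zero otherwise).

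Part (3) is immediate from the piecewise-linear-plus-linear principle: the function $\xxx\mapsto\phi_\sss^\infty(\xxx)+i\langle\xxx,\mmm\rangle$ is linear on each cone of $\Sigma$ (since $\phi_\sss^\infty$ is, and $\langle\cdot,\mmm\rangle$ is globally linear), and at every ray generator $b_\rho$ it takes the value $s_\rho+i\langle b_\rho,\mmm\rangle$, which is precisely the $\rho$-component of $\sss+i\mmm$. By uniqueness of the piecewise linear function with prescribed values on the $b_\rho$'s, this function coincides with $\phi_{\sss+i\mmm}^\infty$. No step presents a genuine obstacle; the main care is tracking the two-case definition of $\phi_\sss$ in part (1) and the twisted-sector coordinates being killed by the embedding of $M^{\rig}_{\RR}$ in part (2).
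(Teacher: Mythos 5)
Your proposal is correct and follows essentially the same route as the paper: part (1) via $a_\tau(\yyy+\yyy')=a_\tau(\yyy)+k_\tau$ and the invariance of $q$, part (2) by expanding the embedding of $\mmm$ into $\RR^{\Spijx}$ and collapsing $\sum_\rho a_\rho(\yyy)\langle b_\rho,\mmm\rangle$ to $\langle\yyy,\mmm\rangle$, and part (3) by cone-wise linearity with the prescribed values at the $b_\rho$. Your explicit handling of the two-case definition of $\phi_\sss$ in part (1) is if anything slightly more careful than the paper's.
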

\begin{proof}
\begin{enumerate}
\item Obviously, one has for $\tau\in\Sigma(1)$ that $$a_{\tau}(\yyy+\yyy')=a_{\tau}\big(\yyy+\sum_{\rho\in\Sigma(1)}k_{\rho}b_{\rho}\big)=a_{\tau}(\yyy)+k_{\tau} $$ and that $$q(\yyy,x)=(q(\yyy),x)=(q(\yyy+\yyy'),x)=q(\yyy+\yyy',x).$$ Hence, 
\begin{align*}\phi_{\sss}(\yyy+\yyy',x)&=s_{q(\yyy+\yyy',x)}+\sum_{\rho\in\Sigma(1)}a_{\rho}(\yyy+\yyy')s_{\rho}\\&=s_{q(\yyy,x)}+\sum_{\rho\in\Sigma(1)}(a_{\rho}(\yyy)+k_{\rho})s_{\rho}\\&=\phi_{\sss}(\yyy,x)+\sum_{\rho\in\Sigma(1)}k_{\rho}s_{\rho}
\end{align*}
\item By definition, we have that
\begin{align*}\phi_{\sss+i\mmm}(\yyy,x)
&=s_{q(\yyy,x)}+\sum_{\rho\in\Sigma(1)}a_{\rho}(\yyy)(s_{\rho}+i\langle b_{\rho},\mmm\rangle)\\
&=s_{q(\yyy,x)}+\sum_{\rho\in\Sigma(1)}a_{\rho}(\yyy)s_{\rho}+ i \big\langle \sum_{\rho\in\Sigma(1)}a_{\rho}(\yyy)b_{\rho},\mmm\big\rangle\\
&=\phi_{\sss}(\yyy,x)+i\big\langle \sum_{\rho\in\Sigma(1)}a_{\rho}(\yyy)b_{\rho},\mmm\big\rangle\\
&=\phi_{\sss}(\yyy,x)+i\langle\yyy,\mmm\rangle.
\end{align*}
\item Let $\sigma\in\Sigma(1)$ be such that $\xxx\in\sigma.$ For $\rho\in\sigma(1)$, there exists a unique $u_{\rho}\in [0,+\infty[$ such that $\xxx=\sum_{\rho\in\sigma(1)}u_{\rho}b_{\rho}$. One has that
\begin{align*}\phi^{\infty}_{\sss+i\mmm}(\xxx)=\sum_{\rho\in\sigma(1)}u_{\rho} \cdot(s_{\rho}+i\langle b_{\rho},\mmm\rangle )&=\sum_{\rho\in\sigma(1)}u_{\rho}s_{\rho}+i\langle\sum_{\rho\in\sigma(1)}u_{\rho}b_{\rho},\mmm\rangle\\& =\phi^{\infty}_{\sss}(\xxx)+i\langle\xxx,\mmm\rangle.
\end{align*}
\end{enumerate}
\end{proof}
For a twisted sector $\mathcal Y\in \pi^*_0(\mathcal J_0\XX)$, one has that 
\begin{align}\begin{split}\label{transpofphi} \phi_{\sss-K_X}(\mathcal Y)&=s_{\mathcal Y}-\age(\mathcal Y)+1+\sum_{\rho\in\Sigma(1)}a_{\rho}(\yyy)(s_{\rho}+1)\\&=s_{\mathcal Y}-\age(\mathcal Y)+1+\sum_{\rho\in\Sigma(1)}a_{\rho}(\yyy)s_{\rho}+\age(\mathcal Y)\\&=\phi_{\sss}(\mathcal Y)+1.
\end{split}
\end{align}
\subsubsection{}In this paragraph, we define certain height pairings.
\begin{mydef}\label{finpair}
Let~$v$ be in the set of finite places~$M_F^0$. We define a pairing 
\begin{align*}H_v(-,-)&:\CC^{\Spijx}\times \TT(F_v)\to \CC^n\\
(\sss,(\yyy,x))&\mapsto\exp\bigg(\log(q_v)\cdot \phi_{\sss}\big(\log_{\TT,v}(\yyy),\psi_v^{BG}(x)\big)\bigg).
\end{align*}
\end{mydef}
It is immediate from definition that~$H_v(\sss,-)$ is $\TT(\Ov)$-invariant.
\begin{mydef}\label{infpair}
Let~$v$ be in the infinite places~$M_F^{\infty}$. We define a pairing
\begin{align*}
H_v(-,-)&:\CC^{\Sigma(1)\cup\pijx}\times \TT(F_v)\to \CC^n\\
(\sss,(\yyy,x))&\mapsto \exp\big([F_v:\RR]\cdot\phi_{\sss}^{\infty}(\log_{\TT,v}(\yyy))\big).
\end{align*}
\end{mydef}
\begin{lem}\label{otrpljati} Let~$\sss\in\CC^{\Sigma(1)\cup\pijx}$, let $\mmm\in M^{\rig}_{\RR}$.  Let~$v$ be a finite place of~$F$.  For every~$\xxx\in\TT(F_v)$, one has that $$H_v(\sss+i\mmm,\xxx)=H_v(\sss,\xxx)\exp\big(\log(q_v)\cdot i\langle\log_{\TT,v}(\xxx),\mmm\rangle )\big).$$
Let~$v$ be an infinite place of~$F$.  For every~$\xxx\in\TT(F_v)$, one has that
$$H_v(\sss+i\mmm,\xxx)=H_v(\sss,\xxx)\exp\big([F_v:\RR]\cdot i\langle\log_{\TT,v}(\xxx),\mmm\rangle )\big).$$
\end{lem}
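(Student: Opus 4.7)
The plan is to reduce the identity to parts (2) and (3) of Lemma~\ref{obvfacp}, which already describe how $\phi_{\sss+i\mmm}$ and $\phi_{\sss+i\mmm}^{\infty}$ differ from $\phi_{\sss}$ and $\phi_{\sss}^{\infty}$ when one shifts $\sss$ by an imaginary element of $M^{\rig}_{\RR}$. Since $H_v(\sss,-)$ is by Definition~\ref{finpair} and Definition~\ref{infpair} just the exponential of (a multiple of) one of these two pairings evaluated at $\log_{\TT,v}(\xxx)$, the claimed identities are obtained by evaluating the exponential of a sum as a product.

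Concretely, write $\xxx=(\yyy,x)\in T(F_v)\times BG(F_v)$, so that $\log_{\TT,v}(\xxx)=(\log_{T,v}(\yyy),\log_{BG,v}(x))$ lies in $N^{\rig}\times G^D$ when $v$ is finite (or in $N^{\rig}_{\RR}\times\{0\}$ when $v$ is infinite). For the finite case, Definition~\ref{finpair} gives
\[
H_v(\sss+i\mmm,\xxx)=\exp\bigl(\log(q_v)\cdot \phi_{\sss+i\mmm}(\log_{T,v}(\yyy),\log_{BG,v}(x))\bigr),
\]
and applying Lemma~\ref{obvfacp}(2) transforms the exponent into $\log(q_v)\cdot\phi_{\sss}(\log_{T,v}(\yyy),\log_{BG,v}(x))+i\log(q_v)\cdot\langle\log_{T,v}(\yyy),\mmm\rangle$. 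Splitting the exponential and noting that $\mmm\in M^{\rig}_{\RR}$ pairs only with the $N^{\rig}$-component of $\log_{\TT,v}(\xxx)$ (so $\langle\log_{\TT,v}(\xxx),\mmm\rangle=\langle\log_{T,v}(\yyy),\mmm\rangle$) yields the stated formula.

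For the infinite case the argument is identical but uses Definition~\ref{infpair} and Lemma~\ref{obvfacp}(3): the exponent becomes $[F_v:\RR]\cdot\phi_{\sss}^{\infty}(\log_{\TT,v}(\xxx))+i[F_v:\RR]\cdot\langle\log_{\TT,v}(\xxx),\mmm\rangle$, and again splitting the exponential gives the result. There is no real obstacle; the only mildly subtle point is the compatibility of notation for $\langle\log_{\TT,v}(\xxx),\mmm\rangle$ with $\mmm\in M^{\rig}_{\RR}=\Hom(N^{\rig},\ZZ)_{\RR}$, which should be stated explicitly so that the reader sees the $G^D$-part of $\log_{\TT,v}(\xxx)$ is annihilated by $\mmm$.
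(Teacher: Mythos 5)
Your proposal is correct and follows exactly the paper's own argument: in both cases one writes $H_v(\sss+i\mmm,\xxx)$ via Definition~\ref{finpair} or~\ref{infpair}, applies Lemma~\ref{obvfacp}(2) (finite $v$) or~\ref{obvfacp}(3) (infinite $v$) to the exponent, and splits the exponential of the sum into a product. The remark that $\mmm$ only pairs with the $N^{\rig}$-component is the same implicit identification the paper uses when passing from $\langle\log_{T,v}(\yyy),\mmm\rangle$ to $\langle\log_{\TT,v}(\xxx),\mmm\rangle$.
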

\begin{proof}
Suppose that~$v$  is finite. Write $\xxx=(\yyy,x)\in N^{\rig}\times G^D$. From Lemma \ref{obvfacp}(2) we obtain that $$\phi_{\sss+i\mmm}(\log_{\TT,v}(\yyy),\psi^{BG}_v(x))=\phi_{\sss}(\log_{\TT,v}(\yyy),\psi^{BG}_v(x))+i\langle\log_{T,v}(\yyy),\mmm\rangle.$$
We deduce that 
\begin{align*}
H_v(\sss+i\mmm,\xxx)&=\exp(\log(q_v)\cdot\phi_{\sss+i\mmm}(\log_{\TT,v}(\yyy),\psi^{BG}_v(x)))\\
&=\exp\big(\log(q_v)\cdot (\phi_{\sss}(\log_{\TT,v}(\yyy),\psi^{BG}_v(x))+i\langle\log_{T,v}(\yyy),\mmm\rangle)\big)\\
&=H_v(\sss,\xxx)\exp\big(\log(q_v)\cdot i\langle\log_{\TT,v}(\xxx),\mmm\rangle )\big),
\end{align*}
as claimed. Suppose that~$v$ is infinite. From Lemma \ref{obvfacp}(3) we obtain that
\begin{align*}
\phi_{\sss+i\mmm}^{\infty}(\log_{\TT,v}(\xxx))&=\phi_{\sss}^{\infty}(\log_{\TT,v}(\xxx))+i\langle\log_{\TT,v}(\xxx),\mmm\rangle.\\
\end{align*}
We deduce that
\begin{align*}
H_v(\sss+i\mmm,\xxx)&=\exp([F_v:\RR]\cdot\phi_{\sss+i\mmm}^{\infty}(\log_{\TT,v}(\xxx)))\\
&=\exp\big([F_v:\RR]\cdot (\phi_{\sss}(\log_{\TT,v}(\xxx))+i\langle\log_{\TT,v}(\xxx),\mmm\rangle)\big)\\
&=H_v(\sss,\xxx)\exp\big([F_v:\RR]\cdot i\langle\log_{\TT,v}(\xxx),\mmm\rangle )\big),
\end{align*}
as claimed.
\end{proof}

\subsection{Cones} We define a cone in~$\RR^{\Spijx}$ for which we prove in Subsection~\ref{northcottsection} that its elements define heights satisfying the Northcott property.
\subsubsection{} Let us first define certain linear forms.
\begin{mydef}\label{linformxidef}
For $\rho\in\Sigma(1)$, we define a linear form on~$\RR^{\Spijx}$ by $$\Xi_{\rho}((t_{\tau})_{\tau\in\Spijx})=t_{\rho}.$$
For $\YY\in\pijx$, we define a linear form on~$\RR^{\Spijx}$ by $$\Xi_{\YY}((t_{\tau})_{\tau\in\Spijx})=t_{\YY}+\sum_{\rho\in\Sigma(1)}a_{\rho}(\YY)t_{\rho}.$$
\end{mydef}
We note that for $\YY\in\pijx$ and $\sss\in\CC^{\Spijx}$, one has that:
\begin{equation}
\label{phixi}
\Xi_{\YY}(\sss)=\phi_{\sss}(\YY).
\end{equation}
For $\mmm\in M^{\rig}_{\RR}\subset\RR^{\Spijx}$ and $\rho\in\Sigma(1)$, it is immediate that:
\begin{equation}
\label{mxij}\Xi_{\rho}(\mmm)=\langle b_{\rho},\mmm\rangle .
\end{equation}
For $\mmm\in M^{\rig}_{\RR}$ and $\YY=(\yyy,x)\in\pijx$, one has that 
\begin{align}
\begin{split}
\label{mxid}\Xi_{\YY}(\mmm)=\phi_{\mmm}(\YY)=0+\sum_{\rho\in\Sigma(1)}a_{\rho}(\YY)\langle b_{\rho},\mmm\rangle&=\sum_{\rho\in\Sigma(1)}a_{\rho}(\yyy)\langle b_{\rho},\mmm\rangle\\&=\big\langle\sum_{\rho\in\Sigma(1)}a_{\rho}(\yyy)b_{\rho},\mmm\big\rangle\\&=\langle\yyy,\mmm\rangle. 
\end{split}
\end{align}
\begin{mydef}\label{defoflambda}
We define a cone $\Lambda\subset \RR^{\Sigma(1)\cup\pi_0^*(\mathcal J_0\XX)}$ by the inequalities
$$\forall\rho\in\Spijx: \Xi_{\rho}\geq 0.$$
For a cone~$C$ we denote by~$C^\circ$ the relative interior of~$C.$
\end{mydef}

\begin{mydef}\label{def:KX}
We denote by~$K_X$ the element $$K_X:=((-1)_{\rho\in\Sigma(1)}, (\age(\mathcal Y)-1)_{\mathcal Y\in\pi^*_0(\mathcal J_0\XX)}).$$
(This element corresponds to the raised line bundle denoted by \(K_{\cX,\orb}^{-1}\) in Abstract and \cite{dardayasudabm}.) 
\end{mydef}

\begin{lem}\label{propoflambda}
\begin{enumerate}
\item The forms~$\Xi_{\rho}$ for $\rho\in\Spijx$ are linearly independent. 
\item For every $\rho\in\Spijx$, one has that $\Xi_{\rho}(-K_X)=1$. In particular, one has $-K_X\in\Lambda^{\circ}$.
\item The cone~$\Lambda$ does not contain a line.
\end{enumerate}
\end{lem}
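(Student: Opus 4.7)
The plan is to dispatch all three parts by direct calculation, since the ambient space $\RR^{\Spijx}$ is equipped with a canonical basis $(e_\tau)_{\tau\in\Spijx}$ on which the forms $\Xi_\tau$ are easy to evaluate.

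\textbf{Part (1).} Suppose $\sum_{\rho\in\Sigma(1)} c_\rho \Xi_\rho + \sum_{\YY\in\pijx} d_\YY \Xi_\YY = 0$ as linear forms on $\RR^{\Spijx}$. The plan is to evaluate on standard basis vectors. For a fixed twisted sector $\YY_0$, the only term contributing to $e_{\YY_0}$ is $d_{\YY_0}$ (since $\Xi_\rho(e_{\YY_0})=0$ for $\rho\in\Sigma(1)$ by Definition~\ref{linformxidef}, and $\Xi_\YY(e_{\YY_0})=\delta_{\YY,\YY_0}$). This forces $d_{\YY_0}=0$ for all $\YY_0$. The remaining equation $\sum c_\rho \Xi_\rho = 0$, evaluated at $e_{\rho_0}$ for $\rho_0 \in \Sigma(1)$, yields $c_{\rho_0}=0$.

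\textbf{Part (2).} For $\rho \in \Sigma(1)$, one has directly $\Xi_\rho(-K_X) = -(-1) = 1$ by Definition~\ref{def:KX}. For $\YY \in \pijx$ corresponding to $(\yyy,x) \in \Boxx(\Sigma)$, I compute
\[
 \Xi_\YY(-K_X) = (1-\age(\YY)) + \sum_{\rho \in \Sigma(1)} a_\rho(\YY) \cdot 1 = 1 - \age(\YY) + \sum_{\rho \in \Sigma(1)} a_\rho(\YY).
\]
The identity $\age(\YY) = \sum_{\rho\in\Sigma(1)} a_\rho(\YY)$ is the standard formula for the age of a twisted sector of a toric Deligne--Mumford stack (cf.\ the analogous formula in \cite{borisovchensmith} and \cite[Definition 2.23]{dardayasudabm}), so $\Xi_\YY(-K_X) = 1$. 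This is the only nontrivial input to the argument. Now $\Lambda$ is cut out by the linearly independent forms $\Xi_\tau \geq 0$ ($\tau\in\Spijx$), whose number equals $\dim \RR^{\Spijx}$; hence $\Lambda$ is a simplicial cone and $\Lambda^\circ = \{\Xi_\tau > 0 : \tau \in \Spijx\}$. Since $\Xi_\tau(-K_X)=1>0$ for every $\tau$, we conclude $-K_X \in \Lambda^\circ$.

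\textbf{Part (3).} If $\Lambda$ contained a line through the origin in direction $v$, then both $v$ and $-v$ would lie in $\Lambda$, forcing $\Xi_\tau(v)=0$ for every $\tau \in \Spijx$. By Part (1), the forms $\{\Xi_\tau\}_{\tau\in\Spijx}$ are linearly independent and in number equal to $\dim \RR^{\Spijx}$, hence span the dual space; therefore $v=0$, and $\Lambda$ contains no line.

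The only step requiring any real work is the age identity $\age(\YY) = \sum_\rho a_\rho(\YY)$ in Part (2); the remaining assertions are straightforward linear algebra on the coordinate basis.
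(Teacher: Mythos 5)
Your proof is correct and follows essentially the same route as the paper's: coefficient-by-coefficient linear independence for (1), direct evaluation of $\Xi_\rho$ and $\Xi_\YY$ at $-K_X$ using the toric age identity $\age(\YY)=\sum_{\rho\in\Sigma(1)}a_\rho(\YY)$ for (2) (an identity the paper uses silently here and again in Equation~(\ref{transpofphi})), and vanishing of all the linearly independent forms on a putative line for (3).
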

\begin{proof}
\begin{enumerate}
\item Suppose that~$\alpha_{\rho}\in\RR$ for $\rho\in\Spijx$ are such that $$\sum_{\rho\in\Spijx}\alpha_{\rho}\Xi_{\rho}=0.$$ For every $\sss\in\RR^{\Spijx}$, one has that \begin{multline*}
\bigg(\sum_{\rho\in\Spijx}\alpha_{\rho}\Xi_{\rho}\bigg)(\sss)\\=\sum_{\rho\in\Sigma(1)}\alpha_{\rho}s_{\rho}+\sum_{\YY\in\pijx}\bigg(\alpha_{\YY}s_{\YY}+\sum_{\rho\in\Sigma(1)}\alpha_{\YY}a_{\rho}(\YY)s_{\rho}\bigg)
\end{multline*}
and thus $\alpha_{\YY}=0$ for $\YY\in\pijx$. But then it follows that $\alpha_{\rho}=0$ for $\rho\in\Sigma(1)$. The claim is verified.
\item For~$\rho\in\Sigma(1)$, one clearly has $$\Xi_{\rho}(-K_X)=1,$$ while for $\YY\in\pijx$, one has that $$\Xi_{\YY}(-K_X)=1-\age(\YY)+\sum_{\rho\in\Sigma(1)}a_{\rho}(\YY)=1.$$ 
\item This property of a cone follows from the property from Part~(1), but we recall the proof anyway. Suppose that~$\Lambda$ contains a line~$L$. Then for every ${\rho}\in\Spijx$ and every $\yyy\in L$, one has that $\Xi_{\rho}(\yyy)\geq 0$. But this is possible only if $\Xi_{\rho}|_L=0$ for every~$\rho$. As~$\Xi_{\rho}$ are linearly independent, one has that the only element where they all vanish is~$0$, a contradiction.
\end{enumerate}
\end{proof}
\subsubsection{} We dedicate this paragraph to recall basic properties of $\mathsf X$-functions of cones. We will use this theory only in Section~\ref{Finalcalculations}. Let~$V$ be a real vector space and let $d\vvv$ be a Lebesgue measure on~$V$ normalized by a lattice~$L\subset V.$ Let~$d\vvv^+$ be the Lebesgue measure on~$\Hom(V,\RR)$ normalized by the lattice $\Hom(L,\ZZ)\subset\Hom(V,\RR)$. For a cone $K\subset V$, we let~$K^{+}$ to be its dual cone and for $\yyy\in V\otimes\CC$, we set:
$$\mathsf X_K(\yyy):=\int_{K^+}\exp(-\langle\yyy,\vvv^+ \rangle)d\vvv^+.$$
For a finite set~$Z$ and a subset $S\subset\RR^Z$, we denote by 
\begin{equation}\label{defofS}
\Omega_S:=\{\zzz+i\ttt\in\CC^Z|\hspace{0,1cm}\zzz\in S, \ttt\in\RR^Z\}
\end{equation}
the tube above~$S$ in~$\CC^Z$. The integral defining $\mathsf X_K(\yyy)$ converges for any $\yyy\in\Omega_{K^{\circ}}$ and defines a holomorphic function in the domain \cite[Lemma 5.1]{Bourqui}. Set $n=\dim(V)$. If~$K$ is simplicial and $(\ell_j)_{j=1}^{n}$  are linearly independent linear forms on~$V$ such that~$\vvv\in K^{\circ}$ if and only if $\ell_j(\vvv)>0$ for every $j=1\doots n$, then one has that \cite[Section 3.1.7]{FonctionsZ}
$$\mathsf X_{K}(\yyy)=||d\ell_1\wedge\cdots\wedge d\ell_{n}||\prod_{j=1}^{n}\frac{1}{\ell_j(\yyy)},$$
where $||d\ell_1\wedge\cdots\wedge d\ell_{n}||$ is the volume of parallellopiped formed by $\ell_1\doots \ell_n.$
\begin{lem}\label{xconexi}
One has that $$\mathsf X_{\Lambda}(\sss)=\prod_{\rho\in\Spijx}\frac{1}{\Xi_{\rho}(\sss)}$$ for $\sss\in\Omega_{\Lambda^{\circ}}$.
\end{lem}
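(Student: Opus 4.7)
The plan is to apply directly the explicit formula for the $\mathsf{X}$-function of a simplicial cone recalled just before the statement (from \cite[Section~3.1.7]{FonctionsZ}). By Lemma~\ref{propoflambda}(1), the linear forms $(\Xi_\rho)_{\rho\in\Spijx}$ on $V:=\RR^{\Spijx}$ are linearly independent, and there are exactly $n=\#\Spijx=\dim_\RR V$ of them. By Definition~\ref{defoflambda}, these forms cut out $\Lambda$ via $\Xi_\rho\geq 0$, so $\Lambda$ is a full-dimensional simplicial cone with $\Lambda^\circ=\{\sss\in V\mid \Xi_\rho(\sss)>0\text{ for all }\rho\in\Spijx\}$. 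Hence the quoted formula applies with $L:=\ZZ^{\Spijx}$ as the normalizing lattice and $\ell_\rho=\Xi_\rho$.

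It then remains only to compute the volume $\|d\Xi_{\rho_1}\wedge\cdots\wedge d\Xi_{\rho_n}\|$ with respect to the dual lattice $\Hom(L,\ZZ)\subset V^*$. By Definition~\ref{linformxidef}, in the standard dual basis $(e^*_\tau)_{\tau\in\Spijx}$ one has
$$\Xi_\rho=e^*_\rho\ \text{ for }\rho\in\Sigma(1),\qquad \Xi_\YY=e^*_\YY+\sum_{\rho\in\Sigma(1)}a_\rho(\YY)\,e^*_\rho\ \text{ for }\YY\in\pijx.$$
Ordering the basis with the indices in $\Sigma(1)$ first and those in $\pijx$ second, the matrix expressing $(\Xi_\rho)_{\rho\in\Spijx}$ in $(e^*_\tau)_{\tau\in\Spijx}$ has block lower-triangular form $\begin{pmatrix} I & 0 \\ A & I\end{pmatrix}$, where $A=(a_\rho(\YY))_{\YY\in\pijx,\,\rho\in\Sigma(1)}$. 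Its determinant is $1$, so the volume equals $1$.

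Plugging this into the formula yields $\mathsf{X}_\Lambda(\sss)=\prod_{\rho\in\Spijx}\Xi_\rho(\sss)^{-1}$ on $\Omega_{\Lambda^\circ}$, as desired. I do not anticipate any real obstacle: once the general simplicial formula is in hand, the only computation is the triangular determinant above, which is immediate from the explicit form of the defining linear forms $\Xi_\rho$.
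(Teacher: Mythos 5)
Your proof is correct and follows essentially the same route as the paper: both apply the simplicial-cone formula and observe that the matrix expressing $(\Xi_\rho)_{\rho\in\Spijx}$ in the standard dual basis is unitriangular, so the volume factor $\big|\big|\bigwedge_{\rho}d\Xi_\rho\big|\big|$ equals $1$. No issues.
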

\begin{proof}
Clearly, the family defined by $\Xi'_{\rho}(\sss)=s_{\rho}$ for $\rho\in\Spijx$ satisfies that $$\big|\big|\bigwedge_{\rho\in\Spijx}d\Xi_{\rho}'\big|\big|=1.$$ The matrices $(\Xi_{\rho}')_{\rho\in\Spijx}$ and $(\Xi_{\rho})_{\rho\in\Spijx} $ coincide up to the strictly upper triangle (they do coincide on the diagonal). We deduce that $$\big|\big|\bigwedge_{\rho\in\Spijx}d\Xi_{\rho}\big|\big|=1$$ and the claim follows.
\end{proof}
\subsubsection{}\label{defrsig} In this paragraph we define certain polynomials which we will use in Subsection~\ref{sectionloctran}. For $\sigma\in\Sigma\subset N^{\rig}_{\QQ}$, we denote by~$R_{\sigma}$ 
 the rational function 
\begin{equation*}
 R_{\sigma}((X_{\rho})_{\rho\in\Spijx})=\dfrac{\prod_{\rho\in\sigma(1)}X_{\rho}+\sum_{\substack{\mathcal Y=(\yyy,x)\in\pijx\\\yyy\in\sigma}} X_{\YY}\cdot\prod_{\substack{\rho\in\sigma(1)\\a_{\rho}(\YY)= 0}}X_{\rho}}{\prod_{\rho\in\sigma(1)}(1-X_{\rho})}.
\end{equation*}
(When $\sigma=0$, then~$R_{\sigma}$ is understood to be~$1$). We set $$R_{\Sigma}:=\sum_{\sigma\in\Sigma}R_{\sigma}$$and let~$Q_{\Sigma}$ be the polynomial with integer coefficients such that $$R_{\Sigma}=\frac{Q_{\Sigma}}{\prod_{\rho\in\Sigma(1)}(1-X_{\rho})}.$$
\begin{lem}\label{valofqs}
The polynomial $Q_{\Sigma}-1-\sum_{\YY\in\pijx}X_{\YY}$ contains monomials of degree at least~$2$.
\end{lem}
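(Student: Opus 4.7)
I would expand
\[
Q_\Sigma = \prod_{\rho \in \Sigje}(1-X_\rho)\cdot R_\Sigma = \sum_{\sigma \in \Sigma} N_\sigma \cdot \prod_{\rho \in \Sigje \setminus \sigma(1)}(1-X_\rho),
\]
where $N_\sigma$ denotes the numerator in the definition of $R_\sigma$, and read off the terms of total degree $\le 1$. The monomials of $N_\sigma$ are of two kinds: (i) the monomial $\prod_{\rho \in \sigma(1)} X_\rho$, of degree $\dim\sigma$; (ii) for each $\YY=(\yyy,x)\in\pijx$ with $\yyy\in\sigma$, the monomial $X_\YY\cdot\prod_{\rho\in\sigma(1),\,a_\rho(\YY)=0}X_\rho$, of degree $1+k_\YY^\sigma$, where $k_\YY^\sigma := \#\{\rho\in\sigma(1):a_\rho(\YY)=0\}$. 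Since the outer factor equals $1-\sum_{\rho\notin\sigma(1)}X_\rho + (\text{degree}\ge 2)$, the degree-$\le 1$ part coming from each $\sigma$-summand is immediate.

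Organizing by $\dim\sigma$: only $\sigma=0$ produces a constant, namely $1$. For the degree-$1$ part, $\sigma = 0$ contributes $-\sum_{\rho\in\Sigje} X_\rho$ (from (i) against $-\sum X_\rho$) together with $\sum_{\YY=(0,x)\in\pijx} X_\YY$ (from (ii) with $\yyy=0$); each ray $\sigma = \RR_{\ge 0}\,b_{\rho_0}$ contributes $X_{\rho_0}$ (from (i)) together with $\sum_{\YY\in\pijx,\,\yyy_\YY\in\sigma^\circ} X_\YY$ (from (ii); the case $\yyy_\YY=0$ has $k_\YY^\sigma = 1$ and falls into degree $\ge 2$); each $\sigma$ with $\dim\sigma\ge 2$ contributes only via (ii) with $k_\YY^\sigma=0$, giving $\sum_{\YY\in\pijx,\,\yyy_\YY\in\sigma^\circ} X_\YY$. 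Summing over $\sigma$, the $-\sum_\rho X_\rho$ from $\sigma=0$ cancels exactly the $\sum_{\rho_0}X_{\rho_0}$ coming from the rays.

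By simpliciality of $\Sigma$ (as used in Lemma~\ref{weirdoma}), every $\yyy\in|\Sigma|$ lies in $\sigma^\circ$ for a unique $\sigma\in\Sigma$, so the sets $\{\YY\in\pijx:\yyy_\YY\in\sigma^\circ\}$ partition $\pijx$ as $\sigma$ runs over $\Sigma$ (with $\sigma=0$ picking up exactly the sectors $(0,x)$). Hence the twisted-sector contributions assemble to $\sum_{\YY\in\pijx}X_\YY$ with each sector counted once, and we conclude
\[
Q_\Sigma = 1 + \sum_{\YY\in\pijx} X_\YY + (\text{terms of degree}\ge 2),
\]
which is the asserted statement. \textbf{Main obstacle.} There is no deep difficulty here; the proof is a direct degree count. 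The only real care needed is the combinatorial bookkeeping to rule out double-counting twisted sectors across different cones, and this is handled cleanly by the uniqueness of the minimal cone containing any given point in a simplicial fan, which coincides with the unique cone whose relative interior contains it.
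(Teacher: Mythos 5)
Your proof is correct and follows essentially the same route as the paper's: expand $Q_\Sigma=\sum_{\sigma\in\Sigma}N_\sigma\cdot\prod_{\rho\in\Sigma(1)-\sigma(1)}(1-X_\rho)$ and read off the terms of degree at most $1$, with the $-\sum_{\rho}X_\rho$ coming from $\sigma=0$ cancelling the ray contributions and the relative interiors of the cones partitioning $\pijx$. Your explicit bookkeeping of the sectors $(0,x)$ through the $\sigma=0$ summand is the correct reading of the definition of $R_0$ (the one consistent with Lemma \ref{loctraexp}) and is in fact slightly more careful than the paper's concluding display, which takes the union only over $\sigma\in\Sigma-0$.
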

\begin{proof}
In the polynomial
\begin{align*}Q_{\Sigma}&=\bigg(\sum_{\sigma\in\Sigma}R_{\sigma}\bigg)\cdot\prod_{\rho\in\Sigma(1)}(1-X_{\rho})\\
&=\sum_{\sigma\in\Sigma}\prod_{\rho\in\Sigma(1)-\sigma(1)}(1-X_{\rho})\bigg(\prod_{\rho\in\sigma(1)}X_{\rho}+\sum_{\substack{\YY=(\yyy,x)\in\pijx\\\yyy\in\sigma}} X_{\YY} \cdot\prod_{\substack{\rho\in\sigma(1)\\a_{\rho}(\YY)=0}}X_{\rho}\bigg)\\
\end{align*}
the only free coefficient comes from the term corresponding to~$\sigma=0$ and is equal to~$1$. Whenever $\sigma\in\Sigma(1)$, the monomials of degree~$1$ of the polynomial $$\prod_{\rho\in\Sigma(1)-\sigma(1)}(1-X_{\rho})\cdot\bigg(\prod_{\rho\in\sigma(1)}X_{\rho}+\sum_{\substack{\YY=(\yyy,x)\in\pijx\\\yyy\in\sigma}} X_{\YY} \cdot\prod_{\substack{\rho\in\sigma(1)\\a_{\rho}(\YY)=0}}X_{\rho}\bigg)$$ are given by~$X_{\rho}$ for $\rho=\sigma$ and $X_{\YY}$ for $\YY=(\yyy,x)\in\pijx$ with $\yyy\in\sigma$ with $a_{\rho}(\yyy)\neq 0$, that is $\yyy\in\rho^{\circ}=\sigma^{\circ}$, and the coefficient in front of any of these monomials is~$1$. The monomials in the polynomial $\prod_{\rho\in\Sigma(1)}(1-X_{\rho})$ are given by~$X_{\rho}$ for $\rho\in\Sigma(1)$ and their coefficient is~$-1$. This means that monomials~$X_{\rho}$ for $\rho\in\Sigma(1)$ do not appear in~$Q_{\Sigma}$. For every $j\geq 2$ and for every $\sigma\in\Sigma(j),$ the polynomial in the sum defining~$Q_{\Sigma}$ corresponding to~$\sigma$, has for monomials~$X_{\YY},$ for $\YY=(\yyy,x)\in\pijx,$ with $\yyy\in\sigma$ such that for every $\rho\in\sigma(1)$ one has $a_{\rho}(\YY)>0$ , that is, with $\yyy\in\sigma^{\circ}$. The coefficient in front of them is~$1$. We conclude that degree~$1$ monomials in~$Q_{\Sigma}$ are given by~$X_{\YY}$ for $$\YY\in\bigcup_{\sigma\in\Sigma-0}\big\{ \YY=(\yyy,x)\in\pijx|\hspace{0,1cm} \yyy\in\sigma^{\circ}\big\}=\pijx.$$ The statement follows.
\end{proof}
\subsection{Global height}
We now look at the global heights.
\subsubsection{}\label{cwgh} In this paragraph, we compare global heights from \cite{dardayasudabm} with previous height pairings. We let $p:\RR^{\Spijx}\to NS^1_{\orb,\RR}$ to be the canonical map. 
{By a straightforward generalization of \cite[Definition 4.3]{dardayasudabm} to raised \(\RR\)-line bundles, 
an element $\sss\in\NSr$ defines a height function $H_{\sss}:\XX(F)\to\RR_{>0}$ as in \cite[Definition 4.3]{dardayasudabm}. 
We deduce below that~$p(\Lambda)$  is contained in the orbifold pseudo-effective cone of~$\XX$.} We now fix~$\sss\in\Lambda^{\circ}$. We define $L_{\sss}:=p((s_{\rho})_{\rho\in\Spijx})\in p(\Lambda)^{\circ}$. We define a raising function $$c_{\sss}:\pijx\to\RR_{\geq 0}$$ by $$\mathcal Y\mapsto \begin{cases}
0&\mathcal Y\text{ is the untwisted sector,}\\
s_{\mathcal Y}&\mathcal Y\text{ otherwise.}
\end{cases}
$$
The raised {\(\RR\)-line bundle} $(L_{\sss},c_{\sss})$ defines a height function $H_{(L_{\sss},c_{\sss})}:\mathcal X(F)\to\RR_{>0}$ as in \cite[Definition 4.3]{dardayasudabm}.
\begin{lem}\label{gl:pair}
\begin{enumerate}
\item The cone \(p(\Lambda)\) is contained in the orbifold pseudo-effective cone, and hence $(L_{\sss},c_{\sss})$ is big in the sense of \cite[Definition 8.15]{dardayasudabm}.
\item For~$v\in M_F$, let $i_v:\TT(F)\to\TT(F_v)$ be the canonical homomorphism. Let $\xxx\in\TT(F)$. {Then, $$H_{(L_{\sss},c_{\sss})}(\xxx):=\prod_{\vMF}H_v(\sss,i_v(\xxx))$$
is a height function of the raised \(\RR\)-line bundle \((L_\sss,c_\sss)\). 
} 
\end{enumerate}
\end{lem}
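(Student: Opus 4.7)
The plan is to handle the two parts of the lemma separately, each reducing to structural results already established in the paper.

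For part~(1), I would first identify the extremal rays of $\Lambda$ explicitly. By Lemma~\ref{propoflambda}(1) the forms $\{\Xi_\rho\}_{\rho\in\Spijx}$ are linearly independent, so $\Lambda$ is simplicial and its extremal rays form the basis dual to $\{\Xi_\rho\}$. A direct matrix inversion shows these rays are $f_\rho=e_\rho-\sum_{\YY\in\pijx}a_\rho(\YY)\,e_\YY$ for $\rho\in\Sigma(1)$ and $f_\YY=e_\YY$ for $\YY\in\pijx$. Under $p$ these map to classes of torus-invariant prime divisors and of twisted sectors in the orbifold N\'eron--Severi group of $\XX$, both of which are effective; this is the orbifold counterpart of the classical toric description and matches the setup of \cite[Section~8]{dardayasudabm}. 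Hence $p(\Lambda)$ lies in the orbifold pseudo-effective cone. Bigness of $(L_\sss,c_\sss)$ for $\sss\in\Lambda^\circ$ then follows from Lemma~\ref{propoflambda}(2), which places $-K_X$ in $\Lambda^\circ$, combined with \cite[Definition~8.15]{dardayasudabm}.

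For part~(2), I would compute each local pairing $H_v(\sss,-)$ explicitly and match it term by term with the local factors defining $H_{(L_\sss,c_\sss)}$. For a finite place $v$ where $\psi_v$ is defined, Lemma~\ref{lem:res-map} identifies $\psi_v|_{\TT(F_v)}$ with $q\circ\log_{\TT,v}$. Unpacking Definition~\ref{finpair} with the formula $\phi_\sss(\yyy,g)=c_\sss(q(\yyy,g))+\sum_{\rho\in\Sigma(1)}a_\rho(\yyy)s_\rho$ (with $c_\sss$ extended by zero on the untwisted sector) yields
\[
H_v(\sss,(\yyy,x))=q_v^{c_\sss(\psi_v(\yyy,x))}\cdot\exp\bigl(\log(q_v)\sum_{\rho\in\Sigma(1)}a_\rho(\log_{T,v}(\yyy))\,s_\rho\bigr).
\]
The second factor is precisely the local factor at $v$ of a height function of the $\RR$-line bundle $L_\sss$, since $L_\sss$ corresponds to the piecewise linear function $\phi_\sss^\infty$ with $\phi_\sss^\infty(b_\rho)=s_\rho$; this recovers the Batyrev--Tschinkel local height formula adapted to toric stacks. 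At infinite places Definition~\ref{infpair} directly gives the local factor of $L_\sss$, with no raising contribution. Taking the product over all $v$ and absorbing bounded correction factors at the finitely many finite places of bad reduction gives $H_{L_\sss}(\xxx)\cdot\prod_v q_v^{c_\sss\circ\psi_v(\xxx)}$, which is a height function of the raised $\RR$-line bundle $(L_\sss,c_\sss)$ by definition.

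The main obstacle I anticipate lies in part~(1): the careful verification that the canonical basis vectors $e_\rho$ and $e_\YY$ of $\RR^{\Spijx}$ descend under $p$ to effective classes in $\NSr(\XX)$. This requires unpacking the precise definitions of the orbifold N\'eron--Severi group and its pseudo-effective cone from \cite{dardayasudabm} and matching them to the combinatorial data of the stacky fan $\bSigma$. Part~(2) is essentially a bookkeeping exercise once Lemma~\ref{lem:res-map} and the additive decomposition of $\phi_\sss$ are in hand.
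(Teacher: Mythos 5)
Your part~(2) is correct and follows essentially the same route as the paper: split each local factor into the $L_\sss$-part and the raising part, identify the former with the local height of the toric $\RR$-divisor $D_\sss$ via the piecewise linear function, use Lemma~\ref{lem:res-map} to identify $\psi_v|_{\TT(F_v)}$ with $q\circ\log_{\TT,v}$ away from finitely many places, and absorb the bounded discrepancy at the bad places.

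Part~(1) has a genuine gap at exactly the step you flag as the anticipated obstacle. Your dual-basis computation is right: $\Lambda$ is simplicial with extremal rays $f_\rho=e_\rho-\sum_{\YY\in\pijx}a_\rho(\YY)e_\YY$ and $f_\YY=e_\YY$. But $p(f_\rho)$ is \emph{not} "the class of a torus-invariant prime divisor": it is the pair $(D_\rho,\,-a_\rho(\cdot))$, i.e.\ the boundary divisor equipped with a \emph{negative} raising function. There is no "orbifold counterpart of the classical toric description" to cite here; the pseudo-effectivity of such a class is precisely the assertion that for a general member $f:\mathcal{C}\to\cX_{\overline F}$ of any covering family one has $\sum_{z_i}\bigl((f,D_\rho)_{z_i}-a_\rho(\YY_{z_i})\bigr)\ge 0$, and verifying this forces you back to the local computation the paper does for all of $\Lambda$ at once: the local orbifold intersection number at $z_i$ equals $\phi_{\ttt}(\log_{\cT,\overline F((t))}(z_i'),\psi^{BG}_{\overline F((t))}(z_i'))$, which for $\ttt=f_\rho$ comes out to $\lfloor a_\rho(\cdot)\rfloor\ge 0$. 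Without this step your argument proves nothing, and with it your extremal-ray reduction is no shorter than the paper's direct verification that the inequalities $\Xi_\rho\ge 0$ defining $\Lambda$ are exactly the non-negativity of the local intersection numbers. Note also the paper's own remark that the cone cut out by \emph{local} non-negativity is only known to be contained in (not equal to) the orbifold pseudo-effective cone for DM stacks, so any appeal to an "effective = combinatorially obvious" dictionary is unavailable. Finally, your derivation of bigness from Lemma~\ref{propoflambda}(2) is a non sequitur: the relevant point is that $\Lambda$ is full-dimensional, so $L_\sss\in p(\Lambda^\circ)=p(\Lambda)^\circ$ lies in the interior of the pseudo-effective cone; the location of $-K_X$ plays no role for a general $\sss\in\Lambda^\circ$.
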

\begin{proof}
\begin{enumerate}
\item 
Consider an element \(\ttt \in \Lambda\) and a covering family of stacky curves \(\widetilde{f}:\widetilde{\mathcal{C}}\to \cX_{\overline{F}}\) with \(f:\mathcal{C}\to \cX_{\overline{F}}\) a general member. We need to show that  \((f,(L_\ttt,c_\ttt))\ge 0\), where the left side is the intersection number defined in \cite[Section 8]{dardayasudabm}. 

Let \(z_1,\dots,z_n \in \mathcal{C}(\overline{F})\) be those points that do not map into the open stacky torus \(\TT\). Then, the intersection number \((f,(L_\ttt,c_\ttt))\)
is expressed as the sum  of the local intersection numbers \((f,(L_\ttt,c_\ttt))_{z_i}\).
Here, each \((f,(L_\ttt,c_\ttt))_{z_i}\) is the sum \((f,L_\ttt)_{z_i}+c_{\yyy}\) of the local (non-orbifold) intersection \((f,L_\ttt)_{z_i}\) and \(c_{\yyy}\) with \(\yyy\) the associated sector at \(z_i\). For each \(i\), localizing the morphism \(f:C\to \cX_{\overline{F}}\)  around \(z_i\) leads to an \(\overline{F}((t))\)-point  \(z_i'\in \TT(\overline{F}((t)))\). We see that the above sum \((f,L_\ttt)_{z_i}+c_{\yyy}\) is equal to
\[
\phi_{\ttt}(\log_{\cT,\overline{F}((t))}(z_i'),\psi^{BG}_{\overline{F}((t))}(z_i'))  
\]
where \(\log_{\cT,\overline{F}((t))}\) and \(\psi^{BG}_{\overline{F}((t))}\) are the counterparts of \(\log_{\cT,v}\) and \(\psi^{BG}_v\) for \(\overline{F}((t)))\)-points.
The defining inequalities \(\Xi_\rho \ge 0\) of \(\Lambda\) are precisely the condition ensuring that this number is non-negative.  
Thus, local intersection numbers  \((f,(L_\ttt,c_\ttt))_{z_i}\) are non-negative, and so is the global intersection number \((f,(L_\ttt,c_\ttt))\), as desired.

\item 
We can write
\begin{align*}
  &\prod_{\vMF}H_v(\sss,i_v(\xxx))\\
  & = \prod_{\vMF}\exp\big(\log(q_v)\cdot(\Phi_{\sss}(\log_{\TT,v}(i_v(\xxx))))\big)\times
  \prod_{\vMFz}q_v^{ s_{q(\log_{\TT,v}(i_v(\xxx)))}} 
\end{align*}
Let \(\sss_0 \in \RR^{\Sigma(1)}\) be the image of \(\sss\) and 
let \(D_\sss\) be the toric \(\RR\)-divisor on the coarse moduli space \(\overline{\cX}\) corresponding to  \(\sss_0\). Then, \(D_\sss\) and \(L_\sss\) correspond to each other by the natural identification \(NS^1_\RR (\cX)=NS^1_\RR (\overline{\cX})\).
Then 
\begin{align*}
  T(F_v) \ni \xxx \mapsto
 \exp\big(\log(q_v)\cdot(\Phi_{\sss}(\log_{T,v}(\xxx)))\big) \in \RR
\end{align*}
is a local height function of \(D_\sss\) at each place \(v\). Moreover, at finite places, this is \emph{the} one given by the natural toric model of \(\cX\) over the integer ring \(\cO_F\). 
Indeed, if \(D_\sss\) is a Cartier divisor, then we can see this from standard local equations of \(D_\sss\). 
The general case can be reduced to this case, since \(D_\sss\) is \(\RR\)-Cartier. 
Thus, 
\begin{align*}
  T(F)\ni \xxx \mapsto
   \prod_{\vMF}\exp\big(\log(q_v)\cdot(\Phi_{\sss}(\log_{T,v}(i_v(\xxx))))\big)
\end{align*}
is a height function of \(D_\sss\), and its pull-back to \(\cT(F)\) is a (stable) height function of the (non-raised) line bundle \(L_\sss = (L_\sss,0)\). 




From Lemma \ref{lem:res-map}, there exists a finite subset \(S\subset M_F^0\) such that for  every \(v\in M_F^0 - S\), \(q\circ \log_{\TT,v}\) is the residue map. {Thus, 
\begin{align*}
  & \prod_{\vMF}\exp\big(\log(q_v)\cdot(\Phi_{\sss}(\log_{\TT,v}(i_v(\xxx))))\big)\times
  \prod_{\vMFz -S}q_v^{ s_{q(\log_{\TT,v}(i_v(\xxx)))}} \\
  &= H_{L_\sss} (\xxx)\times
  \prod_{\vMFz -S}q_v^{ s_{q(\log_{\TT,v}(i_v(\xxx)))}}
\end{align*}
is a height function of \((L_\sss,c_\sss)\) restricted to points of the stacky torus. 
Then, the function of the assertion,
\begin{align*}
  \prod_{\vMF}H_v(\sss,i_v(\xxx))=H_{L_\sss} (\xxx)\times
  \prod_{\vMFz}q_v^{ s_{q(\log_{\TT,v}(i_v(\xxx)))}} ,
\end{align*}
is also a height function of \((L_\sss,c_\sss)\) restricted to points of the stacky torus, since  the function \(\xxx \mapsto \prod_{v \in S}q_v^{ s_{q(\log_{\TT,v}(i_v(\xxx)))}}\) is bounded below and above by the positive constants \(\prod_{v\in S}q_v^{s_{\min}}\) and \(\prod_{v\in S}q_v^{s_{\max}}\), where \(s_{\min}\) and \(s_{\max}\) are the minimum and maximum values of \(s_\cY\), \(\cY\in \pi_0(\cJ_0\cX)\) respectively.
}

\end{enumerate}
\end{proof}

\begin{rem}
\normalfont
As observed in the proof of the first assertion of the last lemma, 
the image of \(\Lambda\) of \(\NSr\) is characterized by the non-negativity of \emph{local} orbifold intersection numbers (for points of the stacky torus \(\TT\)), while the orbifold pseudo-effective cone defined in \cite{dardayasudabm} is characterized by the non-negativity of \emph{global} orbifold intersection numbers. These two cones coincide for varieties. We do not know whether they coincide also for DM stacks. 
\end{rem}
{
\begin{rem}
\normalfont
We speculate that our choice of the height function \(H_{(L_\sss,c_\sss)}\) is very natural in the sense that the maps \(q\circ \log_{\TT,v}\) would be regarded as residue maps even for finitely many ``bad/exceptional'' finite places, once we have made a verbatim generalization of  \(\cJ_0 \cX\)  and the residue map to all finite places by using group schemes \(\mu_l\) even when \(l\) devides the residue characteristic.
\end{rem}
}
\subsubsection{}We prove the following estimate that we will be used later. For $\yyy=(\yyy_v)_{\vMF}\in\TTT(\AAF)$ and $\sss\in\CC^{\Spijx}$, we set:
$$H(\sss,\yyy):=\prod_{\vMF}H_v(\sss,\yyy_v).$$
\begin{lem}\label{estheighty}
Let~$\yyy\in\TT(\AAF)$. For every $\sss\in{]0,+\infty[^{\Spijx}}$, there exists $C(\yyy)>0$ such that for every $\xxx\in\TT(\AAF)$, one has that:
$$(\exp(||\sss||)\cdot C(\yyy))^{-1}H(\sss,\xxx)\leq H(\sss,\yyy\xxx)\leq (\exp(||\sss||)\cdot C(\yyy))\cdot H(\sss,\xxx).$$
\end{lem}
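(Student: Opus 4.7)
The plan is to bound the log-ratio $\log H(\sss,\yyy\xxx)-\log H(\sss,\xxx)$ place by place, exploiting the fact that almost all local contributions vanish. First, for any finite place $v$ with $\yyy_v\in\TT(\Ov)$, the $\TT(\Ov)$-invariance of $H_v(\sss,-)$ noted after Definition~\ref{finpair} forces $H_v(\sss,\yyy_v\xxx_v)=H_v(\sss,\xxx_v)$. Since $\yyy\in\TT(\AAF)$ gives $\yyy_v\in\TT(\Ov)$ for almost all $v$, the ratio $H(\sss,\yyy\xxx)/H(\sss,\xxx)$ factors as a finite product over a set $S\subset M_F$ consisting of the archimedean places together with the finitely many finite places where $\yyy_v\notin\TT(\Ov)$.

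The second step is to bound the local log-ratio at each $v\in S$ uniformly in $\xxx_v$. Using the homomorphism property $\log_{\TT,v}(\yyy_v\xxx_v)=\log_{\TT,v}(\yyy_v)+\log_{\TT,v}(\xxx_v)$, and writing $a_v:=\log_{\TT,v}(\yyy_v)$, I would study the translated difference $y\mapsto \phi_\sss^\infty(y+a_v)-\phi_\sss^\infty(y)$. Because $\overline{\XX}$ is projective the fan $\Sigma$ is complete, and $\phi_\sss^\infty$ is piecewise linear with linear pieces $L_\sigma$ on each maximal cone $\sigma\in\Sigma(d)$; hence for $|y|$ large compared to $|a_v|$ both $y$ and $y+a_v$ lie in a common maximal cone $\sigma$, yielding the constant difference $L_\sigma(a_v)$, while on the bounded complementary ``transition region'' continuity of $\phi_\sss^\infty$ furnishes another bound. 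Both bounds are linear in $\sss$ since $\phi_\sss^\infty$ itself is linear in $\sss$. At finite places $v\in S$, the additional ``sector term'' $s_{q(\cdot,\cdot)}$ appearing in $\phi_\sss$ contributes a further discrepancy bounded by $2\|\sss\|_\infty$. Summing these finitely many local bounds produces
\[
\bigl|\log H(\sss,\yyy\xxx)-\log H(\sss,\xxx)\bigr|\ \le\ C(\yyy)\,\|\sss\|
\]
for an explicit constant $C(\yyy)>0$ depending only on $(\log_{\TT,v}(\yyy_v))_{v\in S}$ and the fan, and exponentiating together with the symmetric estimate obtained by replacing $\yyy$ by $\yyy^{-1}$ yields the claimed two-sided bound (with constants absorbed into $C(\yyy)$ as in the statement).

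The main technical obstacle is the uniform-in-$y$ boundedness of $\phi_\sss^\infty(y+a_v)-\phi_\sss^\infty(y)$, with linear dependence on $\sss$. This rests crucially on the completeness of $\Sigma$, which forces $y$ and $y+a_v$ into a common maximal cone once $|y|$ is large enough, and on the $\RR$-linearity of $\phi_\sss^\infty$ in $\sss$, so that the continuity/compactness argument on the bounded transition region automatically produces a constant scaling linearly in $\|\sss\|$. Once this piecewise-linear shift estimate is in hand the remaining bookkeeping is routine.
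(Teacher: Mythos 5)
Your overall route is the same as the paper's: reduce to a finite set $S$ of places using the $\TT(\Ov)$-invariance of $H_v(\sss,-)$ (equivalently, $\log_{\TT,v}(\yyy_v)=0$ for almost all $v$), then bound each remaining local discrepancy by a constant times $||\sss||$ and exponentiate. The treatment of the sector term at finite places in $S$ also matches the paper's bound $\max_{\YY,\YY'}|s_{\YY}-s_{\YY'}|$.

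However, one step of your archimedean (and lattice) estimate is wrong as stated. The set of $y$ for which $y$ and $y+a_v$ do \emph{not} lie in a common maximal cone is a neighbourhood of the walls of $\Sigma$, hence \emph{unbounded} (e.g.\ for the fan of quadrants in $\RR^2$, take $y=(N,\varepsilon)$ and $a_v=(0,-2\varepsilon)$ with $N\to\infty$); so your ``bounded complementary transition region'' does not exist, and continuity on a compact set cannot be invoked there. The needed uniform bound is nevertheless true and is what the paper uses when it says ``$\phi_{\sss}^{\infty}$ is $\Sigma$-piecewise linear'': a continuous function that is linear on each cone of a complete fan, with linear pieces $L_\sigma$, is globally Lipschitz with constant $\max_{\sigma}||L_\sigma||\ll||\sss||$ (the segment from $y$ to $y+a_v$ meets finitely many cones, and on each piece the variation is controlled by $||L_\sigma||$), whence $|\phi_{\sss}^{\infty}(y+a_v)-\phi_{\sss}^{\infty}(y)|\ll ||\sss||\cdot||a_v||$ uniformly in $y$. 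Replacing your cone-splitting argument by this Lipschitz estimate repairs the proof; the remaining bookkeeping, including the passage from $\yyy$ to $\yyy^{-1}$ for the lower bound, is as in the paper.
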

\begin{proof}We prove only the second inequality, because then the first inequality follows by setting $\xxx=\yyy\xxx$ and $\yyy=\yyy^{-1}$, and changing $C(\yyy)$ by $\max(C(\yyy),C(\yyy^{-1}))$ if needed. There exists a finite set of places $S\subset M_F$, such that for every $v\in M_F-S$, one has that $\log_{\TT,v}(\yyy_v)=0$. For every $v\in M_F-S$, one hence has that 
\begin{align*}H_v(\sss,\yyy_v\xxx_v)=\exp(\log(q_v)\cdot\phi_{\sss}(\log_{\TT,v}(\yyy_v\xxx_v)))&=\exp(\log(q_v)\cdot\phi_{\sss}(\log_{\TT,v}(\xxx_v)))\\&=H_v(\sss,\xxx_v).
\end{align*}
For every $v\in S\cap M_F^0$, one has that
 \begin{align*}
\phi_{\sss}(\log_{\TT,v}(\yyy_v\xxx_v))-\phi_{\sss}(\log_{\TT,v}(\xxx_v))\hskip-4,5cm&\\
&\leq\max_{\YY,\YY'\in\pijx}|s_{\YY}-s_{\YY'}|+\sum_{\rho\in\Sigma(1)}(a_{\rho}(\log_{\TT,v}(\yyy_v\xxx_v))-a_{\rho}(\log_{\TT,v}(\xxx_v)))s_{\rho}.
\end{align*}
Write $\xxx=q(\xxx)+\sum_{\rho\in\Sigma(1)}n_{\rho}b_{\rho}$, with $n_{\rho}\in\ZZ_{\geq 0}$ and $\yyy=q(\yyy)+\sum_{\rho\in\Sigma(1)}k_{\rho}b_{\rho}$, with $b_{\rho}\in\ZZ_{\geq 0}$. We have that
\begin{align*}a_{\rho}(\log_{\TT,v}(\yyy_v\xxx_v))&=a_{\rho}(\log_{\TT,v}(\yyy_v)+\log_{\TT,v}(\xxx_v))\\&\leq \big(\max_{\YY\in\pijx}a_{\rho}(\YY)\big)+k_{\rho}+n_{\rho}. 
\end{align*}
We deduce that there exists $C_v'(\yyy_v)>0$ such that \begin{align*}\sum_{\rho\in\Sigma(1)}(a_{\rho}(\log_{\TT,v}(\yyy_v\xxx_v))-a_{\rho}(\log_{\TT,v}(\xxx_v)))s_{\rho}\hskip-2cm&\\&=\sum_{\rho\in\Sigma(1)}\big(\big(\max_{\YY\in\pijx}a_{\rho}(\YY)\big)+k_{\rho}+n_{\rho}-n_{\rho}\big)s_{\rho}\\&\leq \#\Sigma(1)\cdot\big(\max_{\substack{\YY\in\pijx\\\rho\in\Sigma(1)}}a_{\rho}(\YY)\big)||\sss||+\sum_{\rho\in\Sigma(1)}k_{\rho}s_{\rho}\\
&\leq ||\sss||\cdot C_v'(\yyy_v).
\end{align*}

We deduce that for some $C_v(\yyy_v)>0$, one has that:
$$\phi_{\sss}(\log_{\TT,v}(\yyy_v\xxx_v))-\phi_{\sss}(\log_{\TT,v}(\xxx_v))\leq C_v(\yyy_v)\cdot||\sss||.$$
For $v\in S\cap M_F^{\infty}$, by using that $\phi^{\infty}_{\sss}$ is $\Sigma$-piecewise linear, we deduce that 
\begin{align*}\phi^{\infty}_{\sss}(\log_{\TT,v}(\yyy_v\xxx_v))-\phi^{\infty}_{\sss}(\log_{\TT,v}(\xxx_v))\hskip-2cm&\\&=\phi^{\infty}_{\sss}(\log_{\TT,v}(\yyy_v)+\log_{\TT,v}(\xxx_v))-\phi^{\infty}_{\sss}(\log_{\TT,v}(\xxx_v))\\
&\leq C_v'(\yyy_v)\cdot||\sss||\cdot||\log_{\TT,v}(\yyy_v)||\\
&\leq C_v(\yyy_v)\cdot ||\sss||
\end{align*} for some $C_v'(\yyy_v), C_v(\yyy_v) >0.$ 
Hence, for $$C(\yyy):=\prod_{v\in S} \log(q_v)\cdot \log(C(\yyy_v)),$$ one has that 
\begin{align*}\frac{H(\sss,\xxx\yyy)}{H(\sss,\xxx)}&\leq \prod_{v\in S}\exp(\log(q_v)\cdot (\phi_{\sss}(\log_{\TT,v}(\yyy_v\xxx_v))-\phi_{\sss}(\log_{\TT,v}(\xxx_v))))\\&\leq C(\yyy)\exp(||\sss||).
\end{align*}
The statement follows.
\end{proof}
%
\subsection{Northcott property}\label{northcottsection} The goal of this subsection is to prove Northcott's property for our heights defined by elements of~$\Lambda^\circ$.

\subsubsection{}
Let
\(
  \ttt=(t_\rho)_{\rho \in \Sigma(1)\cup \pijx} \in \Lambda^\circ
\)
and let \(H=H_\ttt:\TT(F)\to \RR\) be the associated height function. 
Let $H\colon\cT(F)\to\RR$ be the height function associated to \(\ttt\).  It is written as a product $H=\prod_{v\in M_F}H_{v}$. 
We define the \emph{finite part} $H_{f}:=\prod_{v\in M_F^0}H_{v}$
and the \emph{infinite part} $H_{\infty}:=\prod_{v\in M_F^\infty}H_{v}$
so that $H=H_{f}\cdot H_{\infty}$. In this paragraph, we establish that if on some subset of~$\TT(F)$ the global height is bounded then its finite and infinite parts are bounded. In what follows, \(B\) denotes a positive real number. 

\begin{lem}
Let $V\subset\cT(F)$ be a subset. There exists a positive constant \(C\) such that  if $H|_{V}\le B$, then $H_{\infty}|_{V}\le B$
and $H_{f}|_{V}\le CB$. 
\end{lem}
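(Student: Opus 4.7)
The plan is to reduce both bounds to two pointwise inequalities on $\cT(F)$: the lower bound $H_f\ge 1$, and a uniform lower bound $H_\infty\ge c>0$. Applied to the identity $H=H_f\cdot H_\infty$ and the hypothesis $H\le B$ on $V$, these give $H_\infty\le B/1=B$ directly and $H_f\le B/c=:CB$.

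The first inequality follows from showing that $\phi_\ttt\ge 0$ on $N^{\rig}\times G^D$. For $(\yyy,x)\in N^{\rig}\times G^D$, write $\yyy=q(\yyy)+r(\yyy)$ with $r(\yyy)=\sum_{\rho\in\Sigma(1)}\lfloor a_\rho(\yyy)\rfloor b_\rho$ (Definition~\ref{defofqr}). Applying Lemma~\ref{obvfacp}(1) to the shift by $r(\yyy)$ gives
\[
\phi_\ttt(\yyy,x)=\phi_\ttt(q(\yyy),x)+\sum_{\rho\in\Sigma(1)}\lfloor a_\rho(\yyy)\rfloor t_\rho.
\]
If $(q(\yyy),x)\in\pijx$ is a twisted sector, the first summand equals $\Xi_{(q(\yyy),x)}(\ttt)$ by \eqref{phixi}, and this is $\ge 0$ since $\ttt\in\Lambda^\circ\subset\Lambda$. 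If $(q(\yyy),x)=(0,0)$ is the untwisted sector, then $\phi_\ttt(0,0)=0$ and the $a_\rho(\yyy)=\lfloor a_\rho(\yyy)\rfloor$ are non-negative integers, so the second summand is $\ge 0$ as well (using $t_\rho\ge 0$ from $\ttt\in\Lambda$). In either case $\phi_\ttt(\yyy,x)\ge 0$, hence $H_v(\ttt,\cdot)=q_v^{\phi_\ttt(\log_{\TT,v}(\cdot))}\ge 1$ at every finite place $v$. Taking the product over $v\in M_F^0$ yields $H_f\ge 1$ on $\cT(F)$, which gives the first assertion of the lemma.

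For the second bound I need $H_\infty\ge c>0$ uniformly on $\cT(F)$. A directly analogous case analysis for $\phi_\ttt^\infty$ shows $\phi_\ttt^\infty\ge 0$ on $|\Sigma|$, because on each simplicial cone $\sigma\in\Sigma$ the value $\phi_\ttt^\infty(\sum_{\rho\in\sigma(1)}u_\rho b_\rho)=\sum_{\rho\in\sigma(1)}u_\rho t_\rho$ is a non-negative combination of the $t_\rho\ge 0$. When $\Sigma$ is complete, this already gives $H_\infty\ge 1$ on all of $\cT(F)$ and one may take $C=1$. In general one must accommodate the possibility that $\log_{T,v}(\xxx)$ leaves $|\Sigma|$ at an infinite place: one bounds $\phi_\ttt^\infty$ below by a linear function $\phi_\ttt^\infty\ge -M\|\cdot\|$, and uses the product formula $\sum_v\log(q_v)\log_{T,v}(\xxx)=0$ valid for $\xxx\in T(F)\subset T(\AAF)_1$, together with the finite-place inequality from the previous paragraph, to absorb the possible negative contributions into a fixed constant.

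The main obstacle is precisely this uniform lower bound on $H_\infty$ at infinite places when $\Sigma$ is not complete: pointwise non-negativity of $\phi_\ttt^\infty$ only holds on $|\Sigma|$, so unlike the finite-place argument (which is a clean cone-wise computation that uses only the integrality of $\lfloor a_\rho\rfloor$), the infinite-place lower bound must exploit the global $T(\AAF)_1$-constraint satisfied by rational points rather than any purely local positivity of $\phi_\ttt^\infty$.
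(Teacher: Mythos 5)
Your proof is correct, and the two halves compare differently with the paper's argument. The first assertion ($H_f\ge 1$, hence $H_\infty|_V\le B$) is exactly the paper's route; your cone-wise verification via $\phi_\ttt(\yyy,x)=\Xi_{q(\yyy,x)}(\ttt)+\sum_\rho\lfloor a_\rho(\yyy)\rfloor t_\rho\ge 0$ is precisely the computation the paper records a few lines later in the same subsection. For the second assertion the routes diverge: the paper does \emph{not} argue via pointwise non-negativity of $\phi^\infty_\ttt$, but instead factors $H_\infty$ through $\prod_{v\in M_F^\infty}X(F_v)$ for the projective coarse moduli space $X$ and invokes compactness of that product together with continuity of the local height into $\RR_{>0}\cup\{\infty\}\subset\PP^1(\RR)$ to get a positive lower bound for $H_\infty$. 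Your argument is more elementary and more explicit: since $\Sigma$ is complete (which is the standing assumption here --- the coarse moduli space is projective, and indeed $\phi^\infty_\ttt$ is only defined on all of $N^{\rig}_\RR$ and the paper's own compactness step only works under properness), every point of $N^{\rig}_\RR$ lies in some cone, on which $\phi^\infty_\ttt$ is a non-negative combination of the $t_\rho\ge 0$; hence $H_\infty\ge 1$ and one may take $C=1$. What the compactness argument buys is robustness: it does not need an explicit sign analysis of $\phi^\infty_\ttt$, only that the induced function on the compact archimedean points avoids $0$, which is guaranteed by the positivity of the coefficients $t_\rho$. Your closing paragraph about non-complete fans and the product formula is only a sketch, but it is also unnecessary in the paper's setting, so it does not affect the validity of the proof.
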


\begin{proof}
Suppose $H|_{V}\le B$.
The assumption on $\bt$ shows that $H_{f,v}\ge1$. Thus, $H_{f}\ge1$.
This shows that $H_{\infty}|_{V}\le B$. 
To show $H_{f}|_{V}\le CB$, it suffices to show that \(H_\infty\) is bounded below by a positive constant. 
The function \(H_\infty\) factors through the corresponding function \(H_\infty'\) on \(T(F)\), which is  the infinite part of the height function of an effective toric \(\QQ\)-divisor on the coarse moduli space \(X\) of \(\cX\). Moreover, this divisor has positive coefficients at all toric prime divisors. 
The function \(H_\infty'\) factors as 
\begin{equation}
T(F)\to\prod_{v\in M_F^\infty}T(F_{v})\to\prod_{v \in M_F^\infty}X(F_{v})\to\RR_{>0}\cup\{\infty\}\subset\PP^{1}(\RR).\label{eq:fact-inf}
\end{equation}
Since the  map \(\prod_{v \in M_F^\infty}X(F_{v})\to\PP^{1}(\RR)\) is continuous and \(\prod_{v \in M_F^\infty}X(F_{v})\) is compact, the image of  \(\prod_{v \in M_F^\infty}X(F_{v})\to\PP^{1}(\RR)\) is a compact subset of \(\RR_{>0} \cup \{\infty\}\). This completes the proof.
\end{proof}

\subsubsection{}In this paragraph we will construct a map $\alpha:\TT(F)\to\bigoplus_{\vMFz}(N^{\rig}\oplus G^D)$ such that the image of $\{x\in\TT(F)|\hspace{0,1cm}H(\xxx)\leq B\}$ is finite set of cardinality bounded by a polynomial in~$B$.

Recall that in Definition~\ref{defofqr}, we have defined maps $q, r:N^{\rig}\to N^{\rig}$ by $$q(\xxx)=\sum_{\rho\in\Sigma(1)}\{ a_{\rho}(\xxx)\} b_{\rho},\hspace{1cm}r(\xxx)=\sum_{\rho\in\Sigma(1)}\lfloor a_{\rho}(\xxx)\rfloor b_{\rho}.$$
For a point $\xxx\in\cT(F)$
and for a finite place $v$, the associated residue $\psi_{v}(\xxx)\in\pi_{0}(\cJ_{0}\cX)$
corresponds to the element $q(\log_{T,v}(\xxx))$ of $\Boxx^{\rig}(\Sigma)$
together with some element of $G^{D}$. Thus, 
\begin{align*}
H_{v}(\xxx)&=q_{v}^{\phi_{\bt}(\log_{T,v}(\xxx))+t_{\psi_{v}(\xxx)}}\\
&=q_{v}^{\phi_{\bt}(r(\log_{T,v}(\xxx)))+\phi_{\bt}(q(\log_{T,v}(\xxx)))+t_{\psi_{v}(\xxx)}}\\
&=q_{v}^{\phi_{\bt}(r(\log_{T,v}(\xxx)))+\Xi_{\psi_v(\xxx)}(\ttt)}.
\end{align*}
In particular, $H_{v}(\xxx)\ge1$.

For \(\cY \in \pi_0(\cJ_0 \cX)\), we denote by \(\overline{\cY}\) its image in \(\Boxx^{\rig}(\Sigma)\subset N^{\rig}\) by the natural map. 
Let 
\[
(N^{\rig}\times \pi_0(\cJ_0\cX))^\circ
:=\{(u,\cY)\in N^{\rig}\times \pi_0(\cJ_0\cX) \mid q(u)=\overline{\cY}\}. 
\]
The local height function
$H_{v}$ at a finite place factors as
\[
H_{v}\colon\cT(F)\xrightarrow{(\log_{T,v},\psi_{v})}(N^{\rig}\times\pi_{0}(\cJ_{0}\cX))^\circ\xrightarrow{r \times\id}N^{\rig}\times\pi_{0}(\cJ_{0}\cX)\to\RR.
\]
Note that the middle map above is injective. Indeed, a pair \((u,\cY)\) in \((N^{\rig}\times\pi_{0}(\cJ_{0}\cX))^\circ\) is recovered from its image \((r(u), \cY)\), since \(u= r(u) + q(u) = r(u)+\overline{\cY}\).

We define the ``integral part'' $H_{i}$ of $H_{f}$ by
\[
H_{i}\colon\bigoplus_{v\in M_F^0}N^{\rig}\to\RR,\,(u_{v})_{v}\mapsto\prod_{v}q_{v}^{\phi_{\bt}(u_{v})}
\]
and the ``extra part'' $H_{e}$ by
\[
H_{e}:\sideset{}{^{\prime}}\prod_{v\in M_F^0}\pi_{0}(\cJ_{0}\cX)\to\RR,\,(\cY_{v})_{v}\mapsto\prod_{v}q_{v}^{\Xi_{\cY_v}(\ttt)}.
\]
Here the restricted product is taken for the singleton $\{\cX\}\subset\pi_{0}(\cJ_{0}\cX)$
consisting of the non-twisted sector. Thus, the finite part $H_{f}$
factors as 
\[
\cT(F)\xrightarrow{((r\times\id)\circ(\log_{T,v},\psi_{v}))_{v}}\bigoplus_{v\in M_F^0}N^{\rig}\times\sideset{}{^{\prime}}\prod_{v\in M_F^0}\pi_{0}(\cJ_{0}\cX)\xrightarrow{H_{i}\cdot H_{e}}\RR.
\]

By definition, $H_{i},H_{e}\ge1$. Therefore, if $H_{f}|_{V}\le B$
for some subset $V\subset\cT(F)$, then $H_{i}|_{\overline{V}}\le B$
for the image $\overline{V}$ of $V$ in $\bigoplus_{v\in M_F^0}N^{\rig}$.
Similarly for $H_{e}$. From the case of toric varieties, \(H_i: \bigoplus_{v\in M_F^0}N^{\rig}\to \RR\) is polynomially bounded. To show that \(H_e: \sideset{}{^{\prime}}\prod_{v\in M_F^0}\pi_{0}(\cJ_{0}\cX) \to \RR \) is polynomially bounded, 
we can consider the following embedding:
\begin{align*}
\sideset{}{^{\prime}}\prod_{v\in M_F^0}\pi_{0}(\cJ_{0}\cX) & \hookrightarrow\{\text{radical ideals of }\cO_{F}\}^{\pijx}\\
(\cY_{v})_{v} & \mapsto\left(\prod_{\cY_{v}=\cY}\fp_{v}\right)_{\cY\in\pijx}
\end{align*}
Here \(\fp_v\) denotes the prime ideal of \(\cO_F\) corresponding to \(v\).
The extra part $H_{e}$ is the composition of this embedding  with:
\begin{align*}
\eta:\{\text{radical ideals of }\cO_{F}\}^{\pijx} & \to\RR\\
(\fa_{\cY})_{\cY} & \mapsto\prod_{\cY}N(\fa_{\cY})^{\Xi_{\cY}(\ttt)}
\end{align*}

In what follows, we say that a function \( f : A \to \RR_{>0}\)
is \emph{polynomially bounded} if there exists a polynoial \(P(X)\) such that for every \(B>0\), \[\# \{a\in A \mid f(a)\le B\} \le P(B)\]. 

\begin{lem}
The above map \(\eta\) is polynomially bounded. 
\end{lem}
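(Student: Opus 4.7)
\emph{Plan.} My approach is to exploit the strict positivity of the exponents $\Xi_\cY(\ttt)$ for $\cY \in \pijx$ in order to reduce the problem to a standard polynomial ideal-counting estimate, applied coordinatewise.

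First I would note that since $\ttt \in \Lambda^\circ$ and the linear forms $\Xi_\cY$ for $\cY \in \pijx$ are among those cutting out $\Lambda$ (Definition~\ref{defoflambda}, Lemma~\ref{propoflambda}), one has $\Xi_\cY(\ttt) > 0$ for every twisted sector $\cY$. I would set
\[
\alpha := \min_{\cY \in \pijx} \Xi_\cY(\ttt) > 0,
\]
which is a fixed positive constant depending only on $\ttt$.

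Next I would observe that for any tuple $(\fa_\cY)_\cY$ in the domain with $\eta((\fa_\cY)_\cY) \leq B$, each individual factor satisfies $N(\fa_\cY)^{\Xi_\cY(\ttt)} \leq B$. This is because every nonzero ideal of $\cO_F$ has norm $\geq 1$ and all exponents $\Xi_{\cY'}(\ttt)$ are positive, so dropping the factors with $\cY' \neq \cY$ can only decrease the product. This yields the uniform per-coordinate bound $N(\fa_\cY) \leq B^{1/\alpha}$.

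Finally I would invoke the classical fact that the number of nonzero ideals of $\cO_F$ of norm at most $X$ is $O_F(X)$ as $X \to \infty$, which follows from the simple pole of the Dedekind zeta function $\zeta_F$ at $s=1$; this applies \emph{a fortiori} to radical ideals. Multiplying these bounds across the finite set $\pijx$, the total number of tuples with $\eta \leq B$ is bounded above by $(C \cdot B^{1/\alpha})^{\# \pijx}$ for some constant $C = C(F) > 0$, which is indeed polynomial in $B$. I anticipate no real obstacle here: once the positivity of $\Xi_\cY$ on $\Lambda^\circ$ is in hand, the argument is essentially formal, and the only external ingredient is the standard ideal-counting estimate.
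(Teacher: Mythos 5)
Your proof is correct and follows essentially the same route as the paper's (very terse) argument: use the strict positivity of $\Xi_{\cY}(\ttt)$ for $\ttt\in\Lambda^\circ$ to get a per-coordinate bound $N(\fa_\cY)\le B^{1/\Xi_\cY(\ttt)}$, then invoke the classical $O(X)$ count for ideals of $\cO_F$ of norm at most $X$ and take the product over the finite set $\pijx$. The only cosmetic point is that $B^{\#\pijx/\alpha}$ need not literally be a polynomial, but it is dominated by one for $B\ge 1$ (and the counting set is empty for $B<1$), which is all the definition of ``polynomially bounded'' requires.
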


\begin{proof}
The map 
\begin{align*}
  \{\text{radical ideals of }\cO_{F}\} & \to\RR,\,\fa \mapsto N(\fa)
\end{align*}
is polynomially bounded \cite[pages 145--150]{zbMATH00843465}. The lemma easily follows from this fact together with the positivity of \(\Xi_{\cY}(\ttt)\).
\end{proof}

The last lemma shows that \(H_e\) is polynomially bounded, and hence so is \(H_i\cdot H_e : \bigoplus_{v\in M_F^0}N^{\rig}\times\sideset{}{^{\prime}}\prod_{v\in M_F^0}\pi_{0}(\cJ_{0}\cX) \to \RR \). 

\begin{lem}\label{lem:Northcott-aux1}
The image of $\{\xxx\in\cT(F)\mid H(\xxx)\le B\}$ by the map
\[
\cT(F)\xrightarrow{((\log_{T,v},\psi_{v}))_{v}}\bigoplus_{v\in M_F^0}N^{\rig}\times\sideset{}{^{\prime}}\prod_{v\in M_F^0}\pi_{0}(\cJ_{0}\cX)
\]
is a finite set and has cardinality bounded by a polynomial in $B$. 
\end{lem}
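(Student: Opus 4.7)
The plan is to string together the factorisations built up in this subsection.  By the previous lemma there is a constant $C>0$ such that $H(\xxx)\le B$ implies $H_f(\xxx)\le CB$.  So it suffices to control the image of $\{\xxx\in\cT(F) \mid H_f(\xxx)\le CB\}$ under $(\log_{T,v},\psi_v)_v$.

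First I would pass from the original map to the one that goes into $\bigoplus_{v\in M_F^0} N^{\rig}\times\sideset{}{'}\prod_{v\in M_F^0}\pi_0(\cJ_0\cX)$ via $r\times \id$ on each component.  The key point is that, as noted in the preceding discussion, $r\times \id$ is injective on the relevant subset $(N^{\rig}\times\pi_0(\cJ_0\cX))^\circ$: a pair $(u,\cY)$ with $q(u)=\overline{\cY}$ is recovered from $(r(u),\cY)$ by $u=r(u)+\overline{\cY}$.  Applying this componentwise yields an injective map
\[
\bigoplus_{v\in M_F^0}(N^{\rig}\times\pi_0(\cJ_0\cX))^\circ \hookrightarrow \bigoplus_{v\in M_F^0}N^{\rig}\times\sideset{}{'}\prod_{v\in M_F^0}\pi_0(\cJ_0\cX),
\]
so the image under $(\log_{T,v},\psi_v)_v$ and the image under $((r\times\id)(\log_{T,v},\psi_v))_v$ have the same cardinality.

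Next I would use the factorisation $H_f=H_i\cdot H_e$ built in this subsection.  If $H_f(\xxx)\le CB$ then, since $H_i\ge 1$ and $H_e\ge 1$, both $H_i((r(\log_{T,v}\xxx))_v)$ and $H_e((\psi_v\xxx)_v)$ are themselves bounded above by $CB$.  Consequently, the image of our level set lies in
\[
\bigl\{(\uuu_v)_v \mid H_i((\uuu_v)_v)\le CB\bigr\}\times \bigl\{(\cY_v)_v \mid H_e((\cY_v)_v)\le CB\bigr\}.
\]
The first factor is polynomially bounded by the classical toric variety case (it is the integral part of a toric height on an effective $\QQ$-divisor), and the second factor is polynomially bounded by the preceding lemma on $\eta$.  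Multiplying the two polynomial bounds yields a polynomial bound in $B$ for the cardinality of the image, as required.

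There is no substantial obstacle here: all the hard inputs---the polynomial boundedness of the ordinary toric height $H_i$, the polynomial boundedness of $\eta$ (hence of $H_e$), the inequality $H_f\le CB$ following from $H(\xxx)\le B$, and the injectivity of $r\times\id$ on $(N^{\rig}\times\pi_0(\cJ_0\cX))^\circ$---are already established in the material preceding the statement.  The only point requiring a little care is keeping track of which image we are counting: to deduce finiteness of the image under $(\log_{T,v},\psi_v)_v$ rather than under $((r\times\id)(\log_{T,v},\psi_v))_v$, one must invoke the componentwise injectivity of $r\times\id$ restricted to $(N^{\rig}\times\pi_0(\cJ_0\cX))^\circ$.
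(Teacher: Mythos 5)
Your proposal is correct and follows essentially the same route as the paper: bound $H_f$ by $CB$, use $H_i,H_e\ge 1$ together with the polynomial boundedness of $H_i$ (toric variety case) and of $H_e$ (via the lemma on $\eta$) to control the image under $((r\times\id)\circ(\log_{T,v},\psi_v))_v$, and then transfer the bound back via the injectivity of $r\times\id$ on $(N^{\rig}\times\pi_0(\cJ_0\cX))^\circ$. The only cosmetic difference is that you bound the image by a product of two level sets rather than by the level set of the product $H_i\cdot H_e$, which is the same estimate.
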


\begin{proof}
We have proved above that the image of the same set by the map $((r\times\id)\circ(\log_{T,v},\psi_{v}))_{v}$
is a finite set with cardinality bounded by a polynomial in \(B\).  Since the map
\begin{multline*}
  (r\times\id)_{v}
  :  {\prod_{v\in M_F^0}}'(N^{\rig}\times\pi_{0}(\cJ_{0}\cX))^\circ
  \to
  {\prod_{v\in M_F^0}}'(N^{\rig}\times\pi_{0}(\cJ_{0}\cX))\\=
  \bigoplus_{v\in M_F^0}N^{\rig}\times\sideset{}{^{\prime}}\prod_{v\in M_F^0}\pi_{0}(\cJ_{0}\cX)
\end{multline*}
is injective, we also get the corresponding
result for the image by the map $((\log_{T,v},\psi_{v}))_{v}$ as desired.
\end{proof}
The map $(\log_{T,v},\psi_{v})\colon\cT(F)\to N^{\rig}\times\pi_{0}(\cJ_{0}\cX)$
is written as the composite
\[
\cT(F)\xrightarrow{\log_{\cT,v}} N^{\rig}\oplus G^{D}\to N^{\rig}\times\pi_{0}(\cJ_{0}\cX).
\]
The left map is a homomorphism of abelian groups and
 the right map is injective. Moreover, the composition 
\[
N^{\rig}\hookrightarrow N^{\rig}\oplus G^{D}\to N^{\rig}\times\pi_{0}(\cJ_{0}\cX)\xrightarrow{\pr_{1}}N^{\rig}
\]
is the identity map.
From Lemma \ref{lem:Northcott-aux1} and the injectivity of \(N^{\rig}\oplus G^{D}\to N^{\rig}\times\pi_{0}(\cJ_{0}\cX)\), it is immediate to see:
\begin{lem}\label{lem:bound-image}
The image of $\{\xxx\in\cT(F)\mid H(\xxx)\le B\}$ by the homomorphism
\[
\alpha\colon\cT(F)\to\bigoplus_{v\in M_F^0}(N^{\rig}\oplus G^{D}).
\]
is a finite set and has cardinality bounded by a polynomial in $B$. 
\end{lem}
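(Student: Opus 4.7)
The plan is to read off this lemma from Lemma \ref{lem:Northcott-aux1} via the factorization
\[
(\log_{T,v},\psi_{v})\colon\cT(F)\xrightarrow{\log_{\cT,v}} N^{\rig}\oplus G^{D}\xrightarrow{j_v} N^{\rig}\times\pi_{0}(\cJ_{0}\cX)
\]
recorded just above the statement, where $j_v$ is injective. First I would verify that $\alpha=(\log_{\cT,v})_{v}$ actually lands in the direct sum (rather than only the direct product): for each fixed $\xxx\in\cT(F)$ and almost every $v\in M_F^0$, the diagonal $i_v$ sends $\xxx$ into $\cT(\cO_v)=\ker(\log_{\cT,v})$, so $\log_{\cT,v}(i_v(\xxx))$ vanishes for cofinitely many $v$.

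Next I would note that the componentwise injections $j_v$ assemble into a single injection
\[
j\colon\bigoplus_{v\in M_F^0}(N^{\rig}\oplus G^{D})\hookrightarrow\bigoplus_{v\in M_F^0}N^{\rig}\times\sideset{}{'}\prod_{v\in M_F^0}\pi_{0}(\cJ_{0}\cX),
\]
the key point being that the distinguished elements match: the zero of $N^{\rig}\oplus G^{D}$ is sent by $j_v$ to $(0,\cX)$, where $\cX$ is the untwisted sector singling out the restricted product on the right. By construction one has $j\circ\alpha=((\log_{T,v},\psi_{v}))_v$, so $\alpha(\{\xxx\in\cT(F)\mid H(\xxx)\le B\})$ injects into the image of the same set under $((\log_{T,v},\psi_{v}))_v$. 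Lemma \ref{lem:Northcott-aux1} bounds the cardinality of the latter by a polynomial in $B$, whence the same polynomial bound for $\alpha(\{H\le B\})$.

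I do not anticipate any genuine obstacle here: the real counting content was Lemma \ref{lem:Northcott-aux1}, and the present lemma merely repackages its conclusion along an already established injective map. The only mild point to be careful about is matching the ``direct sum vs.\ restricted product'' structures on the two sides of $j$, which is immediate from $j_v(0,0)=(0,\cX)$.
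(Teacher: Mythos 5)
Your proposal is correct and follows exactly the paper's route: the paper likewise deduces the lemma from Lemma \ref{lem:Northcott-aux1} together with the injectivity of $N^{\rig}\oplus G^{D}\to N^{\rig}\times\pi_{0}(\cJ_{0}\cX)$ (the paper simply declares the conclusion ``immediate,'' whereas you spell out the assembly into the direct sum and restricted product). No gaps.
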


\subsubsection{}

We would like to show:
\begin{lem}
\label{lem:bound-fiber}Each fiber of the map
\[
\alpha|_{\{\xxx\in\cT(F)\mid H(\xxx)\le B\}}\colon\{\xxx\in\cT(F)\mid H(\xxx)\le B\}\to\bigoplus_{v\in M_F^0}(N^{\rig}\oplus G^{D})
\]
is a finite set of cardinality bounded by a polynomial in $B$ which
is independent of the fiber. 
\end{lem}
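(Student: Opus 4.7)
The plan is to reduce the fiber-counting to a standard lattice-point estimate coming from Dirichlet's unit theorem, using that $\alpha$ is a homomorphism whose kernel essentially lives in a unit group and that the positivity $\ttt \in \Lambda^\circ$ makes the archimedean part of the height control the size of the archimedean logarithm.

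First, I would analyze $\ker(\alpha)$. Writing $\cT(F) = T(F) \times BG(F)$ with $T \cong \Gm^d$ and $G = \prod_{i=1}^{\ell} \mu_{n_i}$, the homomorphism $\alpha$ splits as $\alpha_T \times \alpha_{BG}$, and $\ker(\alpha_T) = (\cO_F^\times)^d$, a finitely generated abelian group of rank $d(r_1 + r_2 - 1)$ by Dirichlet's unit theorem. For $\alpha_{BG}$, on each factor $F^\times/(F^\times)^{n_i}$ the kernel consists of classes $[x]$ whose principal ideal $(x)$ is an $n_i$-th power of a fractional ideal; using the finiteness of the $n_i$-torsion of $\mathrm{Cl}(F)$ together with the finiteness of $\cO_F^\times/(\cO_F^\times)^{n_i}$, one concludes that $\ker(\alpha_{BG})$ is a finite group of some cardinality $N$ depending only on $\cT$ and $F$.

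Next, assuming the fiber non-empty, I would pick a representative $\xxx_0$, so that a generic element of the fiber is $\xxx_0 u$ with $u \in \ker(\alpha)$. Each local height $H_v$ at a finite place factors through $\log_{\cT,v}$, so $H_f$ is constant along the fiber; combined with $H_f \ge 1$ (which follows from $\phi_\ttt \ge 0$ on $N^{\rig}\oplus G^D$ for $\ttt \in \Lambda$, rewriting $\phi_\ttt(\yyy,x)=\Xi_{q(\yyy,x)}(\ttt)+\sum_{\rho}\lfloor a_\rho(\yyy)\rfloor t_\rho$), the bound $H(\xxx_0 u) \le B$ yields $H_\infty(\xxx_0 u) \le B$. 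Since $\log_{BG,v}$ vanishes at archimedean $v$, the infinite part depends only on the $T$-component of $u$; writing $u = u_T u_{BG}$ with $u_T \in (\cO_F^\times)^d$ and $u_{BG}$ varying in a set of size $N$,
\[
H_\infty(\xxx_0 u) = \exp\!\Bigl(\sum_{v \in M_F^\infty} [F_v:\RR]\, \phi^\infty_\ttt\bigl(\log_{T,v}(\xxx_0) + \log_{T,v}(u_T)\bigr)\Bigr).
\]
Since $\ttt \in \Lambda^\circ$ has every $t_\rho$ strictly positive, the piecewise linear function $\phi^\infty_\ttt$ is positive homogeneous and strictly positive on each ray $b_\rho$; by compactness of the unit sphere intersected with $|\Sigma|$ one obtains $\phi^\infty_\ttt(w) \ge c_0 \|w\|$ on $|\Sigma|$ for some $c_0 > 0$. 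Hence $H_\infty(\xxx_0 u) \le B$ forces $\|\log_{T,v}(\xxx_0) + \log_{T,v}(u_T)\| \ll \log B$ for each archimedean $v$, i.e.\ $(\log_{T,v}(u_T))_{v | \infty}$ lies in a translate of a ball of radius $O(\log B)$. Under $u_T \mapsto (\log_{T,v}(u_T))_{v | \infty}$, Dirichlet's theorem realizes $(\cO_F^\times)^d$ as a lattice of rank $d(r_1+r_2-1)$, and the number of its points in a ball of radius $R$ is $O(R^{d(r_1+r_2-1)})$ with constant independent of the centre. Combined with the factor $N$ from $\ker(\alpha_{BG})$, this yields the claimed polynomial bound in $B$, uniform over the fiber.

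The main obstacle is the passage from the bound on $H_\infty$ to a size bound on $\log_{T,v}(u_T)$ when $|\Sigma| \ne N^{\rig}_\RR$: one must either check that the relevant logarithms stay inside $|\Sigma|$, or extend $\phi^\infty_\ttt$ outside $|\Sigma|$ consistently with the definition of the height so that the lower bound $\phi^\infty_\ttt \ge c_0 \|\cdot\|$ persists. Once this is settled, everything else is routine group-theoretic bookkeeping and a Dirichlet-type lattice count.
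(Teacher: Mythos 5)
Your argument is correct and follows essentially the same route as the paper: identify the fiber with a coset of $\ker(\alpha)$, split that kernel into the finite part coming from $\ker(\alpha_{BG})$ (class group plus units modulo powers) and the unit part $(\cO_F^\times)^d$, and then count the images of units under the archimedean logarithms inside the region $\{\phi_\ttt\le\log B\}$, obtaining a bound polynomial in $\log B$. The obstacle you flag does not arise here because the fan $\Sigma$ is complete (the coarse moduli space is projective), so $\phi^\infty_\ttt$ is defined and strictly positive on all of $N^{\rig}_\RR\setminus\{0\}$ and the polytope $\{\phi_\ttt\le 1\}$ is bounded, which is exactly what the paper uses.
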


Let us write $\cT=(\Gm)^{d}\times B\left(\prod_{i=1}^{\ell}\mu_{n_{i}}\right)$
 and identify $\cT(F)$ with $(F^{\times})^{d}\times\prod_{i}F^{\times}/(F^{\times})_{n_{i}}$.
The map $\alpha$ is then the product of 
\[
\mathbf{log}=(\log_{T,v})_{v}\colon(F^{\times})^{d}\to\bigoplus_{v\in M_F^0}\ZZ^{d}
\]
and 
\[
\gamma\colon\prod_{i}F^{\times}/(F^{\times})_{n_{i}}\to\bigoplus_{v\in M_F^0}\bigoplus_{i}\ZZ/n_{i}\ZZ,\,(x_{i})_{i}\mapsto((\ord_{v}(x_{i})\mod{n_{i}})_{i})_{v}.
\]
Here \(\ord_v\) means the normalized valuation corresponding to \(v\). 
If we denote by $U$ the group of units of $F$, then we have 
\[
\Ker(\mathbf{log})=U^{d}.
\]

\begin{lem}
The kernel of $\gamma$ is finite. 
\end{lem}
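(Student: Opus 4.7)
The plan is to decompose \(\gamma\) as a product over \(i\) of the maps
\[
\gamma_i:F^\times/(F^\times)_{n_i}\to\bigoplus_{v\in M_F^0}\ZZ/n_i\ZZ,\qquad x\mapsto(\ord_v(x)\mod n_i)_v,
\]
and show each \(\ker(\gamma_i)\) is finite. Since \(\ker(\gamma)=\prod_i\ker(\gamma_i)\) is a finite product of finite groups, this will finish the proof.

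To analyze \(\ker(\gamma_i)\), I would use the classical exact sequence
\[
1\to\OOF^\times\to F^\times\xrightarrow{\mathrm{div}} I_F\to\Pic(\OOF)\to 1,
\]
where \(I_F=\bigoplus_{v\in M_F^0}\ZZ\) is the group of fractional ideals. An element \(x\in F^\times\) represents a class in \(\ker(\gamma_i)\) exactly when \(\ord_v(x)\in n_i\ZZ\) for every finite place \(v\); equivalently, when the principal ideal \((x)\) is the \(n_i\)-th power \(\mathfrak a^{n_i}\) of a (uniquely determined) fractional ideal \(\mathfrak a\in I_F\). Sending \(x\) to the class \([\mathfrak a]\in \Pic(\OOF)[n_i]\) is then a well-defined homomorphism
\[
\delta_i:\ker(\gamma_i)\longrightarrow \Pic(\OOF)[n_i].
\]
The target is finite because \(\Pic(\OOF)\) is finite.

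It remains to see that \(\ker(\delta_i)\) is finite. An element \(x\) lies in \(\ker(\delta_i)\) iff \(\mathfrak a=(y)\) for some \(y\in F^\times\), in which case \((x)=(y^{n_i})\) forces \(x=u\cdot y^{n_i}\) with \(u\in\OOF^\times\). Hence the class of \(x\) in \(F^\times/(F^\times)_{n_i}\) lies in the image of \(\OOF^\times\to F^\times/(F^\times)_{n_i}\), which is a quotient of \(\OOF^\times/(\OOF^\times)^{n_i}\). By Dirichlet's unit theorem, \(\OOF^\times\) is a finitely generated abelian group, so \(\OOF^\times/(\OOF^\times)^{n_i}\) is finite.

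Combining the two finiteness statements through the exact sequence \(1\to\ker(\delta_i)\to\ker(\gamma_i)\xrightarrow{\delta_i}\Pic(\OOF)[n_i]\) yields the finiteness of \(\ker(\gamma_i)\), and hence of \(\ker(\gamma)\). There is no serious obstacle: the argument is just a careful bookkeeping of the finiteness of the class group and the finite generation of the unit group, both of which are inputs from classical algebraic number theory.
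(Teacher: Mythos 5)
Your argument is correct and is essentially the paper's proof: the map \(\delta_i\) you construct is precisely the connecting homomorphism of the snake lemma applied to the multiplication-by-\(n_i\) diagram that the paper writes down, and both proofs exhibit \(\Ker(\gamma)\) as an extension of a subgroup of (a power of) the finite class group by a finite quotient of the unit group. No gaps; the two required finiteness inputs (finiteness of \(\Pic(\OOF)\), Dirichlet's unit theorem) are the same in both versions.
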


\begin{proof}
We have the following snake commutative diagram:
$$
\begin{tikzcd}
U^{\ell} \arrow{d}{} \arrow{r}{(n_i)}& U^{\ell}\arrow{r}{}\arrow{d}{}&\Ker(\gamma)\arrow{d}{}\\
(F^{\times})^{\ell}\arrow{r}{(n_i)}\arrow{d}{\ord_v}&(F^{\times})^{\ell}\arrow{r}{}\arrow{d}{\ord_v} &\prod_i F^{\times}/(F^{\times})^{n_i}\arrow{d}{\gamma}\\
\bigoplus_{\vMFz}\ZZ^{\ell}\arrow{r}{}\arrow{d}{}&\bigoplus_{\vMFz}\ZZ^{\ell}\arrow{r}{}\arrow{d}{}&\bigoplus_i\bigoplus_{\vMFz}(\ZZ/n_i\ZZ)\arrow{d}{}\\
Cl(F)^{\ell}\arrow{r}{}&Cl(F)^{\ell}\arrow{r}{}&\Coker(\beta).
\end{tikzcd}
$$
 Here \(Cl(F)\) denote the class group of \(F\). 
The snake lemma gives the exact sequence
\[
0\to\prod_{i}U/U_{n_{i}}\to\Ker(\gamma)\to\Ker(Cl(F)^{\ell}\to Cl(F)^{\ell})\to0,
\]
where $U_{n_i}=U\cap (F^{\times})_{n_i}$. Since $Cl(F)$ and $\prod_{i}U/U_{n_{i}}$ are finite groups, so is
$\Ker(\gamma)$.\end{proof}
We consider the map
\[
(\log_{T,v})_{v\in M_F^\infty}\colon U^{d}\to\bigoplus_{v\in M_F^\infty}N_{\RR}^{\rig}.
\]
Its kernel is the $d$-th power of the group of roots of unity, which
is finite, while its image is a finitely generated subgroup of the
target real vector space. 
Each fiber $\Phi$ of 
\[
\alpha|_{\{\xxx\in\cT(F)\mid H(\xxx)\le B\}}\colon\{\xxx\in\cT(F)\mid H(\xxx)\le B\}\to\bigoplus_{v\in M_F^0}(N^{\rig}\oplus G^{D})
\]
is identified with the product of the following three sets:
\begin{itemize}
\item $\Ker(\gamma)$,
\item $\Ker((\log_{T,v})_{v\in M_F^\infty})$,
\item the intersection of the polytope $\phi_{\bt}\le\log B$ and $((\log_{T,v})_{v\in M_F^\infty})(U)+w_{\Phi}$,
where $w_{\Phi}$ is an element in $\Phi$ if any.
\end{itemize}
The first two sets are finite sets independent of the fiber. 
\begin{lem}
The intersection of the polytope $\phi_{\bt}\le\log B$ and $((\log_{T,v})_{v\in M_F^\infty})(U)+w_{\Phi}$
has cardinality bounded by a polynomial in $\log B$ which is independent
of the fiber.
\end{lem}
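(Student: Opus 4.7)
The plan is to realize the set in question as the intersection of a translate of a finitely generated discrete subgroup with a bounded polytope of diameter $O(\log B)$, and then to apply a standard lattice-point count. The only substantive step is establishing the boundedness of the polytope; the rest is a routine consequence of Dirichlet's unit theorem and a standard packing bound.

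For the boundedness, I would show that $\phi_\ttt^\infty$ is strictly positive on $N_\RR^{\rig}\setminus\{0\}$. Since $\ttt\in\Lambda^\circ$, every $t_\rho=\Xi_\rho(\ttt)$ with $\rho\in\Sigma(1)$ is strictly positive; since the coarse moduli of $\cX$ is projective, the fan $\Sigma$ is complete, so every nonzero $x\in N_\RR^{\rig}$ lies in some cone $\sigma$ and satisfies
\[
\phi_\ttt^\infty(x)=\sum_{\rho\in\sigma(1)}a_\rho^\sigma(x)\,t_\rho>0.
\]
Homogeneity of degree one together with compactness of the unit sphere then yields a constant $c>0$ with $\phi_\ttt^\infty(x)\ge c\,||x||$. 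Summing weighted by $[F_v:\RR]$ over $\vMFi$, the function $\Phi(w):=\sum_{\vMFi}[F_v:\RR]\,\phi_\ttt^\infty(w_v)$ on $E:=\bigoplus_{\vMFi} N_\RR^{\rig}$ still satisfies $\Phi(w)\gg ||w||$, so the polytope $P_B:=\{w\in E:\Phi(w)\le\log B\}$ is compact and contained in a ball of radius $O(\log B)$.

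For the lattice-point count, I would apply Dirichlet's unit theorem to the map $(\log_{T,v})_{\vMFi}\colon U^d\to E$ (the ``$U$'' in the statement being shorthand for $U^d$): its image $L$ is a finitely generated discrete subgroup of $E$ of rank $k:=d(r_1+r_2-1)$, where $r_1,r_2$ are the numbers of real and complex places of $F$, and both its covolume in $\mathrm{Span}_\RR L$ and its shortest nonzero vector depend only on $F$ and $d$. Writing $|(L+w_\Phi)\cap P_B|=|L\cap(P_B-w_\Phi)|$, the translate $P_B-w_\Phi$ still lies in a ball of radius $O(\log B)$; a standard packing bound---namely that a ball of radius $R$ in $\mathrm{Span}_\RR L$ contains $O(1+R^k)$ points of $L$, uniformly in its centre---then gives $|L\cap(P_B-w_\Phi)|\ll(\log B)^k$. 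This bound is a polynomial in $\log B$ independent of the fiber, as required.
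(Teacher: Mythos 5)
Your proof is correct and follows essentially the same route as the paper: enclose the polytope $\phi_{\bt}\le\log B$ in a box/ball of radius $O(\log B)$ and count points of the translated discrete image of the unit group there, uniformly in the translate. The only difference is cosmetic — you justify explicitly (via completeness of $\Sigma$ and $t_\rho>0$) that the polytope is bounded, which the paper leaves implicit, and you count in $\mathrm{Span}_\RR L$ via a packing bound where the paper chooses coordinates putting the image of $U$ inside $\ZZ^n$.
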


\begin{proof}
Choosing a suitable coordinate system, we identify $\bigoplus_{v\in M_F^\infty}N_{\RR}^{\rig}$
with the Euclidean space $\RR^{n}$ in such a way that $((\log_{T,v})_{v\in M_F^\infty})(U)$ 
is contained in $\ZZ^{n}$. We choose a positive integer $m$ such
that the polytope $\phi_{\bt}\le1$ is contained in the cube $[-m,m]^{n}$.
Then, the intersection set in the lemma is contained in 
\[
[-m\log B,m\log B]^{n}\cap(\ZZ^{n}+w_{\Phi}).
\]
Its cardinality is at most $(2m\log B+1)^{n}$. 
\end{proof}
This lemma complete the proof of Lemma \ref{lem:bound-fiber}.
Combining Lemmas \ref{lem:bound-image} and \ref{lem:bound-fiber} gives:

\begin{thm}\label{northcottproperty}
For every $\ttt\in\Lambda^\circ$,  there exists a polynomial~$P$ such that for any \(B \ge 0 \),   $$\#\{\xxx\in\TT(F)|\hspace{0,1cm}H_{\ttt}(\xxx)\leq B\}\leq P(B).$$
\end{thm}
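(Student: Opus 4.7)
The plan is to prove the bound by combining the image bound of Lemma \ref{lem:bound-image} with the fiberwise bound of Lemma \ref{lem:bound-fiber} for the map
\[
\alpha\colon \cT(F)\to\bigoplus_{v\in M_F^0}(N^{\rig}\oplus G^{D}),
\]
which is the componentwise composition of $\log_{\cT,v}$ with the embedding into $N^{\rig}\times\pi_{0}(\cJ_{0}\cX)$. The key preliminary reduction, already set up in the excerpt, is that if $H_{\ttt}(\xxx)\le B$ then also $H_{f}(\xxx)\le CB$ for some constant $C$ independent of $\xxx$, because $H_{\infty}$ is bounded below by a positive constant (its factorisation through a continuous map from the compact space $\prod_{v\in M_F^\infty}X(F_v)$ to $\PP^{1}(\RR)$ forces the image to be a compact subset of $\RR_{>0}\cup\{\infty\}$, thanks to the nonnegativity of the coefficients of $\ttt$).

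Next I would invoke Lemma \ref{lem:bound-image}, whose proof exploits the further decomposition $H_{f}=H_{i}\cdot H_{e}$ into the ``integral part'' $H_i$ and ``extra part'' $H_e$, both $\ge 1$ pointwise. The integral part is polynomially bounded by the classical toric variety analogue, while the extra part is polynomially bounded via the embedding of $\sideset{}{'}\prod_{v\in M_F^0}\pi_{0}(\cJ_{0}\cX)$ into tuples of radical ideals of $\cO_F$, reducing to the known polynomial growth of radical ideals of bounded norm (together with positivity of $\Xi_{\cY}(\ttt)$ for $\ttt\in\Lambda^\circ$). This yields a polynomial $P_{1}(B)$ bounding the cardinality of $\alpha(\{H_{\ttt}\le B\})$.

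Then I would apply Lemma \ref{lem:bound-fiber}: each fiber of $\alpha|_{\{H_{\ttt}\le B\}}$ decomposes as a product of the finite groups $\Ker(\gamma)$ and $\Ker((\log_{T,v})_{v\in M_F^\infty})$ with a set of the form $\bigl(\phi_{\ttt}\le\log B\bigr)\cap\bigl(((\log_{T,v})_{v\in M_F^\infty})(U)+w_{\Phi}\bigr)$, whose cardinality is $O((\log B)^{n})$ uniformly in the fiber by a standard lattice-point-in-polytope estimate; this yields a second polynomial $P_{2}(B)$ bounding all fiber sizes.

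Multiplying, one obtains $\#\{\xxx\in\cT(F)\mid H_{\ttt}(\xxx)\le B\}\le P_{1}(B)P_{2}(B)=:P(B)$, as required. The one step I expect to be most delicate is establishing the uniform polynomial bound on fibers, since the toric-type lattice-point estimate must be made robust to the translation $w_\Phi$ and must combine cleanly with the bound on the image; fortunately, once the polytope $\{\phi_{\ttt}\le 1\}$ is compact (which follows from $\ttt\in\Lambda^\circ$ together with Lemma \ref{propoflambda}(3)), the required bound $(2m\log B+1)^n$ depends only on $B$ and not on the fiber.
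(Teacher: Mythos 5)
Your proposal is correct and follows essentially the same route as the paper, which proves the theorem precisely by combining Lemma \ref{lem:bound-image} (polynomial bound on the image of $\alpha$, via the reduction $H\le B\Rightarrow H_f\le CB$ and the decomposition $H_f=H_i\cdot H_e$) with Lemma \ref{lem:bound-fiber} (uniform polynomial bound on fibers via the finite kernels and the lattice-point count in the polytope $\phi_{\bt}\le\log B$). The multiplication of the two bounds is exactly how the paper concludes.
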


\section{Analysis of height zeta functions} \label{aohzf}
We are in the situation of the last section. In order to count rational points on the open substack $\TT\subset\XX$ of bounded height, we define a height zeta function and use harmonic analysis to understand its behavior. For every finite set~$S$ and every domain $D\subset\RR^S$, we denote by~$\Omega_D$ the set of~$\zzz\in\CC^S$, given by $\Re(\zzz)\in D$.
\subsection{Local transform}\label{sectionloctran}
We will calculate Fourier transforms of local heights.
We will fix a notation. For $\rho\in\Sigma(1)$, for the purpose of notation, we may identify~$\rho$ with $(b_{\rho},0)$. E.g.\ we denote by $f_{\rho}:\ZZ\to N^{\rig}\times G^D$ the homomorphism $$f_{\rho}=f_{(b_{\rho},0)}:\ZZ\to N^{\rig}\times G^D.$$
Similarly, by~$\chi^{(\rho)}$ we will denote $$\chi^{(\rho)}=\chi^{(b_{\rho},0)}.$$
\subsubsection{}
Let~$v$ be a finite place of~$F$.
\begin{lem}\label{loctraexp} Let $\chi\in\TT(\Ov)^{\perp}[\TT(F_v)^*]$. One has that
\begin{align*}
\wH_v(\sss,\chi^{-1})=\int_{\TT(F_v)}H_v(\sss,-)^{-1}\chi\mu_v=R_{\Sigma}(\xxx),
\end{align*}
where
$$\xxx=\big(\exp(-\log(q_v)\cdot\Xi_{\rho}(\sss))\overline{\chi}^{(\rho)}(\pi_v)\big)_{\rho\in\Spijx}. $$
\end{lem}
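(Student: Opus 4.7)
The plan is to reduce the integral to a lattice sum over $X_*(\TT) = N^{\rig} \times G^D$ via the pushforward of $\mu_v$ under $\log_{\TT, v}$, and then to evaluate this sum cone-by-cone using the decomposition $N^{\rig} = \bigsqcup_{\sigma \in \Sigma}(\sigma^\circ \cap N^{\rig})$, so that each cone's contribution matches $R_\sigma(\xxx)$.

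Since $H_v(\sss,-)^{-1}$ is $\TT(\Ov)$-invariant (noted after Definition \ref{finpair}) and $\chi \in \TT(\Ov)^\perp$ by hypothesis, the integrand descends to $X_*(\TT)$. By Definition \ref{muvdef}, the pushforward of $\mu_v$ along $\log_{\TT,v}$ is the counting measure on $X_*(\TT)$; by Lemma \ref{coolrelat}, $\chi$ corresponds on $X_*(\TT)$ to the function $(\yyy,x)\mapsto \chi^{((\yyy,x))}(\pi_v)$. Hence the integral equals
\[
\sum_{(\yyy,x)\in N^{\rig}\times G^D} \exp\big(-\log(q_v)\,\phi_{\sss}(\yyy,x)\big)\, \chi^{((\yyy,x))}(\pi_v).
\]

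For each $\sigma \in \Sigma$ and $\yyy \in \sigma^\circ \cap N^{\rig}$, writing $\yyy = q(\yyy) + r(\yyy)$ with $r(\yyy) = \sum_{\rho \in \sigma(1)} \lfloor a_\rho(\yyy)\rfloor b_\rho$, Lemma \ref{obvfacp}(1) combined with \eqref{phixi} yields $\phi_{\sss}(\yyy,x) = \Xi_{q(\yyy,x)}(\sss) + \sum_{\rho \in \sigma(1)} \lfloor a_\rho(\yyy)\rfloor s_\rho$ (with the convention $\Xi_{(0,0)}(\sss):=0$ for the untwisted sector), while Corollary \ref{muiop}(3) gives the parallel multiplicative decomposition of $\chi^{((\yyy,x))}(\pi_v)$. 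Thus each summand factors as $X_{q(\yyy,x)} \prod_{\rho \in \sigma(1)} X_\rho^{\lfloor a_\rho(\yyy)\rfloor}$ in the variables $X_\tau$ of the statement (with $X_{(0,0)}:=1$). Parametrizing $(\yyy,x) \in (\sigma^\circ \cap N^{\rig}) \times G^D$ via Lemma \ref{weirdoma} by triples $(\yyy_1,x,(k_\rho))$ with $\yyy_1 = q(\yyy) \in \Boxx^{\rig}(\sigma)$ and $k_\rho = \lfloor a_\rho(\yyy)\rfloor$, the $\sigma^\circ$-membership condition $a_\rho(\yyy)=a_\rho(\yyy_1)+k_\rho>0$ translates to $k_\rho \geq 1$ whenever $a_\rho(\yyy_1)=0$. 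Summing the geometric series in the $k_\rho$'s then yields, for the untwisted case $(\yyy_1,x)=(0,0)$, the contribution $\prod_{\rho \in \sigma(1)} X_\rho / \prod_{\rho \in \sigma(1)}(1-X_\rho)$, and for each twisted $\YY=(\yyy_1,x)\in\pijx$ with $\yyy_1 \in \sigma$, the contribution $X_\YY \prod_{\rho\in\sigma(1),\,a_\rho(\YY)=0} X_\rho / \prod_{\rho \in \sigma(1)}(1-X_\rho)$. These add to $R_\sigma(\xxx)$, and summing over $\sigma \in \Sigma$ produces $R_\Sigma(\xxx)$.

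The hard part is the cone-by-cone bookkeeping: verifying that the bijection from Lemma \ref{weirdoma} correctly matches the geometric condition $\yyy\in\sigma^\circ$ with the arithmetic constraint $k_\rho \geq 1$ whenever $a_\rho(\yyy_1)=0$, and correctly sorting the untwisted versus twisted sector contributions into the two pieces of the numerator of $R_\sigma$. Convergence of the geometric series, i.e.\ $|X_\rho|<1$, equivalently $\Re(s_\rho) > 0$, holds on the tube $\Omega_{\Lambda^\circ}$.
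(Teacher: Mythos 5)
Your proposal is correct and follows essentially the same route as the paper's proof: pushing the integral forward to a lattice sum over $X_*(\TT)$ via the exact sequence of Definition~\ref{muvdef} and Lemma~\ref{coolrelat}, decomposing over relative interiors of cones, reparametrizing each cone's sum through Lemma~\ref{weirdoma} with the constraint $k_\rho\geq 1$ when $a_\rho(\yyy_1)=0$, and summing the resulting geometric series to recover $R_\sigma$ and hence $R_\Sigma$. No gaps.
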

\begin{proof}
Recall the exact sequence $$1\to (\TT(\Ov), \mu^{\proba})\to (\TT(F_v), \mu_v)\xrightarrow{\log_{\TT, v}} (N^{\rig}\times G^D, \mu^{\#})\to 1$$
from Definition~\ref{muvdef}. Recall from Definition~\ref{finpair} that for~$\xxx\in\TT(F_v)$, one has that $$H(\sss,\xxx)=\exp(\log(q_v)\cdot\phi_{\sss}(\log_{\TT, v}(\xxx))).$$
For $\xxx\in \TT(F_v)$, we have by Lemma~\ref{coolrelat} that $$\overline{\chi}(\xxx)=\big((\log_{\TT, v}^*)^{-1}(\overline\chi)\big)(\log_{\TT,v}(\xxx))=\overline\chi^{(\log_{\TT,v}(\xxx))}(\pi_v).$$
We decompose $N^{\rig}\times G^D$ in a disjoint union $N^{\rig}\times G^D=\coprod_{\sigma\in\Sigma} (N^{\rig}\cap\sigma^{\circ})\times G^D.$ Now we can integrate $\TT(\Ov)$-invariant function $H_v(\sss,-)^{-1}\overline{\chi}$ as follows:
\begin{align}\label{sigmsumexp}
\begin{split}\int_{\TT(F_v)}H_v(\sss,-)^{-1}\overline{\chi}\mu_v\hskip-3cm&\\&
=\sum_{\sigma\in\Sigma}\sum_{(\yyy,x)\in(\sigma^{\circ}\cap N^{\rig})\times G^D}\exp(-\log(q_v)\phi_{\sss}(\yyy,x))\cdot (((\log_{\TT, v}^*)^{-1}(\overline\chi))(\yyy,x))\\
&=\sum_{\sigma\in\Sigma}\sum_{(\yyy,x)\in(\sigma^{\circ}\cap N^{\rig})\times G^D}\exp(-\log(q_v)\phi_{\sss}(\yyy,x))\cdot \overline\chi^{(\yyy,x)}(\pi_v).
\end{split}
\end{align}
For each face $\sigma\in\Sigma$, Lemma~\ref{weirdoma} establishes a bijection $$\Boxx^{\rig}(\sigma)\times \big\{\sum_{\substack{\rho\in\sigma(1)\\ k_{\rho}\in\ZZ_{\geq 0}}}k_{\rho}b_{\rho}\big\}-\bigcup_{\sigma'\subsetneq\sigma}(\sigma'\times\sigma') \to \sigma^{\circ}\cap N^{\rig},\hspace{0,2 cm}(\yyy,\yyy')\mapsto \yyy+\yyy'.$$
The term corresponding to~$\sigma$ in our sum becomes $$\sum_{\substack{\yyy\in\Boxx^{\rig}(\sigma),\\x\in G^D}}\sum_{\substack{\yyy'\in\{\sum_{\rho\in\sigma(1)}k_{\rho}b_{\rho}|\hspace{0,1cm}\forall\rho\in\sigma(1): k_{\rho}\in\ZZ_{\geq 0}\},\\\forall\sigma'\subsetneq\sigma, \yyy\in\sigma'\implies \yyy'\not\in\sigma'}}\hskip-1cm\exp(-\log(q_v)\phi_{\sss}(\yyy+\yyy',x))\cdot\overline{\chi}^{(\yyy+\yyy',x)}(\pi_v).$$
Setting $\yyy'=\sum_{\rho\in\sigma(1)}k_{\rho}b_{\rho}$, and using Lemma~\ref{obvfacp}(1) and Corollary~\ref{muiop}(3) gives that \begin{multline*}\exp(-\log(q_v)\phi_{\sss}(\yyy+\yyy',x))\cdot\overline{\chi}^{(\yyy+\yyy',x)}(\pi_v)\\=\exp(-\log(q_v)\phi_{\sss}(\yyy,x))\overline{\chi}^{(\yyy,x)}(\pi_v)\cdot  \exp\big(-\log(q_v)\sum_{\rho\in\sigma(1)}k_{\rho}s_{\rho}\big)\cdot \overline{\chi}^{(\sum_{\rho\in\sigma(1)}k_{\rho}b_{\rho},0)}(\pi_v).
\end{multline*}
We recall from Equation~(\ref{phixi}) that $\Xi_{(\yyy,x)}(\sss)=\phi_{\sss}(\yyy,x) $ whenever $(\yyy,x)\in\Boxx^{\rig}(\Sigma(1))-\{0\}$ and the term corresponding to~$\sigma$ becomes:
\begin{multline*}
\sum_{\substack{\yyy\in\Boxx^{\rig}(\sigma)\\x\in G^D}}\exp(-\log(q_v)\Xi_{(\yyy,x)}(\sss))\overline{\chi}^{(\yyy,x)}(\pi_v)\times\\
\times\sum_{\substack{(k_{\rho})_{\rho\in\sigma(1)}\in\ZZ^{\sigma(1)}_{\geq 0}\\\forall\sigma'\subsetneq\sigma:\yyy\in\sigma'\implies\sum_{\rho\in\sigma(1)}k_{\rho}b_{\rho}\not\in\sigma'}}\exp\bigg(-\log(q_v)\sum_{\rho\in\sigma(1)}k_{\rho}s_{\rho}\bigg)\cdot \overline{\chi}^{(\sum_{\rho\in\sigma(1)}k_{\rho}b_{\rho},0)}(\pi_v)
\end{multline*}
Note that condition $\forall\sigma'\subsetneq\sigma:\yyy\in\sigma'\implies \sum_{\rho\in\sigma(1)}k_{\rho}b_{\rho}\not\in\sigma'$
is in fact equivalent to the condition $a_{\rho}(\yyy)=0\implies k_{\rho}\geq 1$.
 Thus the inner sum is 
\begin{multline*}
\sum_{\substack{(k_{\rho})_{\rho\in\{\rho:a_{\rho}(\yyy)=0\}}\in\ZZ^{\{\rho:a_{\rho}(\yyy) =0\}}_{\geq 1}\\(k_{\rho})_{\rho\in\{\rho:a_{\rho}(\yyy)\neq  0\}}\in\ZZ^{\{\rho: a_{\rho}(\yyy)\neq 0\}}_{\geq 0}}}\exp\bigg(-\log(q_v)\sum_{\rho\in\sigma(1)}k_{\rho}s_{\rho}\bigg)\cdot \overline{\chi}^{(\sum_{\rho\in\sigma(1)}k_{\rho}b_{\rho},0)}(\pi_v).
\end{multline*}
Using Corollary~\ref{muiop}(3) and summing the geometric series, we obtain that the last  sum
\begin{multline*}
\prod_{\substack{\rho\in\sigma(1)\\ a_{\rho}(\yyy)\neq 0}}\frac{1}{(1-\exp(-\log(q_v)s_{\rho})\overline{\chi}^{(\rho)}(\pi_v))}\cdot \prod_{\substack{\rho\in\sigma(1)\\a_{\rho}(\yyy)=0}}\frac{\exp(-\log(q_v)s_{\rho})\overline{\chi}^{(\rho)}(\pi_v)}{(1-\exp(-\log(q_v)s_{\rho})\overline{\chi}^{(\rho)}(\pi_v))}.
\end{multline*}
Recall from Definition~\ref{linformxidef} that for $\rho\in\Sigma(1)$, one has that $\Xi_{\rho}(\sss)=s_{\rho}$.  Hence, the term corresponding to~$\sigma$ in the sum~(\ref{sigmsumexp}) is 
\begin{multline*}\sum_{\substack{\yyy\in\Boxx^{\rig}(\sigma)\\x\in G^D}}\exp(-\log(q_v)\Xi_{(\yyy,x)}(\sss))\overline\chi^{(\yyy,x)}(\pi_v)\times\\\times \prod_{\rho\in\sigma(1)}\frac{1}{(1-\exp(-\log(q_v)\Xi_{\rho}(\sss))\overline{\chi}^{\rho}(\pi_v))}\cdot\prod_{\substack{\rho\in\sigma(1)\\a_{\rho}(\yyy)=0}}\exp(-\log(q_v)\Xi_{\rho}(\sss))\overline{\chi}^{(\rho)}(\pi_v).
\end{multline*}
We separate the term $(0,0)$ and our sum becomes:
\begin{multline*}\frac{\prod_{\rho\in\sigma(1)}\exp(-\log(q_v)\Xi_{\rho}(\sss))\overline{\chi}^{(\rho)}(\pi_v)}{\prod_{\rho\in\sigma(1)}(1-\exp(-\log(q_v)\Xi_{\rho}(\sss))\overline{\chi}^{(\rho)}(\pi_v))}+\\\dfrac{\sum_{\YY\in\Boxx(\sigma)-\{0,0\}} \exp(-\log(q_v)\Xi_{\YY}(\sss))\overline\chi^{(\YY)}(\pi_v)\prod_{\substack{\rho\in\sigma(1)\\a_{\rho}(\YY)=0}}\exp(-\log(q_v)\Xi_{\rho}(\sss))\overline\chi^{(\rho)}(\pi_v))}{\prod_{\rho\in\sigma(1)}(1-\exp(-\log(q_v)\Xi_{\rho}(\sss))\overline{\chi}^{\rho}(\pi_v))}.
\end{multline*}
This is precisely the rational function from Subsection~\ref{defrsig} $$R_{\sigma}=\frac{1}{\prod_{\rho\in\sigma(1)}(1-X_{\rho})}\cdot\bigg(\prod_{\rho\in\sigma(1)}X_{\rho}+\sum_{\YY\in\Boxx(\sigma)-\{0,0\}}X_{\YY}\prod_{\substack{\rho\in\sigma(1)\\a_{\rho}(\YY)=0}}X_{\rho}\bigg).$$
evaluated at $\xxx=(\exp(-\log(q_v)\Xi_{\rho}(\sss))\overline\chi^{(\rho)}(\pi_v))_{\rho\in\Spijx}.$
 Hence, $$\wH(\sss,\chi)=\sum_{\sigma\in\Sigma}R_{\sigma}(\xxx)=R_{\Sigma}(\xxx), $$
as claimed. 
\end{proof}
%
\subsubsection{}  Let~$v$ be an infinite place of~$F$. The following proposition is a variant of \cite[Proposition 4.2.4]{FonctionsZ}. The proof presented there applies immediately to our situation. For every finite set~$Z$ and every $\sss\in\RR^{Z}$, we set $$||\sss||:=\max_{z\in S}(\sss|_{\{z\}}).$$ When there will be no confusion, we will refer to $\sss|_{\{z\}}$ by~$s_z$.
\begin{prop}[Chambert-Loir, Tschinkel]\label{explemphi} Let~$Z$ be a finite set and let ~$\Delta\subset\RR^Z$ be a complete simplicial fan. For a ray $\rho\in\Delta$, we let~$e_{\rho}$ to be its generator. For $\sss\in \CC^{\Delta(1)},$ we define a piecewise linear function $$\eta(\sss,-):\RR^Z\to\CC,\hspace{1cm}e_{\rho}\mapsto s_{\rho}.$$ Let $W\subset\RR^Z$ be a vector subspace. For every compact~$\mathcal K\subset\RR^{\Delta(1)}_{>0}$, there exists $C(\mathcal K)>0$ such that for every $\sss\in\Omega_{\mathcal K}$ and every $\mmm\in W$ one has that 
\begin{multline*}\bigg|\int_{\RR^Z}\exp(-\eta({\sss},\xxx)-i\langle \xxx,\mmm\rangle) d\xxx\bigg|\\\leq\frac{C(\mathcal K)}{1+||\mmm||}\sum_{\sigma\in\Delta(\# Z)}\frac{1+||\Im(\sss)|_{\sigma(1)}||}{\prod_{\rho\in\sigma(1)}(1+|\Im(s_{\rho})+\langle e_{\rho},\mmm\rangle|)}.
\end{multline*}
\end{prop}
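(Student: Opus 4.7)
Set $n:=\#Z$ and $J:=\int_{\RR^Z}\exp(-\eta(\sss,\xxx)-i\langle\xxx,\mmm\rangle)d\xxx$. The plan is to compute $J$ as a sum over the maximal cones of $\Delta$ and then gain the extra factor $1/(1+||\mmm||)$ via one integration by parts. Decomposing $\RR^Z=\bigcup_{\sigma\in\Delta(n)}\sigma$, each maximal $\sigma$ has rays $(e_\rho)_{\rho\in\sigma(1)}$ forming a basis of $\RR^Z$ (since $\Delta$ is simplicial). Writing $\xxx=\sum_{\rho\in\sigma(1)}u_\rho e_\rho$ with $u_\rho\geq 0$, one has $\eta(\sss,\xxx)=\sum_{\rho\in\sigma(1)}u_\rho s_\rho$, so the integral over $\sigma$ factors into $n$ one-dimensional exponential integrals and evaluates to
\[
\tilde I_\sigma:=\int_\sigma e^{-\eta(\sss,\xxx)-i\langle\xxx,\mmm\rangle}d\xxx=|\det M_\sigma|\prod_{\rho\in\sigma(1)}\frac{1}{s_\rho+i\langle e_\rho,\mmm\rangle},
\]
where $M_\sigma$ is the matrix of $(e_\rho)_{\rho\in\sigma(1)}$. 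Since $\Re(\sss)$ lies in the compact set $\mathcal K\subset\RR^{\Delta(1)}_{>0}$, the real parts $\Re(s_\rho)$ are bounded below by a positive constant depending only on $\mathcal K$, giving the pointwise bound
\[
|\tilde I_\sigma|\leq C_{\mathcal K}\prod_{\rho\in\sigma(1)}\frac{1}{1+|\Im(s_\rho)+\langle e_\rho,\mmm\rangle|},
\]
and $J=\sum_{\sigma\in\Delta(n)}\tilde I_\sigma$, so $|J|\leq\sum_\sigma|\tilde I_\sigma|$.

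To extract the extra factor $1/(1+||\mmm||)$ for $\mmm\neq 0$, I would integrate by parts in the direction $v:=\mmm/||\mmm||$. On the interior of each maximal cone $\sigma$, $\eta(\sss,\cdot)$ is linear, so $\partial_v e^{-\eta(\sss,\cdot)}=-c_{\sigma,v}\cdot e^{-\eta(\sss,\cdot)}$, where $c_{\sigma,v}=\sum_{\rho\in\sigma(1)}s_\rho\,(v)^\sigma_\rho$ and $(v)^\sigma_\rho\in\RR$ are the coordinates of $v$ in the basis $(e_\rho)_{\rho\in\sigma(1)}$. Performing IBP on each $\sigma$ and summing, the boundary contributions on the codimension-one walls of $\Delta$ cancel pairwise because $\eta(\sss,\cdot)$ is continuous across them and adjacent maximal cones give opposite outward normals, while the ``boundary at infinity'' of each cone vanishes by the exponential decay $e^{-\Re(\eta)}$ coming from $\Re(\sss)>0$. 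Using $\partial_v e^{-i\langle\xxx,\mmm\rangle}=-i\,||\mmm||\,e^{-i\langle\xxx,\mmm\rangle}$ this produces
\[
i\,||\mmm||\,J=-\sum_{\sigma\in\Delta(n)}c_{\sigma,v}\,\tilde I_\sigma,
\]
and since $\Delta$ has only finitely many maximal cones and the changes of basis are uniformly bounded, $|c_{\sigma,v}|\leq C'_{\mathcal K}(1+||\Im(\sss)|_{\sigma(1)}||)$ uniformly in $\sigma$ and in $v$ of norm $1$.

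Combining the trivial bound with the IBP bound (which extends trivially to $\mmm=0$) gives
\[
(1+||\mmm||)\,|J|\leq C''_{\mathcal K}\sum_{\sigma\in\Delta(n)}(1+||\Im(\sss)|_{\sigma(1)}||)\,|\tilde I_\sigma|,
\]
and substituting the pointwise estimate on $|\tilde I_\sigma|$ from the first paragraph yields the claimed inequality. The subspace $W$ plays no role in the bound, which in fact holds for every $\mmm\in\RR^Z$. The one step that requires real care is the cone-by-cone integration by parts; I would justify it by working on $\sigma\cap B(0,R)$, checking that the spherical piece vanishes as $R\to\infty$ thanks to the decay of $e^{-\Re(\eta)}$ along cone rays, and verifying the wall cancellations by pairing the two maximal cones adjacent to each codimension-one face of $\Delta$.
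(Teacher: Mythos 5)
Your proposal is correct, and it is essentially the argument of the cited source: the paper itself gives no proof here but defers to Chambert-Loir--Tschinkel \cite[Proposition 4.2.4]{FonctionsZ}, whose proof proceeds exactly as you do — decompose $\RR^Z$ into the maximal cones, evaluate each cone integral as $|\det M_\sigma|\prod_\rho(s_\rho+i\langle e_\rho,\mmm\rangle)^{-1}$, and gain the factor $(1+\norm{\mmm})^{-1}$ by a single integration by parts in the direction of $\mmm$, using that $e^{-\eta(\sss,\cdot)}$ is continuous and piecewise linear in the exponent so the wall terms cancel and $|c_{\sigma,v}|\ll 1+\norm{\Im(\sss)|_{\sigma(1)}}$ on $\Omega_{\mathcal K}$. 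Your observation that $W$ plays no role and that the bound holds for all $\mmm\in\RR^Z$ is also accurate.
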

We fix an isomorphism~$M^{\rig}_{\RR}\cong \RR^{Z}$, where~$Z$ is a finite set. Using the isomorphism, we see~$||\cdot||$ as a norm on~$M^{\rig}_{\RR}$. We deduce a norm on $\prod_{\vMFi}M^{\rig}_{\RR}$. We have a canonical identification 
$$(\prod_{\vMFi}(\log_{\TT,v})^*)^{-1}:\prod_{\vMFi}\TT(\Ov)^{\perp}[\TT(F_v)^*]\xrightarrow{\sim}\prod_{\vMFi}M^{\rig}_{\RR}.$$
For a character $$\chi\in \prod_{\vMFi} \TT(\Ov)^{\perp}[\TT(F_v)^*],$$ we deduce a meaning of~$||\chi||$.
\begin{cor}\label{pinft}
Let us denote by~$\widetilde\Sigma$ the fan $\prod_{\vMFi}\Sigma$ in $\widetilde N^{\rig}_{\RR}:=\prod_{\vMFi}N_{\RR}^{\rig}.$ For an element $\sss\in\CC^{\Sigma(1)}$, we set $\widetilde\sss:=(\sss)_{\vMFi}$ and for $\sss\in\CC^{\Sigma(1)\cup\pijx}$ we set $\widetilde\sss:=\widetilde{\sss|_{\Sigma(1)}}$. 
 For every $\chi=(\chi_v)_v \in\prod_{\vMFi}\TT(\Ov)^{\perp}[\TT(F_v)^*],$ one has that the function $$\sss\mapsto \prod_{\vMFi}\wH_v(\sss,\chi_v)=:\wH_{\infty}(\sss,\chi)$$is holomorphic in the domain~$\Omega_{\Lambda^{\circ}}$. Moreover, for every compact $\mathcal K\subset\Lambda^{\circ}$  there exists $C(\mathcal K)>0$ such that for every $\sss\in \CC^{\Sigma(1)\cup \pi_0^*(\mathcal J_0\XX)}$ with $\Re(\sss)\in\mathcal K$ 
 one has that
\begin{align*}\wH_{\infty}(\sss,\chi)\leq \frac{C(\mathcal K)}{1+||\chi||}\sum_{\widetilde\sigma\in\widetilde\Sigma(d\cdot \# M_F^{\infty})}\frac{1+||\Im(\widetilde\sss)|_{\widetilde\sigma(1)}||}{\prod_{\rho\in\widetilde\sigma(1)}(1+|\Im(s_{\rho})+\langle e_{\rho},\chi\rangle )|)}.
\end{align*}
\end{cor}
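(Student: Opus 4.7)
The plan is to realize the infinite product $\wH_\infty(\sss,\chi)$ as a single Fourier-type integral over the product space $\widetilde N^{\rig}_\RR := \prod_{\vMFi} N^{\rig}_\RR$ associated to the product fan $\widetilde\Sigma$, and then apply Proposition~\ref{explemphi} directly. First, for each $\vMFi$ the exact sequence in Definition~\ref{muvdef}(2) together with the isomorphism $(\log_{\TT,v})^{*}\colon M^{\rig}_\RR \xrightarrow{\sim} \TT(\Ov)^{\perp}[\TT(F_v)^*]$ (which uses that $\log_{\TT,v}$ takes values in $X_*(T)_\RR = N^{\rig}_\RR$ at infinite places, so the $BG$-factor contributes trivially) lets us write $\chi_v = \exp(i\langle \log_{\TT,v}(\cdot),\mmm_v\rangle)$ for a unique $\mmm_v\in M^{\rig}_\RR$, with $\|\chi\| = \|(\mmm_v)_v\|$ by the chosen norm.

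Next, because $H_v(\sss,-)^{-1}\chi_v$ is $\TT(\Ov)$-invariant and factors through $\log_{\TT,v}$, Definition~\ref{infpair} together with the normalization of $\mu_v$ yields
\begin{equation*}
\wH_v(\sss,\chi_v) = \int_{N^{\rig}_\RR} \exp\bigl(-[F_v:\RR]\cdot\phi^{\infty}_\sss(\xxx) - i\langle \xxx,\mmm_v\rangle\bigr)\, d\xxx.
\end{equation*}
Since $\phi^\infty_\sss$ is piecewise linear on $\Sigma$ and determined by $\phi^\infty_\sss(b_\rho) = s_\rho$, it depends only on $\widetilde\sss$. By Fubini, the finite product over $\vMFi$ assembles into a single integral
\begin{equation*}
\wH_\infty(\sss,\chi) = \int_{\widetilde N^{\rig}_\RR} \exp\bigl(-\eta(\widetilde{\sss}',\widetilde\xxx) - i\langle \widetilde\xxx, \mmm\rangle\bigr)\, d\widetilde\xxx,
\end{equation*}
where $\mmm=(\mmm_v)_v$, and $\widetilde{\sss}'\in\CC^{\widetilde\Sigma(1)}$ is the tuple with $\rho$-coordinate $[F_v:\RR]\cdot s_\rho$ for $\rho$ in the $v$-summand; the exponent is then literally the function $\eta(\widetilde\sss',-)$ of Proposition~\ref{explemphi} attached to the complete simplicial fan $\widetilde\Sigma$.

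Applying Proposition~\ref{explemphi} to $\widetilde\Sigma$ with $W = \prod_{\vMFi} M^{\rig}_\RR$ (viewed as a subspace of $\Hom(\widetilde N^{\rig}_\RR,\RR)$) gives both the desired holomorphy on $\Omega_{\Lambda^\circ}$ — because $\sss\in\Omega_{\Lambda^\circ}$ forces $\Re(s_\rho)>0$ for every $\rho\in\Sigma(1)$, hence $\widetilde\sss'\in\Omega_{\RR^{\widetilde\Sigma(1)}_{>0}}$ — and the stated bound, with the scalars $[F_v:\RR]$ absorbed into $C(\mathcal K)$. The step requiring the most care is the bookkeeping in the second paragraph: verifying that passing from local measures $\mu_v$ to Lebesgue measure on $N^{\rig}_\RR$ is exactly the right push-forward, correctly tracking the role of the twisted-sector coordinates $s_\YY$ (which disappear at infinite places since $\phi^\infty_\sss$ only sees $\widetilde\sss$), and confirming that the product fan $\widetilde\Sigma$ together with $\widetilde\sss'$ reproduces the piecewise-linear exponent in the form required by Proposition~\ref{explemphi}. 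Once these identifications are in place, the corollary is an immediate application of that proposition.
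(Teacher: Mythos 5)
Your proposal is correct and follows essentially the same route as the paper: identify each $\chi_v$ with some $\mmm_v\in M^{\rig}_{\RR}$ via $(\log_{\TT,v})^*$, push the local integrals forward to Lebesgue integrals over $N^{\rig}_{\RR}$ using the normalization of $\mu_v$, assemble them by Fubini into a single integral over $\widetilde N^{\rig}_{\RR}$ with the piecewise-linear exponent attached to the product fan $\widetilde\Sigma$, and invoke Proposition~\ref{explemphi}. Your explicit tracking of the $[F_v:\RR]$ factors is in fact slightly more careful than the paper's own computation, which suppresses them.
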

\begin{proof}
The holomorphicity is immediate. Let $\mathcal K\subset\Lambda^{\circ}$ be a compact. Let $(\mmm_v)_{\vMFi}\in\prod_{\vMFi} M^{\rig}_{\RR}$ corresponds to~$\chi$ for the above identification. 
We have that 
\begin{align*}
\wH_{\infty}(\sss,\chi)&=\prod_{\vMFi}\wH_v(\sss,\chi_v)\\&=\prod_{\vMFi}\int_{\TT(F_v)}H(\sss,\cdot)^{-1}\overline{\chi_v}\mu_v\\&=\prod_{\vMFi}\int_{N^{\rig}_{\RR}}\exp(-\phi^{\infty}_{\sss}(\xxx)-i\langle\xxx,\mmm_v\rangle) d\xxx\\
&=\int_{\widetilde N^{\rig}_{\RR}}\exp\big(-\eta(\widetilde{\sss},\widetilde\xxx)-i\langle\widetilde\xxx,(\mmm_v)_{\vMFi}\rangle\big) d\widetilde\xxx,
\end{align*}
where $d\widetilde\xxx$ is the product measure $\prod_{\vMFi}d\xxx$. Note that as $\sss\in\mathcal K,$ then $\Re(\widetilde\sss)$ belongs to the compact $\widetilde {\mathcal K}:=(\mathcal K|_{\Sigma(1)})^{\vMFi},$ where $\mathcal K|_{\Sigma(1)}:=\{f|_{\Sigma(1)}|\hspace{0,1cm}f\in \mathcal K\}.$ Lemma \ref{explemphi} gives that
\begin{align*}
\bigg|\int_{\widetilde N^{\rig}_{\RR}}\exp\big(-\eta(\widetilde{\sss},\widetilde\xxx)-i\langle\widetilde\xxx,(\mmm_v)_{\vMFi}\rangle\big)d\widetilde\xxx\bigg|\hskip-5cm&\\&\leq\frac{C(\widetilde {\mathcal K})}{||(\mmm_v)_{\vMFi}||}\sum_{\widetilde\sigma\in \widetilde\Sigma(d\cdot \#M_F^{\infty})}\frac{1+||\Im(\widetilde\sss)||_{\widetilde\sigma(1)}}{\prod_{\rho\in\widetilde\sigma(1)}(1+|s_{\rho}+\langle e_{\rho}, (\mmm_v)_{\vMFi}\rangle|)}\\
&=\frac{C(\widetilde {\mathcal K})}{||\chi||}\sum_{\widetilde\sigma\in \widetilde\Sigma(d\cdot \#M_F^{\infty})}\frac{1+||\Im(\widetilde\sss)||_{\widetilde\sigma(1)}}{\prod_{\rho\in\widetilde\sigma(1)}(1+|s_{\rho}+\langle e_{\rho}, \chi\rangle|)}.
\end{align*}
The statement is proven.
\end{proof}
We add an easily deduced fact that we will use latter.
\begin{lem}\label{hinfnnz}
For every $\sss\in\RR_{\Lambda^{\circ}}^{\Spijx}$, one has that $\wH_{\infty}(\sss,1)>0$.
\end{lem}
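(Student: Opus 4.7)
The plan is to reduce the lemma to a direct positivity argument, using the holomorphicity already established in Corollary~\ref{pinft} to handle convergence. Since $\chi=1$, the local Fourier transform at an infinite place is
\[
\wH_v(\sss,1)=\int_{\TT(F_v)}H_v(\sss,-)^{-1}\,\mu_v,
\]
and $\wH_{\infty}(\sss,1)=\prod_{\vMFi}\wH_v(\sss,1)$. It therefore suffices to show each factor is strictly positive.

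First, I would observe that for $\sss\in\RR^{\Spijx}$ the piecewise linear function $\phi_\sss^\infty\colon N^{\rig}_\RR\to\CC$ is in fact \emph{real}-valued: by construction it is the unique piecewise linear extension of $\rho\mapsto s_\rho\in\RR$ to $N^{\rig}_\RR$. Consequently, by Definition~\ref{infpair}, for every $\xxx\in\TT(F_v)$ the value
\[
H_v(\sss,\xxx)=\exp\bigl([F_v:\RR]\cdot\phi^{\infty}_{\sss}(\log_{\TT,v}(\xxx))\bigr)
\]
is a strictly positive real number, and so is its reciprocal $H_v(\sss,\xxx)^{-1}$.

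Next, I would invoke Corollary~\ref{pinft} with $\chi=1$: since $\sss\in\Lambda^{\circ}$, the function $\wH_{\infty}(\sss,1)$ is holomorphic at $\sss$, and in particular each integral $\wH_v(\sss,1)$ is finite. Combining this finiteness with the positivity of the integrand $H_v(\sss,-)^{-1}$ and the fact that $\mu_v$ is a non-zero Haar measure on the non-empty locally compact group $\TT(F_v)$ forces $\wH_v(\sss,1)>0$.

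Taking the product over all $\vMFi$ yields $\wH_\infty(\sss,1)>0$, as claimed. There is no real obstacle here: the content of the lemma is entirely carried by the real-valuedness of $\phi_\sss^\infty$ for real $\sss$ together with the already-proven integrability at $\sss\in\Lambda^{\circ}$.
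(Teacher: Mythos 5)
Your proof is correct and follows essentially the same route as the paper: the paper first pushes the integral forward along $\log_{\TT,v}$ to write $\wH_{\infty}(\sss,1)=\int_{\widetilde N^{\rig}_{\RR}}\exp(-\eta(\widetilde\sss,\widetilde\xxx))\,d\widetilde\xxx$ and then observes the integrand is strictly positive, whereas you argue the positivity of the integrand $H_v(\sss,-)^{-1}$ directly on $\TT(F_v)$ — a purely cosmetic difference. The key points (real-valuedness of $\phi_\sss^\infty$ for real $\sss$, finiteness from the domain $\Lambda^\circ$, and strict positivity of the integral of a strictly positive function against Haar measure) are all present and correctly used.
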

\begin{proof}
The character~$1$ corresponds to $0\in\prod_{\vMFi}M^{\rig}_{\RR}$ for the above identification. As in the proof of the previous corollary, we obtain that 
$$\wH_{\infty}(\sss,1)=\int_{\widetilde N_{\RR}^{\rig}}\exp(-\eta(\widetilde\sss,\widetilde\xxx))d\widetilde\xxx,$$
with the same notation. But for $\sss\in\RR_{\Lambda^{\circ}}^{\Spijx}$, the function $\widetilde\xxx\mapsto \exp(-\eta(\widetilde\sss,\widetilde\xxx))$ is strictly positive. Hence, its integral is strictly positive.
\end{proof}
\subsection{Product of transforms over finite places} The goal of this subsection is to give an estimate for the product of Fourier transforms over finite places of~$F$.
\subsubsection{} We recall basic facts on $L$-functions. For every $\vMFz,$ every $\chi_v\in\Gm(\Fv)^*$ and every $s\in\CC$, we set:
$$L_v(s,\chi_v):=\frac{1}{1-\exp(-s\log(q_v))\chi_v(\pi_v)}.$$
We write $\zeta_v(s):=L_v(s,1)$. For every $t\in\RR$, note that 
\begin{align*}L_v(s,\chi_v\chi_t)&=\frac{1}{1-\exp(-s\log(q_v))\chi_v(\pi_v)\exp(i\log(q_v)\langle\log_{\Gm,v}(\pi_v),t\rangle)}\\&=\frac{1}{1-\exp(-(s-it)\log(q_v))\chi_v(\pi_v)}\\&=L_v(s-it,\chi_v).
\end{align*}
For a character $\chi\in\Gm(\AAF)^*$ we set:
$$L(s,\chi):=\prod_{\vMFz}L_v(s,\chi_v),  $$whenever the product converges.  By \cite[VII, \S 1, Corollary 1]{basicnt}, the product converges absolutely in the domain~$\Omega_{>1}$ to a holomorphic function. We write $\zeta(s):=L(s,1).$ It is immediate from the local situation that for every $t\in\RR$, one has that $$L(s,\chi\chi_t)=L(s-it,\chi).$$  Recall that $$m:(K(\Gm)\Delta(\Gm(F)))^{\perp}[\Gm(\AAF)^*]\to \RR $$ is a retraction to the injection $$\RR\to (K(\Gm)\Delta(\Gm(F)))^{\perp}[\Gm(\AAF)^*],\hspace{1cm}t\mapsto \chi_t.$$
We recall several well known statements.
\begin{prop}[{\cite[VII, \S 7, Theorem 6]{basicnt}}] \label{lfunc}
For every $$\chi\in (K(\Gm)\Delta(\Gm(F)))^{\perp}[\Gm(\AAF)^*],$$ the function $s\mapsto L(s,\chi)$ is meromorphic in~$\CC$. Moreover, if $\chi|_{\Gm(\AAF)_1}\neq 1$, the function is entire and if $\chi|_{\Gm(\AAF)_1}=1$,  the function admits precisely one pole, which is at $s=1+im(\chi)$  and which is simple.
\end{prop}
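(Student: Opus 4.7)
The plan is to reduce the statement to the analytic continuation of Hecke $L$-functions for idele class characters, which is the content of the cited theorem of Weil. The main work consists in unwinding the definition of the class of characters $(K(\Gm)\Delta(\Gm(F)))^{\perp}[\Gm(\AAF)^*]$ into a statement about unramified idele class characters, and tracking how a ``non-unitary twist'' by $\chi_{m(\chi)}$ shifts the variable $s$.

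First I would use the splitting provided by $r_{\Gm}$ and $m$ to decompose $\chi = \chi_0 \cdot \chi_{m(\chi)}$, where $\chi_{m(\chi)}(x) = \exp(i \cdot m(\chi)\cdot \log_{\Gm}(x))$ and $\chi_0 := \chi \cdot \chi_{-m(\chi)}$. By construction, $m(\chi_0)=0$, so $\chi_0$ vanishes on $n_{\Gm}(\RR)$; combined with $\chi_0\in (K(\Gm))^{\perp}[\Gm(\AAF)^*]$, this forces $\chi_0$ to factor through $\Gm(\AAF)/(K(\Gm)\cdot n_{\Gm}(\RR))$, and the character remains trivial on $\Delta(\Gm(F))$. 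The computation $L_v(s, \chi_v\chi_t) = L_v(s-it,\chi_v)$ from the passage before the proposition immediately yields
\[
L(s,\chi) = L(s - i\cdot m(\chi),\chi_0),
\]
whenever convergence holds, and propagates along any meromorphic continuation. Since $\chi_0|_{\Gm(\AAF)_1} = \chi|_{\Gm(\AAF)_1}$, the two cases of the statement correspond to whether $\chi_0$ is trivial on $\Gm(\AAF)_1$.

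Next I would identify $\chi_0$ with a Hecke (idele class) character. The vanishing on $K(\Gm)=\prod_v\Ov^{\times}$ means $\chi_0$ is unramified at every finite place; the vanishing on $\Delta(\Gm(F))$ makes it a character of the idele class group $\Gm(\AAF)/\Delta(\Gm(F))$; and $m(\chi_0)=0$ means the unique $\RR$-component of $\chi_0$ (with respect to the archimedean direction singled out by $n_{\Gm}$) is trivial. In the terminology of \cite{basicnt}, $\chi_0$ is a quasi-character of the idele class group of type $(0,\dots,0)$ with trivial $|\cdot|^s$-component, so Weil's theorem directly applies to $L(s,\chi_0)$. I would then invoke \cite[VII, \S 7, Theorem 6]{basicnt}: $L(s,\chi_0)$ extends to a meromorphic function on $\CC$, entire if $\chi_0\ne 1$ (equivalently $\chi_0|_{\Gm(\AAF)_1}\ne 1$, since $\chi_0$ is already trivial on $n_{\Gm}(\RR)$), and with a single simple pole at $s=1$ when $\chi_0=1$ (i.e.\ the Dedekind zeta function $\zeta_F$).

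Finally, I would translate the conclusion back through $L(s,\chi) = L(s - i\cdot m(\chi),\chi_0)$: the shift $s \mapsto s - i\cdot m(\chi)$ moves the possible pole at $s=1$ of $L(\cdot,\chi_0)$ to $s = 1 + i\cdot m(\chi)$, preserving simplicity and the vanishing/nonvanishing dichotomy. The only genuine difficulty here is the analytic continuation and functional equation itself, but since this is the cited theorem of Weil, no further work is required. The bookkeeping step most prone to error is verifying that the decomposition $\chi = \chi_0\cdot \chi_{m(\chi)}$ keeps $\chi_0$ inside $(K(\Gm)\Delta(\Gm(F)))^{\perp}$ and identifies it with a genuine unramified idele class character; this is where the definitions of $n_{\Gm}$, $r_{\Gm}$, and $m$ from the preceding subsection are used essentially.
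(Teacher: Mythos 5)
Your proposal is correct and matches the paper's intent: the paper gives no proof beyond the citation of Weil, and your reduction — splitting off the twist $\chi_{m(\chi)}$ via the retraction $m$, identifying the remainder $\chi_0$ with an unramified unitary idele class character trivial on $n_{\Gm}(\RR)$, applying \cite[VII, \S 7, Theorem 6]{basicnt}, and shifting the pole from $s=1$ to $s=1+im(\chi)$ using $L(s,\chi\chi_t)=L(s-it,\chi)$ — is exactly the routine unwinding the citation presupposes. The bookkeeping you flag (that $\chi_0$ stays in $(K(\Gm)\Delta(\Gm(F)))^{\perp}$, since $\log_{\Gm}$ kills both $K(\Gm)$ and the diagonal) checks out.
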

The following claim is well known and can be derived from Rademacher's estimates \cite{Rademacher}.
\begin{prop}[Rademacher]
For every $\delta>0$, there exists $1>\epsilon>0$ and $C>0$ such that in the domain $s\in\Omega_{>1-\epsilon}$ for every $\chi\in (K(\Gm)\Delta(\Gm(F)))^{\perp}[\Gm(\AAF)]$ one has $$|L(s,\chi)|\leq C (1+|\Im(s)|)^{\delta}(1+||\chi||)^{\delta}$$ if $\chi|_{\Gm(\AAF)_1}\neq 1$ and $$\bigg|\frac{s-im(\chi)-1}{s-im(\chi)}
\cdot L(s,\chi)\bigg|\leq C(1+|\Im(s)|)^{\delta}(1+||\chi||)^{\delta},$$ if $\chi|_{\Gm(\AAF)_1}=1$. 
\end{prop}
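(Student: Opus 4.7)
The plan is to reduce to the classical Rademacher estimate for unramified Hecke $L$-functions via the decomposition of $\chi$ into a finite-order part and a real twist. Given $\chi\in(K(\Gm)\Delta(\Gm(F)))^{\perp}[\Gm(\AAF)^*]$, set $\chi_0:=\chi\cdot\chi_{-m(\chi)}$. Each $\chi_t$ is trivial on $\Gm(\AAF)_1\supseteq K(\Gm)\Delta(\Gm(F))$, so $\chi_0$ again lies in this annihilator, and $m(\chi_0)=0$ by construction. Since $K(\Gm)$ is an open subgroup of $\Gm(\AAF)_1$ while $\Gm(\AAF)_1/\Delta(\Gm(F))$ is compact (Weil), the quotient $\Gm(\AAF)_1/(K(\Gm)\Delta(\Gm(F)))$ is finite, so combined with $m(\chi_0)=0$ this forces $\chi_0$ into a \emph{finite} set of class-group-like characters; in particular $||\chi_0||\leq C_0$ uniformly in $\chi$. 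Moreover $\chi_0|_{\Gm(\AAF)_1}=\chi|_{\Gm(\AAF)_1}$, so the dichotomy in the statement corresponds to $\chi_0=1$ or $\chi_0\neq 1$.

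The local identity $L_v(s,\chi_v\chi_t)=L_v(s-it,\chi_v)$ noted just before the proposition yields, on forming Euler products,
\[
L(s,\chi)=L(s-im(\chi),\chi_0).
\]
Apply the classical Rademacher estimate to each of the finitely many $\chi_0$: for every $\delta>0$ there exist $1>\epsilon>0$ and $C'>0$ such that in $\Re(s)>1-\epsilon$,
\[
|L(s,\chi_0)|\leq C'(1+|\Im(s)|)^{\delta}\qquad(\chi_0\neq 1),
\]
with the analogous estimate for $((s-1)/s)\zeta(s)$ when $\chi_0=1$. This estimate is standard: the trivial bound on $\Re(s)=1+\eta$ from absolute convergence of the Euler product, a bound on $\Re(s)=-\eta$ obtained from the functional equation together with the standard gamma-factor estimates at the infinite places, and Phragmen--Lindel\"of interpolation in the critical strip (after first removing the simple pole in the trivial case by the factor $(s-1)/s$).

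To translate back, $|\Im(s-im(\chi))|\leq|\Im(s)|+|m(\chi)|$ combined with $(1+a+b)^{\delta}\leq(1+a)^{\delta}(1+b)^{\delta}$ reduces the target bound to the inequality $1+|m(\chi)|\ll 1+||\chi||$. Under the identification $(\Gm(\Ov))^{\perp}[\Gm(F_v)^*]\cong X^*(\Gm)_{\RR}\cong\RR$ at an infinite place $v$, the archimedean part of $\chi_{m(\chi)}$ at $v$ equals $m(\chi)\log(q_v)$, so $||(\chi_{m(\chi)})_\infty||\gg|m(\chi)|$; combined with $||(\chi_0)_\infty||\leq C_0$ and the triangle inequality, this yields $|m(\chi)|\ll 1+||\chi||$, completing the bound. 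The pole-cancelling factor $(s-im(\chi)-1)/(s-im(\chi))$ in the statement is exactly the translate of $(s-1)/s$ by $im(\chi)$, matching the corresponding step for $\chi_0=1$.

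The main technical content lies in the classical Rademacher estimate for Hecke $L$-functions, which is standard but delicate (functional equation, gamma-factor bounds, convexity). In our setting, the finiteness of the set of possible $\chi_0$ renders the uniformity in $\chi_0$ automatic, so the core work is already in the classical literature; the novel bookkeeping is only the reduction $\chi\leadsto(\chi_0,m(\chi))$ and the norm comparison just described.
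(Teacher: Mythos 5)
Your overall strategy (split $\chi=\chi_0\cdot\chi_{m(\chi)}$, use $L(s,\chi)=L(s-im(\chi),\chi_0)$, and invoke the classical convexity/Rademacher bound) is the right one, and it matches the paper, which simply cites Rademacher's estimates without proof. However, there is a genuine gap in your reduction: you claim that $K(\Gm)$ is open in $\Gm(\AAF)_1$ and hence that $\Gm(\AAF)_1/(K(\Gm)\Delta(\Gm(F)))$ is finite, so that $\chi_0$ ranges over a finite set with $\lVert\chi_0\rVert\leq C_0$. This is false for a general number field. At an archimedean place the maximal compact subgroup $\Ov^\times\subset F_v^\times$ is not open, and the quotient $\Gm(\AAF)_1/(K(\Gm)\Delta(\Gm(F)))$ is a compact group of dimension $r_1+r_2-1$ (an extension involving the class group and the torus $\prod_{\vMFi}\RR_{>0}$ cut by the norm-one condition modulo the unit lattice). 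Its character group is therefore infinite whenever $F$ is not $\QQ$ or imaginary quadratic: these are exactly the unramified Hecke characters of possibly infinite order with unbounded archimedean parameters. Consequently $\chi_0$ does \emph{not} lie in a finite set, $\lVert\chi_0\rVert$ is unbounded, and the "automatic uniformity" you invoke does not exist.

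This matters because the factor $(1+\lVert\chi\rVert)^{\delta}$ in the statement is not there merely to absorb $|m(\chi)|^{\delta}$ (which your norm comparison handles correctly, and which in fact follows directly from continuity of $m=$ a linear functional of $\chi_\infty$, without any finiteness): it must also absorb the archimedean analytic conductor of $\chi_0$ itself. The correct argument applies Rademacher's (or the standard Phragm\'en--Lindel\"of convexity) estimate for Hecke $L$-functions \emph{uniformly} in the character, with explicit polynomial dependence on the archimedean parameters $(t_v)_{\vMFi}$ of $\chi$ entering through the gamma factors in the functional equation; one then observes that these parameters are controlled by $\lVert\chi\rVert$ and $|\Im(s)|$. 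So the "novel bookkeeping" is not only the translation by $im(\chi)$ but the tracking of the conductor dependence across an infinite family, which is precisely the content of the cited estimate. The remainder of your argument (the translation identity, the pole-cancelling factor being the translate of $(s-1)/s$, and the inequality $1+|m(\chi)|\ll 1+\lVert\chi\rVert$) is correct.
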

\subsubsection{} In this paragraph we estimate the product of local Fourier transforms over finite places. For~$\sss\in \CC^{\Sigma(1)\cup\pijx}$ and $\chi\in\TT(\AAF)^*$, we define formally $$\wH_f(\sss,\chi):=\prod_{\vMFz}\wH_v(\sss,\chi_v).$$
Recall that in Definition~\ref{defoflambda}, we have defined a cone $\Lambda\subset\RR^{\Spijx}$ by asking that it is the domain where the linear forms~$\Xi_{\rho}$ from~Definition~\ref{linformxidef} are all non-negative. 
\begin{prop} \label{ljuka}
For every $\chi\in\TT(\AAF)^*$, the product defining $\sss\mapsto\wH(\sss,\chi)$ converges in the domain $\Omega_{\Lambda^{\circ}-K_X}$ to a holomorphic function. There exists~$\varepsilon_1>0$ such that for $\epsilon_1=(\varepsilon_1)_{\rho\in\Spijx}$ all the functions $\sss\mapsto\wH_f(\sss,\chi)$ extend to a meromorphic function in the domain~$\Omega_{\Lambda-K_X-\mathbf\epsilon_1}.$  More precisely, if for $\chi\in\TT(\AAF)^*$, we define a function $\gamma(-,\chi):\Omega_{\Lambda-K_X-\epsilon_1}\to\CC $ by $$\wH_f(\sss,\chi)=\gamma(\sss,\chi)\cdot\prod_{\rho\in\Spijx}L(\Xi_{\rho}(\sss),\overline{\chi}^{(\rho)}),$$ then all functions $\sss\mapsto \gamma(\sss,\chi)$ are holomorphic in the domain~$\Omega_{\Lambda-K_X-\epsilon_1}.$ Furthermore, for every compact~$\mathcal K\subset \Lambda-K_X-\epsilon_1$, one has that there exists $C(\mathcal K)>0$ such that for every $\chi\in\TT(\AAF)^*$ and all $\sss\in\Omega_{\Lambda-K_X-\epsilon}$ with $\Re(\sss)\in\mathcal K$, one has that $|\gamma(\sss,\chi)|<C(\mathcal K)$.
\end{prop}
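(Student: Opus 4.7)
The plan is to expand each local factor $\wH_v(\sss,\chi)$ via Lemma \ref{loctraexp}, extract an $L$-function factor for each element of $\Spijx$, and verify that the remainder is an Euler product which converges absolutely on the enlarged domain thanks to the degree bound of Lemma \ref{valofqs}.

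By Lemma \ref{loctraexp} (applied with $\chi^{-1}$ in place of $\chi$), each finite local factor equals $\wH_v(\sss,\chi)=R_\Sigma(\xxx_v)$ where
\[
\xxx_{v}=\bigl(\exp(-\log(q_v)\Xi_\rho(\sss))\,\overline{\chi}^{(\rho)}(\pi_v)\bigr)_{\rho\in\Spijx}.
\]
Using $R_\Sigma=Q_\Sigma/\prod_{\rho\in\Sigma(1)}(1-X_\rho)$ together with Lemma \ref{valofqs}, I would write $Q_\Sigma=1+\sum_{\YY\in\pijx}X_\YY+R(X)$ with $R(X)$ a sum of monomials of total degree at least $2$, and introduce the local corrections
\[
\gamma_v(\sss,\chi):=\wH_v(\sss,\chi)\cdot\prod_{\rho\in\Spijx}(1-\xxx_{v,\rho})=Q_\Sigma(\xxx_v)\prod_{\YY\in\pijx}(1-\xxx_{v,\YY}).
\]
Expanding the right-hand side and cancelling the degree-one contributions $\sum_\YY X_\YY$ from $Q_\Sigma$ against $-\sum_\YY X_\YY$ from $\prod_\YY(1-X_\YY)$, one finds that $\gamma_v(\sss,\chi)=1+P_v(\xxx_v)$ for a polynomial $P_v$ whose monomials are all of total degree $\ge 2$.

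Since $\Xi_\rho(-K_X)=1$ by Lemma \ref{propoflambda}(2), on any compact $\mathcal K\subset\Lambda-K_X-\epsilon_1$ one has $\Re\Xi_\rho(\sss)\ge 1-C\varepsilon_1$ for a constant $C>0$ depending only on the $a_\rho(\YY)$. Choosing $\varepsilon_1$ small enough that $2(1-C\varepsilon_1)>1+2\delta$ for some $\delta>0$, every monomial of $P_v$ evaluated at $\xxx_v$ is bounded in modulus by $q_v^{-(1+2\delta)}$, uniformly in $\chi$ (as $|\overline{\chi}^{(\rho)}(\pi_v)|=1$). Hence $|\gamma_v(\sss,\chi)-1|\ll q_v^{-(1+2\delta)}$, so the infinite product $\gamma(\sss,\chi):=\prod_\vMFz\gamma_v(\sss,\chi)$ converges absolutely and uniformly on such compacts to a holomorphic function, with bound independent of $\chi$. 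Rearranging yields the desired factorisation
\[
\wH_f(\sss,\chi)=\gamma(\sss,\chi)\prod_{\rho\in\Spijx}L(\Xi_\rho(\sss),\overline{\chi}^{(\rho)}),
\]
giving the claimed meromorphic extension to $\Omega_{\Lambda-K_X-\epsilon_1}$.

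The first assertion, that $\wH_f(\sss,\chi)$ converges absolutely and is holomorphic on $\Omega_{\Lambda^\circ-K_X}$, is then a direct consequence: the Euler products of the $L$-functions converge absolutely on $\{\Re s>1\}$, and on $\Omega_{\Lambda^\circ-K_X}$ one has $\Re\Xi_\rho(\sss)>1$ for every $\rho\in\Spijx$. The main obstacle is the combinatorial input Lemma \ref{valofqs}: the entire argument hinges on the precise identification of the degree-one monomials of $Q_\Sigma$ as exactly $\{X_\YY:\YY\in\pijx\}$, without which stray degree-one contributions to $\gamma_v$ would obstruct absolute convergence of $\gamma$ in the enlarged domain and prevent a clean extraction of one $L$-function per twisted sector.
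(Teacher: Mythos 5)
Your proposal is correct and follows essentially the same route as the paper: both extract the full product of local $L$-factors over $\Spijx$, identify the remaining local factor with the fixed polynomial $Q_\Sigma\cdot\prod_{\YY\in\pijx}(1-X_\YY)$ evaluated at $\xxx_v$, and use Lemma \ref{valofqs} (free coefficient $1$, no degree-one monomials after the cancellation you spell out) to get $|\gamma_v-1|\ll q_v^{-(1+2\delta)}$ uniformly in $\chi$, hence absolute convergence and uniform boundedness of $\gamma$ on the enlarged domain. The only difference is expository: you make the degree-one cancellation and the choice of $\varepsilon_1$ slightly more explicit than the paper does.
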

\begin{proof}
We set 
$$W_v(\sss,\chi_v)=\bigg(\prod_{\rho\in\Spijx} L_v(\Xi_{\rho}(\sss),\overline{\chi}^{(\rho)}_v)\bigg)^{-1}.$$
We define a polynomial $P\in\ZZ[\{X_{\rho}\}_{\rho\in\Spijx}]$ by $$P=\prod_{\rho\in\Spijx}(1-X_{\rho}).$$ 
For $\xxx\in\CC^{\Sigma(1)\cup\pijx}$, given by $x_{\rho}=\exp(-\log(q_v)\cdot\Xi_{\rho}(\sss))\overline{\chi}^{(\rho)}_v(\pi_v)$ for $\rho\in\Spijx,$ 
one has that $$W_v(\sss,\chi_v)=P(\xxx).$$   Consider the polynomial $D\in\ZZ[\{X_{\rho}\}_{\rho\in\Spijx}]$ given by $$D=Q_{\Sigma}\cdot \prod_{\mathcal Y\in\pi_0^*(\mathcal J_0(\XX))}(1-X_{\mathcal Y})=R_{\Sigma}\cdot P,$$where~$R_{\Sigma}$ and~$Q_{\Sigma}$ are a rational function and a polynomial defined in Paragraph \ref{defrsig} related by $R_{\Sigma}=Q_{\Sigma}\cdot\prod_{\rho\in\Sigma(1)}(1-X_{\rho})^{-1}$. It follows from Lemma \ref{valofqs}, that the free coefficient of~$D$ is~$1$ and that~$D$ does not contain monomials of degree~$1$. Lemma \ref{loctraexp} gives that $\wH_v(\sss,\chi_v)=R_{\Sigma}(\xxx)$ and we deduce $$\gamma_v(\sss,\chi):=\wH(\sss,\chi_v)\cdot W_v(\sss,\chi_v)=Q_{\Sigma}(\xxx)P_{\Sigma}(\xxx)=D(\xxx).$$ 
We now verify that terms in form $c\exp(-s\log(q_v))$, with $c,s\in\CC$, appearing in $D(\xxx)$ have $\Re(s)>1$ if~$\varepsilon$ is sufficiently small. Indeed, as the degree of a non-free monomial in~$D$ is at least~$2$, any such~$s$ is a sum  $s=\sum_{\rho\in\Spijx}k_{\rho}\Xi_{\rho}(\sss)$. 
with $k_{\rho}\in\ZZ_{\geq 0}$ and 
 $\sum_{\rho\in\Spijx}k_\rho\geq 2$. As the coefficients of every linear form~$\Xi_{\rho}$ are non-negative, it suffices to verify the claim when $\sum_{\rho\in\Spijx}k_{\rho}=2$, what we do next. By the property that $\Xi_{\rho}(-K_X)=1$ from Lemma~\ref{propoflambda}(2) and the definition of~$\Lambda$, we deduce that for every $\rho\in\Spijx$ one has that $\Xi_{\rho}(\Lambda^{\circ}-K_X)=]1,+\infty[$. Moreover, for certain $\varepsilon_1>0$, one will have that $\Re(\Xi_{\rho}(\sss)+\Xi_{\rho'}(\sss))>1$ whenever $\sss\in\Lambda^{\circ}-K_X-(\varepsilon_1)_{\rho\in\Spijx}$. 
For~$\epsilon_1=(\varepsilon_1)_{\rho\in\Spijx}$, it follows that for every~$\chi$ the product defining $\gamma(\sss,\chi)$ converges absolutely to a holomorphic function in the domain~$\Omega_{\Lambda^{\circ}-K_X-\epsilon_1}$ and that for every compact~$\mathcal K\subset {\Lambda^{\circ}-K_X-\epsilon_1}$, there exists a constant~$C=C(\mathcal K)$ such that $|\gamma(\sss,\chi)|\leq C$ if provided $\Re(\sss)\in\mathcal K$. Now, by Proposition~\ref{lfunc} and the fact that $\Re(\Xi_{\rho}(\sss))>1$ for every $\rho\in\Spijx$ and every $\sss\in\Omega_{\Lambda^{\circ}-K_X}$, we deduce that 
\begin{align*}\wH_f(\sss,\chi)=\prod_{\vMFz}\wH_v(\sss,\chi_v)&=\prod_{\vMFz}\gamma_v(\sss,\chi)W_v(\sss,\chi_v)^{-1}\\&=\gamma(\sss,\chi)\prod_{\rho\in\Spijx}L(\Xi_{\rho}(\sss),\overline\chi^{(\rho)})
\end{align*} converges in the domain $\Omega_{\Lambda^{\circ}-K_X}$ to a holomorphic function and extends meromorphically to the domain $\Omega_{\Lambda^{\circ}-K_X-\epsilon_1}$. The proof is completed.
\end{proof}
\begin{prop}
\label{smrka} Let~$\epsilon_1=(\varepsilon_1)_{\rho\in\Spijx}$ be given by Proposition~\ref{ljuka}. For every character $\chi\in(K(\TT)\Delta(\TT(F)))^{\perp} [\TT(\AAF)^*]$, one has
$$\wH_f^*(\sss,\chi):=\bigg(\prod_{\substack{\rho\in\Spijx\\\chi^{(\rho)}|_{\Gm(\AAF)_1}=1}}\frac{\Xi_{\rho}(\sss)-im(\chi^{(\rho)})-1}{\Xi_{\rho}(\sss)-im(\chi^{(\rho)})}\bigg)\cdot\wH_f(\sss,\chi).$$
 is a holomorphic function of~$\sss$ in the domain $\Omega_{\Lambda-K_X-\epsilon_1}.$ 
Finally, for every $\delta>0$, there exists $\varepsilon_1>\varepsilon(\delta)>0$ such that setting $\epsilon=\epsilon(\delta)=(\varepsilon(\delta))_{\rho\in\Spijx}$ gives that for every compact $\mathcal K\subset \Lambda-K_X-\epsilon$ there exists $C=C(\mathcal K)>0$, such that $$|\wH^*_f(\sss,\chi)|\leq C\cdot (1+||\Im(\sss)||)^{\delta}(1+||\chi||)^{\delta}.$$ \label{finparfour}
\end{prop}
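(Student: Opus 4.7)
The plan is to bootstrap Proposition~\ref{ljuka}, which already furnishes the factorization
\[
\wH_f(\sss,\chi) \;=\; \gamma(\sss,\chi)\cdot\prod_{\rho\in\Spijx} L\bigl(\Xi_\rho(\sss),\overline{\chi}^{(\rho)}\bigr),
\]
with $\gamma(-,\chi)$ holomorphic in $\Omega_{\Lambda-K_X-\epsilon_1}$ and uniformly bounded on compacts. Since $\chi\in(K(\TT)\Delta(\TT(F)))^\perp[\TT(\AAF)^*]$, Corollary~\ref{obrik}(3)(4) gives that each component $\chi^{(\rho)}$ vanishes on $K(\Gm)\Delta(\Gm(F))$, so Proposition~\ref{lfunc} applies to each L-factor.

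First I would verify holomorphy of $\wH_f^*$. Fix $\rho\in\Spijx$. If $\chi^{(\rho)}|_{\Gm(\AAF)_1}\neq 1$, then $L(\cdot,\overline{\chi}^{(\rho)})$ is entire by Proposition~\ref{lfunc}, so this factor contributes no pole. If $\chi^{(\rho)}|_{\Gm(\AAF)_1}= 1$, then $L(\cdot,\overline{\chi}^{(\rho)})$ has a single simple pole, which the corresponding prefactor $\frac{\Xi_\rho(\sss)-im(\chi^{(\rho)})-1}{\Xi_\rho(\sss)-im(\chi^{(\rho)})}$ in the definition of $\wH_f^*$ is designed to kill (the identity $m(\overline{\chi}^{(\rho)})=-m(\chi^{(\rho)})$ is used here). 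Together with the holomorphy of $\gamma$, this yields that $\wH_f^*(-,\chi)$ is holomorphic on $\Omega_{\Lambda-K_X-\epsilon_1}$.

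For the growth bound, given $\delta>0$ I would set $\delta':=\delta/\#\Spijx$ and choose $\varepsilon(\delta)\in(0,\varepsilon_1)$ small enough that $\Re\Xi_\rho(\sss)>1-\varepsilon(\delta)$ for every $\rho\in\Spijx$ and every $\sss\in\Omega_{\Lambda-K_X-\epsilon(\delta)}$ where $\epsilon(\delta)=(\varepsilon(\delta))_{\rho\in\Spijx}$; this is possible since $\Xi_\rho(-K_X)=1$ by Lemma~\ref{propoflambda}(2) and $\Xi_\rho\geq 0$ on $\Lambda$. Rademacher's estimate (Proposition~\ref{lfunc}'s companion) applied to each (normalized, if necessary) L-factor gives
\[
\bigl|(\mathrm{normalization})_\rho\cdot L(\Xi_\rho(\sss),\overline{\chi}^{(\rho)})\bigr| \;\ll\; (1+|\Im\Xi_\rho(\sss)|)^{\delta'}(1+\|\chi^{(\rho)}\|)^{\delta'}.
\]
Since $\Xi_\rho$ is a fixed linear form on a finite-dimensional space, $|\Im\Xi_\rho(\sss)|\ll\|\Im\sss\|$; and by Lemma~\ref{rama}, $\|\chi^{(\rho)}\|\ll\|\chi\|$ uniformly in $\chi\in\mathfrak A_\TT$. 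Taking the product over the finite set $\Spijx$ and combining with the bound on $|\gamma(\sss,\chi)|$ from Proposition~\ref{ljuka} produces $|\wH_f^*(\sss,\chi)|\leq C(1+\|\Im\sss\|)^{\delta}(1+\|\chi\|)^{\delta}$.

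The main delicate point is the coordinated choice of $\varepsilon(\delta)$: it has to place every shifted argument $\Xi_\rho(\sss)$ inside the Rademacher half-plane, and simultaneously keep each denominator $\Xi_\rho(\sss)-im(\chi^{(\rho)})$ bounded away from zero uniformly in $\chi$. The latter is automatic since its real part is $>1-\varepsilon(\delta)>0$ on the chosen domain, independently of $\chi$. Everything else is a routine assembly of the ingredients of the preceding propositions.
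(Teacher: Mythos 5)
Your proposal is correct and follows essentially the same route as the paper: the factorization $\wH_f=\gamma\cdot\prod_\rho L(\Xi_\rho(\sss),\overline{\chi}^{(\rho)})$ from Proposition~\ref{ljuka}, pole cancellation of the relevant $L$-factors by the rational prefactors via Proposition~\ref{lfunc}, and then Rademacher's estimates with exponent $\delta/(\#\Sigma(1)+\#\pijx)$ per factor combined with Lemma~\ref{rama} and the uniform bound on $\gamma$. The only cosmetic difference is that the paper takes $\varepsilon(\delta)=\min(\varepsilon_1,\varepsilon')$ with $\varepsilon'$ from Rademacher rather than phrasing the choice in terms of the half-plane condition, which amounts to the same thing.
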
 
\begin{proof}
By using Proposition~\ref{lfunc} and Proposition~\ref{ljuka}, we obtain that
\begin{multline*}
\sss\mapsto \wH^*_f(\sss,\chi)=\gamma(\sss,\chi)\times\\
\times\bigg(\prod_{\substack{\rho\in\Spijx\\\chi^{(\rho)}|_{\Gm(\AAF)_1}=1}}\hskip-0,3cm(\Xi_{\rho}(\sss)-im(\chi^{(\rho)})-1)\cdot L(\Xi_{\rho}(\sss),\overline{\chi}^{(\rho)})\bigg)\cdot\prod_{\substack{\rho\in\Spijx\\\chi^{(\rho)}|_{\Gm(\AAF)_1}\neq 1}}L(\Xi_{\rho}(\sss),\overline{\chi}^{(\rho)})
\end{multline*}
 is holomorphic in the domain $\Omega_{\Lambda^{\circ}-K_X-\epsilon_1}$. The first claim is proven, and now we prove the second claim.  
For the $L$-function of a Hecke character~$\tau$ one has, by Rademacher's estimates, that for every $\delta>0$ there exists $\varepsilon'>0$ such that in the domain $\Re(s)>1-\varepsilon'$ one has $$|L(s,\tau)|\ll (1+|\Im(s)|)^{\frac{\delta}{\#\Sigma(1)+\#\pijx}}(1+||\tau||)^{\frac{\delta}{\#\Sigma(1)+\#\pijx}}$$if $\tau|_{\Gm(\AAF)_1}\neq 1$ and $$\bigg|\bigg(\frac{s+im(\tau)-1}{s+im(\tau)}\bigg)L(s,\tau)\bigg|\ll (1+|\Im(s)|)^{\frac{\delta}{\#\Sigma(1)+\#\pijx}}(1+||\tau||)^{\frac{\delta}{\#\Sigma(1)+\#\pijx}}$$  if $\tau|_{\Gm(\AAF)_1}=1$. Set $\varepsilon=\min(\varepsilon_1,\varepsilon')$ and $\epsilon=(\varepsilon)_{\rho\in\Spijx}$. Now, for a compact $\mathcal K\subset\Lambda-K_X-\epsilon$, by using this and the fact from Proposition \ref{ljuka} that $\gamma(\sss,\chi)$ is absolutely bounded in vertical strips when provided that $\Re(\sss)\in\mathcal K$, we obtain that there exists $C>0$ such that
\begin{equation*}|\wH^*_f(\sss,\chi)|\leq C \bigg(\prod_{\rho\in\Spijx}(1+|\Im(\Xi_{\rho}(s_{\rho}))|)^{\delta}(1+||\overline{\chi}^{(\rho)}||)^{\delta}
\bigg)^{\frac{1}{\#\Sigma(1)+\#\pijx}}.\end{equation*}
Now by using an evident fact that
$$\prod_{\rho\in\Spijx}(1+|\Im(\Xi_{\rho}(s_{\rho}))|)\ll (1+||\Im(\sss)||)^{\#\Sigma(1)+\#\pijx}$$
and Lemma \ref{rama}, which implies that
$$\prod_{\rho\in\Spijx}(1+||\overline{\chi}^{\rho}||)\ll (1+||\overline{\chi}||)^{\#\Sigma(1)+\#\pijx},$$
we obtain the wanted claim. 
\end{proof}
\subsection{Global transform}We now study the global transform. For~$\sss\in\CC^{\Spijx}$ and~$\chi\in\TT(\AAF)$, we define formally $$\wH(\sss,\chi):=\int_{\TT(\AAF)}H(\sss,-)\overline{\chi}\mu.$$
We now estimate the transform over all places. 
As for all~$v$ one has that $H_v(\sss,-)$ is equal to~$1$ on the subgroup~$\TT(\Ov),$ we have by \cite[Proposition 5.6(2)]{Ramakrishnan} that
$$\wH(\sss,\chi)=\prod_{\vMF}\wH_v(\sss,\chi_v)=\wH_f(\sss,\chi)\cdot \wH_{\infty}(\sss,\chi).$$
if the second quantity converges. It is immediate from Lemma \ref{ljuka} and Corollary \ref{pinft} that the functions $\sss\mapsto\wH(\sss,\chi)$ are meromorphic in the domain $\Omega_{>\Lambda^{\circ}-K_X-\epsilon}.$
\subsubsection{} In this paragraph we show that the Fourier transform at a ramified character is~$0$.
\begin{lem}\label{ramch}
Let $\chi\in \TT(\AAF)^*-(K(\TT))^{\perp}[\TT(\AAF)^*]$ and let $\sss\in\CC^{\Spijx}$. One has that 
$$\wH(\sss,\chi):=\prod_{\vMF}H_v(\sss,\chi_v)=0.$$
\end{lem}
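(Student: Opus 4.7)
The plan is to exploit the hypothesis that $\chi$ does not vanish on $K(\TT) = \prod_v \TT(\Ov)$ to locate a single place $v_0$ at which the local Fourier transform $\wH_{v_0}(\sss,\chi_{v_0})$ vanishes identically; this makes the formal product zero regardless of any convergence issue at the other places.

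First I would pick a place $v_0$ of $F$ such that $\chi_{v_0}$ is non-trivial on $\TT(\Ov_0)$. Such $v_0$ exists precisely because $\chi \notin K(\TT)^{\perp}[\TT(\AAF)^*]$, by definition of the restricted product topology on $\TT(\AAF)$ and its character group.

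Next comes the key local observation: the function $H_v(\sss,-)$ is $\TT(\Ov)$-invariant at every place $v$. For finite $v$ this is exactly the remark following Definition \ref{finpair}. For infinite $v$, the formula in Definition \ref{infpair} depends on $\xxx$ only through $\log_{\TT,v}(\xxx)$, which in turn factors through $T(F_v)$ and annihilates the maximal compact $T(\Ov)$; combined with $BG(\Ov) = BG(F_v)$ by convention, this gives $H_v(\sss,-)|_{\TT(\Ov)} \equiv 1$ at infinite places as well. Given this invariance, fixing any $t_0 \in \TT(\Ov_0)$ with $\chi_{v_0}(t_0)\neq 1$, translation invariance of the Haar measure $\mu_{v_0}$ yields
\[
\wH_{v_0}(\sss,\chi_{v_0}) = \int_{\TT(F_{v_0})} H_{v_0}(\sss, xt_0)\,\overline{\chi_{v_0}}(xt_0)\,d\mu_{v_0}(x) = \overline{\chi_{v_0}}(t_0)\,\wH_{v_0}(\sss,\chi_{v_0}),
\]
and since $\overline{\chi_{v_0}}(t_0) \neq 1$ this forces $\wH_{v_0}(\sss,\chi_{v_0}) = 0$. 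The formal product $\prod_v \wH_v(\sss,\chi_v)$ therefore equals $0$ at the distinguished factor, hence vanishes as claimed.

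There is no genuine obstacle; the argument is the standard orthogonality relation for characters on a compact abelian group, applied at the single ``ramified'' place. The only mild point requiring attention is that the infinite product is not assumed to converge, but this is harmless because the vanishing of a single factor forces the product to be $0$ whatever convention one adopts for the remaining factors.
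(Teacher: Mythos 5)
Your argument is correct and is essentially identical to the paper's proof: both locate a place where $\chi_v$ is non-trivial on $\TT(\Ov)$, invoke the $\TT(\Ov)$-invariance of $H_v(\sss,-)$, and use translation invariance of the Haar measure to obtain $\wH_v(\sss,\chi_v)=\overline{\chi_v}(t_0)\,\wH_v(\sss,\chi_v)$, forcing the local factor to vanish. (The only slip is a missing inverse on $H_{v_0}(\sss,\cdot)$ in your displayed integral, which does not affect the argument.)
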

\begin{proof}
Let~$v$ be a place such that $\chi_v\not\in (\TT(\Ov))^{\perp}[\TT(\Fv)^*]$ and let $\xxx\in\TT(\Ov)$ be such that $\chi_v(\xxx)\neq 1$. Depending wether~$v$ is finite or not, it follows from Definition~\ref{finpair} or Definition~\ref{infpair} that $$\TT(\Fv)\to\CC,\hspace{1cm}\xxx\mapsto H_v(\sss,\xxx)^{-1}$$ is $\TT(\Ov)$-invariant. By using the change of the variables formula, we deduce that
\begin{align*}
\wH_v(\sss,\chi_v)&=\int_{\TT(\Fv)}H(\sss,\yyy)^{-1}\overline{\chi_v(\yyy)}d\mu_v(\yyy)
\\
&=\int_{\TT(F_v)}H(\sss,\xxx\yyy)^{-1}\overline{\chi_v(\xxx\yyy)}d\mu_v(\xxx\yyy)\\
&=\overline{\chi_v(\xxx)}\int_{\TT(F_v)}H(\sss,\yyy)^{-1}\overline{\chi_v(\yyy)}d\mu_v(\yyy)\\
&=\overline{\chi_v(\xxx)}\wH_v(\sss,\chi_v),
\end{align*}
which implies that $\wH(\sss,\chi_v)=0$.
\end{proof}
\subsubsection{} \label{bascalh}We now combine the estimate for the finite part and the estimate at infinite places to estimate the global transform. But before we show a useful relation between heights and characters.
\begin{lem} Let $\mmm\in M^{\rig}_{\RR}$. \label{relhc}
\begin{enumerate}
\item 
Let $\xxx\in\TT(\AAF)$. One has that
$$H(\sss,\xxx)\cdot\chi_{\mmm}(\xxx)=H(\sss+i\mmm,\xxx).$$
\item Let $\chi\in\TT(\AAF)^*$ be a character. One has that $$\wH(\sss,\chi \cdot\chi_{\mmm})=\wH(\sss+i\mmm,\chi),$$whenever the quantities on both hand sides converge.
\end{enumerate}
\end{lem}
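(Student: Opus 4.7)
The plan is to reduce both assertions to the local/place-wise identity proved in Lemma~\ref{otrpljati} together with the very definitions of $\log_{\TT}$, $\chi_{\mmm}$ and $H(\sss,-)$.

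For Part (1), I would start by writing the left-hand side as a product over all places: by definition
\[
H(\sss,\xxx)\cdot\chi_{\mmm}(\xxx)=\prod_{\vMF}H_v(\sss,\xxx_v)\cdot \exp\big(i\langle\log_{\TT}(\xxx),\mmm\rangle\big),
\]
and then expand $\langle\log_{\TT}(\xxx),\mmm\rangle=\sum_{\vMF}\log(q_v)\langle\log_{\TT,v}(\xxx_v),\mmm\rangle$, so that the exponential factorises as $\prod_{\vMF}\exp(i\log(q_v)\langle\log_{\TT,v}(\xxx_v),\mmm\rangle)$. Here I would recall that for $\vMFi$ the convention is $q_v=\exp([F_v:\RR])$, so $\log(q_v)=[F_v:\RR]$, matching the factor $[F_v:\RR]$ that appears in the infinite-place formula of Lemma~\ref{otrpljati}. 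Therefore, applying Lemma~\ref{otrpljati} at each place yields
\[
H_v(\sss,\xxx_v)\cdot \exp\bigl(i\log(q_v)\langle\log_{\TT,v}(\xxx_v),\mmm\rangle\bigr)=H_v(\sss+i\mmm,\xxx_v),
\]
and taking the product over $v$ gives $H(\sss,\xxx)\cdot\chi_{\mmm}(\xxx)=H(\sss+i\mmm,\xxx)$.

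For Part (2), I would simply substitute this identity in the integral defining $\wH$. Using $\chi_{\mmm}^{-1}=\overline{\chi_{\mmm}}$ (as $\chi_{\mmm}$ is unitary), Part (1) rewrites as $H(\sss+i\mmm,-)^{-1}=H(\sss,-)^{-1}\overline{\chi_{\mmm}}$, hence
\[
\wH(\sss+i\mmm,\chi)=\int_{\TT(\AAF)} H(\sss+i\mmm,-)^{-1}\,\overline{\chi}\,d\mu=\int_{\TT(\AAF)}H(\sss,-)^{-1}\,\overline{\chi\cdot\chi_{\mmm}}\,d\mu=\wH(\sss,\chi\cdot\chi_{\mmm}),
\]
which is the stated identity as soon as one of the two integrals is absolutely convergent (and then so is the other, since $|\chi_{\mmm}|=1$).

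There is no real obstacle here: both parts are essentially bookkeeping. The only point that requires a moment of care is to make sure that the normalisation $q_v=\exp([F_v:\RR])$ at archimedean places is consistent with the $[F_v:\RR]$-factor used in Definition~\ref{infpair}, and hence with the archimedean case of Lemma~\ref{otrpljati}; once this is checked, Part~(1) is place-wise and Part~(2) is a direct change of variable in the Fourier integral.
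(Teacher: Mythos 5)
Your proposal is correct and follows essentially the same route as the paper: both split $\langle\log_{\TT}(\xxx),\mmm\rangle$ into its place-wise contributions (with $\log(q_v)=[F_v:\RR]$ at archimedean places), apply Lemma~\ref{otrpljati} at each place for Part (1), and then substitute the resulting identity into the Fourier integral for Part (2).
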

\begin{proof}
\begin{enumerate}
\item By using Lemma \ref{otrpljati}, we have that:
\begin{align*}
H(\sss,\xxx)\cdot\chi_{\mmm}(\xxx)&=
H(\sss,\xxx)\cdot\exp(i\langle\log_{\TT}(\xxx),\mmm\rangle)\\&=\prod_{\vMF}H_v(\sss,\xxx)\cdot\exp\bigg(i\cdot\langle\sum_{\vMFz}\log(q_v)\log_{\TT,v}(\xxx),\mmm\rangle +\\&\quad\quad\quad\quad\quad\quad\quad
\quad\quad+i\cdot\langle\sum_{\vMFi}[F_v:\RR]\log_{\TT,v}(\xxx),\mmm\rangle\bigg)\\
&=\prod_{\vMFz}H_v(\sss,\xxx)\exp\bigg(i\cdot\langle\log(q_v)\log_{\TT,v}(\xxx),\mmm\rangle\bigg)\times\\
&\quad\quad\times\prod_{\vMFi}H_v(\sss,\xxx) \exp\bigg(i\cdot\langle[F_v:\RR]\log_{\TT,v}(\xxx),\mmm\rangle\bigg)\\
&=\prod_{\vMFz}H_v(\sss+i\mmm,\xxx)\cdot\prod_{\vMFi}H_v(\sss+i\mmm,\xxx)\\&=H(\sss+i\mmm,\xxx).
\end{align*}
 \item By using the first part, we obtain that:
\begin{align*}
\wH(\sss,\chi \cdot\chi_{\mmm})&=\int_{\TT(\AAF)}H(\sss,\xxx)^{-1}\overline{\chi(\xxx)\chi_{\mmm}(\xxx)}d\muu(\xxx)\\
&=\int_{\TT(\AAF)}H(\sss,\xxx)^{-1}\overline{\chi(\xxx)}\chi_{\mmm}(\xxx)^{-1}d\muu(\xxx)\\
&=\int_{\TT(\AAF)}H(\sss+i\mmm,\xxx)^{-1}\overline{\chi(\xxx)}d\muu(\xxx)\\
&=\wH(\sss+i\mmm,\chi).
\end{align*}
\end{enumerate}
\end{proof}
Here is the estimate of the global transform.
\begin{prop}\label{gltrfin}
Let $\epsilon_1=(\varepsilon_1)_{\rho\in\Spijx}$ be given by Proposition~\ref{ljuka}. For every $\chi\in (K(\TT)\Delta(\TT))^{\perp}[\TT(\AAF)^*]$, one has that 
$$\wH^*(\sss,\chi)=\bigg(\prod_{\substack{\rho\in\Spijx\\\chi^{(\rho)}|_{\Gm(\AAF)_1}=1}}\frac{\Xi_{\rho}(\sss)-1-im(\chi^{(\rho)})}{\Xi_{\rho}(\sss)-im(\chi^{(\rho))}}\bigg)\cdot\wH(\sss,\chi).$$
is a holomorphic function of~$\sss$ in the domain $\Omega_{\Lambda-K_X-\epsilon}.$ Moreover, 
if for every $\delta>0$ we set $\epsilon=(\varepsilon(\delta))_{\rho\in\Spijx},$ where $\varepsilon_1>\varepsilon(\delta)>0$ is given by Proposition~\ref{smrka}, then for every compact $\mathcal K\subset\Lambda-K_X-\epsilon$, there exists $C(\mathcal K)>0$ such that for every $\chi\in (K(\TT)\Delta(\TT(F)))^{\perp}[\TT(\AAF)^*]$ one has that (with the notation as in Corollary \ref{pinft})
\begin{align*}|\wH^*(\sss,\chi)| \leq \frac{C(\mathcal K)\cdot (1+||\Im(\sss)||)^{1+\delta}}{(1+||\chi||)^{1-\delta}}\sum_{\widetilde\sigma\in\widetilde\Sigma(d\cdot\#M^{\infty}_F)}\prod_{\rho\in \widetilde\sigma (1)}\frac{1}{1+|\langle e_{\rho},\chi_{\infty}\rangle+ \Im(s_{\rho})|}
\end{align*}
whenever $\sss\in\Omega_{\mathcal K}$.
\end{prop}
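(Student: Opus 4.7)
The plan is to assemble the global estimate from the two pieces that have already been prepared: the finite part is handled by Proposition \ref{smrka} and the infinite part by Corollary \ref{pinft}. The starting point is the product decomposition
\[
\wH(\sss,\chi)=\wH_f(\sss,\chi)\cdot\wH_\infty(\sss,\chi),
\]
valid wherever both factors converge (and in particular on $\Omega_{\Lambda^\circ-K_X}$, by Proposition \ref{ljuka} and Corollary \ref{pinft}). Multiplying both sides by the correction factor appearing in the definition of $\wH^*$ yields
\[
\wH^*(\sss,\chi)=\wH^*_f(\sss,\chi)\cdot\wH_\infty(\sss,\chi),
\]
since $\wH_\infty$ contributes no poles in $\Omega_{\Lambda^\circ}$.

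For the holomorphicity assertion, I would shrink $\varepsilon(\delta)$ further if necessary so that $\Lambda-K_X-\epsilon\subset\Lambda^\circ$ (this is possible because $\Xi_\rho(-K_X)=1$ by Lemma \ref{propoflambda}(2), so elements of $\Lambda-K_X-\epsilon$ satisfy $\Xi_\rho>0$ for $\varepsilon$ small enough). On this set $\wH^*_f$ is holomorphic by Proposition \ref{smrka} and $\wH_\infty$ is holomorphic by Corollary \ref{pinft}, hence so is their product $\wH^*$.

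For the quantitative bound, I would simply multiply the bounds from Proposition \ref{smrka} and Corollary \ref{pinft}. On a compact $\mathcal K\subset\Lambda-K_X-\epsilon$, fixing $\delta>0$ and $\epsilon=(\varepsilon(\delta))_{\rho\in\Spijx}$, Proposition \ref{smrka} gives
\[
|\wH^*_f(\sss,\chi)|\leq C_1(\mathcal K)(1+\|\Im(\sss)\|)^{\delta}(1+\|\chi\|)^{\delta},
\]
while Corollary \ref{pinft} gives
\[
|\wH_\infty(\sss,\chi)|\leq \frac{C_2(\mathcal K)}{1+\|\chi\|}\sum_{\widetilde\sigma\in\widetilde\Sigma(d\cdot\# M_F^\infty)}\frac{1+\|\Im(\widetilde\sss)|_{\widetilde\sigma(1)}\|}{\prod_{\rho\in\widetilde\sigma(1)}(1+|\Im(s_\rho)+\langle e_\rho,\chi\rangle|)}.
\]
Multiplying, the factor $(1+\|\chi\|)^{\delta}/(1+\|\chi\|)$ becomes $(1+\|\chi\|)^{-(1-\delta)}$, and the numerator factor $(1+\|\Im(\sss)\|)^{\delta}\cdot(1+\|\Im(\widetilde\sss)|_{\widetilde\sigma(1)}\|)$ is absorbed into $(1+\|\Im(\sss)\|)^{1+\delta}$ up to a harmless constant depending only on $\mathcal K$. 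This yields precisely the asserted estimate.

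There is no genuine obstacle here beyond being careful with the domains: the only point that needs checking is that the shifted cone $\Lambda-K_X-\epsilon$ still lies inside $\Lambda^\circ$ (so that Corollary \ref{pinft} applies) and inside $\Lambda-K_X-\epsilon_1$ (so that Proposition \ref{smrka} applies); both hold automatically for $\varepsilon(\delta)$ sufficiently small, and the constants $C_1,C_2$ combine into the single constant $C(\mathcal K)$ of the statement.
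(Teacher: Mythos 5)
Your proposal is correct and follows essentially the same route as the paper: factor $\wH^*=\wH^*_f\cdot\wH_\infty$, invoke Proposition~\ref{smrka} for the finite part and Corollary~\ref{pinft} for the infinite part, and multiply the two estimates, absorbing $(1+\|\Im(\widetilde\sss)|_{\widetilde\sigma(1)}\|)$ into $(1+\|\Im(\sss)\|)$. Your explicit check that $\Lambda-K_X-\epsilon\subset\Lambda^{\circ}$ (so that Corollary~\ref{pinft} applies) is a point the paper leaves implicit, but it is the same argument.
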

\begin{proof}
The first claim is follows from Corollary \ref{pinft} and Proposition \ref{finparfour}. Combining the estimate for the finite part of the Fourier transform given in Proposition \ref{finparfour} and the estimate for the infinite part of the Fourier transform given in Corollary \ref{pinft}, we obtain that for every $\delta>0$ and every $\mathcal K\subset\Lambda-K_X-\epsilon,$ there exist constants $C(\mathcal K), C'(\mathcal K), C''(\mathcal K)>0$ such that:
\begin{align*}
|\wH^*(\sss,\chi)|&=\bigg|\prod_{\substack{\rho\in\Spijx\\\chi^{(\rho)}|_{\Gm(\AAF)_1}=1}}\frac{\Xi_{\rho}(\sss)-1-im(\chi^{(\rho)})}{\Xi_{\rho}(\sss)-im(\chi^{(\rho)})}\cdot \wH_f(\sss,\chi)\cdot\wH_{\infty}(\sss,\chi)\bigg|\\
&=C''(\mathcal K)(1+||\Im(\sss)||)^{\delta}(1+||\chi||)^{\delta}\cdot\wH_{\infty}(\sss,\chi)\\
&=\frac{C'(\mathcal K)\cdot (1+||\Im(\sss)||)^{\delta}}{(1+||\chi||)^{1-\delta}}\sum_{\widetilde\sigma\in\widetilde\Sigma(d\cdot\# M_F^{\infty})}\frac{1+||\Im(\widetilde\sss)|_{\widetilde\sigma(1)}||}{\prod_{\rho\in\widetilde\sigma(1)}(1+|\langle e_{\rho},\chi_{\infty}\rangle+\Im(s_{\rho})|)}\\
&\leq \frac{C(\mathcal K)(1+||\Im(\sss)||)^{1}}{(1+||\chi||)^{1-\delta}}\sum_{\widetilde\sigma\in\widetilde\Sigma(d\cdot \# M_F^{\infty})}\frac{1+||\Im(\sss)||}{\prod_{\rho\in\widetilde\sigma(1)}(1+|\langle e_{\rho},\chi_{\infty}\rangle+\Im(s_{\rho})|)}\\
&= \frac{C(\mathcal K)(1+||\Im(\sss)||)^{1+\delta}}{(1+||\chi||)^{1-\delta}}\sum_{\widetilde\sigma\in\widetilde\Sigma(d\cdot \# M_F^{\infty})}\frac{1}{\prod_{\rho\in\widetilde\sigma(1)}(1+|\langle e_{\rho},\chi_{\infty}\rangle+\Im(s_{\rho})|)}
\end{align*}
for every $\sss\in\Omega_{\mathcal K}$ and every $\chi\in(K(\TT)\Delta(\TT(F)))^{\perp}[\TT(\AAF)^*].$ 
The statement is proven. 
\end{proof}
Before the next proposition, let us recall that $\Xi_{\rho}(-K_X)=1$ for every $\rho\in\Spijx$, and so that $\Xi_{\rho}(-K_X+\sss)=\Xi_{\rho}(\sss)+1$. 
\begin{prop}\label{growthofg}
Let~$\epsilon_1=(\varepsilon_1)_{\rho\in\Spijx}$ be given by Proposition~\ref{ljuka}. One has that 
$$g(\sss):= \sum_{\chi_0\in\mathfrak A_{\TT}}\wH(\sss,\chi_0)$$converges absolutely and uniformly to a holomorphic function in the domain $\sss\in\Omega_{\Lambda^{\circ}-K_X-\epsilon_1}$. Moreover, 
if for every $\delta>0$ we set $\epsilon=\epsilon(\delta)=(\varepsilon(\delta))_{\rho\in\Spijx},$ where $\varepsilon_1>\varepsilon(\delta)>0$ is given by Proposition~\ref{smrka}, then
 \begin{align*}g^*(\sss):=\bigg(\prod_{\rho\in\Spijx(1)}\frac{\Xi_{\rho}(\sss)-1}{\Xi_{\rho}(\sss)}\bigg)\cdot g(\sss)
\end{align*}
extends to a holomorphic function in the domain $\Omega_{\Lambda^{\circ}-K_X-\epsilon}$ and there exist linear forms $(\ell_{i,j})_{i\in I, j\in J}\in (\RR^{\Spijx})^*$ such that for every $j\in J$ the forms $(\ell_{i,j})_{i\in I}$ form a basis when restricted to~$M^{\rig}_{\RR}$ and such that for every compact~$\mathcal K$ in the domain $\RR_{\Lambda^{\circ}-\epsilon(\delta)}$ there exists $C(\mathcal K)>0$ such that whenever $\sss\in\Omega_{\mathcal K}$ and $\mmm\in 
 M^{\rig}_{\RR}$ one has 
\begin{multline*}\bigg|\prod_{\rho\in\Spijx}\frac{\Xi_{\rho}(\sss-i\mmm)}{1+\Xi_{\rho}(\sss-i\mmm)}\cdot g(\sss-K_X+i\mmm) \bigg|\\
\leq\frac{C(\mathcal K)(1+||\Im(\sss)||)^{1+\delta}}{(1+||\mmm||)^{1-\delta}}\sum_{i\in I}\prod_{j\in J}\frac{1}{(1+|\ell_{i,j}(\Im(\sss)+\mmm)|)}.
\end{multline*}
\end{prop}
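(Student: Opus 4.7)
The strategy is to apply the pointwise estimate of Proposition~\ref{gltrfin} to each summand $\wH(\sss,\chi_0)$ with $\chi_0 \in \mathfrak{A}_\TT$ and then sum over $\chi_0$, exploiting the finitely generated structure of $\mathfrak{A}_\TT$ together with a lattice-to-integral comparison.

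The first step is to reconcile the correction factors. For $\chi_0 \in \mathfrak{A}_\TT$, Corollary~\ref{funat} tells us that whenever $\chi_0^{(\rho)}|_{\Gm(\AAF)_1} = 1$, one has $m(\chi_0^{(\rho)}) = 0$, so the correction appearing in Proposition~\ref{gltrfin} becomes $(\Xi_\rho(\sss)-1)/\Xi_\rho(\sss)$ for such $\rho$. We therefore can write
\[
\prod_{\rho \in \Spijx} \frac{\Xi_\rho(\sss)-1}{\Xi_\rho(\sss)}\,\wH(\sss,\chi_0) = \bigg(\prod_{\rho:\,\chi_0^{(\rho)}|_{\Gm(\AAF)_1}\neq 1}\frac{\Xi_\rho(\sss)-1}{\Xi_\rho(\sss)}\bigg)\,\wH^*(\sss,\chi_0),
\]
where the bracketed prefactor is holomorphic and uniformly (in $\chi_0$) bounded in $\Omega_{\Lambda^\circ - K_X - \epsilon}$ because $\Re(\Xi_\rho(\sss))$ stays strictly positive there. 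Consequently, the holomorphy and the final bound for $g^*$ reduce to the corresponding statements for $\sum_{\chi_0}\wH^*(\sss,\chi_0)$.

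To incorporate the imaginary shift by $\mmm$, apply Lemma~\ref{relhc} to obtain $\wH(\sss-K_X+i\mmm,\chi_0) = \wH(\sss-K_X,\chi_0\chi_\mmm)$. Since $\chi_\mmm$ lies in $X^*(\TT)_\RR \subset (K(\TT)\Delta(\TT(F)))^\perp[\TT(\AAF)^*]$ by Lemma~\ref{normmeas}(3), the product $\chi_0\chi_\mmm$ satisfies the hypothesis of Proposition~\ref{gltrfin}. Using Lemma~\ref{obrikmmm} to identify the infinite component of $\chi_\mmm$ as a linear translation by $\mmm$ (up to normalization constants at infinite places), we obtain
\[
|\wH^*(\sss-K_X,\chi_0\chi_\mmm)| \leq \frac{C\cdot(1+||\Im(\sss)||)^{1+\delta}}{(1+||\chi_{0,\infty}+\mmm||)^{1-\delta}}\sum_{\widetilde\sigma}\prod_{\rho\in\widetilde\sigma(1)}\frac{1}{1+|\langle e_\rho,\chi_{0,\infty}+\mmm\rangle + \Im(s_\rho)|},
\]
with $\widetilde\sigma$ ranging over $\widetilde\Sigma(d\cdot \#M_F^\infty)$.

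Finally, summing over $\chi_0 \in \mathfrak{A}_\TT$: by the decomposition $\mathfrak{A}_\TT \cong \mathfrak{A}_T \times \mathfrak{A}_{BG}$ with $\mathfrak{A}_{BG}$ finite, and by Lemma~\ref{finofbchi} applied with $D = X_*(\TT)$, the map $\chi_0 \mapsto \chi_{0,\infty}$ embeds the free part of $\mathfrak{A}_\TT$ as a finite-rank lattice $L$ in a real subspace $V \subset \prod_\vMFi M^\rig_\RR$, and $||\chi_0||$ is comparable to $||\chi_{0,\infty}||$. For each simplicial $\widetilde\sigma$, the linear forms $\{\langle e_\rho,\cdot\rangle : \rho \in \widetilde\sigma(1)\}$ form a basis of $\prod_v M^\rig_\RR$; after restriction to $V$ and passage from a lattice sum over $L$ to an integral, each term produces a finite collection of products $\prod_{j \in J}1/(1+|\ell_{i,j}(\Im(\sss)+\mmm)|)$, where the $\ell_{i,j}$ restrict to a basis of $(M^\rig_\RR)^*$. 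The residual factor $(1+||\chi_{0,\infty}+\mmm||)^{\delta-1}$ is absorbed in the summation, contributing the claimed $(1+||\mmm||)^{\delta-1}$ after a translation of variables in $L$. Convergence of the unshifted series $g(\sss)$ in $\Omega_{\Lambda^\circ - K_X - \epsilon_1}$ is the special case $\mmm = 0$.

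The main obstacle is this last step: explicitly carrying out the lattice-to-integral comparison while tracking the combinatorics that produces the index set $I$ and the linear forms $\ell_{i,j}$. Concretely, one must refine the indexing over $\widetilde\sigma$ by recording which subset of rays in $\widetilde\sigma(1)$ yields, after restriction to $V$ and projection onto $M^\rig_\RR$, a basis of $(M^\rig_\RR)^*$, and verify that the residual polynomial decay in $||\chi_{0,\infty}+\mmm||$ is sufficient to integrate along the complementary directions while maintaining uniformity in $\Im(\sss)$ and $\mmm$.
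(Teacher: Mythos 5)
Your proposal follows essentially the same route as the paper's proof: reduce the correction factors via Corollary~\ref{funat}, implement the shift by $i\mmm$ via Lemma~\ref{relhc} and Lemma~\ref{obrikmmm}, apply Proposition~\ref{gltrfin} termwise to $\chi_0\chi_{\mmm}$, and sum over $\mathfrak A_{\TT}$ by splitting off the finite factor $\mathfrak A_{BG}$ and comparing the remaining lattice sum with an integral. Two remarks. First, the structural facts you need about $\chi_0\mapsto\chi_{0,\infty}$ --- finite kernel on $\mathfrak A_T$, and image a lattice in a subspace of $\prod_{\vMFi}M^{\rig}_{\RR}$ complementary to $M^{\rig}_{\RR}$ --- do not follow from Lemma~\ref{finofbchi} (which only asserts finiteness of $B(D)$); the paper takes them from \cite[Lemma 4.52]{Bourqui}, and the complementarity is what makes the final bound uniform in $\mmm$. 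Second, the ``main obstacle'' you flag at the end is exactly the content of \cite[Proposition B3]{FonctionsZ}, which the paper invokes as a black box applied to $M^{\rig}_{\RR,\infty}$ with the complementary subspaces $\mathfrak A_{T,\infty}\otimes\RR$ and $M^{\rig}_{\RR}$, with $v_1=0$, $v_2=(m_{\rho}+\Im(s_{\rho}))_{\rho\in\widetilde\sigma(1)}$ and the dual basis $(\langle e_{\rho},\cdot\rangle)_{\rho\in\widetilde\sigma(1)}$; this is precisely where the index sets $I,J$ and the linear forms $\ell_{i,j}$ of the statement come from. With those two citations supplied, your argument coincides with the paper's.
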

\begin{proof}
 We fix a compact $\mathcal K\subset \RR_{\Lambda^{\circ}-K_X-\epsilon_1}$.
By Corollary~\ref{funat}, whenever $\chi_0\in\mathfrak A_{\TT}$,  for every $\rho\in\Spijx$, one has
$$\chi_0^{(\rho)}|_{\Gm(\AAF)_1}=1\implies m(\chi_0^{(\rho)})=0.$$ Whenever $\sss\in\Omega_{\mathcal K}$, we have that all the functions $\sss\mapsto\frac{\Xi_{\rho}(\sss)-1}{\Xi_{\rho}(\sss)}$ are bounded. Thus by Proposition~\ref{gltrfin}, for $\sss\in\Omega_{\mathcal K}$, we have that \begin{align*}\bigg(\prod_{\rho\in\Spijx}\frac{\Xi_{\rho}(\sss)-1}{\Xi_{\rho}(\sss)}\bigg)\cdot\wH(\sss,\chi_0)\hskip-4cm&\\&=\bigg(\prod_{\substack{\rho\in\Spijx\\\chi^{(\rho)}|_{\Gm(\AAF)_1}\neq 1}}\frac{\Xi_{\rho}(\sss)-1}{\Xi_{\rho}(\sss)}\bigg)\cdot\wH^*(\sss,\chi_0)\\
&\leq  \frac{C_1\cdot (1+||\Im(\sss)||)^{1+\delta}}{(1+||\chi_0||)^{1-\delta}}\sum_{\widetilde\sigma\in\widetilde\Sigma(d\cdot\#M^{\infty}_F)}\prod_{\rho\in \widetilde\sigma (1)}\frac{1}{1+|\langle e_{\rho},(\chi_0)_{\infty}\rangle+ \Im(s_{\rho})|}
\end{align*}
for certain $C_1>0$, independent of $\chi_0\in\mathfrak A_{\TT}$.  Hence
\begin{align*}
|g(\sss)|&\leq \sum_{\chi_0\in\mathfrak A_{\TT}}\frac{C_1\cdot (1+||\Im(\sss)||)^{1+\delta}}{(1+||\chi_0||)^{1-\delta}}\sum_{\widetilde\sigma\in\widetilde\Sigma(d\cdot \# M_F^{\infty})}\prod_{\rho\in\widetilde\sigma(1)}\frac{1}{1+\left|\langle e_{\rho},(\chi_0)_{\infty}\rangle+\Im(s_{\rho})\right|}
\end{align*}
for every $\sss\in\Omega_{\mathcal K}$.
Recall from Equation~(\ref{atagmabl}) that $\mathfrak A_{\TT}=\mathfrak A_T\times \mathfrak A_{BG}$ and that $\mathfrak A_{BG}$ is finite.  For a character $\chi'\in \mathfrak A_{BG}=\mathfrak A_{BG}\times\{1\}\subset\mathfrak A_{\TT}$, we have that $$(\chi_0\chi')_{\infty}=(\chi_0)_{\infty}$$ and, hence, that $$||\chi_0||=||\chi_0\chi'||.$$ We deduce that for $C_2=\#\mathfrak A_{BG}\cdot C_1$, one has that $$|g(\sss)|\leq\sum_{\chi_0\in\mathfrak A_T}\frac{C_2\cdot (1+||\Im(\sss)||)^{1+\delta}}{(1+||\chi_0||)^{1-\delta}}\sum_{\widetilde\sigma\in\widetilde\Sigma(d\cdot \# M_F^{\infty})}\prod_{\rho\in\widetilde\sigma(1)}\frac{1}{1+\left|\langle e_{\rho},(\chi_0)_{\infty}\rangle+\Im(s_{\rho})\right|}.$$
Let us establish a uniform convergence of the sum $$\sum_{\chi_0\in\mathfrak A_T}\frac{1}{(1+||\chi_0||)^{1-\delta}}\sum_{\widetilde\sigma\in\widetilde\Sigma(d\cdot (r_1+r_2))}\prod_{\rho\in\widetilde\sigma(1)}\frac{1}{1+\left|\langle e_{\rho},(\chi_0)_{\infty}\rangle+\Im(s_{\rho})\right|}.$$
Let~$\mathfrak A_{T,\infty}$ be the image of $\mathfrak A_T$ for the homomorhism $\theta\mapsto\theta_{\infty}\in  M^{\rig}_{\RR,\infty}.$ By \cite[Lemma 4.52(1)]{Bourqui}, the later homomorphism if of finite kernel, hence it suffices for each $\widetilde\sigma\in\widetilde\Sigma(d\cdot \# M_F^{\infty})$ to establish a uniform convergence of the sum $$\sum_{\yyy\in\mathfrak A_{T,\infty}}\frac{1}{(1+||\yyy||)^{1-\delta}}\frac{1}{\prod_{\rho\in\widetilde\sigma(1)}\big(1+\left|\langle e_{\rho},\yyy\rangle+\Im(s_{\rho})\right|\big)}.$$
By \cite[Lemma 4.52(3)]{Bourqui}, one has that $\mathfrak A_{T,\infty}$ is a lattice of a subspace of~$M^{\rig}_{\RR,\infty}$. We can thus bound the last sum by the integral $$\int_{\mathfrak A_{T,\infty}\otimes\RR}\frac{1}{(1+||\yyy||)^{1-\delta}}\frac{d\yyy}{\prod_{\rho\in\widetilde\sigma(1)}(1+|\langle e_{\rho},\yyy\rangle+\Im(s_{\rho})|)} $$ where $d\yyy$ is the Lebesgue measure on $\mathfrak A_{T,\infty}\otimes\RR$ normalized by~$\mathfrak A_{T,\infty}$. We now apply \cite[Proposition B3]{FonctionsZ} for the vector space $M^{\rig}_{\RR,\infty},$  with the respect to the complementary subspaces $\mathfrak A_{T,\infty}\otimes\RR$ and $M^{\rig}_{\RR}$ (they are complementary by \cite[Lemma 4.52(2)]{Bourqui}), with $v_1=0$ and $v_2=(\Im(s_{\rho}))_{\rho\in\widetilde\sigma(1)}$ and  the dual basis of $M^{\rig}_{\RR,\infty}$ given by $(\langle e_{\rho},\cdot\rangle)_{\rho\in\widetilde\sigma(1)}.$ We deduce that the series defining~$g(\sss)$ converges uniformly on the compacts in the domain $\Omega_{\Lambda^{\circ}-K_X}$ that $\sss\mapsto g(\sss)$ is holomorphic in the domain. The first claim is proven.

Let us prove the second claim. For $\rho\in\Spijx$, note that $$(\chi_0\chi_{\mmm})^{(\rho)}|_{\Gm(\AAF)_1}\iff \chi_0^{(\rho)}|_{\Gm(\AAF)_1}= 1. $$ Furthermore, for $\rho\in\Spijx$ if $\chi_0^{(\rho)}|_{\Gm(\AAF)_1}=1,$ then $m(\chi_0^{(\rho)})=0$ by  and Lemma~\ref{obrikmmm}: $$m((\chi_0\chi_{\mmm})^{(\rho)})=m(\chi_0^{(\rho)})+m(\chi^{(\rho)}_{\mmm})=0+\Xi_{\rho}(
i\mmm)=\Xi_{\rho}(i\mmm).$$ Let $\delta>0$ and let $\epsilon=\epsilon(\delta)$.  We have that 
\begin{align*}
|g^*(\sss-K_X+i\mmm)|\hskip-1cm&\\
&=\bigg|\prod_{\rho\in\Spijx}\frac{\Xi_{\rho}(\sss-i\mmm)}{1+\Xi_{\rho}(\sss-i\mmm)} \cdot g(\sss-K_X+i\mmm)\bigg|\\
&=\bigg|\prod_{\rho\in\Spijx}\frac{\Xi_{\rho}(\sss-i\mmm)}{1+\Xi_{\rho}(\sss-i\mmm)} \cdot\sum_{\chi_0\in\mathfrak A_{\TT}}\wH(\sss-K_X+i\mmm,\chi_0)\bigg|
\\&\leq\sum_{\chi_0\in\mathfrak A_{\TT}}\bigg|\prod_{\rho\in\Spijx}\frac{\Xi_{\rho}(\sss-i\mmm)}{1+\Xi_{\rho}(\sss-i\mmm)} \cdot\wH(\sss-K_X+i\mmm,\chi_0)\bigg|\\
&\leq\sum_{\chi_0\in\mathfrak A_{\TT}}\bigg|\prod_{\rho\in\Spijx}\frac{\Xi_{\rho}(\sss-i\mmm)}{1+\Xi_{\rho}(\sss-i\mmm)} \cdot\wH(\sss-K_X,\chi_0\chi_{\mmm})\bigg|\\
&\leq \sum_{\chi_0\in\mathfrak A_{\TT}}\bigg|\prod_{\substack{\rho\in\Spijx\\\chi_0^{(\rho)}|_{\Gm(\AAF)_1}= 1}}\frac{\Xi_{\rho}(\sss-i\mmm)}{1+\Xi_{\rho}(\sss-i\mmm)}\cdot\wH(\sss-K_X,\chi_0\chi_{\mmm})\times\\
&\quad\quad\quad\quad\quad\quad\quad\quad\quad\quad\quad\quad\times  \prod_{\substack{\rho\in\Spijx\\\chi_0^{(\rho)}|_{\Gm(\AAF)_1}\neq 1}}\frac{\Xi_{\rho}(\sss-i\mmm)}{1+\Xi_{\rho}(\sss-i\mmm)} \bigg|.
\end{align*}
  Now, Proposition~\ref{gltrfin} gives that
\begin{align*}
|\wH^*(\sss-K_X,\chi_0\chi_{\mmm})|\hskip-3cm&\\
&\leq \frac{C_3(1+||\Im(\sss)+K_X||)^{1+\delta}}{(1+||\chi_0\chi_{\mmm}||)^{1-\delta}}\sum_{\widetilde\sigma\in\widetilde\Sigma (d\cdot\#M_F^{\infty})}\prod_{\rho\in\widetilde\sigma(1)}\frac{1}{1+|\langle e_{\rho},(\chi_0\chi_{\mmm})_{\infty}\rangle+\Im(s_{\rho}-1)|}\\
&= \frac{C_3(1+||\Im(\sss)||)^{1+\delta}}{(1+||\chi_0\chi_{\mmm}||)^{1-\delta}}\sum_{\widetilde\sigma\in\widetilde\Sigma (d\cdot\#M_F^{\infty})}\prod_{\rho\in\widetilde\sigma(1)}\frac{1}{1+|\langle e_{\rho},(\chi_0\chi_{\mmm})_{\infty}\rangle+\Im(s_{\rho})|}.
\end{align*}
By using that the functions $\frac{\Xi_{\rho}(\sss)}{1+\Xi_{\rho}(\sss)}$ and $\frac{\phi_{\sss}(\YY)}{1+\phi_{\sss}(\YY)}$ are bounded on $\Omega_{\mathcal K}$, we deduce that for some $C_4>0$, one has that
\begin{multline*}|g^*(\sss-K_X+i\mmm)|\\\leq \sum_{\chi_0\in\mathfrak A_{\TT}} \frac{C_4(1+||\Im(\sss)||)^{1+\delta}}{(1+||\chi_0\chi_{\mmm}||)^{1-\delta}}\sum_{\widetilde\sigma\in\widetilde\Sigma (d\cdot\#M_F^{\infty})}\prod_{\rho\in\widetilde\sigma(1)}\frac{1}{1+|\langle e_{\rho},(\chi_0\chi_{\mmm})_{\infty}\rangle+\Im(s_{\rho})|}. 
\end{multline*}
By the same arguing as before, we deduce that for some $C_5>0$, one has 
\begin{multline*}|g^*(\sss+K_X+i\mmm)|\\\leq \sum_{\yyy\in\mathfrak A_{\TT,\infty}} \frac{C_5(1+||\Im(\sss)||)^{1+\delta}}{(1+||\yyy+\widetilde\mmm||)^{1-\delta}}\sum_{\widetilde\sigma\in\widetilde\Sigma (d\cdot\#M_F^{\infty})}\prod_{\rho\in\widetilde\sigma(1)}\frac{1}{1+|\langle e_{\rho},\yyy+\widetilde{\mmm}\rangle+\Im(s_{\rho})|}. 
\end{multline*}
When~$\mmm$ is fixed, we have that $$(1+||\yyy+\widetilde\mmm||) \asymp (1+||\yyy||)\cdot (1+||\widetilde\mmm||)=(1+||\yyy||)\cdot (1+||\mmm||).$$ 
We also observe that:
\begin{align*}
1+|\langle e_{\rho},\yyy+\widetilde\mmm\rangle+\Im(s_{\rho})|=1+|\langle e_{\rho},\yyy+\widetilde\mmm+\widetilde\sss\rangle|
\end{align*}
It follows that 
\begin{align*}
|g^*(\sss+K_X+i\mmm)|\hskip-2cm&\\&\leq \frac{C_6(1+||\Im(\sss)||)^{1+\delta}}{(1+||\mmm||)^{1-\delta}}\times\\&\quad\quad\times\sum_{\yyy\in\mathfrak A_{\TT,\infty}} \frac{1}{(1+||\yyy||)^{1-\delta}}\sum_{\widetilde\sigma\in\widetilde\Sigma (d\cdot\#M_F^{\infty})}\prod_{\rho\in\widetilde\sigma(1)}\frac{1}{1+|\langle e_{\rho},\yyy+\widetilde{\mmm}+\Im(\widetilde\sss)\rangle|}\\
&\leq \frac{C_6(1+||\Im(\sss)||)^{1+\delta}}{(1+||\mmm||)^{1-\delta}}\times\\&\quad\quad\times\int_{\mathfrak A_{T,\infty}}\frac{d\yyy}{(1+||\yyy||)^{1-\delta}}\sum_{\widetilde\sigma\in\widetilde\Sigma(d\cdot\# M_F^{\infty})}\prod_{\rho\in\widetilde\sigma(1)}\frac{1}{1+|\langle e_{\rho},\yyy+\widetilde{\mmm}+\Im(\widetilde\sss)\rangle|}.
\end{align*}
We now apply \cite[Proposition B3]{FonctionsZ} for the vector space $M^{\rig}_{\RR,\infty},$  with the respect to the complementary subspaces $\mathfrak A_{T,\infty}\otimes\RR$ and $M^{\rig}_{\RR}$ (they are complementary by \cite[Lemma 4.52(2)]{Bourqui}), with $v_1=0$ and $v_2=(m_{\rho}+\Im(s_{\rho}))_{\rho\in\widetilde\sigma(1)}$ and  the dual basis of $M^{\rig}_{\RR,\infty}$ given by $(\langle e_{\rho},\cdot\rangle)_{\rho\in\widetilde\sigma(1)}.$ The claim follows.
\end{proof}
We now prove that~$g^*(-K_X)$ is strictly positive. 
Set $$B:=\{\chi_0\in\mathfrak A_{\TT}|\hspace{0,1cm}\forall\rho\in\Spijx:\chi_0^{(\rho)}=1\}\subset\mathfrak A_{\TT}.$$ We note that for every $\chi_0\in B$ and every $\yyy\in D=\{\sum_{\rho\in\Sigma(1)}k_{\rho}b_{\rho}|\hspace{0,1cm}k_{\rho}\in\ZZ\},$ one has that $\chi_0^{(\yyy)}=1$. From Lemma~\ref{finofbchi} we deduce that~$B$ is finite. The subgroup~$B^{\perp}[\TT(\AAF)]\subset\TT(\AAF)$ is thus of finite index. 
\begin{prop}\label{gkxz}
One has that \begin{align*}g^*(-K_X)\hskip-1.6cm&\\&=\lim_{\sss\to (0)_{\rho\in\Spijx}}\prod_{\rho\in\Spijx}\frac{\Xi_{\rho}(\sss)}{\Xi_{\rho}(\sss)+1}\cdot(\#B)\cdot\int_{B^{\perp}[\TT(\AAF)]}H(\sss-K_X,-)^{-1}\mu\\&> 0.
\end{align*}
\end{prop}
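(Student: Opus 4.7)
The plan is to expand $g^*(\sss-K_X)$ from its definition, truncate the sum indexing $g$ down to those characters $\chi_0\in B$ whose every component $\chi_0^{(\rho)}$ is trivial, and then apply discrete Fourier inversion on the finite group $B$.

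By Proposition~\ref{growthofg} together with the identity $\Xi_\rho(\sss-K_X)=\Xi_\rho(\sss)+1$, one has the equality of meromorphic functions
\[
g^*(\sss-K_X) \;=\; \prod_{\rho\in\Spijx}\frac{\Xi_\rho(\sss)}{\Xi_\rho(\sss)+1}\sum_{\chi_0\in\mathfrak A_\TT}\wH(\sss-K_X,\chi_0),
\]
valid in the domain of convergence of $g$ and extending analytically near $\sss=0$. Invoking the factorization $\wH_f(\sss-K_X,\chi_0)=\gamma(\sss-K_X,\chi_0)\prod_\rho L(\Xi_\rho(\sss)+1,\overline{\chi_0}^{(\rho)})$ of Proposition~\ref{ljuka}, an $L$-factor has a pole at $\sss=0$ exactly when $\overline{\chi_0}^{(\rho)}|_{\Gm(\AAF)_1}=1$ with $m(\overline{\chi_0}^{(\rho)})=0$, and by Corollary~\ref{funat} these two conditions are together equivalent to $\overline{\chi_0}^{(\rho)}$ being the trivial character of $\Gm(\AAF)$. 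Hence if $\chi_0\notin B$, some $\chi_0^{(\rho_0)}\ne 1$, the corresponding $L$-factor stays holomorphic at $\sss=0$, and the accompanying factor $\Xi_{\rho_0}(\sss)$ in the prefactor kills the contribution in the limit; only terms with $\chi_0\in B$ survive.

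Since $B$ is finite by Lemma~\ref{finofbchi}, discrete Fourier inversion applies: the identity $\sum_{\chi_0\in B}\overline{\chi_0}(\xxx)=\#B\cdot\mathbf{1}_{B^\perp[\TT(\AAF)]}(\xxx)$ combined with Fubini gives
\[
\sum_{\chi_0\in B}\wH(\sss-K_X,\chi_0)\;=\;\#B\int_{B^\perp[\TT(\AAF)]}H(\sss-K_X,-)^{-1}d\mu,
\]
which together with the previous reduction yields the asserted integral formula.

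For positivity I approach $\sss=0$ along real $\sss\in (0,\epsilon)^{\Spijx}\subset\Lambda^\circ$: the integrand $H(\sss-K_X,-)^{-1}$ is then a strictly positive real function on the open subgroup $B^\perp[\TT(\AAF)]$ of positive Haar measure, and $\prod_\rho\Xi_\rho(\sss)/(\Xi_\rho(\sss)+1)>0$, so the whole expression is strictly positive; continuity of $g^*$ at $-K_X$ gives $g^*(-K_X)\ge 0$. To upgrade this to strict positivity, I compute the leading term. A key observation is that for $\chi_0\in B$ the finite-place factor $\gamma(\sss-K_X,\chi_0)$ is independent of $\chi_0$, since $\chi_0^{(\rho)}=1$ forces $\chi_{0,v}^{(\rho)}(\pi_v)=1$ for every finite $v$. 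Therefore
\[
g^*(-K_X) \;=\; \rho_F^{\#\Spijx}\,\gamma(-K_X,1)\sum_{\chi_0\in B}\wH_\infty(-K_X,\chi_0),
\]
where $\rho_F>0$ is the residue of $\zeta_F$ at $s=1$ (obtained as $\lim_{\sss\to 0}\prod_\rho\Xi_\rho(\sss)\zeta_F(\Xi_\rho(\sss)+1)$), and $\gamma(-K_X,1)>0$ as an absolutely convergent Euler product of positive local factors $\gamma_v(-K_X,1)=\wH_v(-K_X,1)\prod_\rho(1-q_v^{-1})$, with absolute convergence ensured by Lemma~\ref{valofqs} which gives $D=1+O(|X|^2)$. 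The main obstacle is then strict positivity of the archimedean sum $\sum_{\chi_0\in B}\wH_\infty(-K_X,\chi_0)$; I expect this to follow from local Fourier inversion at each infinite place, after a careful analysis of the image of $B$ inside $\prod_{v\mid\infty}\TT(F_v)^*$ which would express the archimedean sum as a positive quantity coming from a genuine (non-cancelling) integral over a suitable compact subgroup.
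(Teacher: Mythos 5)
Your derivation of the integral formula is essentially the paper's: the reduction of $g^*(-K_X)$ to the sub-sum over $\chi_0\in B$ (the paper gets the vanishing of the other terms directly from the bound in Proposition~\ref{growthofg}; your route through the $L$-factorization is fine, though the fact that $\chi_0^{(\rho)}|_{\Gm(\AAF)_1}=1$ forces $\chi_0^{(\rho)}=1$ for $\chi_0\in\mathfrak A_\TT$ comes from Corollary~\ref{charate} rather than Corollary~\ref{funat}), followed by the Poisson/finite Fourier inversion identity $\sum_{\chi_0\in B}\wH(\sss-K_X,\chi_0)=\#B\int_{B^{\perp}[\TT(\AAF)]}H(\sss-K_X,-)^{-1}\mu$.

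The positivity argument, however, has a genuine gap, and you name it yourself: you reduce strict positivity to the claim that $\sum_{\chi_0\in B}\wH_\infty(-K_X,\chi_0)>0$, and then only say you \emph{expect} this to follow from an archimedean analysis. Nothing in your proposal rules out cancellation in that sum: membership in $B$ constrains only the \emph{global} characters $\chi_0^{(\rho)}$ of $\Gm(\AAF)$, not their archimedean components $(\chi_{0,v})^{(\rho)}$ individually, so the individual terms $\wH_\infty(-K_X,\chi_0)$ need not be positive or even real. (Your observation that the finite-place factors $\gamma(\sss-K_X,\chi_0)$ and the $L$-factors agree with those of the trivial character for $\chi_0\in B$ is correct, but it does not touch the archimedean part.) The paper avoids this entirely by \emph{not} computing the limit term by term: it picks coset representatives $\yyy_1,\dots,\yyy_{\#B}$ of $\TT(\AAF)/B^{\perp}[\TT(\AAF)]$ and uses the translation estimate of Lemma~\ref{estheighty} to get, uniformly for small real $\sss>0$, a bound $\int_{B^{\perp}[\TT(\AAF)]}H(\sss-K_X,-)^{-1}\mu\geq \frac{C'}{\#B}\,\wH(\sss-K_X,1)$. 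The limit of $\prod_{\rho}\frac{\Xi_\rho(\sss)}{\Xi_\rho(\sss)+1}\wH(\sss-K_X,1)$ is then computed explicitly as $\gamma(-K_X,1)\,\wH_\infty(-K_X,1)\,\prod_{\rho}\Res(\zeta_F,1)$, which is strictly positive by Proposition~\ref{ljuka} and Lemma~\ref{hinfnnz}. Some such comparison step (or an actual proof of your archimedean claim) is needed; as written, your argument only yields $g^*(-K_X)\geq 0$.
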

\begin{proof}
For every~$\chi_0\in\mathfrak A_{\TT}$ for which there exists $\rho\in\Spijx$ such that $\chi_0^{(\rho)}\neq 1$, one has by Proposition~\ref{growthofg} $$\lim_{\sss\to(0)_{\rho\in\Spijx}}\bigg(\prod_{\rho\in\Spijx}\frac{\Xi_{\rho}(\sss)}{1+\Xi_{\rho}(\sss)}\cdot\wH(\sss-K_X,\chi_0)\bigg)=0.$$Thus, \begin{multline*}g^*(-K_X)\\=\lim_{\sss\to (0)_{\rho\in\Spijx}}\bigg(\prod_{\rho\in\Spijx}\frac{\Xi_{\rho}(\sss)}{1+\Xi_{\rho}(\sss)}\cdot\sum_{\substack{\chi_0\in B}}\wH(-K_X,\chi_0) \bigg).
\end{multline*}
The exact sequence:
$$1\to (B, \mu_B^{\#})\to (\TT(\AAF)^*,\mu^*)\to (\TT(\AAF)/B, \mu^*/\mu_B^{\#})\to 1$$ induces the exact sequence of the dual groups endowed with the dual Haar measures:
$$1\to (B^{\perp}[\TT(\AAF)], (\mu^*/\mu^{\#}_B)^*) \to (\TT(\AAF), \mu) \to (B^*, (\#B)^{-1}\mu^{\proba}_{B^*})\to 1$$
where $\mu^{\proba}_{B^*}$ denotes the probability Haar measure on~$B^*$. We deduce in particular that $(\mu^*/\mu^{\#}_B)^*=(\# B)\cdot\mu|_{B^{\perp}[\TT(\AAF)]}.$
We apply the Poisson formula aplied to the function $\wH(\sss,-)$ with the respect to the inclusion $B\subset\TT(\AAF)^*$ of locally compact abelian groups endowed with the Haar measures~$\mu^{\#}_B$ and~$\mu^{*}$, respectively.  Note that $\wH(\sss,-)$ is obviously $\mu^{\#}_B$-integrable whenever $\wH(\sss,-)$ converges, hence at least for $\sss\in\Omega_{\Lambda^{\circ}-K_X}$. Moreover, in the same domain for~$\sss$, by the Fourier inversion theorem, the function $\wH(\sss,-)$ is also $\mu^*$-integrable and for~$\xxx\in\TT(\AAF)=(\TT(\AAF)^*)^*$, one has $$H(\sss,\xxx)^{-1}=\int_{\TT(\AAF)^*}\wH(\sss,\chi)\overline{\chi(\xxx)}d\mu^*(\chi).$$ The Poisson formula gives that in the domain $\sss\in\Omega_{\Lambda^{\circ}-K_X}$:
$$\sum_{\chi_0\in B}\wH(\sss,\chi_0)= \#B\cdot\int_{B^{\perp}[\TT(\AAF)]}H(\sss,-)^{-1}\mu.$$
It remains to prove that
 \begin{multline*}\lim_{\sss\to (0)_{\rho\in\Spijx}}\prod_{\rho\in\Spijx}\frac{\Xi_{\rho}(\sss)}{\Xi_{\rho}(\sss)+1}\cdot g(\sss-K_X)\\=\lim_{\sss\to (0)_{\rho\in\Spijx}}\prod_{\rho\in\Spijx}\frac{\Xi_{\rho}(\sss)}{\Xi_{\rho}(\sss)+1}\cdot(\#B)\cdot\int_{B^{\perp}[\TT(\AAF)]}H(\sss-K_X,-)^{-1}\mu. 
\end{multline*}
is strictly positive.  Let $\yyy_1\doots \yyy_{\# B}\in\TT(\AAF)$ be a system of representatives modulo $B^{\perp}[\TT(\AAF)]$. It follows from Lemma~\ref{estheighty}, that there exists $C>0$ such that for every~$\yyy_i$ and every $\xxx\in\TT(\AAF)$, one has that  $$H(\sss-K_X,\yyy\xxx)^{-1}\geq \exp(||\sss||)^{-1}\cdot C\cdot H(\sss-K_X,\xxx)$$ whenever $\sss\in \RR_{>0}^{\Spijx}$ and thus for some $C'>0$, one has $$H(\sss-K_X,\yyy\xxx)^{-1}\geq C' H(\sss-K_X,\xxx)^{-1}$$ whenever $\sss\in(\RR_{>0}^{\Spijx}\cap \RR^{\Spijx}_{<\frac12})$. In particular for every $i=1\doots \# B$, one has:
\begin{align*}\int_{\yyy_iB^{\perp}[\TT(\AAF)]}H(\sss-K_X,-)^{-1}\mu&=\int_{B^{\perp}[\TT(\AAF)]}H(\sss-K_X,\yyy_i\xxx)^{-1}d\mu(\xxx) \\&\geq C'\int_{B^{\perp}[\TT(\AAF)]}H(\sss-K_X,-)^{-1}\mu 
\end{align*}
 We have that
\begin{align*}
\lim_{\sss\to (0)_{\rho\in\Spijx}}\prod_{\rho\in\Spijx}\frac{\Xi_{\rho}(\sss)}{\Xi_{\rho}(\sss)+1}\cdot(\#B)\cdot\int_{B^{\perp}[\TT(\AAF)]}H(\sss-K_X,-)^{-1}\mu\hskip-12,5cm&\\
&\geq\lim_{\sss\to (0)_{\rho\in\Spijx}}C'\prod_{\rho\in\Spijx}\frac{ \Xi_{\rho}(\sss)}{\Xi_{\rho}(\sss)+1}\cdot\sum_{i=1}^{\# B}\int_{\yyy_iB^{\perp}[\TT(\AAF)]}H(\sss-K_X,-)^{-1}\mu\\
&\geq  \lim_{\sss\to (0)_{\rho\in\Spijx}}C'\prod_{\rho\in\Spijx}\frac{ \Xi_{\rho}(\sss)}{\Xi_{\rho}(\sss)+1}\cdot\int_{\TT(\AAF)}H(\sss-K_X,-)^{-1}\mu.\\
&=\lim_{\sss\to (0)_{\rho\in\Spijx}}C'\prod_{\rho\in\Spijx}\frac{ \Xi_{\rho}(\sss)}{\Xi_{\rho}(\sss)+1}\cdot \wH(\sss-K_X,1).
\end{align*}
By Proposition~\ref{ljuka}(2), one has \begin{align*}
\lim_{\sss\to (0)_{\rho\in\Spijx}}\prod_{\rho\in\Spijx}\frac{ \Xi_{\rho}(\sss)}{\Xi_{\rho}(\sss)+1}\cdot\wH(\sss-K_X,1)\hskip-9cm&\\&=\lim_{\sss\to (0)_{\rho\in\Spijx}}\bigg(\wH_f(\sss-K_X,1)\cdot\prod_{\rho\in\Spijx}\frac{ \Xi_{\rho}(\sss)}{\Xi_{\rho}(\sss)+1}\bigg)\cdot\wH_{\infty}(\sss-K_X,1)\\&=\gamma(-K_X,1)\wH_{\infty}(-K_X,1)\lim_{\sss\to (0)_{\rho\in\Spijx}}\prod_{\rho\in\Spijx}\frac{\Xi_{\rho}(\sss)}{\Xi_{\rho}(\sss)+1}\cdot\zeta_F(\Xi_{\rho}(\sss-K_X)),
\end{align*}where 
the function $\gamma(-,1)$ is defined in {\it id}.\ by a converging Euler product in the domain $\Omega_{\Lambda^{\circ}-K_X-\epsilon}$, with $\epsilon$ as in {\it id}.\ In particular $\gamma(-K_X,1)>0$.  Lemma~\ref{hinfnnz} gives that $$\wH_{\infty}(-K_X,1)>0.$$ 
 One has that 
\begin{align*}
\lim_{\sss\to (0)_{\rho\in\Spijx}}\prod_{\rho\in\Spijx}\frac{\Xi_{\rho}(\sss)}{\Xi_{\rho}(\sss)+1}\zeta_F(\Xi_{\rho}(\sss-K_X))\hskip-5cm&\\&=\lim_{\sss\to (0)_{\rho\in\Spijx}}\prod_{\rho\in\Spijx}\frac{\Xi_{\rho}(\sss)}{\Xi_{\rho}(\sss)+1}\zeta_F(\Xi_{\rho}(\sss-K_X))\\&=\prod_{\rho\in\Spijx}\Res(\zeta_F,1)\\&>0,\\
\end{align*} where $\Res(\zeta_F,1)$ denotes the residue of the Dedekind zeta function~$\zeta_F$ at~$1$. We deduce:
\begin{align*}
g^*(-K_X)\hskip-2cm&\\&=\lim_{\sss\to (0)_{\rho\in\Spijx}}\frac{\Xi_{\rho}(\sss)}{\Xi_{\rho}(\sss)+1}\cdot g(-K_X+\sss)\\
&=\lim_{\sss\to (0)_{\rho\in\Spijx}}\prod_{\rho\in\Spijx}\frac{\Xi_{\rho}(\sss)}{\Xi_{\rho}(\sss)+1}\cdot(\#B)\cdot\int_{B^{\perp}[\TT(\AAF)]}H(\sss-K_X,-)^{-1}\mu\\
&>0.
\end{align*} 
as claimed.
\end{proof}
\subsection{Final calculations}\label{Finalcalculations}
We are ready to establish the wanted meromorphic behavior of the height zeta function. 

\subsubsection{} The real orbifold Néron-Severi group identifies with the quotient $$NS^1_{\orb,\RR}=\RR^{\Spijx}/M^{\rig}_{\RR}.$$
We set $b=\rk(NS^1_{\orb})$. Recall that we have denoted by $p:\RR^{\Spijx}\to NS^{1}_{\orb,\RR}$ the quotient map. We also denote by~$p$ the induced map: $$p=p\otimes\CC:\CC^{\Spijx}\to (\RR^{\Spijx}/M^{\rig}_{\RR})\otimes\CC.$$
We set $\Lambda_{\orb}=p(\Lambda).$
For every $\sss\in \NSc=\NSr\otimes\CC$ let us choose $\widetilde\sss\in\CC^{\Spijx}$ such that $p(\widetilde\sss)=\sss$. 
It follows from Lemma~\ref{gl:pair} that for every $\sss\in\NSc$, that one has that $$H_{\sss}(\xxx)=H(\widetilde\sss,\xxx).$$
In particular, for any $\widetilde\sss'$ for which $p(\widetilde\sss')=p(\sss)$, one has that $H(\widetilde\sss,-)=H(\widetilde\sss',-)$, hence that $\wH(\widetilde\sss,-)=\wH(\widetilde\sss',-)$ and that $g(\widetilde\sss)=g(\widetilde\sss')$. 
We define a height zeta function.
\begin{mydef}For $\sss\in\NSc$, we define formally $$Z(\sss):=\sum_{\xxx\in\TT(F)}H_{\sss}(\xxx)^{-1}=\sum_{\xxx\in\TT(F)}H(\widetilde\sss,\xxx)^{-1}.$$
\end{mydef}
\begin{thm}\label{princip}
For $\eta\gg 1$, one has that whenever $\sss\in\Omega_{p(]\eta,+\infty[^{\Spijx})}$ that \begin{equation}\label{eqofzf}
Z(\sss)=\frac{\# \Sh^1(G)}{(\# G^D)\cdot (2\pi)^{\rk (M^{\rig})}}\int_{M^{\rig}_{\RR}}g(\widetilde{\sss}+i\mmm)d\mmm,
\end{equation}where~$\widetilde\sss$ is a lift of~$\sss$. There exists $\varepsilon'>0$ such that for $\epsilon=(\varepsilon')_{\rho\in\Spijx}$ the function $\sss\mapsto Z(\sss)$ extends to a meromorphic function in the domain $\Omega_{p(\Lambda^{\circ}-\epsilon)-p(K_X)}.$ The only possible poles of $\sss\mapsto Z(\sss)$ in the domain $\Omega_{p(\Lambda^{\circ}-\epsilon)-p(K_X)}$ are along the faces of~$p(\Lambda)-p(K_X)$ and they are at worst simple. Furthermore, there exists~$\alpha>0$ such that for every compact $\mathcal K\subset\Lambda-p(K_X)-\epsilon$ there exists $C>0$ such that $$\bigg|\frac{Z(\sss)}{\mathsf X_{p(\Lambda)}(\sss+p(K_X))}|\leq C (1+||\Im(\sss)||)^{\alpha},$$
whenever $\sss\in\Omega_{\mathcal K}$. Finally, for every $\sss_0\in p(\Lambda^{\circ}),$ one has that 
\begin{equation*}
\lim_{z\to 0^+}\frac{Z(z\sss_0)}{\mathsf X_{p(\Lambda)}(z\sss_0+p(K_X))}=\frac{\#\Sh^1(G)\cdot g^*(-K_X)}{\# G^D}>0.
\end{equation*}
\end{thm}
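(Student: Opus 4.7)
The plan has three stages: establish the Fourier-type formula \eqref{eqofzf}, extend meromorphically by a multivariable contour shift, and read off the leading constant.

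\emph{Stage 1: the integral formula.} For \(\sss\) deep in the right half-space, absolute convergence of \(Z(\sss)\) follows from the Northcott property (Theorem \ref{northcottproperty}). I plan to apply Poisson summation to the closed, discrete, cocompact inclusion \(\Delta(\TT(F))\subset\TT(\AAF)_1\); the combinatorics of this inclusion supplies the prefactor \(\#\Sh^1(G)/\#G^D\) (using that \(\TT(F)\to\Delta(\TT(F))\) is \(\#\Sh^1(G)\)-to-one and that \(\Tam(\TT)=\#G^D\) by Proposition \ref{ukmar}). Lemma \ref{ramch} eliminates all characters outside \((K(\TT)\Delta(\TT(F)))^{\perp}[\TT(\AAF)^*]\), and Lemma \ref{normmeas}(3) together with the splitting of Definition \ref{charat} decompose each remaining character uniquely as \(\chi=\chi_0\chi_{\mmm}\) with \(\chi_0\in\mathfrak A_{\TT}\) and \(\mmm\in M^{\rig}_{\RR}\); the dual measure on \(M^{\rig}_{\RR}\) is \((2\pi)^{-\rk(M^{\rig})}d\mmm\) by Lemma \ref{djurdjule}. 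The identity \(\wH(\widetilde\sss,\chi_0\chi_{\mmm})=\wH(\widetilde\sss+i\mmm,\chi_0)\) of Lemma \ref{relhc}(2) then collapses the inner sum over \(\mathfrak A_{\TT}\) to \(g(\widetilde\sss+i\mmm)\), producing \eqref{eqofzf}. Independence of the lift \(\widetilde\sss\) is automatic, since a shift by \(M^{\rig}_{\CC}\) is absorbed into the \(\mmm\)-integral.

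\emph{Stage 2: meromorphic continuation.} I will substitute \(\widetilde\sss\mapsto\widetilde\sss-K_X\) in \eqref{eqofzf} and use Proposition \ref{growthofg} to write
\[
g(\widetilde\sss-K_X+i\mmm) = g^*(\widetilde\sss-K_X+i\mmm)\cdot\prod_{\rho\in\Spijx}\frac{\Xi_{\rho}(\widetilde\sss-i\mmm)}{\Xi_{\rho}(\widetilde\sss-i\mmm)-1},
\]
with \(g^*\) holomorphic on \(\Omega_{\Lambda^{\circ}-K_X-\epsilon}\). The polynomial bound in Proposition \ref{growthofg}, indexed by the cones of \(\widetilde\Sigma\), will provide an integrable majorant for the \(g^*\)-factor against \(d\mmm\), justifying a shift of \(\Re(\widetilde\sss)\) across the walls \(\Xi_{\rho}=0\). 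The only singularities arise from the explicit rational prefactor, giving simple poles along the hyperplanes \(\Xi_{\rho}(\sss+p(K_X))=0\); after descending to \(\NSc\), these are exactly the faces of \(p(\Lambda)-p(K_X)\). The polynomial-in-\(\|\Im(\sss)\|\) bound for \(Z(\sss)/\mathsf X_{p(\Lambda)}(\sss+p(K_X))\) then follows by combining Proposition \ref{growthofg} with Lemma \ref{xconexi}, which identifies \(\mathsf X_{p(\Lambda)}(\sss+p(K_X))\) with \(\prod_{\rho}\Xi_{\rho}(\sss+p(K_X))^{-1}\).

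\emph{Stage 3: leading constant and positivity.} Evaluating along \(\sss=z\sss_0\) as \(z\to 0^+\): the iterated \(\mmm\)-contour shift, together with Lemma \ref{xconexi}, supplies exactly the \(\mathsf X_{p(\Lambda)}\)-factor from the rational prefactor, while dominated convergence (with envelope from Proposition \ref{growthofg}) sends the \(g^*\)-factor to its value at \(-K_X\); this yields the limit \(\#\Sh^1(G)\cdot g^*(-K_X)/\#G^D\), strictly positive by Proposition \ref{gkxz}. The hard step throughout is the \(\rk(M^{\rig})\)-fold contour shift: the bound of Proposition \ref{growthofg} is only marginally integrable — the \(\delta\)-losses at each individual shift must be absorbed by the simplicial sum over \(\widetilde\Sigma\) — so the main technical obstacle will be verifying that Fubini and dominated convergence apply at every stage, and that no residues beyond those along the faces of \(p(\Lambda)-p(K_X)\) appear; this is where the matching of the simplicial structure of \(\widetilde\Sigma\) with that of \(\Lambda\) becomes decisive.
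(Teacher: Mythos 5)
Your Stage 1 is essentially the paper's argument: Poisson summation for $\Delta(\TT(F))$ (the paper applies it to the inclusion into $\TT(\AAF)$ rather than $\TT(\AAF)_1$, but the bookkeeping is the same), elimination of ramified characters via Lemma \ref{ramch}, the measure-theoretic splitting of Lemma \ref{normmeas}(3) with the dual measure $(2\pi)^{-\rk(M^{\rig})}d\mmm$, and the translation identity of Lemma \ref{relhc}(2); the prefactor $\#\Sh^1(G)/\#G^D$ arises exactly as you say. Where you diverge is Stages 2--3: you propose to perform the $\rk(M^{\rig})$-fold contour shift and the residue analysis by hand, and you correctly flag this as the main technical obstacle. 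The paper does not do this; it invokes \cite[Lemma 3.1.6]{FonctionsZ} for holomorphy of $\sss\mapsto\int_{M^{\rig}_{\RR}}g(\widetilde\sss-K_X+i\mmm)\,d\mmm$ on $\Omega_{p(\Lambda^{\circ})}$ and then \cite[Theorem 3.1.14]{FonctionsZ} (with $M=M'=M^{\rig}_{\RR}$, $V=\RR^{\Spijx}$, cone $\Lambda$, function $\zzz\mapsto g(\zzz-K_X)$) for the meromorphic continuation, the location and simplicity of the poles along the faces of $p(\Lambda)$, the polynomial growth of $Z/\mathsf X_{p(\Lambda)}$, and the identification of the leading constant via the limit computed at the end of Proposition \ref{growthofg}. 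The hypotheses of that cited theorem are precisely the estimate established in Proposition \ref{growthofg} (uniform decay in $\|\mmm\|$ of exponent $1-\delta$, the sum over bases of linear forms $\ell_{i,j}$ restricting to bases of $M^{\rig}_{\RR}$, and polynomial growth in $\|\Im(\sss)\|$), so the step you defer is closed by citation rather than re-proved. Your plan is therefore sound in outline, but as written it leaves the hardest analytic content (Fubini, dominated convergence, and the absence of extraneous residues under iterated shifts) unexecuted; either carry out that argument in full or, as the paper does, reduce it to the Chambert-Loir--Tschinkel machinery, noting that Proposition \ref{growthofg} is formatted exactly for that purpose.

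Two minor points. First, your factorization $g=g^{*}\cdot\prod_{\rho}\Xi_{\rho}(\widetilde\sss-i\mmm)/(\Xi_{\rho}(\widetilde\sss-i\mmm)-1)$ has the denominators off: after the shift by $-K_X$ one has $\Xi_{\rho}(\widetilde\sss-K_X+i\mmm)=\Xi_{\rho}(\widetilde\sss+i\mmm)+1$, so the relevant factor is $\bigl(1+\Xi_{\rho}\bigr)/\Xi_{\rho}$ evaluated at $\widetilde\sss+i\mmm$, matching the form used in Proposition \ref{growthofg}. Second, the positivity of the limit requires not only Proposition \ref{gkxz} but also the observation that only the finitely many characters $\chi_0\in B$ contribute to the limit, which is where Lemma \ref{finofbchi} and the Poisson argument inside Proposition \ref{gkxz} enter; your proposal implicitly subsumes this in the citation of Proposition \ref{gkxz}, which is acceptable.
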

\begin{proof}
We will apply the Poisson formula for the function $H(\widetilde\sss,-)^{-1}$ and for the inclusion of the closed subgroup $$\Delta(\TT(F))\subset\TT(\AAF)$$ with respect to Haar measures $(\# \Sh^1(G))(\# G(F))^{-1}\mu_{\Delta(\TT(F))}^{\#}$ and~$\mu$. Recall the notation $$\mu/\Delta= \mu/({\# \Sh^1(G)}\cdot{\# G(F)}^{-1}\cdot\mu_{\Delta(\TT(F))}^{\#}).$$ For $\sss\in\Omega_{p(]\eta,+\infty[^{\Spijx})}$, the function $H(\widetilde\sss,-)^{-1}$ is absolutely integrable on~$\Delta(\TT(F))$ and~$\TT(\AAF)$ by Theorem~\ref{northcottproperty} and Proposition~\ref{ljuka}(1), respectively.  We obtain that \begin{align*}
Z(\sss)&=(\#\Sh^1(G))\cdot\sum_{\xxx\in\Delta(\TT(F))}H(\widetilde\sss,\xxx)^{-1}\\&=(\#\Sh^1(G))\cdot\int_{(\Delta(\TT(F)))^{\perp}[\TT(\AAF)^*]}\wH(\widetilde\sss,-) (\mu/\Delta)^*,
\end{align*}
where $\widetilde\sss\in\Omega_{]\eta,+\infty[^{\Spijx}}$ is a lift of~$\sss$.
Whenever $\chi\in (\Delta(\TT(F)))^{\perp}[\TT(\AAF)^*]$ satisfies that $\chi|_{K(\TT)}\neq 1$, Lemma~\ref{ramch} gives that$$\wH(\widetilde\sss,\chi)=0.$$ Thus it suffices to integrate over the subgroup $(K(\TT)\Delta(\TT(F)))^{\perp}[\TT(\AAF)^*]$ which is open in $(\Delta(\TT(F)))^{\perp}[\TT(\AAF)^*]$ by Lemma~\ref{normmeas}(2):
\begin{equation*}Z(\sss)=\#\Sh^1(G)\cdot\int_{(K(\TT)\Delta(\TT(F)))^{\perp}[\TT(\AAF)^*]}\wH(\widetilde\sss,-)(\mu/\Delta)^*.
\end{equation*}
Lemma~\ref{normmeas}(3) gives an exact sequence of locally compact abelian groups endowed with Haar measures:
\begin{multline*}1\to (M^{\rig}_{\RR}, \frac{d\mmm}{(2\pi)^{\rk(M^{\rig})}})\xrightarrow{\mmm\mapsto\chi_{\mmm}} \big((K(\TT)\Delta(\TT(F)))^{\perp}[\TT(\AAF)^*], (\mu/\Delta)^*\big)\to\\
\to \big((K(\TT)\Delta(\TT(F)))^{\perp}[\TT(\AAF)_1^*], (\# G^D)^{-1}\cdot\mu^{\#}_{\Delta(\TT(F))K(\TT))^{\perp}[\TT(\AAF)_1^*]}\big) \to 1,
\end{multline*}
where~$d\mmm$ is the Lebesgue measure on~$M^{\rig}_{\RR}$ normalized by the lattice~$M^{\rig}$. The subgroup~$\mathfrak A_{\TT}\subset (K(\TT)\Delta(\TT(F)))^{\perp}[\TT(\AAF)^*]$ satisfies that the restriction of the map to $(K(\TT)\Delta(\TT(F)))^{\perp}[\TT(\AAF)_1^*]$ is a bijection: $$\mathfrak A_{\TT}\xrightarrow{\sim}\big(K(\TT)\Delta(\TT(F))\big)^{\perp}[\TT(\AAF)^*_1].$$ The set~$\mathfrak A_{\TT}$ is obviously discrete and we endow it with the measure for which the bijection is measure preserving, that is with the measure $(\#G^D)^{-1}\mu_{\mathfrak A_{\TT}}^{\#}$. We deduce that 
$$Z(\sss)=\frac{\#\Sh^1(G)}{(\#G^D)(2\pi)^{\rk (M^{\rig})}}\int_{M^{\rig}_{\RR}}\sum_{\chi_0\in\mathfrak A_{\TT}}\wH(\widetilde\sss,\chi_0\chi_{\mmm})d\mmm$$ and using Lemma~\ref{relhc} which gives $$\wH(\widetilde\sss,\chi_0\chi_{\mmm})=\wH(\widetilde\sss+i\mmm,\chi_0),$$ that:
\begin{align*}Z(\sss)&=\frac{\#\Sh^1(G)}{(\#G^D)(2\pi)^{\rk (M^{\rig})}}\int_{M^{\rig}_{\RR}}\sum_{\chi_0\in\mathfrak A_{\TT}}\wH(\widetilde\sss+i\mmm,\chi_0)d\mmm\\&=\frac{\#\Sh^1(G)}{(\#G^D)(2\pi)^{\rk (M^{\rig})}}\int_{M^{\rig}_{\RR}}g(\widetilde\sss+i\mmm)d\mmm.
\end{align*}
We have proven Equality~(\ref{eqofzf}). Using it and applying \cite[Lemma 3.1.6]{FonctionsZ} for the {\it translated} function $$\Omega_{\Lambda^{\circ}}\to\CC,\hspace{1cm}\zzz\mapsto g(\zzz-K_X)$$ gives that $\sss\mapsto Z(\sss-p(K_X))$ converges absolutely on compacts to a holomorphic function in the domain~$\Omega_{p(\Lambda^{\circ})}$. This proves the first claim after Equality~(\ref{eqofzf}). 
To get the other claims, we will apply \cite[Theorem 3.1.14]{FonctionsZ}. With the notation as in \textit{id}.\ we set: $M=M':=M^{\rig}_{\RR}$ and $V:=\RR^{\Spijx}$, for the cone we take~$\Lambda,$ while for the function we take~$\zzz\mapsto g(\zzz-K_X)$ for which we have verified in Proposition~\ref{growthofg} that it indeed satisfies the conditions of {\it id.} 
Applying \textit{id}.\ gives that $\sss\mapsto Z(\sss-p(K_X))),$  
is holomorphic in the domain~$\Omega_{p(\Lambda^{\circ})},$  
that it extends meromorphically to the domain~$\Omega_{p(\Lambda^{\circ}-\epsilon)}$ and that its poles are precisely in the domain are faces of~$p(\Lambda)$, which are at worst simple.  
 Moreover, there exists $\alpha>0$ such that whenever $\mathcal K\subset\Omega_{p(\Lambda^{\circ}-\epsilon)} $ is a compact, one gets that for every $\sss\in\Omega_{\mathcal K}$ that $$\bigg|\frac{Z(\sss-p(K_X))}{\mathsf X_{p(\Lambda)}(\sss-p(K_X))}\bigg|\leq C\cdot (1+||\Im(\sss))||)^{\alpha}.$$
Let us now use the second part of \cite[Theorem 3.1.14]{FonctionsZ}. We have verified in  Lemma~\ref{propoflambda} that~$\Lambda$ does not contain a line. Recall that by Lemma~\ref{xconexi} one has that $$\mathsf X_{\Lambda}(\sss)=\prod_{\rho\in\Spijx}\frac{1}{\Xi_{\rho}(\sss)}.$$For every $\zzz_0\in\Lambda$, one has that \begin{align*}\lim_{z\to 0^+}\frac{g^*(-K_X)^{-1}\cdot g(z\cdot\zzz_0-K_X)}{\mathsf X_{\Lambda}(z\cdot\zzz_0)}\hskip-3cm&\\&=g^*(-K_X)^{-1}\lim_{z\to 0^+}g(z\cdot \zzz_0-K_X)\cdot\prod_{\rho\in\Spijx}\Xi_{\rho}(z\cdot\zzz_0) \\
&=g^*(-K_X)^{-1}\lim_{z\to 0^+}g(z\cdot\zzz_0-K_X)\cdot\prod_{\rho\in\Spijx}\frac{\Xi_{\rho}(z\cdot\zzz_0)}{\Xi_{\rho}(z\cdot\zzz_0)+1}\\
&=g^*(-K_X)^{-1}\cdot g^*(-K_X)\\
&=1.
\end{align*}
We set $Z':=\frac{Z\cdot \# G^D}{\#\Sh^1(G)\cdot g^*(-K_X)},$ so that $$Z'(\sss)=\frac{1}{(2\pi)^{\rk(M^{\rig})}}\int_{M^{\rig}_{\RR}}(g^*(-K_X))^{-1}\cdot g(\widetilde\sss+i\mmm-K_X)d\mmm.$$
Applying \textit{id}.\ gives that for every~$\sss_0\in p(\Lambda)^{\circ}=p(\Lambda^{\circ})$, one has that
$$\lim_{z\to 0^+}\frac{Z'(z\sss_0-p(K_X))}{\mathsf X_{p(\Lambda)}(z\sss_0)}=1,$$
i.\ e.\ that
$$\lim_{z\to 0^+}\frac{Z(z\sss_0-p(K_X))}{\mathsf X_{p(\Lambda)}(z\sss_0)}=\frac{\#\Sh^1(G)\cdot g^*(-K_X)}{\# G^D}.$$
\end{proof}
\begin{thm}
 We consider the function $$Z^{-p(K_X)}:z\mapsto Z(-zp(K_X))=\sum_{\xxx\in\TT(F)}H(-zK_X,\xxx)^{-1}.$$ The function~$Z^{-p(K_X)}$ is holomorphic in the domain~$\Omega_{]1,+\infty[},$ for some $\varepsilon>0$ it extends meromorphically to the domain $\Omega_{]1-\varepsilon, +\infty[} $ where its only pole is at~$z=1$ which is of order precisely~$b$. Moreover, there exists $\alpha>0$ such that for every compact $\mathcal K\subset ]1-\varepsilon, +\infty[$, there exists $C(\mathcal K)>0$ such that $$|Z^{-p(K_X)}(z)|\leq C(\mathcal K)(1+|\Im(z)|)^{\alpha}$$ if provided $z\in\Omega_{\mathcal K}$. One has that $$\lim_{z\to 1}(z-1)^bZ^{-p(K_X)}(z)=\mathsf X_{p(\Lambda)}(-p(K_X))\cdot \omega_H(\XX(\AAF)),$$
where we have used the notation 
\begin{multline*}
\omega_H(\XX(\AAF)):=\frac{\#\Sh^1(G)\cdot (\# B)}{\# G^D}\times \\\times \lim_{\sss\to (0)_{\rho\in\Spijx}}\prod_{\rho\in\Spijx}\frac{\Xi_{\rho}(\sss)}{\Xi_{\rho}(\sss)+1}\cdot\int_{B^{\perp}[\TT(\AAF)]}H(\sss-K_X,-)^{-1}\mu.
\end{multline*} 
\end{thm}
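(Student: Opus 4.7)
The plan is to obtain every stated property of $Z^{-p(K_X)}$ by restricting the meromorphic function $Z(\sss)$ produced by Theorem~\ref{princip} to the complex line $z\mapsto \sss(z):=-z\,p(K_X)$ in $\NSc$. First I record the useful identity
\[
\sss(z)+p(K_X)=(1-z)\,p(K_X)=(z-1)(-p(K_X)).
\]
Since $-K_X\in\Lambda^{\circ}$ by Lemma~\ref{propoflambda}(2), we have $-p(K_X)\in p(\Lambda)^{\circ}$, so $\Re(\sss(z))+p(K_X)$ lies in $p(\Lambda)^{\circ}$ exactly when $\Re(z)>1$; thus the absolute convergence of $Z^{-p(K_X)}$ in $\Omega_{]1,+\infty[}$ follows from the convergence part of Theorem~\ref{princip}.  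Because the extended domain $\Omega_{p(\Lambda^{\circ}-\epsilon)-p(K_X)}$ from Theorem~\ref{princip} is an open neighbourhood of $-p(K_X)$, the line $\sss(z)$ remains inside it as soon as $\Re(z)>1-\varepsilon'$ for some $\varepsilon'>0$, and this yields the meromorphic continuation of $Z^{-p(K_X)}$ to $\Omega_{]1-\varepsilon',+\infty[}$.

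For the pole analysis I use that $\Re(\sss(z))+p(K_X)=(1-\Re(z))\,p(K_X)$ lies on the ray $\RR_{>0}\cdot p(K_X)$ when $\Re(z)<1$ (so, outside $p(\Lambda)$ except at the origin) and on $\RR_{>0}\cdot(-p(K_X))\subset p(\Lambda)^{\circ}$ when $\Re(z)>1$.  Consequently, along the line $\sss(z)$, the faces of $p(\Lambda)-p(K_X)$ are met only at the apex $-p(K_X)$, which corresponds to $z=1$.  Combined with the bound
\[
\bigl|Z(\sss)/\mathsf X_{p(\Lambda)}(\sss+p(K_X))\bigr|\le C(\mathcal K)(1+||\Im(\sss)||)^{\alpha}
\]
of Theorem~\ref{princip} and the homogeneity identity
\[
\mathsf X_{p(\Lambda)}\bigl((z-1)(-p(K_X))\bigr)=(z-1)^{-b}\,\mathsf X_{p(\Lambda)}(-p(K_X))
\]
(valid by a real change of variables for $\Re(z-1)>0$ and then by analytic continuation in $z$), I deduce that $z=1$ is the unique pole of $Z^{-p(K_X)}$ in $\Omega_{]1-\varepsilon',+\infty[}$ and that its order is at most $b$; the nonzero leading coefficient computed below will then show the order equals $b$.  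The polynomial growth in vertical strips follows from the same bound, because $|\Im(\sss(z))|\asymp|\Im(z)|$ and $|\mathsf X_{p(\Lambda)}((1-z)p(K_X))|$ is bounded on compacts of $\Omega_{]1-\varepsilon',+\infty[}$ that avoid $z=1$.

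To extract the leading term I apply the limit assertion of Theorem~\ref{princip} with $\sss_0:=-p(K_X)\in p(\Lambda)^{\circ}$; after setting $t=1+z$ and using the homogeneity of $\mathsf X_{p(\Lambda)}$, this limit transforms into
\[
\lim_{t\to 1^+}(t-1)^{b}\,Z^{-p(K_X)}(t)=\mathsf X_{p(\Lambda)}(-p(K_X))\cdot\frac{\#\Sh^{1}(G)\cdot g^{*}(-K_X)}{\# G^{D}}.
\]
Replacing $g^{*}(-K_X)$ by the integral representation provided by Proposition~\ref{gkxz} yields exactly $\mathsf X_{p(\Lambda)}(-p(K_X))\cdot\omega_{H}(\XX(\AAF))$, as required; the extension from $t\to 1^+$ to $z\to 1$ follows from the meromorphy already established.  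The main technical hurdle will be the interplay between the multivariable pole structure along faces of $p(\Lambda)-p(K_X)$ and its one-variable restriction: I must use the homogeneity of $\mathsf X_{p(\Lambda)}$ together with the estimates of Theorem~\ref{princip} in a sufficiently uniform way that the order of the pole, the polynomial bound on vertical strips, and the value of the leading coefficient can all be read off cleanly from the one-parameter family.
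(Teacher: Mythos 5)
Your proposal is correct and follows essentially the same route as the paper: restrict $Z$ from Theorem~\ref{princip} to the line $\sss(z)=-z\,p(K_X)$, use the degree $-b$ homogeneity of $\mathsf X_{p(\Lambda)}$ to convert the bound $|Z/\mathsf X_{p(\Lambda)}(\cdot+p(K_X))|\ll(1+\|\Im\|)^{\alpha}$ and the limit assertion (with $\sss_0=-p(K_X)$) into the one-variable pole order, growth estimate, and leading coefficient, and then substitute the integral formula for $g^*(-K_X)$ from Proposition~\ref{gkxz}. The observation that all $b$ simple polar hyperplanes meet the line only at $z=1$, with the order pinned down to exactly $b$ by the strict positivity of the leading constant, is precisely the argument the paper relies on.
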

\begin{proof}
In Theorem~\ref{princip} we set $\sss=-zp(K_X)$, where~$z$ is a single variable. We obtain that in the domain $-zp(K_X)\in\Omega_{p(\Lambda^{\circ})-p(K_X)}$, that is, in the domain $z\in\Omega_{>1}$, the function~$Z^{-p(K_X)}=z\mapsto Z(-zp(K_X))$ is holomorphic, the function \begin{align*}z&\mapsto\frac{ Z^{-p(K_X)}(z)}{\mathsf X_{p(\Lambda)}((1-z)p(K_X))}=(z-1)^b\cdot\frac{ Z^{-p(K_X)}(z)}{\mathsf X_{p(\Lambda)}(-p(K_X))}\end{align*} extends holomorphically to the domain $\Omega_{>1-\varepsilon}$ for some $\varepsilon>0$ and that there exists $\alpha>0$ such that for every compact $\mathcal K\subset ]1-\varepsilon, +\infty[$ one has that $$\bigg|(z-1)^b\cdot\frac{ Z^{-p(K_X)}(z)}{\mathsf X_{p(\Lambda)}(-p(K_X))}\bigg|\ll (1+|\Im(z)|)^{\alpha}$$ whenever $z\in\Omega_{\mathcal K}$.  By the last claim of Theorem~\ref{princip}, one has that 
\begin{align*}\lim_{z\to 1}(z-1)^bZ^{-p(K_X)}(z)\hskip-4cm&\\&={\mathsf X_{p(\Lambda)}(-p(K_X))}\lim_{z\to 1^+}\frac{Z(-zp(K_X))}{\mathsf X_{p(\Lambda)}((1-z)p(K_X))}\\
&=\frac{\mathsf X_{p(\Lambda)}(-p(K_X))\cdot\#\Sh^1(G)\cdot g^*(-K_X)}{\# G^D},\\
&=\frac{\mathsf X_{p(\Lambda)}(-p(K_X))\cdot \#\Sh^1(G)\cdot \#B}{\# G^D} \times\\&\quad\times\lim_{\sss\to (0)_{\rho\in\Spijx}}\prod_{\rho\in\Spijx}\frac{\Xi_{\rho}(\sss)}{\Xi_{\rho}(\sss)+1}\cdot\int_{B^{\perp}[\TT(\AAF)]}H(\sss-K_X,-)^{-1}\mu,
\end{align*}
where the last equality is established in~Proposition~\ref{gkxz}.
\end{proof}
\begin{cor}\label{cor:main}We shorten $H=H_{-p(K_X)}$. For~$X>1$, we set $$N_{H}(X):=\#\{\xxx\in\TT(F)|\hspace{0,1cm} H(\xxx)\leq X\}.$$ There exists a monic polynomial~$P$ of degree~$b-1$ and $\varepsilon>0$ such that $$N_H(X)=CX P(\log(X))+O(X^{1-\varepsilon})$$when $X\to \infty$, 
where $$C=\mathsf X_{p(\Lambda)}(-p(K_X))\cdot \omega_H(\XX(\AAF)).$$
\end{cor}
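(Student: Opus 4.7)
The plan is to deduce the asymptotic for $N_H(X)$ from the analytic properties of the height zeta function $Z^{-p(K_X)}$ established in the preceding theorem, by means of a standard Perron/Mellin--Tauberian argument, of the kind used in \cite{FonctionsZ} (or, alternatively, a direct contour shift).

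First I would pass to a Dirichlet-series form: grouping $\TT(F)$-points by the value of $H$ yields
\[
Z^{-p(K_X)}(z) = \int_1^\infty X^{-z}\, dN_H(X),
\]
with non-negative coefficients. By Theorem \ref{northcottproperty}, $N_H(X)$ grows at most polynomially, so the Mellin transform and its inversion are legitimate. Then I would apply the truncated Perron formula
\[
N_H(X) = \frac{1}{2\pi i}\int_{\sigma - iT}^{\sigma + iT} Z^{-p(K_X)}(z)\, \frac{X^z}{z}\, dz + O\!\left(\frac{X^\sigma}{T}\right)
\]
for $\sigma > 1$ and large $T$.

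Next, using the meromorphic continuation to $\Omega_{>1-\varepsilon}$ from the preceding theorem, I would shift the contour to $\Re z = 1 - \delta$ for some small $\delta \in (0, \varepsilon)$, picking up the unique pole at $z = 1$, which is of order $b$. The residue there evaluates to $X \cdot \widetilde P(\log X)$ for a polynomial $\widetilde P$ of degree $b-1$: writing $(z-1)^b Z^{-p(K_X)}(z) = C + O(z-1)$ near $z = 1$ with $C = \mathsf X_{p(\Lambda)}(-p(K_X))\cdot \omega_H(\XX(\AAF))$, and Taylor-expanding $X^z/z$ at $z=1$, the standard residue calculation produces all the logarithmic terms $(\log X)^k$ for $0 \le k \le b-1$, with the leading $(\log X)^{b-1}$ coefficient determined explicitly by $C$. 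This yields the main term of the form $CX P(\log X)$ with $P$ of degree $b-1$ and leading coefficient dictated by $C$, as in the statement.

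Finally, the error analysis: the polynomial-strip bound $|Z^{-p(K_X)}(z)| \ll (1 + |\Im z|)^\alpha$ established in the preceding theorem bounds the contribution from the horizontal segments at height $\pm T$ and from the shifted vertical line at $\Re z = 1-\delta$; after optimizing $T$ against $\delta$ (e.g.\ $T = X^{\beta}$ for an appropriate $0 < \beta < 1$), one obtains a combined error of $O(X^{1-\varepsilon'})$ for some smaller $\varepsilon' > 0$. The main technical obstacle is precisely this optimization: it is standard but fiddly, and relies essentially on the polynomial-growth bound from the preceding theorem to make the saving possible, while the Northcott polynomial bound on partial sums is needed to control the truncation error in Perron's formula.
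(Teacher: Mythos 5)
Your proposal is correct and is essentially the paper's (implicit) argument: the corollary is obtained from the preceding theorem by the standard Tauberian theorem for Dirichlet series with non-negative coefficients (as in the appendix of \cite{FonctionsZ}), whose proof is exactly the truncated-Perron/contour-shift computation you describe, using the order-$b$ pole at $z=1$ for the main term and the polynomial vertical-strip bound plus the Northcott estimate for the $O(X^{1-\varepsilon})$ error. The paper simply invokes this Tauberian statement without spelling it out, so your write-up supplies the same proof in expanded form.
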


We can easily deduce the main result of this paper, Theorem \ref{firstthm}, from this corollary, recalling that \(K_X\) corresponds to the anti-canonical raised line bundle \(K_{\cX,\orb}\) (Definition \ref{def:KX}) and that $b=\rk(NS^1_{\orb}) = \rho(\XX)+\#\pijx$. 
\bibliography{bibliography}

\begin{thebibliography}{dlBBP12}

\bibitem[BCS05]{borisovchensmith}
Lev~A. Borisov, Linda Chen, and Gregory~G. Smith.
\newblock The orbifold {Chow} ring of toric {Deligne}-{Mumford} stacks.
\newblock {\em J. Am. Math. Soc.}, 18(1):193--215, 2005.

\bibitem[Bir62]{Birch}
B.~J. Birch.
\newblock Forms in many variables.
\newblock {\em Proc. R. Soc. Lond., Ser. A}, 265:245--263, 1962.

\bibitem[Bou67]{TSpectrale}
N.~Bourbaki.
\newblock {\em \'{E}l\'{e}ments de math\'{e}matique. {F}asc. {XXXII}.
  {T}h\'{e}ories spectrales. {C}hapitre {I}: {A}lg\`ebres norm\'{e}es.
  {C}hapitre {II}: {G}roupes localement compacts commutatifs}.
\newblock Actualit\'{e}s Scientifiques et Industrielles [Current Scientific and
  Industrial Topics], No. 1332. Hermann, Paris, 1967.

\bibitem[Bou11]{Bourqui}
David Bourqui.
\newblock Fonction z\^{e}ta des hauteurs des vari\'{e}t\'{e}s toriques non
  d\'{e}ploy\'{e}es.
\newblock {\em Mem. Amer. Math. Soc.}, 211(994):viii+151, 2011.

\bibitem[BT96]{Toric}
V.~V. Batyrev and Yu. Tschinkel.
\newblock Height zeta functions of toric varieties.
\newblock {\em J. Math. Sci.}, 82(1):3220--3239, 1996.
\newblock Algebraic geometry, 5.

\bibitem[CLT01]{FonctionsZ}
Antoine Chambert-Loir and Yuri Tschinkel.
\newblock Fonctions z\^{e}ta des hauteurs des espaces fibr\'{e}s.
\newblock In {\em Rational points on algebraic varieties}, volume 199 of {\em
  Progr. Math.}, pages 71--115. Birkh\"{a}user, Basel, 2001.

\bibitem[CLT02]{Vgps}
Antoine Chambert-Loir and Yuri Tschinkel.
\newblock On the distribution of points of bounded height on equivariant
  compactifications of vector groups.
\newblock {\em Invent. Math.}, 148(2):421--452, 2002.

\bibitem[Dar21]{darda:tel-03682761}
Ratko Darda.
\newblock {\em {Rational points of bounded height on weighted projective
  stacks}}.
\newblock Theses, {Universit{\'e} Paris Cit{\'e}}, September 2021.

\bibitem[dlBBP12]{Chatelet}
R\'{e}gis de~la Bret\`eche, Tim Browning, and Emmanuel Peyre.
\newblock On {M}anin's conjecture for a family of {C}h\^{a}telet surfaces.
\newblock {\em Ann. of Math. (2)}, 175(1):297--343, 2012.

\bibitem[DY22a]{dardayasudabm}
Ratko Darda and Takehiko Yasuda.
\newblock The {B}atyrev-{M}anin conjecture for {D}{M} stacks, 2022.

\bibitem[DY22b]{commcase}
Ratko Darda and Takehiko Yasuda.
\newblock Torsors for finite group schemes of bounded height, 2022.

\bibitem[ESZB23]{ellenberg_satriano_zureick-brown_2023}
Jordan~S. Ellenberg, Matthew Satriano, and David Zureick-Brown.
\newblock Heights on stacks and a generalized batyrev–manin–malle
  conjecture.
\newblock {\em Forum of Mathematics, Sigma}, 11:e14, 2023.

\bibitem[FMN10]{mann}
Barbara Fantechi, Etienne Mann, and Fabio Nironi.
\newblock Smooth toric {D}eligne-{M}umford stacks.
\newblock {\em J. Reine Angew. Math.}, 648:201--244, 2010.

\bibitem[Jan96]{zbMATH00843465}
Gerald~J. Janusz.
\newblock {\em Algebraic number fields.}, volume~7 of {\em Grad. Stud. Math.}
\newblock Providence, RI: AMS, American Mathematical Society, 2nd ed. edition,
  1996.

\bibitem[Lie07]{lieblich}
Max Lieblich.
\newblock Moduli of twisted sheaves.
\newblock {\em Duke Math. J.}, 138(1):23--118, 2007.

\bibitem[Mal04]{Malle}
Gunter Malle.
\newblock On the distribution of {G}alois groups. {II}.
\newblock {\em Experiment. Math.}, 13(2):129--135, 2004.

\bibitem[Mil80]{Milne}
James~S. Milne.
\newblock {\em \'{E}tale cohomology}.
\newblock Princeton Mathematical Series, No. 33. Princeton University Press,
  Princeton, N.J., 1980.

\bibitem[Mil06]{ArithmeticDuality}
J.~S. Milne.
\newblock {\em Arithmetic duality theorems}.
\newblock BookSurge, LLC, Charleston, SC, second edition, 2006.

\bibitem[Ono61]{Ono}
Takashi Ono.
\newblock Arithmetic of algebraic tori.
\newblock {\em Ann. of Math. (2)}, 74:101--139, 1961.

\bibitem[Pey95]{Peyre}
Emmanuel Peyre.
\newblock Hauteurs et mesures de {T}amagawa sur les vari\'{e}t\'{e}s de {F}ano.
\newblock {\em Duke Math. J.}, 79(1):101--218, 1995.

\bibitem[Rad60]{Rademacher}
Hans Rademacher.
\newblock On the {P}hragm\'{e}n-{L}indel\"{o}f theorem and some applications.
\newblock {\em Math. Z}, 72:192--204, 1959/1960.

\bibitem[RV99]{Ramakrishnan}
Dinakar Ramakrishnan and Robert~J. Valenza.
\newblock {\em Fourier analysis on number fields}, volume 186 of {\em Graduate
  Texts in Mathematics}.
\newblock Springer-Verlag, New York, 1999.

\bibitem[San81]{Sansuc}
J.-J. Sansuc.
\newblock Groupe de {B}rauer et arithm\'{e}tique des groupes alg\'{e}briques
  lin\'{e}aires sur un corps de nombres.
\newblock {\em J. Reine Angew. Math.}, 327:12--80, 1981.

\bibitem[Sch79]{Schanuel}
Stephen~Hoel Schanuel.
\newblock Heights in number fields.
\newblock {\em Bull. Soc. Math. France}, 107(4):433--449, 1979.

\bibitem[Sta09]{stapledon}
A.~Stapledon.
\newblock Motivic integration on toric stacks.
\newblock {\em Comm. Algebra}, 37(11):3943--3965, 2009.

\bibitem[Tsc09]{zbMATH05657087}
Yuri Tschinkel.
\newblock Algebraic varieties with many rational points.
\newblock In {\em Arithmetic geometry. Clay Mathematics Institute Summer School
  Arithmetic Geometry, G\"ottingen, Germany, July 17--August 11, 2006}, pages
  243--334. Providence, RI: American Mathematical Society (AMS); Cambridge, MA:
  Clay Mathematics Institute, 2009.

\bibitem[Wei95]{basicnt}
Andr\'{e} Weil.
\newblock {\em Basic number theory}.
\newblock Classics in Mathematics. Springer-Verlag, Berlin, 1995.
\newblock Reprint of the second (1973) edition.

\end{thebibliography}
\bibliographystyle{alpha}
\end{document}